\theoremstyle{definition}
\newtheorem{defn}[subsection]{Definition}
\newtheorem{rem}[subsection]{Remark}
\theoremstyle{plain}
\newtheorem{lemma}[subsection]{Lemma}
\newtheorem{prop}[subsection]{Proposition}
\newtheorem{thm}[subsection]{Theorem}
\newtheorem{cor}[subsection]{Corollary}
\theoremstyle{remark}
\numberwithin{equation}{subsection}
\newcommand{\beq}{\begin{equation}}
\newcommand{\eeq}{\end{equation}}
\newcommand{\bem}{\begin{bmatrix}}
\newcommand{\enm}{\end{bmatrix}}
\newcommand{\ra}{\rightarrow}
\newcommand{\lra}{\longrightarrow}
\newcommand{\hra}{\hookrightarrow}
\newcommand{\xra}{\xrightarrow}
\newcommand{\xla}{\xleftarrow}
\newcommand{\Z}{\mathbf{Z}}
\newcommand{\C}{\mathbf{C}}
\newcommand{\F}{\mathbf{F}}
\newcommand{\G}{\mathbf{G}}
\newcommand{\Q}{\mathbf{Q}}
\newcommand{\R}{\mathbf{R}}
\newcommand{\fdelta}{\mathfrak{d}}
\newcommand{\bB}{\mathbb{B}}
\newcommand{\bD}{\mathbb{D}}
\newcommand{\bT}{\mathbb{T}}
\newcommand{\DD}{\mathbf{D}}
\newcommand{\bI}{\mathbf{I}}
\newcommand{\bJ}{\mathbf{J}}
\newcommand{\bJc}{\mathbf{J}_{\Box}}
\newcommand{\bL}{\mathbb{L}}
\newcommand{\X}{\mathbb{X}}
\newcommand{\bP}{\mathbb{P}}
\newcommand{\bA}{\mathbf{A}}
\newcommand{\ee}{\mathbf{e}}
\newcommand{\ff}{\mathbf{f}}
\newcommand{\cA}{\mathcal{A}}
\newcommand{\cB}{\mathcal{B}}
\newcommand{\cC}{\mathcal{C}}
\newcommand{\cF}{\mathscr{F}}
\newcommand{\cG}{\mathcal{G}}
\newcommand{\cH}{\mathcal{H}}
\newcommand{\cL}{\mathcal{L}}
\newcommand{\cM}{\mathcal{M}}
\newcommand{\cO}{\mathcal{O}}
\newcommand{\cP}{\mathcal{P}}
\newcommand{\cU}{\mathcal{U}}
\newcommand{\cV}{\mathcal{V}}
\newcommand{\cW}{\mathcal{W}}
\newcommand{\cY}{\mathcal{Y}}
\newcommand{\cZ}{\mathcal{Z}}
\newcommand{\Ab}{\overline{A}}
\newcommand{\cAb}{\overline{\cA}}
\newcommand{\cBb}{\overline{\cB}}
\newcommand{\cHb}{\overline{\cH}}
\newcommand{\Kb}{\overline{K}}
\newcommand{\Lb}{\overline{L}}
\newcommand{\kappab}{\overline{\kappa}}
\newcommand{\rhob}{\overline{\rho}}
\newcommand{\Pb}{\overline{P}}
\newcommand{\Qb}{\overline{Q}}
\newcommand{\fA}{\mathfrak{A}}
\newcommand{\fa}{\mathfrak{a}}
\newcommand{\fb}{\mathfrak{b}}
\newcommand{\fc}{\mathfrak{c}}
\newcommand{\fp}{\mathfrak{p}}
\newcommand{\fq}{\mathfrak{q}}
\newcommand{\fH}{\mathfrak{H}}
\newcommand{\fJ}{\mathfrak{J}}
\newcommand{\fK}{\mathfrak{K}}
\newcommand{\fM}{\mathfrak{M}}
\newcommand{\fL}{\mathfrak{L}}
\newcommand{\fN}{\mathfrak{N}}
\newcommand{\fS}{\mathfrak{S}}
\newcommand{\fX}{\mathfrak{X}}
\newcommand{\fY}{\mathfrak{Y}}
\newcommand{\Mod}{\mathbf{Mod}_{\fS\otimes \Z_{p^g}}}
\newcommand{\Modd}{\mathbf{Mod}_{\fS}}
\newcommand{\tor}{\mathrm{tors}}
\newcommand{\HW}{\mathrm{HW}}
\newcommand{\univ}{\mathrm{univ}}
\newcommand{\leng}{\mathrm{leng}}
\newcommand{\can}{\mathrm{can}}
\newcommand{\dep}{\mathrm{depth}}
\newcommand{\GL}{\mathrm{GL}}
\newcommand{\MF}{\mathbf{MF}_{S}}
\newcommand{\fr}{\mathrm{fr}}
\newcommand{\Tr}{\mathrm{trace}}
\newcommand{\Frob}{\mathrm{Frob}}
\newcommand{\vk}{\vec{k}}
\newcommand{\res}{\mathrm{Res}}
\newcommand{\tr}{\mathrm{tr}}
\newcommand{\ALG}{\mathrm{ALG}}
\newcommand{\SETS}{\mathrm{SETS}}
\newcommand{\ie}{i.e.}
\newcommand{\et}{\mathrm{\acute{e}t}}
\newcommand{\id}{\mathrm{Id}}
\newcommand{\ord}{\mathrm{ord}}
\newcommand{\rig}{\mathrm{rig}}
\newcommand{\an}{\mathrm{an}}
\newcommand{\rM}{\mathrm{M}}
\newcommand{\red}{\mathrm{red}}
\newcommand{\Fil}{\mathrm{Fil}}
\newcommand{\one}{\vec{\mathbf{1}}}
\newcommand{\omegab}{\underline{\omega}}
\newcommand{\m}{\mathfrak{m}}
\newcommand{\spe}{\mathrm{sp}}
\DeclareMathOperator{\End}{\mathrm{End}}
\DeclareMathOperator{\Lie}{\mathrm{Lie}}
\DeclareMathOperator{\Spec}{\mathrm{Spec}}
\DeclareMathOperator{\Hom}{\mathrm{Hom}}
\DeclareMathOperator{\Spf}{\mathrm{Spf}}
\DeclareMathOperator{\Ker}{\mathrm{Ker}}
\DeclareMathOperator{\im}{\mathrm{Im}}
\begin{document}

\title[Classicality of overconvergent Hilbert eigenforms]{Classicality  of Overconvergent Hilbert Eigenforms: case of quadratic residue degrees}
%(Preliminary Version, please do not diffuse)}
\author{Yichao TIAN}
\address{Mathematics Department, Fine Hall, Washington Road, Princeton, NJ, 08544, USA}
\email{yichaot@princeton.edu}
%\date{Nov 16th, 2010}

\begin{abstract}
Let $F$ be a quadratic real field,  $p$ be a rational prime inert in $F$. In this paper, we prove that an overconvergent $p$-adic Hilbert eigenform for $F$ of small slope is actually a classical Hilbert modular form.
\end{abstract}
\maketitle

\section{Introduction}
\subsection{} We fix a prime number $p>0$. A famous theorem of Coleman says that an overconvergent $p$-adic (elliptic) modular eigenform of small slope is  actually classical. More precisely, let $N\geq 5$ be an integer coprime to $p$, and $X_1(N)^{\an}$ be the rigid analytification of the  usual modular curve of level $\Gamma_1(N)$ over $\Q_p$. We denote by $X_1(N)^{\an}_{\ord}$ the ordinary locus of $X_1(N)^{\an}$.  For $p\geq 5$, $X_1(N)^{\an}_{\ord}$ is simply the locus where $E_{p-1}$ (Eisenstein series of weight $p-1$), the standard lift of the Hasse invariant, has non-zero reduction modulo $p$. For any integer $k\in \Z$, Katz \cite{Ka72} defined the space $\rM^{\dagger}_k(\Gamma_1(N))$ of overconvergent $p$-adic modular forms of weight  $k$.  An element  in  $\rM^{\dagger}_k(\Gamma_1(N))$ is a section of the modular line bundle  $\omegab^{k}$ defined over a strict neighborhood of $X_1(N)^{\an}_{\ord}$ in $X_1(N)^{\an}$. Moreover, Katz also defined a completely continuous operator $U_p$ acting on $\rM^{\dagger}(\Gamma_1(N))$. There is a natural injection from  $\rM_k(\Gamma_1(N)\cap \Gamma_0(p))$ to $\rM^{\dagger}_k(\Gamma_1(N))$, where $\rM_k(\Gamma_1(N)\cap\Gamma_0(p))$ is the space of classical modular forms of weight $k$ and level $\Gamma_1(N)\cap\Gamma_0(p)$, that is, sections of $\omegab^{k}$ over the modular curve $X(\Gamma_1(N)\cap\Gamma_0(p))$.   In \cite{col}, Coleman proved that if $f\in \rM^{\dagger}_k(\Gamma_1(N))$  is a  $U_p$-eigenvector with eigenvalue $a_p$ and $v_p(a_p)<k-1$, then $f$ actually lies in $\rM_k(\Gamma_1(N)\cap \Gamma_0(p))$. Coleman's original proof for this deep result was achieved by an ingenious dimension counting argument. Later on, Buzzard \cite{Bu} and Kassaei \cite{Ks} reproved Coleman's theorem by  an elegant  analytic continuation process. The basic idea of Buzzard-Kassaei was to extend successively the section $f$ by the functional equation $f=\frac{1}{a_p}U_p(f)$ to the entire rigid analytic space $X(\Gamma_1(N)\cap\Gamma_0(p))^{\an}$. Actually, Buzzard proved that $f$ can be extended to the union of ordinary locus and the area  with  supersingular reduction of $X(\Gamma_1(N)\cap\Gamma_0(p))^{\an}$.  Then Kassaei constructed another form $g$ on the complementary to Buzzard's area, and showed that $f$ and $g$ glue together to an analytic section of $\omegab^{k}$ over $X(\Gamma_1(N)\cap\Gamma_0(p))^{\an}$. The  rigid GAGA theorem \cite[7.6.11]{Ab} then implies that  this is indeed a genuine section of $\omegab^k$ over the algebraic modular curve $X(\Gamma_1(N)\cap \Gamma_0(p))$. In this process, the theory of canonical subgroups for elliptic curves developed in \cite{Ka72} due to Lubin and Katz plays a fundamental role.

There have been many efforts in generalizing the classical theory  on overconvergent $p$-adic modular forms
 to other situations.
 First of all, to generalize overconvergent $p$-modular forms and  the $U_p$-operator,
 we need to construct canonical subgroups in more general context. This has been done by many authors.
 For instance,  \cite{KL}, \cite{GK} consider the Hilbert case, and \cite{AM}, \cite{AG} treat the general case for abelian varieties,
 and finally in \cite{Ti}, \cite{Fa09}, \cite{Rab} and \cite{Ha} the canonical subgroups are constructed for general $p$-divisible groups. Using the canonical subgroups, overconvergent $p$-adic modular forms and the $U_p$-operators can
  be constructed similarly in various settings. However, the generalization of Coleman's classicality criterion need more hard work. As far as I know, this criterion has been generalized in the following cases. In \cite{col2}, Coleman himself generalized his results to modular forms of higher level at $p$. Kassaei considered in \cite{Ks09} the case of modular forms defined over various Shimura curves. In \cite{Sa}, Sasaki generalized it to the case of Hilbert eigenforms when $p$ totally splits in the totally real field defining the Hilbert-Blumenthal modular variety. Finally, Pilloni proved in \cite{Pi} the classicality criterion for  overconvergent Siegel modular forms of genus $2$.
 In this paper, we will follow the idea of Buzzard-Kassaei to study overconvergent Hilbert modular forms in the quadratic inert case.

\subsection{} To simplify the notation, let's describe our result in a special but essential case. Let $F$ be a real quadratic number field in which $p$ is inert, and $\cO_F$ be its ring of integers. We put $\kappa\simeq \F_{p^2}$,  $W=\cO_{F_p}$ and $\Q_{\kappa}=W[1/p]$. We denote by $\bB=\{\beta_1,\beta_2\}$ the two embeddings of $F$ into $\Q_{\kappa}$.  Let $N\geq 4$ be an  integer coprime to $p$.  We consider the Hilbert-Blumenthal modular variety $X$ over $\Spec(W)$ that classifies  prime-to-$p$ polarized abelian schemes $A$ with real multiplication by $\cO_F$ of level $\Gamma_{00}(N)$. Let $Y$ be the moduli space that classifies the same data and together with an $(\cO_F/p)$-cyclic subgroup of $A[p]$.  For each pair of integers $\vk=(k_1,k_2)\in \Z^{\bB}$, we have the modular line bundle $\omegab^{\vk}$ over $X$ and $Y$ (See \ref{mod-line} for its precise definition). For each finite extension $L$ of $\Q_{\kappa}$, we put $\rM_{\vk}(\Gamma_{00}(N)\cap\Gamma_0(p),L)=H^0(Y_{L},\omegab^{\vk})$, and call it the space of (geometric) Hilbert modular forms of level $\Gamma_{00}(N)\cap \Gamma_0(p)$ and weight $\vk$ with coefficients in $L$. This is a finite dimensional vector space over $L$ by classical Koecher principle, and the theory of arithmetic compactifications of Hilbert-Blumenthal modular varieties \cite{Rap,Ch,DP} implies that it actually descends to a finite flat $\Z[1/N]$-module.

 Let $\fX$ and $\fY$ be respectively the completion of $X$ and $Y$ along their special fibers, and $\fX_{\rig}$ and $\fY_{\rig}$ be their rigid analytic generic fibers \`a la Raynaud \cite[Ch. 4]{Ab}. We still have a natural forgetful map $\pi:\fY_{\rig}\ra \fX_{\rig}$. For each $\vk\in \Z^{\bB}$, we denote still by $\omegab^{\vk}$ the rigidification of the line bundle $\omegab^{\vk}$. Let $\fX^{\ord}_{\rig}$ be the ordinary locus of $\fX_{\rig}$, i.e. the  locus where the universal rigid Hilbert-Blumenthal abelian variety $\fA_{\rig}$ over $\fX_{\rig}$ has good ordinary reduction. Then the multiplicative part of the universal finite flat group  scheme $\fA_{\rig}[p]$ defines a section $s^{\circ}:\fX^{\ord}_{\rig}\ra \fY_{\rig}$ of the projection $\pi$ over $\fX^{\ord}_{\rig}$. We denote by $\fY_{\rig}^{\ord}$ the image of $s^{\circ}$, so that $\pi|_{\fY^{\ord}_{\rig}}:\fY_{\rig}^{\ord}\xra{\sim} \fX_{\rig}^{\ord}$ is an isomorphism of quasi-compact rigid analytic spaces. For a finite extension $L$ of $\Q_{\kappa}$,  Kisin and Lai  \cite{KL} defined an overconvergent Hilbert modular form of level $\Gamma_{00}(N)$ and weight $\vk$ with coefficients in $L$ to be a section of $\omegab^{\vk}$ over $\fX^{\ord}_{\rig}$ that extends to a strict neighborhood of $\fX^{\ord}_{\rig,L}$. We denote by $\rM^{\dagger}_{\vk}(\Gamma_{00}(N),L)$ the space of such forms. This is a  direct limit of infinite dimensional Banach spaces over $L$. Moreover, the theory of canonical subgroups for Hilbert modular varieties  allows them to define a completely continuous $U_p$-operator on $\rM^{\dagger}_{\vk}(\Gamma_{00}(N),L)$. Note that a relatively weak formulation of the existence of canonical subgroups says that the section $s^{\circ}:\fX^{\ord}_{\rig}\ra \fY_{\rig}^{\ord}$ extends to a strict neighborhood of $\fX_{\rig}^{\ord}$, or equivalently the isomorphism $\pi|_{\fY^{\ord}_{\rig}}$ extends to a strict neighborhood. Therefore, there exists a natural injection
 $$\rM_{\vk}(\Gamma_{00}(N)\cap\Gamma_0(p),L)\ra \rM^{\dagger}_{\vk}(\Gamma_{00}(N),L).$$
 We say an element $f$ in $\rM^{\dagger}_{\vk}(\Gamma_{00}(N),L)$ is classical if it lies in the image of this injection. The main result of this paper is the following
 \begin{thm}\label{thm-quad}
 Let $f\in \rM^{\dagger}_{\vk}(\Gamma_{00}(N),L)$ be a $U_p$-eigenvector with eigenvalue $a_p$. If $v_p(a_p)< \min\{k_1,k_2\}-2$, then $f$ is classical.
 \end{thm}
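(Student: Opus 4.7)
The plan is to adapt the analytic continuation strategy of Buzzard and Kassaei to this quadratic inert Hilbert setting. Since $f$ is a $U_p$-eigenform with non-zero eigenvalue $a_p$, the functional equation $f = a_p^{-1} U_p(f)$ lets us successively extend $f$ from its initial domain---a strict neighborhood of $\fY^{\ord}_{\rig}$ in $\fY_{\rig}$---to larger admissible opens of $\fY_{\rig}$. The ultimate goal is to produce an analytic section of $\omegab^{\vk}$ on the entire quasi-compact rigid space $\fY_{\rig}$, and then invoke rigid GAGA \cite[7.6.11]{Ab} to conclude that it descends to a classical form in $\rM_{\vk}(\Gamma_{00}(N)\cap \Gamma_0(p), L)$.

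To organize the extension, I would introduce two partial degree functions $\deg_{\beta_1}(H), \deg_{\beta_2}(H) \in [0,1]$ for the universal $(\cO_F/p)$-cyclic subgroup $H$, in the spirit of Fargues and of Goren-Kassaei \cite{GK}. The ordinary locus $\fY^{\ord}_{\rig}$ is characterized by $\deg_{\beta_1}(H) = \deg_{\beta_2}(H) = 1$ (i.e., $H$ is multiplicative), while $\fY_{\rig}$ has several ``sheets'' over $\fX^{\ord}_{\rig}$ coming from the $p^{2}+1$ different $\F_{p^{2}}$-lines in $A[p]$ at an ordinary point. The non-ordinary locus of $\fX_{\rig}$ is stratified by the $p$-adic valuations of the two partial Hasse invariants $h_{\beta_1}, h_{\beta_2}$, which are exchanged by $\Frob_p$ in the inert case. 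The $U_p$-correspondence sums $f$ over the $(\cO_F/p)$-cyclic subgroups $H'$ of $A/H$ distinct from $A[p]/H$, and the crucial computation is a precise tracking of how the pair $(\deg_{\beta_1}(H'), \deg_{\beta_2}(H'))$ depends on $(\deg_{\beta_1}(H), \deg_{\beta_2}(H))$ for each term in the sum.

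The main technical obstacle is to show that, under the slope hypothesis $v_p(a_p) < \min\{k_1, k_2\} - 2$, finitely many iterations of $a_p^{-1}U_p$ extend $f$ to the complement of an arbitrarily small neighborhood of the ``deeply non-ordinary'' region of $\fY_{\rig}$. At each iteration, most terms in $U_p(f)$ land in the region where $f$ is already known, while a few exceptional terms may worsen one of the partial degrees; the slope bound should be calibrated so that, after multiplying by $a_p^{-1}$ and the appropriate weight-twist factor, the exceptional terms have norm strictly less than the existing sup-norm of $f$ on its domain, guaranteeing convergence of the iteration. The minimum $\min\{k_1,k_2\}$ reflects the need to control growth uniformly in both embeddings $\beta_1,\beta_2$, and the shift by $2$ arises from the two-dimensional nature of the stratification, as opposed to the one-dimensional picture of the elliptic case. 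Once $f$ has been extended in this way, I would follow Kassaei's glueing method: on the remaining small region, construct an alternative section $g$ of $\omegab^{\vk}$ via a complementary (anti-canonical) choice of isogeny, show that $f$ and $g$ coincide on the overlap by a uniqueness argument for $U_p$-eigenvectors of the prescribed slope, and glue them into a global section on $\fY_{\rig}$. Rigid GAGA then yields classicality.
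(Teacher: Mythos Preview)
Your high-level strategy---analytic continuation via $f = a_p^{-1}U_p(f)$ organized by the pair of partial degrees, a Kassaei-type gluing on a residual region, then rigid GAGA---is the right framework and matches the paper. But two genuine ingredients are missing from your plan.

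First, you propose to glue $f$ to a section on all of $\fY_{\rig}$, but the paper does not (and a direct attack would be delicate) extend $f$ over the tube $]W_{\bB,\bB}[$ of the unique $0$-dimensional Goren--Kassaei stratum by analytic continuation. Instead, the continuation stops at $]Y_\kappa - W_{\bB,\bB}[\,\cup\,\cV_{\can}$ (Prop.~\ref{prop-cont-1}), and the remaining extension is purely algebraic (Prop.~\ref{prop-exten}): since $W_{\bB,\bB}$ has codimension $2$ in $Y_\kappa$ and the local rings of $Y_\kappa$ are Cohen--Macaulay (hence $S_2$), any bounded section over the complement extends automatically. This codimension-$2$ trick, borrowed from Pilloni, is what makes the two-dimensional stratification manageable; your sketch does not provide a substitute for it.

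Second, within the strata $]W_{i,i}[$ there is a superspecial sublocus $V_{i,1}$ (where both partial Hasse invariants of $A$ vanish) on which the $U_p$-dynamics do \emph{not} land in the canonical locus in one step: there is always one ``bad'' point of $U_p(Q)$ remaining in $]W_{i,i}[$ (Cor.~\ref{prop-dyn-ss}(b)). Handling this requires splitting $U_p = U_p^\circ + U_p^s$ and controlling an infinite nested sequence $V_{i,1}\supset V_{i,2}\supset\cdots$ of quasi-compact opens, with a gluing lemma applied across this tower. Showing that $V_{i,n}(\varepsilon)$ is a genuine strict neighborhood of $V_{i,n}$ (Lemma~\ref{lem-tech-ss}) needs an explicit description of the deformation space of a superspecial $p$-divisible group with RM, carried out via Zink's window theory in Appendix~B. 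Your proposal does not anticipate this obstruction. Finally, the appeal to GAGA must go through the rigid Koecher principle (Prop.~\ref{prop-koecher}), since $Y$ itself is not proper.
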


% \begin{rem}
 %After the author diffused a preliminary version of this paper among experts, V. Pilloni and B. Stroh informed me that they also obtained similar results independently, and K. Buzzard  told me that one of his students was also working on this problem by a different approach.
 %\end{rem}

 Actually, we prove our main Theorem  in a slightly general setting \ref{thm-main}. Note that our results imply that, in the quadratic inert case, the classical points are Zariski dense in the eigencurve  for overconvergent Hilbert modular forms of level $\Gamma_{00}(N)$ constructed in \cite{KL} (See Theorem \ref{thm-density}).

   Let's indicate  the ideas of the proof. First, by rigid GAGA and a rigid version of Koecher principle (Prop. \ref{prop-koecher}), we just need to extend $f$  analytically to the entire analytic space $\fY_{\rig}$.  To achieve this, the key point is to understand the dynamics of Hecke correspondence $U_p$ on $\fY_{\rig}$ \eqref{sect-rig-U_p}. We will use extensively the  work of Goren and Kassaei \cite{GK}.
Three ingredients from their work will be important for us. The first one is the stratification on the special fiber $Y_{\kappa}$ defined by them; the second is their valuation on $\fY_{\rig}$  via local parameters; and the third one is the so-called ``Key Lemma'' \cite[2.8.1]{GK}, which relates the partial Hasse invariants with the certain local parameters of $Y_{\kappa}$. In this paper, we will interpret their valuation on $\fY_{\rig}$ in terms of partial degrees (cf. \ref{sect-mult-deg}, \ref{prop-val}). They are natural refinements in the real multiplication case of the usual degree function, which has been introduced by Fargues \cite{Fa} and applied by Pilloni \cite{Pi} to  analytic continuation of $p$-adic Siegel modular forms. Actually, our work originates from an effort to understand the geometric meaning of Goren-Kassaei's valuation. Compared with the totally split case, our difficulty comes from the fact that the $p$-divisible group associated with a Hilbert-Blumenthal abelian variety (HBAV) with RM by $\cO_F$ is a genuine $p$-divisible group of dimension $2$, so its group law can not be explicitly described by one-variable power series. We overcome this  by using  Breuil-Kisin modules to compute the partial degrees of the $p$-torsion of a HBAV. This approach is motivated by the recent work of Hattori \cite{Ha}. These local computations via Breuil-Kisin modules combined with Goren-Kassaei's ``Key Lemma'' will give us enough information to understand the dynamics of the Hecke correspondence $U_p$ except the case mentioned in Prop. \ref{prop-pilloni} or \ref{lem-V_n}. In this exceptional case, we have to study  in detail the local moduli of deformations of  a superspecial HBAV. This is achieved in Appendix B by using Zink's theory on Dieudonn\'e windows \cite{zink}. Finally, we can prove that the form $f$ extends to an admissible open subset of $\fY_{\rig}$ that contains the tube over the complementary to the codimension $2$ stratum in Goren-Kassaei's stratification on $Y_{\kappa}$ ( Prop.  \ref{prop-cont-1}). By a useful trick invented by Pilloni in \cite[\S7]{Pi}, this allows us to conclude that $f$ extends indeed to the entire $\fY_{\rig}$ (Prop. \ref{prop-exten}).

 \subsection{} This paper is organized as follows. In Section 2, we review the facts that we need on the Hilbert-Blumenthal modular varieties and state the main theorem \ref{thm-main} and its consequence on the density of classical points in the eigencurves for overconvergent Hilbert modular forms. In Section 3, we perform the computations mentioned above on the $(\cO_F/p)$-cyclic subgroups of a HBAV over a complete discrete valuation ring via Breuil-Kisin modules. In particular, we give an alternative proof (Thm. \ref{thm-can}) for the existence of canonical subgroups in the Hilbert case proven in \cite{GK}. Section 4 is mainly dedicated to the review on Goren-Kassaei's work, and we provide also another proof of their ``Key Lemma'' using Dieudonn\'e theory (Prop. \ref{prop-key-lemma}). Section 5 is the heart of this work, and it contains a complete proof of Theorem \ref{thm-quad}. Finally, we prove our general main theorem \ref{thm-main} in Section 6. The proof of the general case is a combination of the split case treated by Sasaki \cite{Sa} and the case in Section 5. We have two appendices. In the first one, we gather some general results on the extension and gluing of sections in rigid geometry. In Appendix B, we study the local deformation space of a superspecial $p$-divisible group with formal real multiplication by $\Z_{p^g}$, where $g\geq 1$ is an integer and $\Z_{p^g}$ is the ring of integers of the unramified extension of $\Q_{p}$ of degree $g$. As a by-product, we see that the local moduli admits some canonical choices of local parameters $T_1,\cdots,T_g$ such that the $p$-divisible groups corresponding to $T_i=0$ admits formal complex multiplication by $\Z_{p^{2g}}$ or $\Z_{p^g}\oplus\Z_{p^g}$ according to the parity of  $g$ (cf. Remark \ref{rem-CM}). These $p$-divisible groups (or those isogenous to them) seem to deserve more study, and  should be considered as  the canonical lifting (or quasi-canonical lifting) of the superspecial $p$-divisible group in the formal real multiplication case. We hope that we can  return to  the problem in the future.

 \subsection{} After I finished a preliminary version of this paper and distributed it among a small circle, V. Pilloni showed me a draft of his joint work \cite{PS} with B. Stroh, where similar results were obtained independently.  The  influence of the works \cite{Ks}, \cite{GK} and \cite{Pi} on this work will be obvious for the reader. I express my hearty gratitude to their authors. I am especially grateful to Christophe Breuil for his careful reading of a preliminary version of this paper, and for his valuable suggestions. Finally, I also would like to think  Ahmed Abbes and Liang Xiao for their encouragements and interest in this work,   Kaiwen Lan and Tong Liu for helpful discussions during the preparation of this paper.

\subsection{Notation}\label{notation} Let $F$ be a totally real number field with $g=[F:\Q]> 1$, $\cO_F$ be its ring of integers,  $\fdelta_F$  the different of $F$. Let $p$ be a fixed prime number unramified in $F$. For a prime ideal $\fp$ of $\cO_L$ above $p$, we put  $\kappa(\fp)=\cO_L/\fp$ and denote by $|\kappa(\fp)|=p^{f_{\fp}}$ the cardinality of $\kappa(\fp)$.  Let $f$ be the l.c.m. of all $f_{\fp}$ with $\fp|p$, and
$\kappa$ be the finite field with $p^f$ elements, $W=W(\kappa)$ be the ring of Witt vectors with coefficients in $\kappa$ and $\Q_{\kappa}=W[1/p]$. Let $\bB$ be the set of embeddings of $F$ into $\Q_{\kappa}$. For each prime ideal $\fp$ of $\cO_F$ dividing $p$, let $\bB_{\fp}\subset \bB$ be the subset consisting of embeddings $\beta$ such that $\beta^{-1}(pW)=\fp$.
So we have $\bB=\coprod_{\fp|p}\bB_{\fp}$. If $\sigma$ denotes the Frobenius on $\Q_{\kappa}$, then $\beta\mapsto \sigma\circ \beta$ defines a natural cyclic action of Frobenius on each $\bB_{\fp}$.

In general, for a finite set $I$, we denote by $|I|$ its cardinality.

Let $\C_p$ be the completion of an algebraic closure of $\Q_{\kappa}$. All the finite extensions of $\Q_{\kappa}$ are understood to be subfields of $\C_p$. We denote by $v_p$ the $p$-adic valuation on $\C_p$, and by $|\cdot|_p:\C_p^{\times}\ra \R_{>0}$ the non-archimedean absolute value  $|x|_p=p^{-v_p(x)}$.

\section{Hilbert modular varieties, Hilbert modular forms and the Statement of the main theorem}
\subsection{} Let $S$ be a scheme. A Hilbert-Blumenthal abelian variety by $\cO_F$ (or a  HBAV for short) over $S$ is an abelian scheme $A$ over $S$ equipped with an embedding of rings $\iota: \cO_F\hra \End_{S}(A)$ such that $\Lie(A)$ is an $\cO_S\otimes \cO_F$-module locally free of rank 1. If $A$ is a HBAV over $S$, the dual of $A$, denoted by $A^\vee$, has a canonical structure of HBAV over $S$. We denote by $\cP(A)$ the fppf-sheaf over $S$ of symmetric $\cO_L$-linear homomorphisms of abelian schemes $A\ra A^\vee$, and by $\cP(A)^+
\subset \cP(A)$ the cone consisting of symmetric polarizations.

We fix a positive integer $N\geq 4$ coprime to $p$. Let $\fc$ be a fractional ideal of $F$ prime to $p$, and $\fc^+\subset \fc$ be the cone of totally positive elements. Consider the functor
\[\cF_{\fc}:\;\; \ALG_{W}\lra \SETS\]
which associates to each $W$-algebra $R$ the set of isomorphism classes of triples $(A/R, \lambda, \psi_N)$ where:

\begin{itemize}
\item $A$ is a HBAV over $\Spec(R)$;

\item $\lambda$ is a $\fc$-polarization of $A$, \ie,   an $\cO_F$-linear homomorphism $\lambda: \fc\ra \cP(A)$ sending $\fc^{+}$ to $\cP(A)^{+}$ such that the induced map of fppf-sheaves on $\Spec(R)$
\[A\otimes_{\cO_F} \fc \xra{1_A\otimes \lambda} A\otimes_{\cO_F} \cP(A)\lra A^\vee\colon \; a\otimes x \mapsto a\otimes\lambda(x)\mapsto \lambda(x)(a) \]
 is an isomorphism.

\item $\psi_N$ is an embedding of abelian fppf-sheaves of $\cO_F$-modules $\mu_N\otimes\fdelta^{-1}_{F}\hra A[N]$.
\end{itemize}
It's well known that this functor is representable by a smooth and quasi-projective scheme $X_{\fc}$ over $\Spec(W)$ of relative dimension $g$, which we usually call the $\fc$-Hilbert modular variety over $W$ of level $\Gamma_{00}(N)$ \cite[Ch. 4, \S3.1]{Go}. By a result of Ribet, the fibers of $X_{\fc}$ are geometrically irreducible \cite[Ch. 3 \S6.3]{Go}.

\subsection{} Let $R$ be a $W$-algebra, and  $(A/R,\lambda, \psi_N)$ be an object in $X(R)$.  An \emph{isotropic $(\cO_F/p)$-cyclic subgroup} $H$ of $A$ is a closed subgroup scheme  $H\subset A[p]$ which is stable under $\cO_F$, free of rank 1 over $\cO_F/p$ as abelian fppf-sheaf over $\Spec(R)$, and isotropic under the $\gamma$-Weil pairing
\[A[p]\times A[p]\xra{1\times \gamma}A[p]\times A^\vee[p]\ra \mu_p\]
induced by a $\gamma\in \cP(A)^{+}$ of degree prime to $p$. So when $A$ is defined over a perfect field $k$ of characteristic $p$, a subgroup $H\subset A[p]$ is $(\cO_F/p)$-cyclic if and only if its Dieudonn\'e module is a free $(k\otimes\cO_F)$-module of rank $1$.
  Let $\cG_{\fc}$ be the functor which associates to each $W$-algebra $R$ the  set of isomorphism classes of $4$-tuples $(A/R, \lambda, \psi_N,H)$ where $(A/R, \lambda, \psi_N)$ is an object in $\cF_{\fc}(R)$, and $H\subset A[p]$ is an isotropic  $(\cO_L/p)$-cyclic subgroup of $A$.
The functor $\cG_{\fc}$ is representable by a scheme $Y_{\fc}$ over $\Spec(W)$.
We call $Y$ the $\fc$-Hilbert modular variety of level $\Gamma_{00}(N)\cap\Gamma_0(p)$. The natural forgetful map $(A/R, \lambda, \psi,H)\mapsto (A/R,\lambda, \psi)$ defines a morphism of $W$-schemes $\pi:Y_{\fc}\ra X_{\fc}$, which is finite \'etale of degree $\prod_{\fp|p}(p^{f_{\fp}}+1)$ on the generic fibers  over $\Q_{\kappa}$.

Note that, for an object $(A/R, \lambda_A, \psi_{A,N},H)$ in $Y_{\fc}(R)$, the quotient $B=A/H$ is naturally equipped with a structure of HBAV.  Let $f: A\ra B$ be the natural isogeny and $f^t:B\ra A$ be the unique isogeny such that $f\circ f^t= p \cdot 1_B$ and $f^t\circ f=p \cdot 1_A$. If $\lambda: A\otimes_{\cO_F} \fc\xra{\sim}A^\vee$ is the isomorphism given by $\lambda_A: \fc\ra \cP(A)$, we define a $\fc$-polarization on $B$ by $\lambda_B=\frac{1}{p}(f^t)^*\circ \lambda_A: \fc\ra \cP(A)\ra \cP(B)$, where $(f^t)^*: \cP(A)\ra \cP(B)$ is given by $\phi\mapsto (f^t)^\vee\circ \phi\circ f^t$. Finally, since $H$ has order prime to $N$, the isogeny $f:A\ra B$ induces an isomorphism  $f:A[N]\xra{\sim}B[N]$. We define  $\psi_{B,N}$ as $\mu_N\otimes\fdelta_F^{-1} \xra{\psi_{A,N}}A[N]\xra{\sim} B[N]$. We get an object $(B/R, \lambda_B,\psi_{B,N})$ in $X_{\fc}(R)$. %Therefore, the scheme $Y$ is also the moduli space classifying the $\cO_F$-isogenies $f:(A/R,\lambda_A,\psi_{A,N})\ra (B/R, \lambda_B,\psi_{B,N})$ of degree $p^g$ with $f^*(\lambda_B)=p\lambda_A$.

Fix a finite set $\{\fc_1,\cdots, \fc_{h^+}\}$ of fractional ideals of $F$ prime to $p$, which form a set of representatives for the narrow class group $\mathrm{Cl}^{+}_F$ of $F$. We put
\[X=\coprod_{i=1}^{h^+}X_{\fc_i}\quad\text{and} \quad Y=\coprod_{i=1}^{h^+}Y_{\fc_i}.\]
We call $X$ (resp. $Y$) the Hilbert modular varities of level $\Gamma_{00}(N)$ (resp. of level $\Gamma_{00}(N)\cap \Gamma_0(p)$).  In the sequel, an object $(A/R, \lambda, \psi_N, H)$ in $Y(R)$ will be usually omitted as $(A/R, H)$, or even as $(A,H)$, if there is no confusions on the polarization $\lambda$ and the level structure $\psi_N$.

\subsection{}\label{mod-line}  Let $T$ be the algebraic group $(\res_{\cO_F/\Z}\G_m)\otimes_{\Z} W$ over $W$, and $\X(T)$ be the group of characters of $T$. For any $\beta\in \bB=\mathrm{Emd}_{\Q}(F,\Q_{\kappa})$, let  $\chi_\beta\in \X(T)$ be the character
\[T(R)=(R\otimes \cO_F)^\times \ra R^\times=\G_m(R)\quad \text{given by} \quad r\otimes a\mapsto r\beta(a).\]
Then $(\chi_\beta)_{\beta\in \bB}$ form a basis of $\X(T)\xra{\sim} \Z^{\bB}$ over $\Z$. For an element $(k_{\beta})_{\beta\in \bB}=\sum_{\beta\in\bB}k_{\beta}\beta\in \Z^{\bB}$, we denote by  $\chi_{\vk}= \prod_{\beta\in\bB}\chi_{\beta}^{k_\beta}$ the corresponding character of $T$.

Let $\cA\ra X$ be the universal HBAV over $X$, and $\omegab=e^*\Omega^1_{\cA/X}$ where $e: X\ra \cA$ is the unit section of $\cA$. This is a locally free $\cO_X\otimes \cO_F$-module of rank 1, and we have
\[\omegab=\bigoplus_{\beta\in\bB}\omegab_{\beta},\]
where $\omegab_{\beta}$ is the submodule of $\omegab_{\cA/X}$ where $\cO_F$ acts via $\chi_\beta$. For any character $\vk=(k_\beta)_{\beta\in\bB}\in\Z^{\bB}$, we define a line bundle $$\omegab^{\vk}=\bigotimes_{\beta\in \bB}\omegab_{\beta}^{\otimes k_\beta}.$$ % We have a canonical isomorphism \cite[1.0.15]{Ka}
%\begin{equation}\label{trivial-1}
%\wedge^2_{\cO_X\otimes \cO_F} R^1f_{*}\Omega^\bullet_{A/X}\xra{\sim}\cO_X\otimes \fc\fdelta_F^{-1},
%\end{equation}
%and we denote by $(\wedge^2_{\cO_X\otimes \cO_F}R^1f_{*}\Omega^{\bullet}_{A/X})_{\beta}$ the submodule where $\cO_F$ acts by $\chi_\beta$.
%Let $(\vk,\vw)\in \Z^{\bB}\times \Z^{\bB}$ such that $w=k_{\beta}+2w_{\beta}\geq 0$ is  independent of $\beta$. We put
%\begin{equation}\label{defn-omega}\omegab^{(\vk,\vw)}=\Omega^g_{A/X}\otimes \bigl( \bigotimes_{\beta} (\wedge^2_{\cO_X\otimes \cO_F}R^1f_{*}\Omega^{\bullet}_{A/X})_{\beta}^{\otimes w_\beta}\otimes \omegab_{\beta}^{k_\beta-2}\bigr).
%\end{equation}
 By abuse of notation, we denote still by $\omegab^{\vk}$ its pull-backs over  $Y$ via $\pi^*$.

\begin{defn}\label{defn-Hilb}For a $W$-algebra $R_0$, we call the elements of $H^0(X\otimes R_0, \omegab^{\vk} )$ (resp. $H^0(Y\otimes R_0, \omegab^{\vk} )$) (geometric) Hilbert modular forms with coefficients in $R_0$ of weight $\vk=\sum_{\beta}k_\beta\cdot \beta$ and level $\Gamma_{00}(N)$ (resp. of level $\Gamma_{00}(N)\cap\Gamma_0(p)$) over  $R_0$.
\end{defn}

We have the following modular interpretation of (geometric) Hilbert modular forms \`a la Katz. For each $R_0$-algebra $R$, we consider  $5$-tuples $(A/R, \lambda, \psi_N, H, \omega)$, where  $(A/R, \lambda, \psi_N, H)$ is an element  in $Y(R)$ and $\omega$ is a generator of $\omegab_{A/R}=e^*\Omega^1_{A/R}$ as an $R\otimes \cO_F$-module. Then a Hilbert modular form $f$  of level $\Gamma_{00}(N)\cap \Gamma_0(p)$ of weight $\vk$ over $R_0$ is equivalent to a rule that assigns to each $R_0$-algebra $R$, each $5$-tuple $(A/R,\lambda, \psi_N, H, \omega)$ as above, an element $f(A/R,H,\omega)\in R$ satisfying the following properties:
\begin{itemize}
\item $f(A/R, H, a\cdot \omega)=\chi_{\vk}(a)^{-1}f(A/R,H,\omega)$ for $a\in (R\otimes \cO_F)^\times$;

\item if $\phi:R\ra R'$ is a homomorphism of $R_0$-algebras and $(A'/R',\lambda',\psi_N',H',\omega')$ is the base change to $R'$ of $(A/R,\lambda, \psi_N,H,\omega)$, then  $f(A'/R',H',\omega')=\phi(f(A/R,H,\omega))$.
\end{itemize}

We have a similar description of Hilbert modular forms of level $\Gamma_{00}(N)$ over $R_0$, and we leave the details to the reader.
%Note that \eqref{trivial-1} induces a canonical trivialization of   $(\wedge^2_{\cO_X\otimes \cO_F} R^1f_{*}\Omega^\bullet_{A/X})[1/p]$ as an $(\cO_{X_{\Q_\kappa}}\otimes_{\Q} F)$-module.
%On the other hand, we have a Kodaira-Spencer isomorphism \cite[1.0.21]{Ka}
%\begin{equation}\label{KS}
%\Omega^1_{X/W}\xra{\sim} \omegab^{\otimes 2}_{A/X} \otimes_{\cO_F}\fc^{-1},
%\end{equation}
%where the tensor product $\omegab^{\otimes 2}_{A/X}$ is taken as $\cO_{X}\otimes \cO_F$-modules. Hence we have
%$\Omega^g_{A/X}\simeq \omegab^{\vec{2}}$, where $\vec{2}=2\sum_{\beta\in\bB}\beta$.  It follows therefore that over the generic fiber $X_{\Q_{\kappa}}$ the sheaf $\omegab^{(\vk,\vw)}$ is canonically isomorphic to $\omega^{\vk}$. Therefore,  if $R$ is over $\Q_{\kappa}$, we have
%\[H^0(X\otimes R, \omegab^{(\vk,\vw)})\xra{\sim}H^0(X\otimes R, \omegab^{\vk}),\]
%and also
%\[H^0(Y\otimes R, \omegab^{(\vk,\vw)})\xra{\sim}H^0(Y\otimes R, \omegab^{\vk})\]
%since $\pi:Y_{\Q_\kappa}\ra X_{\Q_{\kappa}}$ is finite \'etale.

\subsection{} We just recall some well known facts that we need on the minimal compactifications of Hilbert-Blumenthal moduli spaces. For more information, the reader may consult \cite{Rap, Ch, KL}. Let $(\fc,\fc^{+})$ be a prime-to-$p$ fractional ideal of $\cO_F$. A $\Gamma_{00}(N)$-cusp $C$ of $X_{\fc}$ consists of the following data:
\begin{enumerate}
\item Projective $\cO_F$-modules $\fa$ and $\fb$ of rank $1$.

\item An isomorphism of $\cO_F$-modules $\fb^{-1}\fa\xra{\sim}\fc$.

\item An exact sequence of projective $\cO_F$-modules
\[0\ra \fdelta_{F}^{-1}\fa^{-1}\ra L\ra \fb\ra 0.\]

\item An embedding of abelian fppf-sheaves of $\cO_{F}$-modules over $\Spec(W)$
$$i_{C}:\mu_{N}\otimes\fdelta_{F}^{-1}\hra L/NL.$$
\end{enumerate}
Set $M=\fa\fb=\fb^2\fc$, and $M^*=\Hom_{\Z}(M,\Z)\simeq \fa^{-1}\fb^{-1}\fdelta^{-1}_F=\fdelta_F^{-1}\fa^{-2}\fc$. The positivity on $\fc$ and that on $\fdelta_F$ induce natural positivities on $M$ and $M^*$.
For each  $\Gamma_{00}(N)$-cusp $C$, we choose a rational polyhedral cone decomposition of $M^{*+}\cup\{0\}$. We can construct a toroidal compactification  $\overline{X}_{\fc}$ of $X_{\fc}$ as in \cite{KL}. This is a proper smooth scheme over $W$ that contains $X_{\fc}$ as a Zariski open dense subset, and the boundary $\overline{X}_{\fc}-X_{\fc}$ is a relative normal crossing divisor in $\overline{X}_{\fc}$ over $\Spec(W)$. There exists a semi-abelian scheme $\cAb$ with real multiplication by $\cO_F$ over $\overline{X}_{\fc}$ which extends the universal HBAV $\cA$ over $X_{\fc}$.   Finally, we can construct the minimal compactification $X_{\fc}^*$ of $X$ from $\overline{X}_{\fc}$ \cite[1.8]{KL}. The scheme $X^{*}_{\fc}$  is normal and projective over $\Spec(W)$ with normal fibers, and it contains also $X_{\fc}$ as a Zariski open dense subset. There exists a canonical map $\overline{X}_{\fc}\ra X^{*}_{\fc}$ such that the following diagram
\[\xymatrix{&\overline{X}_{\fc}\ar[d]\\
X_{\fc}\ar@{^(->}[ru]\ar@{^(->}[r]&X^{*}_{\fc}
}\]
is commutative. Even though the toroidal compactification $\overline{X}_{\fc}$ depends on the choice of the polyhedral decompositions, the minimal compactification $X^*_{\fc}$ is canonical. The boundary $X^*_{\fc}-X_{\fc}$ consists of finitely many copies of $\Spec(W)$ indexed by the $\Gamma_{00}(N)$-cusps of $X_{\fc}$. We put
\[X^*=\coprod_{(\fc,\fc^+)}X_{\fc},\]
where $(\fc,\fc^+)$ runs through a set of prime-to-$p$ representatives for the strict ideal class group of $\cO_F$. The $(\cO_{X}\otimes \cO_F)^{\times}$ torsor $\omegab$ extends to $X^{*}$, hence, for any $\vk\in \Z^{\bB}$, the line bundle $\omegab^{\vk}$ extends uniquely to a line bundle, denoted still by $\omegab^{\vk}$, on $X^{*}$. Moreover, the line bundle $\omegab^{\vec{1}}$ is ample on $X^*$, where $\vec{1}=(1,\cdots,1)\in \Z^{\bB}$ \cite[Thm. 4.3 (ix)]{Ch}.

 Following \cite[4.5.2]{Ch},  we put $Y^*$ to be the normalization of $Y$ over $X^*$. This is a proper normal scheme over $\Spec(W)$. By a result of \cite{St} (cited as \eqref{equ-loc-ring}), the scheme $Y$ is normal. Hence, $Y$ is naturally a  Zariski open dense subset of $Y^*$. We call $Y^*$ the minimal compactification of $Y$. The boundary $Y^*-Y$ has a similar interpretation in terms of cusps. We define a $\Gamma_{00}(N)\cap\Gamma_0(p)$-cusp $(C,H)$ on $Y$ to be a $\Gamma_{00}(N)$ cusp $C=(\fa,\fb,L,i_C)$ together with an $(\cO_F/p)$-cyclic subgroup $H\subset L/pL$. We say a $\Gamma_{00}(N)\cap\Gamma_0(p)$-cusp $(C,H)$ is unramified if $H=\fa^{-1}\fdelta_{F}^{-1}/p\fa^{-1}\fdelta_{F}^{-1}\subset L/pL$. It's clear that there are $\prod_{\fp|p}(p^{f_{\fp}}+1)$ $\Gamma_{00}(N)\cap\Gamma_0(p)$-cusps above a $\Gamma_{00}(N)$-cusp, and one of them is unramified.  The boundary $Y^*-Y$ is a finite flat scheme over $\Spec(W)$, whose generic geometric fibers are indexed by the $\Gamma_{00}(N)\cap\Gamma_0(p)$-cusps. Note that since unramified $\Gamma_{00}\cap\Gamma_0(p)$-cusps are defined over $\Spec(W)$, each of them corresponds to a connected component of $Y^*-Y$ isomorphic to $\Spec(W)$. However, in general, several $\Gamma_{00}(N)\cap\Gamma_0(p)$-cusps on the generic fibers may specialize to the same point on the special fiber of $Y^*$. Another important fact for us is that the local ring of the special fiber of $Y^*$ at the specialization of a cusp is normal \cite[Proof of 7.1.1]{GK}. As usual, for $\vk\in \omegab^{\vk}$, we still denote by $\omegab^{\vk}$ the pull-back of the line bundle $\omegab^{\vk}$ on $X^{*}$. Note that $\omegab^{\vec{1}}$ is just ample on the generic fiber $Y^*_{\Q_{\kappa}}$, but not  on the special fiber $Y^*_{\kappa}$, because the canonical projection $Y^*_{\kappa}\ra X_{\kappa}^*$ is not quasi-finite.

\subsection{} Let $\fX$ and $\fY$ be the respectively the formal completions of $X$ and  $Y$ along their special fibers.  The formal scheme $\fX$ represents the functor that attaches, to each admissible $p$-adic formal scheme over $\Spf(W)$, the set of polarized HBAV with a $\Gamma_{00}(N)$-level structure; and we have a similar interpretation for $\fY$. Let $\fX_{\rig}$ and $\fY_{\rig}$ be the associated rigid analytic spaces in the sense of Raynaud, $X_{\Q_\kappa}^{\an}$ and $Y_{\Q_{\kappa}}^{\an}$  be the analytic spaces over $\Q_{\kappa}$ associated with the $\Q_\kappa$-schemes $X_{\Q_\kappa}$ and $Y_{\Q_\kappa}$. Similarly, we have formal schemes  $\fX^{*}$, $\fY^*$, and their associated rigid analytic spaces $\fX^*_{\rig}$, $\fY^*_{\rig}$. Then  we have natural inclusions of rigid analytic spaces
\[\fX_{\rig}\subset X^{\an}_{\Q_{\kappa}}\subset \fX^*_{\rig},\quad\quad \fY_{\rig}\subset Y_{\Q_{\kappa}}^{\an}\subset \fY_{\rig}^*.\]
For any extension of valuation fields $L/\Q_{\kappa}$, we use a subscript $L$ to denote the base change of a rigid space over $\Q_{\kappa}$ to $L$, e.g.  $\fX_{\rig,L}$, $X^{\an}_{\Q_{\kappa},L}=X^{\an}_{L}$, $\fX^{*}_{\rig,L}$...
For any weight $\vk\in \Z^{\bB}$, by an obvious abuse of notation, we still denote by $\omegab^{\vk}$ the modular line bundles of weight $\vk$ on the formal schemes $\fX^*$ and $\fY^*$, and on  rigid spaces $\fX^*_{\rig}$, $\fY^*_{\rig}$.

\begin{prop}[Koecher's Principle]\label{prop-koecher}For any finite extension  $L$ over $\Q_{\kappa}$,  we have a commutative diagram of canonical isomorphisms
\[\xymatrix{
H^0(Y^*_{L},\omegab^{\vk})\ar[r]^{\sim}\ar[d]^{\sim}& H^0(Y_{L},\omegab^{\vk})\ar[d]^{\sim} &\\
H^0(\fY^*_{\rig,L},\omegab^{\vk})\ar[r]^{\sim}&H^0(Y_{L}^{\an},\omegab^{\vk})\ar[r]^{\sim}& H^0(\fY_{\rig,L},\omegab^{\vk}),
}
\]
where the horizontal arrows are natural restriction map, and the vertical arrows are analytification maps.
\end{prop}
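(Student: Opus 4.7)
The plan is to establish each of the four outer arrows in the diagram as an isomorphism; the middle vertical (analytification map) then follows by commutativity. Only the bottom-right arrow is truly delicate, since it requires extending sections across the rigid analytic tubes over the cusps.

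\emph{Koecher, algebraic and analytic.} For the top horizontal, $Y^*_L$ is a normal projective $L$-scheme of pure dimension $g$, and the boundary $Y^*_L\setminus Y_L$ is a finite set of cusp points, of codimension $g\geq 2$ thanks to the standing convention $g=[F:\Q]>1$. Since $\omegab^{\vk}$ is locally free, classical Hartogs gives $H^0(Y^*_L,\omegab^{\vk})\xra{\sim} H^0(Y_L,\omegab^{\vk})$. For the bottom-left horizontal, the same argument runs in the rigid analytic category: $Y^{*,\an}_L$ is normal of pure dimension $g$, its complement of $Y^{\an}_L$ consists of finitely many (analytified) cusp points of codimension $g\geq 2$, and the rigid analytic Hartogs theorem (cf.\ Appendix A) yields $H^0(\fY^*_{\rig,L},\omegab^{\vk})\xra{\sim} H^0(Y^{\an}_L,\omegab^{\vk})$.

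\emph{GAGA and the formal identification.} For the left vertical, $Y^*$ is projective over $W$, hence $Y^*_L$ is proper over $L$, and rigid GAGA \cite[7.6.11]{Ab} gives $H^0(Y^*_L,\omegab^{\vk})\xra{\sim} H^0(Y^{*,\an}_L,\omegab^{\vk})$. The properness of $Y^*$ over $W$ also identifies the Raynaud generic fibre of the formal completion $\fY^*$ with the analytification of the generic fibre: $\fY^*_{\rig,L}=Y^{*,\an}_L$. Combined with the previous step, commutativity of the top square forces the middle vertical arrow to be an isomorphism as well.

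\emph{Extension across cusp tubes (the main point).} It remains to show that the restriction $H^0(Y^{\an}_L,\omegab^{\vk})\to H^0(\fY_{\rig,L},\omegab^{\vk})$ is an isomorphism. Using the bottom-left isomorphism, this reduces to showing that $H^0(\fY^*_{\rig,L},\omegab^{\vk})\to H^0(\fY_{\rig,L},\omegab^{\vk})$ is an isomorphism. Injectivity is clear, since $\fY_{\rig,L}$ (the preimage under specialization of $Y_\kappa\subset Y^*_\kappa$) is Zariski dense in the normal connected space $\fY^*_{\rig,L}$. For surjectivity, a section on $\fY_{\rig,L}$ must be extended across the tube around each cusp $c\in Y^*_\kappa$. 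I would do this via the $q$-expansion principle: the Mumford--Tate degeneration at $c$ provides a formal chart in which $\omegab^{\vk}$ is canonically trivialized and a section becomes a Laurent $q$-series, and the normality of the complete local ring of $Y^*_\kappa$ at $c$ (cited in the text from \cite[Proof of 7.1.1]{GK}) combined with $g\geq 2$ eliminates negative Fourier coefficients, yielding convergence on the full tube.

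The hardest step is this last extension: in contrast to the bottom-left arrow, the missing locus $Y^{\an}_L\setminus\fY_{\rig,L}$ is full-dimensional (it is the disjoint union of the punctured tubes), so a codimension-based Hartogs argument does not apply directly. What saves the day is the concrete Tate--Mumford geometry at each cusp, combined with the classical Koecher phenomenon (no negative Fourier coefficients) forced by $g\geq 2$.
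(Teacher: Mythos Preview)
Your logical skeleton matches the paper's: commutativity, classical Koecher for the top arrow, rigid GAGA for the left vertical, and then deducing the remaining arrows. The divergence is in how you handle the composite restriction $H^0(\fY^*_{\rig,L},\omegab^{\vk})\to H^0(\fY_{\rig,L},\omegab^{\vk})$.

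You assert that a codimension argument ``does not apply directly'' because $Y^{\an}_L\setminus\fY_{\rig,L}$ is full-dimensional in the generic fibre, and you reach for $q$-expansions instead. The paper's point is precisely that a codimension argument \emph{does} apply --- provided one works on the formal model rather than on the rigid generic fibre. The admissible open $\fY_{\rig,L}\subset\fY^*_{\rig,L}$ is the tube $]Y_\kappa[$ over the open subscheme $Y_\kappa\subset Y^*_\kappa$, and the cusp locus in the \emph{special fibre} $Y^*_\kappa$ has codimension $g\geq 2$. Since $Y^*_\kappa$ is normal at the cusps (the very fact you cite), Corollary~\ref{cor-ext-1} of Appendix~A gives the isomorphism in one stroke. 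From this, together with the obvious injectivity of both lower horizontal arrows, the paper deduces that each of them is an isomorphism, and then the right vertical follows by commutativity. In particular the paper never treats the bottom-left arrow separately via an analytic Hartogs on $Y^{*,\an}_L$.

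Your $q$-expansion route is the classical one and would work in principle, but it is heavier: one must set up Mumford's construction at each cusp, handle ramified and unramified cusps, and argue on the cone of Fourier exponents (normality of $Y^*_\kappa$ is not really the operative ingredient there --- the cone structure is). The formal-model Hartogs argument bypasses all of this and uses normality in the cleanest possible way.
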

\begin{proof}
First, the diagram  above is clearly commutative. The top horizontal isomorphism is the classical Koecher principle \cite[Thm. 4.3(i)]{Ch}. The left vertical isomorphism follows from rigid GAGA \cite[7.6.11]{Ab}, since $Y^*$ is proper over $\Spec(W)$. The lower horizontal arrows are clearly injective. By Corollary \ref{cor-ext-1}, the composite map
\[H^0(\fY^*_{\rig,L},\omegab^{\vk})\ra H^0(\fY_{\rig,L},\omegab^{\vk})\]
is an isomorphism, because the special fiber $Y^*_{\kappa}$ is normal at the cusps. It follows that the lower horizontal arrows are isomorphisms,  hence so is the right vertical one. This finishes the proof of the Proposition.
\end{proof}

Of course, the same proposition is still true with $Y$ replaced by $X$. We leave the details to the reader.

\subsection{$U_{\fp}$-operators}\label{sect-U_p} We follow the treatment in \cite[1.11]{KL}. If  $A$ is a HBAV over a $W$-algebra $R$, we have a decomposition of finite and locally free group schemes over $R$
\[A[p]=\prod_{\fp | p}A[\fp],\]
 where $\fp$ runs through all the prime ideals of $\cO_F$ dividing $p$, and $A[\fp]$ is the subgroup scheme killed by all $a\in \fp$.  Then $A[\fp]$ is a group scheme of $(\cO_F/\fp)$-vector spaces of dimension $2$. We fix a prime ideal $\fp$ of $\cO_F$ above $p$, and put $\kappa(\fp)=\cO_F/\fp$. Let $\cC(\fp)$ be the scheme over $\Q_{\kappa}$ which represents the functor that attaches to a $\Q_{\kappa}$-algebra $R$ the set of isomorphism classes of  $5$-tuples $(A/R, \lambda, \psi_N, H, H')$ where:
\begin{itemize}
\item $(A/R,\lambda, \psi_N, H)$ is an object in $Y(R)$;

\item $H'\subset A[\fp]$ is a closed $(\cO_F/\fp)$-cyclic isotropic subgroup scheme such that $H'\cap H=\{0\}$.
\end{itemize}
 We have two canonical maps
\begin{equation}\label{diag-U_p}
\xymatrix{
& \cC(\fp)\ar[ld]_{\pi_1}\ar[rd]^{\pi_2}\\
Y_{\Q_{\kappa}}&& Y_{\Q_{\kappa}}
}
\end{equation}
given  respectively by
\begin{align*}\pi_1(A/R, \lambda, \psi_N,H,H')&= (A/R, \lambda, \psi_N, H)\\
\pi_2(A/R,\lambda, \psi_N, H,H')&= (B/R, \lambda_B, \psi_{B,N}, (H'+H)/H'),
 \end{align*}
 where $B$ is the quotient abelian scheme $A/H'$, $\lambda_B$ and $\psi_{B,N}$ are the naturally induced  polarization and level structure on $B$. Note that since we are in characteristic 0, both $\pi_1$ and $\pi_2$ are finite flat of degree $|\kappa(\fp)|$. Let $(\cA,\lambda, \psi_N,H,H')$ be the universal object on $\cC(\fp)$, and $\phi:\cA\ra \cB=\cA/H'$ be the canonical isogeny. We have a commutative diagram
\[\xymatrix{
\cA\ar[d]^{f}&\cA\ar[d]_{f_1}\ar[l]\ar[r]^{\phi}&\cB\ar[d]^{f_2}\ar[r]&\cA\ar[d]\\
Y_{\Q_{\kappa}}
&\cC(\fp)\ar[l]_{\pi_1}\ar@{=}[r]&\cC(\fp)\ar[r]^{\pi_2}&Y_{\Q_{\kappa}},
}\]
where both the left and right squares are cartesian. We have a natural morphism of $(\cO_{\cC(\fp)}\otimes \cO_F)^{\times}$-torsors:
\[\pi_2^*(\omegab_{\cA/Y_{\Q_{\kappa}}})\xra{\sim} f_{2*}(\Omega^1_{\cB/\cC(\fp)})\xra{\phi^*}f_{1*}(\Omega^1_{\cA/\cC(\fp)})\xla{\sim}\pi_1^*(\omegab_{\cA/Y_{\Q_\kappa}}),\]
which induces a natural homomorphism of line bundles
\[\phi^*: \pi_2^*(\omegab^{\vk})\lra\pi_1^*(\omegab^{\vk})\]
for any $\vk\in \Z^\bB$.
We define the $U_{\fp}$-operator on the space $H^0(Y_{\Q_\kappa}, \omegab^{\vk})$ as the composite
\begin{equation}\label{defn-U_p}
U_{\fp}:H^0(Y_{\Q_\kappa}, \omegab^{\vk})\xra{\pi_2^*}H^0(\cC(\fp),\pi_2^*(\omegab^{\vk}))\xra{\phi^*}H^0(\cC(\fp), \pi_1^*(\omegab^{\vk}))\xra{\frac{1}{|\kappa(\fp)|}\tr} H^0(Y_{\Q_{\kappa}},\omegab^{\vk}),
\end{equation}
where ``$\tr$'' is  induced by the trace map $\pi_{1*}\pi_1^*(\omegab^{\vk})\ra \omegab^{\vk}$. Explicitly, if $L$ is a finite extension of $\Q_\kappa$, $(A/L,H)=(A/L,\lambda, \psi_N, H)$ is an element in $Y(L)$, and $\omega$ is a generator of $\omegab_{A}=e^*\Omega^1_{A/K}$ as an $(L\otimes \cO_F)$-module, then we have
\begin{equation}\label{equ-U_p}
(U_{\fp}f)(A/L,H,\omega)=\frac{1}{|\kappa(\fp)|}\sum_{\substack{H'\subset A[\fp]\\ H'\cap H=0}}f(A/H',(H'+H)/H',p^{-1}\hat{\phi}^*(\omega)),
\end{equation}
where $H'$ runs over all the $\cO_F$-subgroup of $A[\fp]$ of order $|\kappa(\fp)|$ with  $H\cap H'=0$, and $\hat{\phi}:A/H'\ra A$ is the canonical isogeny with kernel $A[p]/H'$.

\subsection{Norms}\label{norms} Let $\vk=\sum_{\beta\in \bB}k_{\beta}\in \Z^{\bB}$ be a multi-weight. For any quasi-compact admissible open subset $V\subset \fY_{\rig}$, we define a norm on the space $H^0(V,\omegab^{\vk})$ as follows. Let $L$ be a finite extension of $\Q_\kappa$, and $Q=(A,H)\in \fY_{\rig}(L)$ be a rigid point, i.e.  a morphism of formal schemes $Q:\Spf(\cO_L)\ra \fY$ such that $Q^*(\fA,\fH)=(A,H)$, where $(\fA,\fH)$ is the universal formal HBAV together with its universal isotropic $(\cO_F/p)$-cyclic subgroup over $\fY_{\rig}$. Let $\omega$ be a generator of the free $(\cO_K\otimes\cO_F)$-module $\omegab_{A}$, and $\omega_{\beta}$ be its $\beta$-component for any $\beta\in \bB$. Then
\[\omega^{\vk}=\otimes_{\beta\in \bB}\,\omega^{k_{\beta}}_{\beta}\]
is a basis of the $\cO_K$-module $Q^*(\omegab^{\vk})$. For any element $f\in Q^*(\omegab^{\vk})\otimes_{\cO_L}L$, we write $f=f(A,H,\omega)\omega^{\vk}$ with $f(A,H,\omega)\in L$, and define
$$|f|=|f(A,H,\omega)|_p.$$
For any admissible open subset $V\subset \fY_{\rig}$, and a section $f\in H^0(V,\omegab^{\vk})$, we define
\[|f|_{V}=\sup_{Q\in |V|}|f(Q)|\in \R_{>0}\cup\{\infty\}.\]
If $V$ is  quasi-compact, then $|f|_V\in \R_{>0}$ by the maximum modulus principle in rigid analysis, and $H^0(V,\omegab^{\vk})$ is a $\Q_k$-Banach space with the norm   $|\cdot |_V$.  If $V$ is clear from the context, we usually omit the subscript $V$ from the notation.

\subsection{Hasse Invariants} Let $R$ be a $\kappa$-algebra, and $A$ be a HBAV over $R$. Let $\omega_{A/R}=H^0(A,\Omega^1_{A/R})$ be the module of invariant $1$-differentials of $A$ relative to $R$, and $\Lie(A)$ be the Lie algebra of $A$; so we have $\Lie(A)=\Hom_{R}(\omega_{A/R},R)$. The Verschiebung homomorphism $V_{A}:A^{(p)}\ra A$ induces a map of $R$-modules $\HW: \Lie(A)^{(p)}\ra \Lie(A)$, where $\Lie(A)^{(p)}$ is the base change of $\Lie(A)$ via the absolute Frobenius endomorphism $F_R:a\mapsto a^p$ of $R$. Equivalently, we have a canonical map
 \begin{align}
h:\omega_{A/R}\ra \omega_{A/R}^{(p)}.\nonumber
\end{align}
Note that $\omegab_{A/R}$ is a locally free $R\otimes\cO_F$-module of rank 1, and let $\omegab_{A/R}=\bigoplus_{\beta\in \bB}\omegab_{A/R,\,\beta},$
where $\omega_{A/R,\beta}$ is the direct summand on which $\cO_F$ acts via the character $\chi_{\beta}$. Thus we have a decomposition $h=\oplus_{\beta\in \bB}h_{\beta}$, where
\begin{equation}\label{part-Ha}
h_\beta: \omegab_{A/R,\,\beta}\ra \omegab^{(p)}_{A/R,\,\sigma^{-1}\circ \beta}.
\end{equation}
The morphism $h_\beta$ thus defines a Hilbert modular form (with full level) of weight $p\cdot \sigma^{-1}\circ \beta-\beta$ over $\kappa$, and we call it the \emph{$\beta$-partial Hasse invariant}. The product
$E=\prod_{\beta\in \bB}h_{\beta}$ is thus a Hilbert modular form of weight $(p-1)\sum_{\beta\in\bB}\beta$ over $\kappa$, called simply \emph{the Hasse invariant}. If $A$ is a HBAV over an algebraically closed field containing $\kappa$, the Hasse invariant $E(A)\neq 0$ if and only if $A$ is ordinary in the usual sense, \ie, the finite group scheme $A[p]$ is  isomorphic to $\mu_p^g\times (\Z/p\Z)^g$.

\subsection{}\label{sect-ord} Let $X_{\kappa}$, $Y_{\kappa}$ be the special fibers of $X$ and $Y$, and $X_{\kappa}^{\ord}$ be the locus where the Hasse invariant $h$ does not vanish, or equivalently the open subscheme of $X_{\kappa}$ parametrizing  polarized ordinary HBAV. For a HBAV $A$ over a $\kappa$-algebra $R$, the kernel of the Frobenius homomorphism of $A$ is naturally an $(\cO_F/p)$-cyclic isotropic subgroup of $A[p]$. In other words, the kernel of Frobenius defines a section $s:X_{\kappa}\ra Y_{\kappa}$ of the projection $\pi: Y_{\kappa}\ra X_{\kappa}$. We put $Y_{\kappa}^\ord=s(X_{\kappa}^{\ord})$. In particular, $Y_{\kappa}^{\ord}$ is isomorphic to $X_{\kappa}^{\ord}$.

Let $\fX^{\ord}$ and $\fY^{\ord}$ be respectively the open formal subschemes of $\fX$ and $\fY$ corresponding to the open subsets $X_{\kappa}^\ord\subset X_{\kappa}$ and $Y_{\kappa}^\ord\subset Y_{\kappa}$, and $\fX_{\rig}^{\ord}$ and $\fY^{\ord}_{\rig}$ be the associated rigid analytic generic fibers. Then $\fX^{\ord}_{\rig}$ and $\fY^{\ord}_{\rig}$ are respectively  quasi-compact admissible open subsets of $\fX_{\rig}$ and $\fY_{\rig}$. Let $(\fA,\lambda, \psi_N)$ be the  universal  formal HBAV over $\fX$.   Over the ordinary locus $\fX^{\ord}$, we have an extension of finite flat $\cO_F$-group schemes
  \[0\ra \fA[p]^{\mu}\ra \fA[p]\ra \fA[p]^\et\ra 0,\]
  where $\fA[p]^\et$ is \'etale of order $p^g$, and $\fA[p]^\mu$ is of multiplicative type and lifts the kernel of Frobenius of $\fA\otimes_{W} \kappa$. The finite flat subgroup $\fA[p]^\mu$ is $(\cO_F/p)$-cyclic and isotropic for the Weil pairing induced by the polarization $\lambda$,  and it defines thus an section $s^{\circ}:\fX^{\ord}\ra \fY^{\ord}$ lifting the section $s:X_{\kappa}^{\ord}\ra Y_{\kappa}^{\ord}$ given by the kernel of Frobenius. In particular, the natural projections  $\fY^{\ord}\ra \fX^{\ord}$ and $\fY^{\ord}_{\rig}\ra \fX^{\ord}_{\rig}$ are canonical isomorphisms.

\begin{defn}\label{defn-CO-HMF}
Let $L$ be a finite extension of $\Q_{\kappa}$.

(i) For $\vk\in \Z^{\bB}$,  an element of $H^0(\fX^{\ord}_{\rig,L},\omegab^{\vk})$ is called a \emph{$p$-adic Hilbert modular form} of level $\Gamma_{00}(N)$ and weight $\vk$ with coefficients in $L$. \\

(ii) We say an element $f\in H^0(\fX^{\ord}_{\rig,L}, \omegab^{\vk})$ is \emph{overconvergent} if $f$ extends to a strict neighborhood  $V$ of $\fX^{\ord}_{\rig,L}$ in $\fX_{\rig,L}$. We put
\[\rM^{\dagger}_{\vk}(\Gamma_{00}(N),L )=\varinjlim_{V} H^0(V, \omegab^{\vk})\]
 where $V$ runs over the  strict neighborhoods of $\fX^{\ord}_{\rig,L}$ in $\fX_{\rig,L}$, and we call it the space of overconvergent $p$-adic Hilbert modular forms of level $\Gamma_{00}(N)$ and weight $\vk$.
\end{defn}

\begin{rem}\label{rem-overcon}
 By the theory of canonical subgroups (cf. \cite[\S3]{KL} and \cite[Thm. 5.3.1]{GK}), the isomorphism of ordinary loci $\pi:\fY^{\ord}_{\rig}\ra \fX^{\ord}_{\rig}$ extends to a strict neighborhood of $\fY^{\ord}_{\rig}$ in $\fY_{\rig}$. Therefore, the natural notion of (overconvergent) $p$-adic Hilbert modular forms of level $\Gamma_{00}(N)\cap \Gamma_{0}(p)$ is the same as its counterpart of level $\Gamma_{00}(N)$. Hence, we can  always consider an element $f\in \rM^{\dagger}_{\vk}(\Gamma_{00}(N),L)$ as a section of $\omegab^{\vk}$ over a strict neighborhood of $\fY^{\ord}_{\rig,L}$.
\end{rem}

We refer the reader to \cite{Be} for the definition of strict neighborhood. Here we construct an explicit fundamental system of strict neighborhoods of $\fX^{\ord}_{\rig,L}$ in $\fX_{\rig,L}$ by using the Hasse invariant. Let $\tilde{E^{k_0}}$ be a lift in $H^0(\fX^*, \omegab^{k_0(p-1)\vec{1}})$, where $\vec{1}=(1,\cdots, 1)\in \Z^\bB$, of $k_0$-th power of the Hasse invariant   $E^{k_0}$ for some integer $t\geq 1$. The existence of such a lift follows from Koecher's principle and the fact that $\omegab^{\vec{1}}$ is ample on the minimal compactification $X^*$. For any rational number $0<r\leq 1$, we denote by $\fX^{\ord}_{\rig,L}(r)$ the admissible open subset  of $\fX_{\rig,L}$ where $|\tilde{E^t}|\geq r^t$. Since the Hasse invariant is well-defined modulo $p$,  the subset $\fX_{\rig,L}(r)$ does not depend on the choice of the lift $\tilde{E^{k_0}}$ if  $p^{-1/k_0}<r\leq 1$. It is clear that  $\fX_{\rig,L}(1)=\fX^{\ord}_{\rig,L}$, and the $\fX_{\rig,L}(r)$'s form a fundamental system of strict neighborhoods of $\fX_{\rig,L}^{\ord}$ in $\fX_{\rig,L}$. Hence, we have
\[\rM^{\dagger}_{\vk}(\Gamma_{00}(N),L)=\lim_{r\ra 1^{-}}H^0(\fX^{\ord}_{\rig,L}(r),\omegab^{\vk}).\]
Note that  each $H^0(\fX_{\rig,L}^{\ord}(r),\omegab^{\vk})$ is a Banach space over $L$, and the natural restriction map
$$H^0(\fX_{\rig,L}^{\ord}(r),\omegab^{\vk})\ra H^0(\fX_{\rig,L}^{\ord}(r'),\omegab^{\vk})$$
for $0<r<r'<1$ are compact \cite[2.4.1]{KL}. Therefore, $\rM^{\dagger}_{\vk}(\Gamma_{00}(N),L)$ is (compact) direct limit of Banach spaces over $L$.

By Remark \ref{rem-overcon}, we have a natural injective map
\[H^0(Y_{L},\omegab^{\vk})\simeq H^0(\fY_{\rig,L},\omegab^{\vk})\ra \rM^{\dagger}_{\vk}(\Gamma_{00}(N),L),\]
where we have used Prop. \ref{prop-koecher} for the first isomorphism.
We denote its image  by $\rM_{\vk}(\Gamma_{00}(N)\cap\Gamma_0(p),L)$, and call them the \emph{classical} Hilbert modular forms.

\subsection{}\label{sect-rig-U_p} For a prime ideal $\fp$ of $\cO_F$ above $p$, let $\cC(\fp)^{\an}$ be the rigid analytification of the scheme $\cC(\fp)$ over $\Q_\kappa$ considered in \ref{sect-U_p}. Then just as $\cC(\fp)$, the rigid space $\cC(\fp)^{\an}$ represents an analogous functor in the rigid analytic setting, and we still have a universal object $(\cA^{\an}, \lambda,\psi_N,H, H')$  over $\cC(\fp)$. We have analogous morphisms $\pi_1,\pi_2:\cC(\fp)^{\an}\ra Y_{\Q_{\kappa}}^{\an}$.  We put
$$\cC(\fp)_{\rig}=\pi^{-1}_1(\fY_{\rig})=\pi^{-1}_{2}(\fY_{\rig}).$$
   The rigid analytic space $\cC(\fp)_{\rig}$ is  the locus of $\cC(\fp)^{\an}$ where $\cA^\an$ has good reduction, it  classifies the objects $(A,H,H')$, where $(A,H)$ is   a rigid point of $\fY_{\rig}$, and $H'\subset A[p]$ is a group scheme of  $(\cO_F/\fp)$-vector space of dimension $1$ with $H\cap H'=0$.  We have a rigid version of the Hecke correspondence:
 \[
 \xymatrix{
& \cC(\fp)_{\rig}\ar[ld]_{\pi_1}\ar[rd]^{\pi_2}\\
\fY_{\rig}&& \fY_{\rig}
}\]
given by $\pi_1(A,H,H')=(A,H)$ and $\pi_2(A,H,H')=(A/H',(H+H')/H')$. We have also  a set theoretical Hecke correspondence  between the rigid points of $\fY_{\rig}$
\begin{align}\label{defn-set-U_p}
U_{\fp}: \fY_{\rig}&\ra \fY_{\rig}\\
Q&\mapsto \pi_2(\pi_1^{-1}(Q)).\nonumber
\end{align}
Here, it's an obvious notation for convenience, because $U_{\fp}$ is not really a morphism of rigid analytic spaces. If $U$ and $V$ are admissible open subsets of $\fY_{\rig}$ such that $U_{\fp}(U)\subset V$, i.e. $\pi_1^{-1}(U)\subset \pi_2^{-1}(V)$. A rigid version of the formula \eqref{equ-U_p} defines the $U_{\fp}$-operator
\[U_\fp:H^0(V,\omegab^{\vk})\xra{\pi_2^*}H^0(\pi_2^{-1}(V),\pi_2^*\omegab^{\vk})\xra{\phi^*} H^0(\pi^{-1}(U),\pi_1^*\omegab^{\vk})\xra{\frac{1}{|\kappa(\fp)|}\tr}H^0(U,\omegab^{\vk}).\]

\begin{lemma} The ordinary locus $\fY_{\rig}^{\ord}$ is stable under the Hecke correspondence $U_{\fp}$, i.e. we have $\pi_2(\pi_1^{-1}(\fY_{\rig}^{\ord}))\subset \fY_{\rig}^{\ord}$.
\end{lemma}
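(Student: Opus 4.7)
The plan is to trace through what it means for a rigid point $(A,H)$ to lie in $\fY_{\rig}^{\ord}$, and then to verify that the two conditions defining $\fY_{\rig}^{\ord}$ (good ordinary reduction of the HBAV, plus the fact that the chosen subgroup is the multiplicative part of $A[p]$) are preserved by the Hecke correspondence. Concretely, a rigid point of $\fY_{\rig}^{\ord}$ over a finite extension $L/\Q_\kappa$ is a pair $(A,H)$ where $A/\cO_L$ is a HBAV with good ordinary reduction and $H = A[p]^\mu$ is the canonical multiplicative subgroup. Given a point of $\pi_1^{-1}(\fY_{\rig}^{\ord})$ above $(A,H)$, we have an $(\cO_F/\fp)$-cyclic isotropic subgroup $H'\subset A[\fp]$ with $H\cap H'=0$, and we must show that $\pi_2(A,H,H')=(B,K)$, where $B=A/H'$ and $K=(H+H')/H'$, again lies in $\fY_{\rig}^{\ord}$.

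The first step is to observe that $H'$ is étale. Indeed, $A[\fp]$ sits in a short exact sequence
\[
0\ra A[\fp]^\mu\ra A[\fp]\ra A[\fp]^\et\ra 0
\]
of finite flat $(\cO_F/\fp)$-vector space schemes of dimension $1$ over $\Spf(\cO_L)$, coming from the canonical filtration of the ordinary $p$-divisible group $A[\fp^\infty]$. Since $H\cap A[\fp]=A[\fp]^\mu$, the hypothesis $H\cap H'=0$ forces $A[\fp]^\mu\cap H'=0$, and a rank comparison (both $H'$ and $A[\fp]^\et$ are $(\cO_F/\fp)$-cyclic) shows that $H'$ maps isomorphically onto $A[\fp]^\et$. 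In particular $H'$ is étale over $\Spf(\cO_L)$.

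The second step is to conclude that $B=A/H'$ has good ordinary reduction. Reducing mod $p$, the subgroup $H'_\kappa$ is the étale part of $A_\kappa[\fp]$, so $B_\kappa=A_\kappa/H'_\kappa$ is isogenous to $A_\kappa$ via a separable ($\cO_F$-linear) isogeny of $\fp$-power degree. Hence $B_\kappa$ is again an ordinary HBAV, proving that $B$ has good ordinary reduction.

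The final step is to identify $K=(H+H')/H'$ with $B[p]^\mu$. Let $\phi\colon A\ra B$ be the canonical isogeny with kernel $H'$. Since $\phi$ is étale (its kernel is), passage to $\phi$ preserves the connected--étale filtration of $p$-divisible groups; hence $\phi(A[p]^\mu)\subseteq B[p]^\mu$. But $\phi|_H$ is injective because $H\cap H'=0$, and $|H|=|A[p]^\mu|=p^g=|B[p]^\mu|$, so $\phi(H)=B[p]^\mu$. Since by definition $\phi(H)=(H+H')/H'=K$, we obtain $(B,K)\in\fY_{\rig}^{\ord}$. No serious obstacle is expected here; the only step requiring a little care is the étaleness of $H'$, which follows straightforwardly from the canonical subgroup structure of an ordinary $p$-divisible group together with the disjointness hypothesis $H\cap H'=0$.
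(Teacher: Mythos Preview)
Your proof is correct and follows essentially the same approach as the paper: you show that $H'$ is \'etale (being disjoint from the multiplicative part $H$), conclude that $A/H'$ has ordinary reduction, and identify $(H+H')/H'$ with the multiplicative part of $(A/H')[p]$. The paper's argument is identical but more terse, simply asserting that $H'$ is necessarily \'etale and drawing the same conclusions.
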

\begin{proof}
Let $L$ be a finite extension of $\Q_{\kappa}$,  $(A,H)\in \fY_{\rig}^{\ord}(L)$ be a rigid point, i.e. $A$ is a HBAV over $\cO_L$ with ordinary reduction, and $H\subset A[p]$ is the multiplicative part. We have to show that $(A/H',(H+H')/H')$ still lies in $\fY_{\rig}^{\ord}$ for any  isotropic $(\cO_F/\fp)$-subgroup $H'\subset A[\fp]$ with $H'\cap H=0$. Actually, such a $H'$ is necessarily \'etale over $\cO_L$. Therefore, the isogeny $A\ra A/H'$ is \'etale, and the subgroup $(H+H')/H'$ is the multiplicative part of the HBAV $A/H'$.
\end{proof}

This easy Lemma implies immediately that a $U_{\fp}$-operator analogous to the classical case can be defined on the space $H^0(\fY_{\rig,L}^{\ord},\omegab^{\vk})$ for any weight $\vk\in \Z^{\bB}$ and any finite extension $L$ of $\Q_{\kappa}$.  In order to show that overconvergent $p$-adic Hilbert modular forms are stable under  $U_{\fp}$, we need to extend canonically the section $s^{\circ}:\fX_{\rig}^{\ord}\ra \fY_{\rig}^{\ord}$ to a strict neighborhood of $\fX_{\rig}^{\ord}$. As already mentioned in Remark \ref{rem-overcon}, this is the theory of canonical subgroups, and it  has been developed  by many authors (cf. for instance \cite{KL} and \cite{GK}).   The main result of this paper is the following

\begin{thm}\label{thm-main}
 Let $f$ be an element of $\rM^{\dagger}_{\vk}(\Gamma_{00}(N),L)$. Assume that for every prime ideal $\fp$ of $\cO_F$ above $p$, we have $[\kappa(\fp):\F_p]\leq 2$ and $U_{\fp}(f)=a_{\fp}f$ with $v_p(a_\fp)< k_{\beta}-[\kappa(\fp):\F_p]$ for all $\beta\in \bB_{\fp}$, then $f$ is classical, i.e., $f\in \rM_{\vk}(\Gamma_{00}(N)\cap\Gamma_0(p), L)$.
\end{thm}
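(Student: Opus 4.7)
The plan is to adapt the analytic continuation strategy of Buzzard--Kassaei, using the ingredients reviewed in the introduction. By Proposition \ref{prop-koecher} combined with rigid GAGA \cite[7.6.11]{Ab}, showing that $f$ is classical is equivalent to extending it from a strict neighborhood of $\fY_{\rig}^{\ord}$ (cf.\ Remark \ref{rem-overcon}) to a section of $\omegab^{\vk}$ on the whole rigid analytic space $\fY_{\rig}$. Any such extension will automatically come from an algebraic section on $Y_L$ by GAGA, and hence lie in $\rM_{\vk}(\Gamma_{00}(N) \cap \Gamma_0(p), L)$.

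Since the full $U_p$-operator factors through the commuting operators $U_\fp$ for the primes $\fp \mid p$, I plan to carry out the extension prime by prime. Fix such a $\fp$. When $f_\fp = [\kappa(\fp) : \F_p] = 1$ I would reuse Sasaki's extension procedure from \cite{Sa} in the totally split setting; here the bound $v_p(a_\fp) < k_\beta - 1$ for $\beta \in \bB_\fp$ is precisely what is needed to iterate the functional equation $f = a_\fp^{-1} U_\fp(f)$ across the non-ordinary locus attached to $\fp$. When $f_\fp = 2$, which is the inert case handled by Theorem \ref{thm-quad}, I would use the Goren--Kassaei stratification of $Y_\kappa$ indexed by subsets of $\bB_\fp$, together with the partial-degree valuation on $\fY_{\rig}$ (cf.\ \ref{sect-mult-deg}, \ref{prop-val}), to describe the $U_\fp$-dynamics. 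On each stratum other than the deepest one, the Breuil--Kisin module computations of Section~3, combined with the ``Key Lemma'' of \cite{GK} (Proposition \ref{prop-key-lemma}) that identifies partial Hasse invariants with local parameters, give sufficient control of the partial degrees of the $(\cO_F/p)$-cyclic subgroups. The stronger slope hypothesis $v_p(a_\fp) < k_\beta - 2$ then provides the strict contraction needed so that the partial approximations $a_\fp^{-n} U_\fp^{n}(f)$ converge and patch together, producing an extension of $f$ to an admissible open subset that contains the tube over the complement of the codimension-$2$ stratum (Proposition \ref{prop-cont-1}).

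The hard part will be precisely this codimension-$2$ stratum in the inert case, corresponding to the superspecial locus where both partial Hasse invariants vanish. This is the exceptional configuration identified in Proposition \ref{prop-pilloni} and Lemma \ref{lem-V_n}, where the Hecke dynamics fail to decrease the valuation and naive iteration breaks down. To bridge it I would invoke Pilloni's trick from \cite[\S7]{Pi}: the tube over the complement of the problematic stratum is already covered by the extension of $f$ constructed above, and the partial extensions have uniformly bounded norms, so $f$ extends across the remaining locus by a rigid-analytic removable-singularities argument (Proposition \ref{prop-exten}). The explicit description of the local deformation space of a superspecial HBAV developed in Appendix~B via Zink's Dieudonn\'e windows is what makes this final step effective in the inert setting, and it is also the place where the restriction $f_\fp \leq 2$ is essential, since the combinatorics of deeper strata and the associated local models for $f_\fp \geq 3$ are not yet under control. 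Concatenating the extensions obtained for every $\fp \mid p$ then yields a section of $\omegab^{\vk}$ on all of $\fY_{\rig}$, and together with the initial reduction this completes the proof.
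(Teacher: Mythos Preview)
Your proposal is correct and follows essentially the same route as the paper: reduce via Koecher and rigid GAGA to extending $f$ to all of $\fY_{\rig}$, treat the primes $\fp\mid p$ one at a time (Sasaki's argument for $f_\fp=1$, the Section~5 machinery---Goren--Kassaei strata, partial degrees, Breuil--Kisin computations, the Key Lemma, and Appendix~B for the superspecial locus---for $f_\fp=2$), land on the tube over the complement of a codimension-$2$ closed subset, and finish with Pilloni's extension trick (Proposition~\ref{prop-exten}). One small organizational point: in the paper the Pilloni step (Proposition~\ref{prop-exten}) is applied \emph{once at the end}, after all primes have been treated and Lemma~\ref{lem-tube} identifies the remaining locus as a tube over a codimension-$2$ closed set, rather than separately inside each degree-$2$ prime's extension; also, Appendix~B feeds into Lemma~\ref{lem-tech-ss} and hence into Step~1 of Proposition~\ref{prop-cont-1} (showing $V_{i,n}(\varepsilon)$ is a strict neighborhood of $V_{i,n}$), not into the Pilloni step itself.
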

\begin{rem} It is reasonable to expect that the theorem is also true without the restriction  $[\kappa(\fp):\F_p]\leq 2$.  The main obstacle to this generalization  is that the geometry of $Y$ in the higher dimensional case is too complicated, and we don't well  understand  the dynamics of the $U_{\fp}$-operator.
\end{rem}

In the reminder of this section, we suppose $p\geq 3$, and  indicate some consequences of our results on eigencurves for overconvergent Hilbert eigenforms.

\subsection{}\label{over-family}
We follow the treatments in \cite{KL}.
Let $L$ be a finite extension of $\Q_{\kappa}$, and $R$ be a Banach algebra over $L$ with a submultiplicative norm $|\cdot|$ and $Z\in R$ such that $|Z|<p^{-\frac{2-p}{p-1}}$. We fix an integer $k_0>0$ coprime to $p$ such that the $k_0$-th power of the Hasse invariant lifts to $\widetilde{E^{k_0}}\in H^0(X,\omegab^{k_0(p-1)\vec{1}})$. For $\vk\in \Z^{\bB}$, Kisin and Lai defined in \cite[4.2.3]{KL} the space of overconvergent Hilbert modular forms over $R$ of level $\Gamma_{00}(N)$ and weight $\vk+Z$ to be space
\begin{equation}\label{defn-KL}
\rM^{\dagger}_{\vk+Z}(\Gamma_{00}(N),R)
=\varinjlim_{V}H^0(V,\omegab^{\vk})\hat{\otimes}_{\Q_{\kappa}}R,
\end{equation}
where $V$ runs over a fundamental system of quasi-compact strict neighborhoods of $\fX_{\rig}^{\ord}$ in $\fX_{\rig}$. This space is equipped with an action of the Hecke operators $T_{\fa}$ (resp. $U_{\fa}$) for each ideal $\fa\subset \cO_F$ coprime to  $pN$ (resp. not coprime to $pN$). We point out that Kisin-Lai's definition of these operators involves the lift $\widetilde{E^{k_0}}^t$, and if $Z=0$, we come back to the definition \eqref{defn-CO-HMF}. We denote by $\bT_{\vk+Z}^{\dagger}(\mu_{N})$ the ring of endomorphisms generated by these operators as $\fa$ runs over the ideals of $\cO_F$. Let $f\in \rM^{\dagger}_{\vk+Z}(\Gamma_{00}(N),R)$. We say $f$ is an \emph{eigenform} if it is non-zero and a simultaneous eigenvector of all the operators in $\bT^{\dagger}_{\vk+Z}(\mu_N)$. We say an eigenform  $f$  has \emph{ finite slope} if it's an eigenform and the eigenvalue of $U_{(p)}=\prod_{\fp|p}U_{\fp}$ is non-zero, i.e. $U_{(p)}(f)\neq 0$.

 According to \cite[4.2.8]{KL}, the space $\rM^{\dagger}_{\vk+Z}(\Gamma_{00}(N),R)$ interpolates the $p$-adic overconvergent modular forms of integer weights in the following sense. Let $L'/L$ be a finite extension, $\psi:R\ra L'$ be a homomorphism of $L$-algebras sending $Z$ to $(p-1)k_0t$ for some $t\in \Z_{>0}$. Then for each Hecke operator $T=T_{\fa}$ or $T=U_{\fa}$, $\psi$ induces a commutative diagram
\[
\xymatrix{
\rM^{\dagger}_{\vk+Z}(\Gamma_{00}(N),R)\ar[r]^{\psi}\ar[d]^{T}&\rM^{\dagger}_{\vk}(\Gamma_{00}(N),L')
\ar[r]^-{\cdot\widetilde{E^{k_0}}^t}&\rM^{\dagger}_{\vk+(p-1)k_0t\vec{1}}(\Gamma_{00}(N),L')\ar[d]^{T}\\
\rM^{\dagger}_{\vk+Z}(\Gamma_{00}(N),R)\ar[r]^{\psi}&\rM^{\dagger}_{\vk}(\Gamma_{00}(N),L')\ar[r]^-{\cdot \widetilde{E^{k_0}}^t}&\rM^{\dagger}_{\vk+(p-1)k_0t\vec{1}}(\Gamma_{00}(N),L').
}\]

 \subsection{} Let $S$ be set of infinity places  and all the finite places of $F$ dividing $pN$, and $G_{F,S}$ be the Galois group of the maximal algebraic extension of $F$ in $\C$ which is unramified outside $S$. Let  $f$ be a Hilbert cusp eigenform of level $\Gamma_{00}(N)\cap \Gamma_0(p)$ and weight $\vk\in \Z^{\bB}$, where  the integers $k_{\beta}$'s are $\geq 2$ and have the same parity. We may suppose that the Fourier coefficients of $f$ at cusps are in $\cO_{f}\subset \cO_{\C_p}$, where $\cO_{f}$ is the normalization of $\Z_p$ in a finite extension $K_f$ of $\Q_p$. Let  $\F_f=\cO_f/\m_{\cO_f}$ be the residue field of $\cO_f$.  Then by the work of many people \cite{Ca,Ta,BR}, we know how to associate to $f$  a $2$-dimensional Galois representation $(\rho_f, V_f)$  of $G_{F,S}$ over $K_{f}$. The representation $\rho_f$ is characterized by the condition that, for any prime ideal $\fq\notin S$ of $\cO_F$, $\Tr(\rho_f(\Frob_{\fq}))$ coincides with the eigenvalue of $T_{\fq}$ on $f$, where $\Frob_{\fq}$ denotes  an arithmetic Frobenius element in $G_{F,S}$ at $\fq$.

  If $L_{f}\subset V_{f}$ be a $G_{F,S}$-stable $\cO_f$-lattice of $V_f$, we put $\overline{V}_f=L_{f}\otimes_{\cO_f}\F_{f}$ by the abuse of notation, and denote by $\rhob_f$ the resulting representation of $G_{F,S}$ over $\F_{f}$. Note that the isomorphism class of $\rhob_f$ is only determined by $\rho_f$ up to semi-simplification. We call such a $\rhob_f$  $p$-modular residual representation of $G_{F,S}$, and call the pseudo-representation associated with the semi-simplification of $\rhob_f$ a \emph{$p$-modular pseudo-representation}.\\

 Let $\rhob$ be a $p$-modular pseudo-representation of $G_{F,S}$ over a finite field $\F$. We denote by $R^{\univ}(\rhob)$ the universal deformation ring of $\rhob$, whose existence is proved in \cite[5.1.3]{CM}, and by $\rho^{\univ}$ the universal pseudo-representation of $G_{F,S}$ over $R^{\univ}(\rhob)$. Let $\cZ(\rhob)$ be the rigid analytic space over $L$ attached to $R^{\univ}(\rhob)\otimes_{W(\F)}L$, and $\cW$ be the weight space over $L$ of $\mathrm{Res}_{\cO_{F}/\Z}\G_{m}$, i.e. the rigid space over $L$ which  to an affinoid algebra $A$ over $L$ assigns the set of continuous $A^{\times}$-valued characters of $(\cO_{F}\otimes_{\Z} \Z_p)^{\times}$. By the local class field theory, the determinant $\det(\rho^{\univ})$ defines a  character $(\cO_F\otimes_{\Z}\Z_p)^{\times}\ra R^{\univ}(\rhob)$, i.e. a map of rigid analytic spaces $\cZ(\rhob)\ra \cW$.

   Fix a weight $\vk\in \Z^{\bB}$ such that all the $k_{\beta}$'s have the same parity. We denote by $\cW_{\vk}$ the subspace of $\cW$ whose  points in a complete subfield $L'\subset \C_{p}$ containing $L$, correspond to characters $\chi: (\cO_F\otimes_{\Z}\Z_{p})^{\times }\ra L'^{\times}$ such that there exists $z_0\in L'$ with $v_p(z_0)>\frac{1}{p-1}-1$ and $\chi=\chi_{\vk}\cdot \langle\mathrm{Nm}\rangle^{z_0}$, where $\mathrm{Nm}=\chi_{\vec{1}}:(\cO_F\otimes \Z_p)^{\times}\ra \Z_{p}^{\times}$ is the natural norm map, and $a\mapsto \langle a\rangle$ is the  projection $\Z_{p}^{\times}\ra (1+p\Z_p)$ defined by the canonical isomorphism
  \[\Z_{p}^{\times}\simeq(\Z/p\Z)^{\times}\times (1+p\Z_p).
\] Then  $\cW_{\vk}$ is a one-dimensional rigid closed subspace of $\cW$.
    We denote by
 $$\chi^{\univ}:(\cO_F\otimes \Z_p)^{\times}\ra \cO(\cW_{\vk})^{\times}$$ the universal character, and by $Z$ the rigid analytic function on $\cW_{\vk}$ such that $\chi^{\univ}=\chi_{\vk}\cdot \langle \mathrm{Nm}\rangle^Z$. We have $|Z|<p^{-\frac{2-p}{p-1}}$, and  $Z$ establishes an isomorphism of rigid spaces
 over $L$
 \[Z:\cW_{\vk}\xra{\sim}\DD_{L}(0, p^{-\frac{2-p}{p-1}})=\{x\in \C_p|v_p(x)> \frac{1}{p-1}-1\}.\] We denote by $\cZ_{\vk}(
 \rhob)$ the inverse image of $\cW_{\vk}$ in $\cZ(\rhob)$. We put $\cY_{\vk}(\rhob)=\cZ_{\vk}(\rhob)\times \G_{m,L}\times \prod_{\fq\in S}\bA^1_{L}$, and denote by $x_p$ the canonical coordinate on $\G_{m,L}$, $x_{\fq}$ the coordinate on $\bA^1_{L}$. We denote also by $Z$ the rigid analytic function on $\cY_{\vk}(\rhob)$ induced by the canonical projection $\cY_{\vk}(\rhob)\ra \cW_{\vk}$, and by the same notation the analytic function on $\cY_{\vk}(\rhob)$ induced by that on $\cZ_{\vk}(\rhob)$.

 Kisin and Lai proved in  \cite[4.5.4]{KL} that there exists a  rigid analytic closed subspace $\cC_{\vk}(\rhob)\subset \cY_{\vk}(\rhob)$ that interpolates $p$-adic overconvergent Hilbert eigenforms of finite slope. More precisely, it satisfies the following properties:
  \begin{enumerate}
  \item For  any closed subfield $L'\subset \C_p$ containing $L$ and any $c\in \cC_{\vk}(\rhob)(\C_p)$, there exists an eigenform  $f_c\in \rM^{\dagger}_{\vk+Z(c)}(\Gamma_{00}(N),L')$ of finite slope, such that, if $\lambda_{c,T}$ denotes the eigenvalue of $T\in \bT^{\dagger}_{\vk+Z(c)}(\mu_N)$ on $f_c$, we have $\lambda_{c, U_{(p)}}=x_p(c)$, and for all primes $\fq$ of $\cO_F$
      \[\lambda_{c,T_{\fq}}=\Tr(\rho^{\univ}(\Frob_{\fq}))\quad \text{if } \fq\notin S\quad \text{and } \lambda_{c,U_{\fq}}=x_{\fq}(c)\quad \text{if }\fq\in S. \]

      \item For any $z_0\in L'$ with $v_p(z_0)> \frac{1}{p-1}-1$, and $c\in \cC_{\vk}(\rhob)(L')$, the association $c\mapsto \{\lambda_{T,c}\}_{T\in \bT^{\dagger}_{\vk+Z(c)}(\mu_N)}$ induces a bijection between $c\in \cC_{\vk}(\rhob)(L')$ with $Z(c)=z_0$, and systems of $\bT^{\dagger}_{\vk+z_0}(\mu_N)$-eigenvalues arising from the eigenforms of finite slope in $ \rM^{\dagger}_{\vk+z_0}(\Gamma_{00}(N),\C_p)$,

          %\item The rigid space $\cC_{\vk}(\rhob)$ is a reduced curve, and the canonical projection $w:\cC_{\vk}(\rhob)\ra \cW_{\vk}$ is locally in-the-domain finite flat in the sense that $\cC_{\vk}(\rhob)$ is covered by admissible affinoid domains $\cU$ such that the restriction of $w$ to $\cU$ is a finite flat mapping of $\cU$ onto its image in $\cW_{vk}$. The complement of the image in $\cW_{\vk}$ of any irreducible component of $\cC_{\vk}(\rhob)$ consists of at most a finite number of weights.
\end{enumerate}

We say a point $c\in \cC_{\vk}(\rhob)(L')$ is \emph{classical}, if $Z(c)=(p-1)k_0t$ for some integer $t\in \Z_{\geq0}$, and the image of $f_c$ under the composite map
\[\rM^{\dagger}_{\vk+Z(c)}(\Gamma_{00}(N), L')\simeq \rM^{\dagger}_{\vk}(\Gamma_{00}(N),L')\xra{\cdot \widetilde{E^{k_0}}^{t}}\rM^{\dagger}_{\vk+(p-1)k_0t\vec{1}}(\Gamma_{00}(N),L')\]
comes from an element of $\rM_{\vk+(p-1)k_0t\vec{1}}(\Gamma_{00}(N)\cap \Gamma_0(p),L')$.

\begin{thm}\label{thm-density} Assume $p\geq 3$ and $[\kappa(\fp):\F_p]\leq 2$ for all prime ideal $\fp\subset \cO_F$ dividing $p$. Then the  classical points are  Zariski-dense in $\cC_{\vk}(\rhob)$.
\end{thm}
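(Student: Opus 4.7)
The strategy is to combine Theorem~\ref{thm-main} with the interpolation of $U_\fp$-eigenvalues along the Kisin--Lai family, following the template of Coleman--Mazur. Because $\cC_{\vk}(\rhob)$ is locally finite over the one-dimensional disc $\cW_{\vk}$ via $Z$ and is one-dimensional, each of its irreducible components surjects onto $\cW_{\vk}$ (an irreducible component could only map to a point if it were contained in a zero-dimensional fibre, which is impossible). Hence it suffices to show that on an admissible cover by affinoid opens $V\subset \cC_{\vk}(\rhob)$, the classical points of $V$ are Zariski-dense.

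Fix such an affinoid $V$. The finite-slope condition forces every $x_\fp$ for $\fp\mid p$ to be a nowhere-vanishing analytic function on $\cC_{\vk}(\rhob)$ (because $x_p=\prod_{\fp\mid p}x_\fp\in \G_m$), so both $x_\fp$ and $x_\fp^{-1}$ are bounded on $V$ by the maximum modulus principle; in particular there is a real number $\alpha_\fp(V)\geq 0$ with $v_p(x_\fp(c))\leq \alpha_\fp(V)$ for every $c\in V$. Choose an integer $T_0$ so that
\[
(p-1)k_0\,t+k_\beta-[\kappa(\fp):\F_p]>\alpha_\fp(V)\qquad \text{for every }t\geq T_0,\ \fp\mid p,\ \beta\in \bB_\fp.
\]
For any $c\in V$ with $Z(c)=(p-1)k_0 t$ and $t\geq T_0$, the commutative diagram in~\ref{over-family} shows that the image of $f_c$ under multiplication by $\widetilde{E^{k_0}}^{t}$ is an element of $\rM^{\dagger}_{\vk+(p-1)k_0 t\vec{1}}(\Gamma_{00}(N),L')$ which is again an eigenform for each $U_\fp$, with the same eigenvalue $x_\fp(c)$. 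By the choice of $T_0$, the slope of this form with respect to each $U_\fp$ is strictly less than the new weight coordinates minus $[\kappa(\fp):\F_p]$, so Theorem~\ref{thm-main} applies and the form is classical; hence $c$ is a classical point.

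Finally, the set $\{(p-1)k_0 t : t\geq T_0\}$ is Zariski-dense in the disc $\cW_{\vk}$, and $Z|_V$ is finite, so the preimage in $V$ of this set is Zariski-dense in $V$. Varying $V$ over an admissible cover concludes the proof. The main technical point is twofold: first, extracting a uniform upper bound for the slopes $v_p(x_\fp)$ on each affinoid, which relies crucially on the fact that we parametrise \emph{finite-slope} eigenforms so that each $x_\fp^{-1}$ is analytic; and second, the compatibility of $U_\fp$-eigenvalues under the normalisation by $\widetilde{E^{k_0}}$, which is built into the Kisin--Lai construction~\cite{KL} recalled in~\ref{over-family}. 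Once these are in place, the classicality criterion proved in Theorem~\ref{thm-main} does all the substantive work.
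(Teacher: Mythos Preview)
Your overall strategy matches the paper's: bound the $U_{\fp}$-slopes uniformly on an affinoid, then apply Theorem~\ref{thm-main} at integer weights lying in that affinoid. However, the reduction step contains a genuine gap.

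You assert that it suffices to prove density in each member $V$ of an \emph{arbitrary} admissible affinoid cover, and then conclude by saying ``$Z|_V$ is finite, so the preimage in $V$ of $\{(p-1)k_0t:t\geq T_0\}$ is Zariski-dense in $V$.'' This fails because the image $Z(V)\subset\cW_{\vk}$ is merely some affinoid subdomain of the weight disc, and there is no reason it should contain \emph{any} point of the form $(p-1)k_0t$, let alone infinitely many with $t\geq T_0$. (For instance, $Z(V)$ could be a small closed disc centred at a non-integer.) Your parenthetical argument that each irreducible component surjects onto $\cW_{\vk}$ is also incomplete: ruling out the case where the image is a single point does not rule out the image being a proper affinoid subdomain.

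The paper repairs exactly this point. It works one irreducible component $C$ at a time and invokes the Coleman--Mazur argument \cite[Thm.~B]{CM} to show that $Z(C)$ omits at most finitely many weights. This guarantees the existence of an affinoid $B\subset C$ whose image in $\cW_{\vk}$ contains a closed $p$-adic disc centred at some integer $(p-1)k_0t_0$; any such disc contains infinitely many integers $(p-1)k_0t$ (those with $t\equiv t_0$ modulo high powers of $p$), and now your slope bound on $B$ together with Theorem~\ref{thm-main} finishes as you intended.

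One point you handle more carefully than the paper: the reason the maximum-modulus argument gives an upper bound on $v_p(x_{\fp}(c))$ rather than on $|x_{\fp}(c)|$ is precisely your observation that $x_p=\prod_{\fp\mid p}x_{\fp}$ is a unit, so each $x_{\fp}$ is nowhere vanishing and $x_{\fp}^{-1}$ is analytic on the affinoid.
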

\begin{proof}Let $C$ be an irreducible component of $\cC_{\vk}(\rhob)$. It suffices to prove that $C$ contains infinitely many classical points. Let $\pi_{\vk}(\rhob):\cC_{\vk}(\rhob)\ra \cW_{\vk}$ be the natural projection onto the weight space. By the same argument of \cite[Thm. B]{CM}, the morphism $\pi_{\vk}(\rhob)$ is component-wise almost surjective in the sense that, for every irreducible component of $\cC_{\vk}(\rhob)$, the complement of its image in $\cW_{\vk}$ consists of at most a finite number of weights. Therefore, there exists an admissible affinoid subdomain $B
\subset C$  such that its image $W_0$ in $\cW_{\vk}$ is an affinoid domain containing a closed disk
 with center $z_0=(p-1)k_0t_0\in W_0$ of radiu $p^{-n}$ for certain $n\in \Z_{>0}$. By the maximum modulus principle, there exists a real number $\alpha>0$ such that the slopes $v_p(x_{\fp}(c))<\alpha$ for  any prime $\fp$ of $F$ dividing $p$ and any  $c\in B(\Lb)$, where $\Lb$ denotes the algebraic closure of $L$ in $\C_p$.  Hence, there are infinitely many points $c\in B(\Lb)$ such that we have $Z(c)=(p-1)k_0 t$, and  $k_{\beta}+(p-1)k_0t>\alpha+2$ for all $\beta\in \bB$. By property (1) of the eigencurve $\cC_{\vk}(\rhob)$ and Theorem \ref{thm-main}, such  a point $c$ corresponds to classical Hilbert modular forms of level $\Gamma_{00}(N)\cap \Gamma_0(p)$ and weight $\vk+(p-1)k_0 t\vec{1}$.
\end{proof}

\bigskip
\section{Finite flat group schemes with RM and Breuil-Kisin modules}

\subsection{} We recall first the theory of Breuil-Kisin modules for finite flat group schemes. Let $k$ be a  perfect field of characteristic $p>0$, $W(k)$ be its ring of Witt vectors, and $K$ be a finite totally ramified extension of $K_0=W(k)[1/p]$ of degree $e$, and $\cO_K$ be its ring of integers. Fix a uniformizer $\pi$ of $K$ with Eisenstein polynomial $E(u)$, and put $\fS=W(k)[[u]]$. We equip $\fS$ with the endomorphism $\varphi$ which acts on $W(k)$ via Frobenius and sends $u$ to $u^p$. A  finite torsion Breuil-Kisin module (of height $1$) is a finite $\fS$-module $\fM$ equipped with a $\varphi$-linear endomorphism $\varphi:\fM\ra \fM$  verifying the following properties:
\begin{enumerate}
\item $\fM$ has $p$-power torsion.

\item $\fM$ has projective dimension $1$ as $\fS$-module, i.e. there is a two term resolution of $\fM$ by finite free $\fS$-modules.

    \item The cokernel of the linearized map
    \[1\otimes\varphi:\varphi^*(\fM)=\fS\otimes_{\fS,\varphi}\fM\xra{1\otimes\varphi} \fM\]
is killed by $E(u)$.
\end{enumerate}
We denote by $\Modd^{\tor}$ the category of finite torsion Breuil-Kisin modules (of height $1$).  Note that $\fS$ is regular local ring of dimension $2$, a finite $\fS$-module has projective dimension $1$ if and only if it has depth $1$. Therefore, condition (2) in the definition above is equivalent to saying that $\fM$ has no $u$-torsion.  Similarly, a finite free  Breuil-Kisin module (of height $1$) is a finite free $\fS$-module $\fM$ equipped with a $\varphi$-linear endomorphism $\varphi:\fM\ra \fM$ such that the third condition above is satisfied. We denote by $\Modd^{\fr}$ the category of finite free Breuil-Kisin modules (of height $1$). By a Breuil-Kisin module, we mean an object $\fM$ in either $\Modd^{\tor}$ or $\Modd^{\fr}$ depending on the situation. For a Breuil-Kisin module $\fM$, the map $1\otimes\varphi:\varphi^*(\fM)\ra \fM$ is necessarily injective \cite[1.1.9]{Ki09}, and we denote its image by $(1\otimes\varphi)\varphi^*(\fM)$.

The main motivation of studying Breuil-Kisin modules in this paper is the following theorem due to Kisin \cite[0.5]{Ki} when $p\geq 3$, to Eike Lau \cite[7.6, 7.7]{La10} and Tong Liu \cite[1.0.1,  1.0.2]{L10} independently when $p=2$.
 \begin{thm}\label{thm-kisin-1}
 There is a natural anti-equivalence between the category of commutative finite and flat group schemes over $\cO_K$ of $p$-power order and the category $\Modd^{\tor}$. Similarly, the category of $p$-divisible groups is naturally anti-equivalent to the category $\Modd^{\fr}$.
\end{thm}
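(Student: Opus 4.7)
The plan is to reduce the torsion case to the $p$-divisible group case, and for the latter to pass through Fontaine's theory of crystalline representations and \'etale $\varphi$-modules. Concretely, for a $p$-divisible group $G$ over $\cO_K$, the $p$-adic Tate module $T_p(G)$ is a $G_K$-stable $\Z_p$-lattice in a crystalline representation whose Hodge-Tate weights lie in $\{0,1\}$, and a theorem of Breuil-Fontaine identifies the category of $p$-divisible groups with the category of such lattices. The target of the construction is therefore to build an anti-equivalence between those lattices and $\Modd^{\fr}$.

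To produce the Breuil-Kisin module from the representation, I would set $\Kinf = \bigcup_{n}K(\pi^{1/p^n})$, $\cGinf = \Gal(\Kb/\Kinf)$, and let $\cEr$ denote the $p$-adic completion of $\fS[1/u]$, a complete discrete valuation ring with residue field $k((u))$. Fontaine's theory gives an anti-equivalence between continuous $\Z_p$-representations of $\cGinf$ on finite free modules and \'etale $\varphi$-modules over $\cEr$. Given a crystalline lattice $T$ as above, restrict to $\cGinf$, obtain an \'etale $\varphi$-module $M$ over $\cEr$, and prove the key claim that $M$ contains a unique $\varphi$-stable $\fS$-lattice $\fM \subset M$ of height one, i.e.\ with $\mathrm{coker}(1\otimes\varphi)$ killed by $E(u)$; this $\fM$ is the desired Breuil-Kisin module. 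Fully faithfulness of the resulting functor follows by combining the fully faithfulness of Fontaine's \'etale $\varphi$-module functor with Breuil's conjecture that restriction from $G_K$ to $\cGinf$ is fully faithful on crystalline representations. For essential surjectivity, given $\fM \in \Modd^{\fr}$, inverting $u$ produces an \'etale $\varphi$-module over $\cEr$ and hence a $\cGinf$-representation; one then has to construct a crystalline $G_K$-representation extending it, which proceeds by building an associated module over Breuil's ring $S = W(k)[[u,E(u)^i/i!]]$ with a compatible filtration and monodromy, and finally invoking Breuil-Fontaine to recover a $p$-divisible group.

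For the torsion part, every $\fM \in \Modd^{\tor}$ admits (by the projective dimension one hypothesis) a two-term resolution by objects in $\Modd^{\fr}$; by the $p$-divisible case this resolution corresponds to an isogeny $G_1 \to G_2$ of $p$-divisible groups over $\cO_K$, and the finite flat group scheme attached to $\fM$ is the kernel of this isogeny. Conversely, a finite flat group scheme of $p$-power order embeds into a $p$-divisible group by a theorem of Raynaud, so one can read off its Breuil-Kisin module as the cokernel of the corresponding map of finite free Breuil-Kisin modules; the condition that $\fM$ has no $u$-torsion corresponds to flatness of the embedding. The main obstacle is the essential surjectivity in the $p$-divisible case, namely constructing the crystalline extension across $\cGinf$ and verifying that the resulting filtered $\varphi$-module lies in the Hodge-Tate range $\{0,1\}$; this is the heart of Kisin's construction for $p \geq 3$, and is exactly where the argument breaks down when $p=2$, requiring the alternative routes of Lau via Dieudonn\'e displays and of Liu via a refined analysis of lattices inside \'etale $\varphi$-modules.
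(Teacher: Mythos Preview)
The paper does not actually give a proof of this theorem: it is stated as a known result and attributed to Kisin \cite{Ki} for $p\geq 3$ and to Lau \cite{La10} and Liu \cite{L10} independently for $p=2$, with no argument supplied. So there is no ``paper's own proof'' to compare against.

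That said, your outline is a faithful sketch of Kisin's original strategy: pass from $p$-divisible groups to crystalline $G_K$-lattices with Hodge--Tate weights in $\{0,1\}$ (Breuil--Fontaine), restrict to $\cGinf$, use Fontaine's \'etale $\varphi$-modules over $\cEr$, locate the unique height-one $\fS$-lattice, and deduce the torsion case from the free case by resolution and Raynaud's embedding theorem. Your identification of the hard step (constructing the crystalline extension from $\cGinf$ to $G_K$ and checking the Hodge--Tate range) and of why $p=2$ requires separate arguments is also accurate. One small correction: the uniqueness of the $\fS$-lattice as you phrase it is not quite right---there are many $\fS$-lattices, and the functor is built by a more careful construction (Kisin works with the moduli of such lattices and picks out the one matching the Hodge filtration); but the overall shape of your sketch is correct.
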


 The following Proposition will be fundamental for our application of Breuil-Kisin modules to the analytic continuation of Hilbert modular forms.

\begin{prop}\label{prop-deg-kisin}
Let $G$ be a finite and flat group scheme (or a $p$-divisible group) over $\cO_K$. Let $\fM$ be the Breuil-Kisin module associated with $G$.
Then  there is a canonical isomorphism of $\cO_K$-modules
\[\omega_{G}\xra{\sim}\fM/(1\otimes\varphi)\varphi^*(\fM),\]
where $\omega_{G}$ is the module of invariant differentials of $G$.
\end{prop}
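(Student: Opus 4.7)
The plan is to establish the isomorphism first for $p$-divisible groups by combining a Tor computation exploiting property (3) of Breuil--Kisin modules with Kisin's crystalline comparison, and then to extend it to the torsion case via a two-term resolution by $p$-divisible groups.

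For $G$ a $p$-divisible group with Breuil--Kisin module $\fM$, so that both $\fM$ and $\varphi^*(\fM)$ are finite free over $\fS$, the defining short exact sequence
\[
0\lra \varphi^*(\fM)\xra{1\otimes\varphi}\fM\lra C\lra 0,\qquad C:=\fM/(1\otimes\varphi)\varphi^*(\fM),
\]
yields, upon applying $-\otimes_{\fS}\fS/E(u)$ and using the $\fS$-flatness of $\fM$ and $\varphi^*(\fM)$, the four-term exact sequence
\[
0\lra \mathrm{Tor}_1^{\fS}\bigl(C,\fS/E(u)\bigr)\lra \varphi^*(\fM)/E(u)\xra{\overline{1\otimes\varphi}}\fM/E(u)\lra C\lra 0.
\]
By property (3), $C$ is annihilated by $E(u)$, so the free resolution $0\to\fS\xra{\cdot E(u)}\fS\to\fS/E(u)\to 0$ gives a canonical identification $\mathrm{Tor}_1^{\fS}(C,\fS/E(u))\simeq C$. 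I would then invoke Kisin's crystalline comparison: $\varphi^*(\fM)/E(u)\varphi^*(\fM)$ is canonically the value $\bD(G)(\cO_K)$ of the contravariant Dieudonn\'e crystal at $\cO_K$, and under this identification the Hodge filtration $\omega_{G}\subset \bD(G)(\cO_K)$ coincides with $\ker(\overline{1\otimes\varphi})$. Combining these two identifications delivers the desired canonical isomorphism $\omega_G\xra{\sim}C$. Concretely, the resulting inverse map admits a direct description: given $m\in\fM$, the inclusion $E(u)\fM\subset(1\otimes\varphi)\varphi^*(\fM)$ together with the injectivity of $1\otimes\varphi$ produces a unique $m'\in\varphi^*(\fM)$ with $E(u)\,m=(1\otimes\varphi)(m')$, and the class of $m$ in $C$ is sent to the class of $m'$ in $\varphi^*(\fM)/E(u)$, which automatically lies in $\ker(\overline{1\otimes\varphi})=\omega_G$.

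To descend to a finite flat group scheme $G$ of $p$-power order, I would choose a two-term resolution $0\to G\to\cG'\to\cG''\to 0$ by $p$-divisible groups over $\cO_K$; by Theorem~\ref{thm-kisin-1} this produces a short exact sequence of Breuil--Kisin modules, to which one applies the snake lemma against the $(1\otimes\varphi)$'s, matching the resulting exact sequence of cokernels with the natural exact sequence for invariant differentials and using the already-established isomorphism on the two $p$-divisible terms. The main obstacle is the crystalline input used in the $p$-divisible case: the identification $\varphi^*(\fM)/E(u)\varphi^*(\fM)\simeq \bD(G)(\cO_K)$ together with the recognition of the Hodge filtration as $\ker(\overline{1\otimes\varphi})$ is the technical heart of Kisin's theory (and of its $p=2$ extensions by Lau and T.~Liu cited above) and must be invoked rather than reproved; once it is granted, the rest of the argument is formal homological algebra.
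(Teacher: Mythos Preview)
Your argument is correct and rests on the same external input as the paper---namely the crystalline comparison identifying $\varphi^*(\fM)/E(u)\varphi^*(\fM)$ with the value of the Dieudonn\'e crystal at $\cO_K$ together with its Hodge filtration---but the organization is different. The paper routes through Breuil's filtered $S$-modules: it sets $\cM(\fM)=S\otimes_{\fS,\varphi}\fM$, proves a separate lemma that $\Fil^1\cM(\fM)/\Fil^1S\,\cM(\fM)\simeq\fM/(1\otimes\varphi)\varphi^*(\fM)$, and then invokes Lau's identification $\omega_G\simeq\Fil^1\cM/\Fil^1S\,\cM$. Your $\mathrm{Tor}$ argument bypasses the $S$-module formalism, landing directly on $C\simeq\ker(\overline{1\otimes\varphi})$; your explicit description of the inverse map $m\mapsto m'$ with $E(u)m=(1\otimes\varphi)(m')$ is precisely the paper's map $\varphi_1^{\#}$ from that lemma. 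The reduction from finite flat group schemes to $p$-divisible groups is identical in both accounts. Your packaging is arguably cleaner, though in practice verifying that $\ker(\overline{1\otimes\varphi})=\omega_G$ inside $\varphi^*(\fM)/E(u)$ requires unwinding the $S$-module description anyway, so the savings are modest.
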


To prove this proposition, we need Breuil's filtered $S$-modules. Let $S$ be the $p$-adic completion of the divided power envelop of $W(k)[u]$ with respect to the principal ideal $(E(u))$, i.e.  $S$ is the completion of subring $W(k)[u, E(u)^i/i!:i\geq1]$ of $K_0[u]$, and $\Fil^1S\subset S$ be the kernel of the natural surjection $S\ra \cO_{K}$ sending $u\mapsto\pi$. We note that $\fS$ is naturally a subring of $S$,  and the endomorphism $\varphi$ on $\fS$ extends to $S$. We check easily that $\varphi(\Fil^1S)\subset pS$, and we put $\varphi_1=\frac{1}{p}\varphi\,|_{\Fil^1S}$ and $c=\varphi_1(E(u))$. A filtered $S$-module $(\cM,\Fil^1\cM,\varphi_1)$ consists of the following data:
 \begin{enumerate}
 \item A finite generated $S$-module  $\cM$ and a submodule $\Fil^1\cM$ with $E(u)\cM\subset\Fil^1\cM$.

 \item A $\varphi$-linear morphism $\varphi_1:\Fil^1\cM\ra \cM$ such that, for $s\in \Fil^1S$ and $x\in \cM$, we have $\varphi_1(sx)=\frac{1}{c}\varphi_1(s)\varphi_1(E(u)x)$, and the  image  of $\varphi_1$ generates $\cM$ as an $S$-module.
  \end{enumerate}

 We denote by $\MF$ the category of filtered $S$-modules. It has a natural structure of an exact category. A sequence is short exact if it is short exact as a sequence of $S$-modules, and induces a short exact sequence on $\Fil^1$'s.

     Let $\fM$ be an object in $\Modd^{\tor}$ or $\Modd^{\fr}$. We can associate covariantly with $\fM$ a filtered $S$-module as follows. We put $\cM(\fM)=S\otimes_{\fS,\varphi}\fM$, and define $\Fil^1\cM(\fM)$ to be the submodule of $\cM(\fM)$ whose image under the morphism of $S$-modules
     \[S\otimes_{\fS,\varphi}\fM\xra{1\otimes \varphi}S\otimes_{\fS}\fM\]
     lies in $\Fil^1S\otimes_{\fS}\fM$. The morphism $\varphi_1:\Fil^1\cM(\fM)\ra \cM(\fM)$ is defined to be the composite
     \[\Fil^1\cM(\fM)\xra{1\otimes \varphi}\Fil^1S\otimes_{\fS}\fM\xra{\frac{1}{p}\varphi\otimes1}S\otimes_{\fS,\varphi}\fM=\cM(\fM).\]
      By \cite[8.1]{La10}, $(\cM(\fM),\Fil^1\cM(\fM),\varphi_1)$ is an object in $\MF$, and the functor $\fM\mapsto \cM(\fM)$ is exact.  By definition of $\Fil^1\cM(\fM)$, we have an embedding \cite[1.1.15]{Ki09}
     \[\cM(\fM)/\Fil^1\cM(\fM)\xra{1\otimes \varphi}S\otimes_{\fS}\fM/(\Fil^1S\otimes_{\fS}\fM)\xra{\sim} \fM/E(u)\fM,\]
     which induces an isomorphism
     \beq\label{iso-br-kis}
     \cM(\fM)/\Fil^1\cM(\fM)\xra{\sim} (1\otimes\varphi)\varphi^*(\fM)/E(u)\fM.
     \eeq
If $\fM$ is an object in $\Modd^{\fr}$, then $(1\otimes\varphi)\varphi^*(\fM)$ is free over $\fS$. So the $\fS$-module $(1\otimes\varphi)\varphi^*(\fM)/E(u)\fM$ has projective dimension $1$, hence depth $1$.  It follows that $\cM(\fM)/\Fil^1\cM(\fM)$ and $(1\otimes\varphi)\varphi^*(\fM)/E(u)\fM$ are actually  finite free $\cO_K$-modules.

\begin{lemma}\label{lem-br-ki} Let $\fM$ be an object in $\Modd^{\tor}$ or $\Modd^{\fr}$ as above.
We have a canonical isomorphism
\begin{align*}
\Fil^1\cM(\fM)/\Fil^1S\cM(\fM)\simeq \fM/(1\otimes\varphi)\varphi^*(\fM).
\end{align*}

\end{lemma}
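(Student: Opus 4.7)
The plan is to realize $\Fil^1\cM(\fM)/\Fil^1 S\cdot\cM(\fM)$ as the kernel of the natural surjection
\[
q:\cM(\fM)/\Fil^1 S\cdot\cM(\fM)\twoheadrightarrow \cM(\fM)/\Fil^1\cM(\fM)
\]
coming from the inclusion $\Fil^1 S\cdot\cM(\fM)\subset\Fil^1\cM(\fM)$. Using the canonical identification $\cM(\fM)\simeq S\otimes_{\fS}\varphi^*(\fM)$ together with $S/\Fil^1 S=\cO_K=\fS/(E(u))$, I would first identify
\[
\cM(\fM)/\Fil^1 S\cdot\cM(\fM)\simeq \varphi^*(\fM)/E(u)\varphi^*(\fM),
\]
and then combine with the previously established isomorphism \eqref{iso-br-kis} to rewrite $q$ as the map
\[
\bar q:\varphi^*(\fM)/E(u)\varphi^*(\fM)\longrightarrow (1\otimes\varphi)\varphi^*(\fM)/E(u)\fM
\]
induced by $1\otimes\varphi$. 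The problem thus reduces to producing a canonical isomorphism $\ker(\bar q)\simeq \fM/(1\otimes\varphi)\varphi^*(\fM)$.

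For this I would invoke two inputs: the cokernel $\fM/(1\otimes\varphi)\varphi^*(\fM)$ is killed by $E(u)$ (definition of a Breuil--Kisin module), and $1\otimes\varphi:\varphi^*(\fM)\to\fM$ is injective by \cite[1.1.9]{Ki09}. Together these associate to each $m\in\fM$ a unique $y_m\in\varphi^*(\fM)$ with $(1\otimes\varphi)(y_m)=E(u)m$. I would verify that $m\mapsto y_m\bmod E(u)\varphi^*(\fM)$ is $\fS$-linear, lands in $\ker(\bar q)$, vanishes on $(1\otimes\varphi)\varphi^*(\fM)$ (since $m=(1\otimes\varphi)(n)$ forces $y_m=E(u)n$), and hits every class in $\ker(\bar q)$ (given $y$ with $(1\otimes\varphi)(y)\in E(u)\fM$, recover $m$ from the equation $E(u)m=(1\otimes\varphi)(y)$).

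The main technical hurdle is showing that $E(u)$ is a non-zero-divisor on $\fM$, which is needed both to recover $m$ uniquely from $y$ in the surjectivity step and to ensure the reduction identifications modulo $\Fil^1 S$ behave as expected. For $\fM\in\Modd^{\fr}$ this is automatic. For $\fM\in\Modd^{\tor}$, the depth-one condition forces the associated primes of $\fM$ to have height one; since $\fM$ is $p$-power torsion and has no $u$-torsion, these primes all equal $(p)$, and $E(u)\notin (p)$ because $E(u)\equiv u^e\pmod p$. With this verified, all the manipulations above are legitimate and produce the asserted canonical isomorphism.
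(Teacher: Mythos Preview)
Your proof is correct and follows essentially the same route as the paper. Both arguments pass through the $\fS$-level object $\varphi^*(\fM)$: your kernel $\ker(\bar q)$ is precisely the paper's $\Fil^1\varphi^*(\fM)/E(u)\varphi^*(\fM)$, and your map $m\mapsto y_m$ is the inverse of the paper's map $\varphi_1^{\#}\colon x\mapsto (1\otimes\varphi)(x)/E(u)$; the paper then packages the comparison with $\Fil^1\cM(\fM)/\Fil^1S\,\cM(\fM)$ into a five-lemma diagram whose middle vertical isomorphism is exactly your identification $\cM(\fM)/\Fil^1S\,\cM(\fM)\simeq\varphi^*(\fM)/E(u)\varphi^*(\fM)$. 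One small virtue of your write-up is that you make explicit why $E(u)$ is a non-zero-divisor on $\fM$ in the torsion case, a point the paper uses (in defining $\varphi_1^{\#}$) but does not spell out.
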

\begin{proof}

The following   construction was indicated to me  by Tong Liu. Consider the map $
1\otimes \varphi: \varphi^*(\fM)\ra \fM$.  Motivated by the definition of $\cM(\fM)$, we put
\[\Fil^1\varphi^*(\fM)=\{x\in \varphi^*(\fM)\,|\, (1\otimes\varphi)(x)\in E(u)\fM\}.\]
Thus $1\otimes \varphi$ induces an isomorphism
\beq\label{equ-b-1}\varphi^*(\fM)/\Fil^1\varphi^*(\fM)\xra{\sim}(1\otimes\varphi)\varphi^*(\fM)/E(u)\fM.\eeq
We denote by
\[\varphi_1^{\#}:\Fil^1\varphi^*(\fM)\ra \fM\]
 the natural map given by $x\mapsto (1\otimes\varphi)(x)/E(u)$. Then $\varphi^{\#}_1$ induces an isomorphism \beq\label{equ-b-2}\Fil^1\varphi^*(\fM)/E(u)\varphi^*(\fM)\xra{\sim} \fM/(1\otimes\varphi)\varphi^*(\fM).\eeq
 On the other hand, the natural inclusion $\fS\hra S$ induces an inclusion $\iota:\varphi^*(\fM)\hra \cM(\fM)$. We check easily that $\iota(\Fil^1\varphi^*(\fM))\subset \Fil^1\cM(\fM)$, and we have a commutative diagram of exact sequences
 \[\xymatrix{0\ar[r]&\Fil^1\varphi^*(\fM)/E(u)\varphi^*(\fM)\ar[r]\ar[d]&\varphi^*(\fM)/E(u)\varphi^*(\fM)\ar[r]\ar[d]^{\sim}
 &\varphi^*(\fM)/\Fil^1\varphi^*(\fM)\ar[d]^{\sim}\ar[r]&0\\
 0\ar[r]&\Fil^1\cM(\fM)/\Fil^1S\cM(\fM)\ar[r]&\cM(\fM)/\Fil^1S\cM(\fM)\ar[r]&\cM(\fM)/\Fil^1\cM(\fM)\ar[r]&0,
 }\]
where the vertical arrows are induced by $\iota$. The middle vertical arrow  is easily seen to be an isomorphism, and so is  the right vertical map  because of \eqref{iso-br-kis} and \eqref{equ-b-1}. It follows that the left vertical one is also an isomorphism. In view of \eqref{equ-b-2}, the lemma follows.

\end{proof}

\begin{proof}[Proof of Prop. \ref{prop-deg-kisin}]
 Let $\cM=\cM(\fM)$ be the filtered $S$-module associated with $G$. By Lemma \ref{lem-br-ki}, it suffices to prove that we have a canonical isomorphism
  \[\omega_{G}\xra{\sim}\Fil^1\cM/\Fil^1S\cM.\]
  If $G$ is a $p$-divisible group, this follows from \cite[3.3.5]{BBM} or \cite[8.1]{La10} and the fact that $\cM$ is the evaluation of the Dieudonn\'e crystal of $G$ at the PD-thickening $\Spec(\cO_K)\hra \Spec(S)$. If $G$ is finite flat group scheme over $\cO_K$, then $G$ can be embedded as a closed subgroup scheme into a  $p$-divisible group $G_0$ over $\cO_K$ \cite[3.1.1]{BBM}; we put $G_1=G_0/G$.
  We denote respectively by $\cM_0$ and $\cM_1$  the filtered $S$-modules associated with $G_0$ and $G_1$. Since all the constructions are functorial in $G$, the exact sequence of  groups $0\ra G\ra G_0\ra G_1\ra0$ induces a commutative diagram of exact sequences of $\cO_K$-modules
 \[\xymatrix{0\ar[r] &\omega_{G_1}\ar[r]\ar[d]^{\sim} &\omega_{G_0}\ar[r]\ar[d]^{\sim}&\omega_{G}\ar[r]\ar[d]& 0\\
 0\ar[r] & \Fil^1\cM_1/\Fil^1S\cM_1\ar[r] &\Fil^1\cM_0/\Fil^1S\cM_0\ar[r]&\Fil^1\cM/\Fil^1S\cM\ar[r]&0.}\]
 Since the left two vertical arrows are isomorphisms, it follows that so is the right one.

\end{proof}

\subsection{$\Z_{p^g}$-groups}  Let $g>0$  be an integer, $\Q_{p^g}$ be the  unramified extension of $\Q_p$ of degree $g$, $\Z_{p^g}$ be its ring of integers. We assume $k$ contains $\F_{p^g}$. We identify $\mathrm{Emd}_{\Z_p}(\Z_{p^g},\cO_K)$, the set of embeddings of $\Z_{p^g}$ into $\cO_K$, with $\Z/g\Z$, and the natural action of Frobenius on $\mathrm{Emd}_{\Z_p}(\Z_{p^g},\cO_K)$ is identified with $i\mapsto i+1$. For an $(\cO_K\otimes \Z_{p^g})$-module $M$, we have a canonical splitting $M=\oplus_{i\in \Z/g\Z}M_i$, where $\Z_{p^g}$ acts on $M_i$ via the $i$-th embedding. If $N$ is a finite torsion $\cO_K$-module, we choose an isomorphism $N\simeq \oplus_{i=1}^d\cO_K/(a_i)$ with $a_i\in \cO_K$, and define the degree of $N$ to be
\[\deg(N)=\sum_{i=1}^dv_p(a_i).\]

  A \emph{$\Z_{p^g}$-group} over $\cO_K$ is a commutative finite and flat group scheme over $\cO_K$ endowed  with an action of $\Z_{p^g}$. Fargues \cite{Fa} defined the degree function of a finite flat group scheme over $\cO_K$. We give a refinement of this function for $\Z_{p^g}$-groups over $\cO_K$.

\begin{defn}\label{defn-deg} Let $G$ be a $\Z_{p^g}$-group over $\cO_K$, and $\omega_G=\oplus_{i\in \Z/g\Z}\, \omega_{G,i}$ be its module of invariant differential forms. We put
$$\deg_i(G)=\deg(\omega_{G,i}),$$
 and we call it the $i$-th degree of $G$.
  \end{defn}

Hence, the degree function of Fargues is $\deg(G)=\sum_{i\in \Z/g\Z}\deg_i(G)$.
If $0\ra G_1\ra G\ra G_2\ra 0$ is an exact sequence of $\Z_{p^g}$-groups over $\cO_K$, we have an exact sequence of $\cO_K\otimes \Z_{p^g}$-modules
\[0\ra \omega_{G_2}\ra \omega_{G}\ra \omega_{G_1}\ra 0;\]
hence we have $\deg_{i}(G)=\deg_{i}(G_1)+\deg_{i}(G_2)$ for any $i\in \Z/f\Z$.

Recall that a scheme of $1$-dimensional $\F_{p^{g}}$-vector spaces over $\cO_K$ is a $\Z_{p^g}$-group $G$ over $\cO_K$ such that $G(\Kb)$ is an $\F_{p^g}$-vector space of dimension $1$, where $\Kb$ is an algebraic closure of $K$. According to Raynaud's classification of such finite flat group schemes \cite[1.5.1]{Ra}, we have an isomorphism of schemes
\begin{equation}\label{Ray-class}
G\simeq\Spec\bigl(\cO_K[T_{i}:i\in \Z/g\Z]/(T^p_{i-1}-a_{i}T_{i})_{i\in\Z/g\Z}\bigr),
\end{equation}
for some $a_i\in \cO_K$ with $0\leq v_p(a_i)\leq 1$.
Using this isomorphism, we have $\deg_{i}(G)=v_p(a_{i})$ for $i\in \Z/g\Z$. This following Lemma is a refinement of \cite[Cor. 3]{Fa}.

\begin{lemma}\label{lem-ray}
Let $\phi:H\ra G$ be a homomorphism of schemes of $1$-dimensional $\F_{p^g}$-vector spaces over $R$ that induces an isomorphism on the generic fibers. Then  for any $i\in \Z/g\Z$, we have
\[\sum_{j=0}^{g-1}p^{j}\deg_{i-j}(G)\geq \sum_{j=0}^{g-1}p^{j}\deg_{i-j}(H).\]
Moreover, all the equalities hold if and only if $\phi$ is an isomorphism.
\end{lemma}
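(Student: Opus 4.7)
The plan is to reduce everything to explicit computations using Raynaud's classification \eqref{Ray-class}. Write $G \simeq \Spec(\cO_K[T_i]/(T_{i-1}^p - a_i T_i)_{i\in\Z/g\Z})$ and $H \simeq \Spec(\cO_K[S_i]/(S_{i-1}^p - b_i S_i)_{i\in\Z/g\Z})$, so that $\deg_i(G) = v_p(a_i)$ and $\deg_i(H) = v_p(b_i)$. By Raynaud's theory, any $\F_{p^g}$-linear morphism $\phi : H\to G$ is induced on rings by $T_i\mapsto c_iS_i$ for some $c_i\in\cO_K$, and compatibility with the defining relations forces
\[c_{i-1}^{p}\, b_i \;=\; a_i\, c_i\qquad \text{for every } i\in \Z/g\Z.\]
The hypothesis that $\phi$ is an isomorphism on generic fibers means exactly $c_i\neq 0$ for all $i$, so we may set $\gamma_i = v_p(c_i)\geq 0$.

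Taking $v_p$ of the relation above yields
\[\deg_i(G) - \deg_i(H) \;=\; p\gamma_{i-1} - \gamma_i.\]
Substituting this into the left-hand side of the inequality and telescoping gives
\[\sum_{j=0}^{g-1} p^{j}\bigl(\deg_{i-j}(G) - \deg_{i-j}(H)\bigr) \;=\; \sum_{j=0}^{g-1}\bigl(p^{\,j+1}\gamma_{i-j-1} - p^{j}\gamma_{i-j}\bigr) \;=\; p^{g}\gamma_{i-g} - \gamma_i \;=\; (p^{g}-1)\gamma_i,\]
where the last step uses the cyclic indexing modulo $g$. This is manifestly $\geq 0$, which proves the inequality.

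For the equality case, the inequality at the index $i$ becomes an equality precisely when $\gamma_i = 0$. Hence "all equalities hold" is equivalent to $v_p(c_i)=0$ for every $i\in\Z/g\Z$, i.e.\ every $c_i$ is a unit in $\cO_K$; but this is exactly the criterion for the tuple $(c_i^{-1})$ to define an inverse morphism $G\to H$, so equivalent to $\phi$ being an isomorphism of group schemes. The only non-formal input is the description of morphisms between Raynaud $\F_{p^g}$-vector space schemes via the scalars $(c_i)$ satisfying $c_{i-1}^p b_i = a_i c_i$; this is a standard consequence of Raynaud's classification. Once this is granted, the only care required is to track the cyclic indexing correctly in the telescoping step, which is the essential combinatorial point that replaces Fargues' inequality by its refined weighted version.
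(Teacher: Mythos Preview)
Your proof is correct and essentially identical to the paper's. Both use Raynaud's description of morphisms via scalars $c_i$ (the paper writes $u_i$) satisfying $a_i c_i = b_i c_{i-1}^p$, and the paper's multiplicative identity $\prod_{j=0}^{g-1}(a_{i-j}/b_{i-j})^{p^j}=u_i^{p^g-1}$ is exactly your additive telescoping after taking $v_p$.
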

\begin{proof}
Let $(a_{i})_{i\in \Z/g\Z}$ (resp. $(b_{i})_{i\in \Z/g\Z}$) be respectively the elements in $\cO_K$ appearing in   an isomorphism as \eqref{Ray-class} for $G$  (resp. for $H$).  We have $\deg_{i}(G)=v_p(a_i)$ and $\deg_{i}(H)=v_p(b_{i})$. The existence of $\phi$ implies that there exist $u_{i}\in \cO_K$ for all $i\in \Z/g\Z$ such that
$a_{i}u_{i}=b_{i}u^p_{i-1}$  \cite[1.5.1]{Ra}.
Hence, we have
\[\prod_{j=0}^{g-1}\biggl(\frac{a_{i-j}}{b_{i-j}}\biggr)^{p^j}=\prod_{j=0}^{g-1}
\frac{u_{i-j-1}^{p^{j+1}}}{u_{i-j}^{p^j}}=u_{i}^{p^{g}-1}.\]
  The lemma follows immediately from the fact that $v_p(u_i)\geq 0$, and that $\phi$ is an isomorphism if and only if $v_p(u_{i})=0$ for all $i\in\Z/g\Z$.
\end{proof}

\subsection{} We describe the  $\Z_{p^g}$-groups over $\cO_K$ in terms of Breuil-Kisin modules.  A \emph{$\Z_{p^g}$-Breuil-Kisin module} is an object $\fM$ in $\Modd^{\tor}$ together with an action of $\Z_{p^g}$ commuting with $\varphi$. Equivalently, a $\Z_{p^g}$-Breuil-Kisin module $\fM$ is an $(\fS\otimes\Z_{p^g})$-module $\fM=\oplus_{i\in\Z/g\Z}\fM_i$ satisfying the following properties:
\begin{enumerate}
\item each $\fM_i$ is killed by some power of $p$;

\item each $\fM_i$ has projective dimension $1$, i.e. $\fM_i$ has a two term resolution by finite free $\fS$-modules;

 \item there is a $\varphi$-linear endomorphism $\varphi:\fM\ra \fM$ such that $\varphi(\fM_i)\subset \fM_{i+1}$ and the cokernel of the linearization $1\otimes \varphi: \varphi^*(\fM_{i})\ra \fM_{i+1}$ is killed by $E(u)$.
     \end{enumerate}

We denote by $\Mod^{\tor}$ the category of $\Z_{p^g}$-Breuil-Kisin modules, and the morphisms in $\Mod^{\tor}$ are homomorphisms of $(\fS\otimes\Z_{p^g})$-modules commuting with $\varphi$. Let $\fM$ be an object of $\Mod^{\tor}$.  We define the $i$-th degree of $\fM$ as
\[\deg_i(\fM)=\frac{1}{e}\leng \biggl(\fM_i/(1\otimes\varphi)\varphi^*(\fM_{i-1})\biggr),\]
Here, ``$\leng$'' denotes the length, and the factor $\frac{1}{e}$ will be justified in Lemma \ref{lem-deg-kisin}. If $0\ra \fL\ra \fM\ra \fN\ra 0$
 is an exact sequence in $\Mod^{\tor}$, it follows from an easy diagram chasing that
$$\deg_i(\fM)=\deg_i(\fL)+\deg_i(\fN)$$ for any $i\in \Z/g\Z$.

 From Theorem \ref{thm-kisin-1}, it follows easily  that \emph{ the category of $\Z_{p^g}$-groups over $\cO_K$ is anti-equivalent to the category $\Mod^{\tor}$.}

\begin{lemma}\label{lem-deg-kisin}
Let $G$ be $\Z_{p^g}$-group over $\cO_K$, and $\fM$ be its corresponding $\Z_{p^g}$-Breuil-Kisin module. Then we have $\deg_{i}(G)=\deg_{i}(\fM)$ for $i\in \Z/g\Z$.
 \end{lemma}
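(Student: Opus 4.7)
The plan is to deduce this from Proposition \ref{prop-deg-kisin} combined with a straightforward bookkeeping of the $\Z_{p^g}$-action and the comparison between $\fS$-length and $\cO_K$-degree.

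First I would apply Proposition \ref{prop-deg-kisin} to obtain the canonical isomorphism of $\cO_K$-modules
\[
\omega_G \xra{\sim} \fM/(1\otimes\varphi)\varphi^*(\fM).
\]
Because Kisin's anti-equivalence and the construction of Prop. \ref{prop-deg-kisin} are functorial in $G$, this isomorphism is automatically $\Z_{p^g}$-equivariant: the $\Z_{p^g}$-action on $\omega_G$ corresponds to the $\Z_{p^g}$-action on $\fM$. Decomposing both sides according to the idempotents of $\cO_K\otimes\Z_{p^g}$ and observing that $(1\otimes\varphi)\varphi^*(\fM_{i-1})\subset\fM_i$ (since $\varphi$ shifts the grading by $+1$), I get isomorphisms of $\cO_K$-modules
\[
\omega_{G,i} \xra{\sim} \fM_i/(1\otimes\varphi)\varphi^*(\fM_{i-1})
\]
for each $i\in\Z/g\Z$.

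Next I would compare $\deg$ and $\leng$. The quotient $\fM_i/(1\otimes\varphi)\varphi^*(\fM_{i-1})$ is killed by $E(u)$ (by axiom (3) defining $\Mod^{\tor}$), hence is naturally an $\cO_K$-module via $\cO_K=\fS/E(u)$. Its $\fS$-length equals its $\cO_K$-length, since the simple $\fS$-modules annihilated by $E(u)$ are precisely the simple $\cO_K$-modules $k$. On the other hand, for any finite torsion $\cO_K$-module $N\simeq\bigoplus_j\cO_K/(a_j)$, one has
\[
\leng_{\cO_K}(N) \;=\; \sum_j v_\pi(a_j) \;=\; e\sum_j v_p(a_j) \;=\; e\cdot \deg(N),
\]
so $\deg(N)=\frac{1}{e}\leng_{\cO_K}(N)=\frac{1}{e}\leng_{\fS}(N)$.

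Combining the two steps yields
\[
\deg_i(G)=\deg(\omega_{G,i})=\tfrac{1}{e}\leng\bigl(\omega_{G,i}\bigr)=\tfrac{1}{e}\leng\bigl(\fM_i/(1\otimes\varphi)\varphi^*(\fM_{i-1})\bigr)=\deg_i(\fM),
\]
which is the desired equality. The only nontrivial ingredient is Proposition \ref{prop-deg-kisin} itself, so the real work is already done; what remains here is purely the compatibility of the canonical isomorphism with the $\Z_{p^g}$-grading (automatic from functoriality) and the length-versus-degree conversion (which explains the factor $\frac{1}{e}$ in the definition of $\deg_i(\fM)$). I do not foresee any genuine obstacle.
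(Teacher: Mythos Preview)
Your proof is correct and follows essentially the same approach as the paper: invoke Prop.~\ref{prop-deg-kisin}, use functoriality to get $\Z_{p^g}$-equivariance, and read off the $i$-th components. The paper is terser and simply writes ``The lemma follows immediately'' after the graded isomorphism, leaving implicit the length-versus-degree conversion (and the $\fS$-length $=$ $\cO_K$-length step) that you spell out; your extra detail is exactly the justification of the factor $\tfrac{1}{e}$ that the paper promised when defining $\deg_i(\fM)$.
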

 \begin{proof} Since  the isomorphism in Prop. \ref{prop-deg-kisin} is canonical, it necessarily commutes with $\Z_{p^g}$-actions. We have an isomorphism of $(\cO_{K}\otimes\Z_{p^g})$-modules
 $$\omega_{G}=\bigoplus_{i\in \Z/g\Z}\omega_{G,i}\simeq \fM/(1\otimes\varphi)\varphi^*(\fM)=\bigoplus_{i\in \Z/g\Z}\fM_{i}/(1\otimes\varphi)\varphi^*(\fM_{i-1}).$$
 The lemma follows immediately.
 \end{proof}
\begin{defn}\label{defn-gp-RM} Let $n\geq 1$ be an integer, $G$ be a truncated Barsotti-Tate group of level $n$ over $\cO_K$ equipped with an action of $\Z_{p^g}$. We say $G$ has \emph{formal real multiplication} (or just RM for short) by $\Z_{p^g}$ if $G$ has dimension $g$ and height $2g$ and  $\omega_G$ is a free $(\cO_K/p^n\otimes \Z_{p^g})$-module of rank $1$; in particular,  we have $\deg_i(G)=n$ for $i\in \Z/g\Z$.
\end{defn}

%\begin{rem}\label{rem-mult}
%Note that if $f:G\ra H$ is a homomorphism of group schemes over $\cO_K$, then $f$ induces a  homomorphism $G^{\mult}\ra H^{\mult}$. In particular, the multiplicative part of a $\Z_{p^g}$-group over $\cO_K$ has a natural structure of a $\Z_{p^g}$-group. Let $G$ be a truncated Barsotti-Tate group of level $n$ over $\cO_K$ with RM by $\Z_{p^g}$ such that $G^{\mult}$ is non-trivial. Then $\omega_{G^{\mult}}$ is a truncated Barsotti-Tate group  of level $n$ equipped with an action of $\Z_{p^g}$. In particular, $\omega_{G^{\mult}}$ is a free $(\cO_K/p^n\otimes\Z_{p^g})$-module of rank at least $1$. It follows from the exact sequence of $\cO_K$-modules
%\[0\ra \omega_{G/G^{\mult}}\ra \omega_{G}\ra \omega_{G^{\mult}}\ra0\]
%that $\omega_{G/G^{\mult}}=0$. Hence, $G/G^{\mult}$ is \'etale, or equivalently $G$ is ordinary.
%\end{rem}

By \cite[2.3.6]{Ki}, an object $\fM$ of $\Mod^{\tor}$
correpsonds to a truncated Barsotti-Tate group  of level $n$ over $\cO_K$ with RM by $\Z_{p^g}$ if and only if

(a) $\fM$ is a free $(\fS/p^n\otimes \Z_{p^g})$-module of rank $2g$;

(b) $\fM/(1\otimes\varphi)\varphi^*(\fM)$ is a free $(\cO_K/p^n\otimes\Z_{p^g})$-module of rank $g$.\\

Let $\fM=\oplus_{i\in \Z/g\Z}\fM_{i}$ be such a $\Z_{p^g}$-Breuil-Kisin module. For each $i\in \Z/g\Z$, we have a filtration of free $\cO_K/p^n$-modules
\[0\ra (1\otimes\varphi)\varphi^*(\fM_{i-1})/E(u)\fM_i\ra \fM_i/E(u)\fM_{i}\ra \fM_i/(1\otimes \varphi)\varphi^*(\fM_{i-1})\ra 0.\]
We say a basis  $(\delta_i,\epsilon_i)_{i\in \Z/g\Z}$ of $\fM$ over $\fS/p^n$ is \emph{adapted} if $\delta_i\in (1\otimes\varphi)\varphi^*(\fM_{i-1})$ and  the image of $\epsilon_i$ generates $\fM_i/(1\otimes\varphi)\fM_{i-1}$ over $\cO_K/p^n$. Then under such an adapted basis, there exists $\begin{bmatrix}a_i& b_i\\
c_i&d_i\end{bmatrix}\in \GL_{2}(\fS/p^n)$ such that
\begin{equation}\label{equ-kisin}
\varphi(\delta_{i-1},\epsilon_{i-1})=(\delta_i,\epsilon_i)\begin{bmatrix}a_i& b_i\\
E(u)c_i& E(u)d_i\end{bmatrix}.
\end{equation}

 \subsection{}\label{sect-hodge} Let $G$ be a truncated Barsotti-Tate group of level $1$ over $\cO_K$ with RM by $\Z_{p^g}$, and $G_1=G\otimes_{\cO_K}\cO_K/p$ be its reduction modulo $p$.  The Lie algebra of the Cartier dual of $G_1$, denoted by $\Lie(G^\vee_1)$, is a free ($\cO_K/p\otimes\Z_{p^g})$-module of rank $1$. Let
$$\Lie(G_1^\vee)=\bigoplus_{i\in\Z/g\Z} \Lie(G_1^\vee)_i$$
 be the decomposition according to the action of $\Z_{p^g}$. The Frobenius homomorphism $F_{G_1}:G_1\ra G_1^{(p)}$ induces a Frobenius linear endomorphism
 $$\HW: \Lie(G_1^\vee)\ra \Lie(G_1^\vee)$$ with $\HW:\Lie(G_1^\vee)_{i-1}\subset \Lie(G_1^\vee)_{i}$. We choose a basis $\delta_i$ for each $\Lie(G_1^\vee)_i$ over  $\cO_K/p$, and write  $\HW(\delta_{i-1})=t_i\delta_i$. Let $v_p:\cO_K/p\ra [0,1]$ be the truncated $p$-adic valuation.
 We define the \emph{$i$-th partial Hodge height of $G$} to be $w_i(G)=v_p(t_i)\in [0,1]$. It's clear that the definition does not depend on the choice of the basis $\delta_i$. Note that $G$ is ordinary if and only if $w_i(G)=0$ for all $i\in \Z/g\Z$.

\begin{lemma}\label{part-hodge}
Let $G$ be as above, and $\fM$ be its corresponding Breuil-Kisin module. We choose an adapted basis of $\fM$ so that $\varphi$ is represented by matrices of the form \eqref{equ-kisin}. Then we have $w_i(G)=\frac{1}{e}\min\{e,v_u(\overline{a}_i)\}$, where $\overline{a}_i$ is the image of $a_i$ in $\fS_1=k[[u]]$, and $v_u$ denotes the $u$-adic valuation.
\end{lemma}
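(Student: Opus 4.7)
The plan is to deduce the formula from a Breuil--Kisin description of the Dieudonn\'e data of $G_1=G\otimes_{\cO_K}\cO_K/p$. First, I would identify $\Lie(G_1^\vee)$ together with its partial Hasse--Witt map $\HW$ in terms of the module $\fM$ and its Frobenius $\varphi$. By the compatibility between BK modules and the crystalline Dieudonn\'e theory (as in Kisin's construction), the Breuil module $\cM(\fM)=S\otimes_{\fS,\varphi}\fM$ computes the value of the Dieudonn\'e crystal $\bD(G_1)$ at the PD-thickening $\Spec(\cO_K/p)\hookrightarrow\Spec(S)$. Reducing modulo $\Fil^1 S$ and $p$, each component of $\bD(G_1)(\cO_K/p)$ becomes a free $\cO_K/p$-module of rank $2$; its Hodge filtration and the image of Verschiebung are determined by the adapted basis of $\fM$. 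In this identification, $\Lie(G_1^\vee)_i$ is free of rank $1$ over $\cO_K/p$ and, by the adapted-basis condition, it is generated by the class of $\delta_i$; this explains the reuse of notation in the statement of the lemma.

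Second, I would compute $\HW$ in this identification. By Cartier duality $F_{G_1}^\vee=V_{G_1^\vee}$, so $\HW$ is induced by $V_{G_1^\vee}$, which under the BK--crystal compatibility corresponds to the Frobenius $\varphi:\fM_{i-1}\to\fM_i$. In the adapted basis, equation~\eqref{equ-kisin} gives
\[
\varphi(\delta_{i-1})=a_i\delta_i+E(u)c_i\epsilon_i\equiv \overline{a}_i\,\delta_i\pmod{(p,E(u))},
\]
and so under the identification of Step~1, the map $\HW:\Lie(G_1^\vee)_{i-1}\to\Lie(G_1^\vee)_i$ is multiplication by $\overline{a}_i\bmod u^e$. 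Thus $t_i\equiv\overline{a}_i\bmod u^e$ in $\cO_K/p$, up to a unit.

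Third, the formula follows from a direct valuation computation. The isomorphism $\cO_K/p\simeq k[u]/(u^e)$ sending $\pi\leftrightarrow u$ identifies $v_p$ with $\tfrac{1}{e}v_u$, truncated at $1$ because any element of $u$-valuation at least $e$ vanishes in $\cO_K/p$. Hence
\[
w_i(G)=v_p(t_i)=\tfrac{1}{e}\min\{e,v_u(\overline{a}_i)\},
\]
as claimed. The main delicate point will be the Step~1 identification, namely that $\Lie(G_1^\vee)_i$ is generated by $\delta_i$ rather than $\epsilon_i$; this requires careful tracking of Cartier duality between $F_{G_1}$ and $V_{G_1^\vee}$, together with the crystalline Dieudonn\'e--BK compatibility, in the spirit of Hattori's computations in~\cite{Ha}.
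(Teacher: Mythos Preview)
Your proposal is correct and follows essentially the same route as the paper. Both arguments identify $\Lie(G_1^\vee)$ via the Breuil module $\cM(\fM)$ and then read off $\HW$ as the map induced by $\varphi$, obtaining $t_i\equiv\overline{a}_i\pmod{u^e}$. The one place where the paper is sharper is precisely your ``delicate point'': rather than tracking Cartier duality by hand, the paper invokes the isomorphism \eqref{iso-br-kis} established earlier, namely $\cM(\fM)/\Fil^1\cM(\fM)\xrightarrow{\sim}(1\otimes\varphi)\varphi^*(\fM)/E(u)\fM$, together with the identification $\Lie(G_1^\vee)\simeq\cM/\Fil^1\cM$ from \cite[8.1]{La10}. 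In the adapted basis, $(1\otimes\varphi)\varphi^*(\fM_{i-1})/E(u)\fM_i$ is visibly generated by the image of $\delta_i$ (this is exactly the content of the adapted-basis condition), so no further duality chase is needed.
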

\begin{proof} Let $\cM=\cM(\fM)$ be the filtered $S$-module associated with $G$. By \cite[8.1]{La10}, there is a canonical isomorphism of $(\cO_K/p)$-modules
$\Lie(G_1^\vee)\simeq \cM/\Fil^1\cM.$
Combining with \eqref{iso-br-kis}, we have an isomorphism $$\Lie(G^\vee_1)\simeq (1\otimes\varphi)\varphi^*(\fM)/E(u)\fM,$$ where the second term is considered as an $\cO_K/p$-module via the isomorphism $\fS_1/E(u)\fS_1\simeq \cO_K/p$ given by $u\mapsto \pi$. Since everything is functorial in $G$, this is actually an isomorphism of $(\cO_K/p\otimes\Z_{p^g})$-modules. Since the endomorphisms $\HW$ on $\Lie(G_1^\vee)$ and $\varphi$ on $(1\otimes\varphi)\varphi^*(\fM)/E(u)\fM$ are both induced by Frobenius homomorphism of $G_1$ \cite[1.1.2]{K09}, one checks easily that $\HW$ and $\varphi$ coincide with each other via the canonical isomorphism above. The Lemma follows immediately.
\end{proof}

Let $G$ be a truncated Barsotti-Tate group  of level $1$ over $\cO_K$ with RM by $\Z_{p^g}$. We say a finite flat closed subgroup scheme $H\subset G$ is \emph{$\Z_{p^g}$-cyclic or $\F_{p^g}$-cyclic}, if $H(\Kb)$ is a one-dimensional $\F_{p^g}$-subspace of $G(\Kb)$.

\begin{lemma}\label{lem-two-cyclic} If $H,H'$ are two distinct $\Z_{p^g}$-cyclic subgroups of $G$, then for all $i\in \Z/g\Z$, we have
\[\sum_{j=0}^{g-1}p^j(\deg_{i-j}(H)+\deg_{i-j}(H'))\leq \frac{p^g-1}{p-1}.\]
\end{lemma}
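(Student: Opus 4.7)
The plan is to compare $H$ with the quotient $G/H'$, which is itself a scheme of $1$-dimensional $\F_{p^g}$-vector spaces, and then invoke Lemma \ref{lem-ray}. More precisely, since $G$ has RM by $\Z_{p^g}$ with $n=1$, the generic fiber $G_{K}$ is a $2$-dimensional $\F_{p^g}$-vector space. The subgroups $H$ and $H'$ are distinct $\F_{p^g}$-lines in $G_{K}$, so $H_K\cap H'_K=0$ and the natural composition
\[\phi: H \hookrightarrow G \twoheadrightarrow G':=G/H'\]
is a $\Z_{p^g}$-equivariant homomorphism of finite flat group schemes over $\cO_K$ which is an isomorphism on generic fibers. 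Moreover, $G'$ is a scheme of $1$-dimensional $\F_{p^g}$-vector spaces, since its generic fiber is and $G'$ has order $p^g$.

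Next I would apply Lemma \ref{lem-ray} to the map $\phi$: for every $i\in\Z/g\Z$,
\[\sum_{j=0}^{g-1}p^j\,\deg_{i-j}(G')\;\geq\;\sum_{j=0}^{g-1}p^j\,\deg_{i-j}(H).\]
From the short exact sequence $0\to H'\to G\to G'\to 0$ and the additivity of the partial degrees recorded just after Definition \ref{defn-deg}, we have $\deg_{i-j}(G')=\deg_{i-j}(G)-\deg_{i-j}(H')$. Since $G$ has RM by $\Z_{p^g}$ of level $1$, its module of differentials $\omega_{G}$ is free of rank $1$ over $\cO_K/p\otimes\Z_{p^g}$, so $\deg_{i-j}(G)=1$ for all indices. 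Substituting and rearranging gives exactly
\[\sum_{j=0}^{g-1}p^j\bigl(\deg_{i-j}(H)+\deg_{i-j}(H')\bigr)\;\leq\;\sum_{j=0}^{g-1}p^j\;=\;\frac{p^g-1}{p-1}.\]

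There is no real obstacle here: once $G/H'$ is identified as a scheme of $1$-dimensional $\F_{p^g}$-vector spaces and Lemma \ref{lem-ray} is in hand, the proof is immediate. The only small point to check carefully is that $G/H'$ genuinely belongs to Raynaud's class (i.e.\ is an $\F_{p^g}$-vector space scheme of dimension $1$), which follows from the fact that $H'$ is $\Z_{p^g}$-stable and isotropic-free of rank $1$ over $\cO_F/p$ on the generic fiber, together with the rank count $|G/H'|=|G|/|H'|=p^g$.
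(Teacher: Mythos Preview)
Your proof is correct and is exactly the argument the paper gives: apply Lemma~\ref{lem-ray} to the composite $H\hookrightarrow G\twoheadrightarrow G/H'$, then use additivity of the partial degrees together with $\deg_{i-j}(G)=1$. The paper states this in a single line; you have simply written out the details.
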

\begin{proof}
The Lemma follows from \ref{lem-ray} applied to the homomorphism $H\hra G\ra G/H'$.
\end{proof}

The following theorem is a slightly generalized version of \cite[Thm. 5.4.3]{GK}. The proof is motivated by  \cite[3.4]{Ha}.

\begin{thm}\label{thm-can}
Let $G$ be a truncated Barsotti-Tate group of level $1$ with RM by $\Z_{p^g}$, and denote $w_i=w_i(G)$. Assume that $w_i+pw_{i-1}<p$ for all $i\in \Z/g\Z$. Then there exists a $\Z_{p^g}$-cyclic subgroup $C\subset G$ such that $\deg_i(C)=1-w_i$; moreover, $C$ is the unique $\Z_{p^g}$-cyclic subgroup of $G$ satisfying
$$\deg_{i}(C)+p\deg_{i-1}(C)>1\quad \text{for any }i\in \Z/g\Z.$$
\end{thm}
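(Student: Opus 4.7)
The plan is to work through the category of $\Z_{p^g}$-Breuil--Kisin modules. Let $\fM$ be the $\Z_{p^g}$-Breuil--Kisin module of $G$, fix an adapted basis $(\delta_i,\epsilon_i)_{i\in\Z/g\Z}$ so that $\varphi$ is given by matrices as in \eqref{equ-kisin}, and recall from Lemma~\ref{part-hodge} that $v_u(\bar a_i)=ew_i$ whenever $w_i<1$. By the anti-equivalence of Theorem~\ref{thm-kisin-1}, a $\Z_{p^g}$-cyclic subgroup $C\subset G$ corresponds to a $\varphi$-stable graded $(\fS/p\otimes\Z_{p^g})$-submodule $\fN=\bigoplus\fN_i\subset\fM$, with each $\fN_i$ a rank-one direct summand of $\fM_i$; concretely $\fN=\fM_{G/C}$, and by Lemma~\ref{lem-deg-kisin}, $\deg_i(C)=1-\deg_i(\fN)$. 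Motivated by \cite[3.4]{Ha}, I will parametrize $\fN$ in the ``canonical chart'' by writing $\fN_i=(\fS/p)\cdot(\delta_i+\mu_i\epsilon_i)$ and reduce the problem to a fixed-point equation for the slopes $(\mu_i)$.

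A direct computation using \eqref{equ-kisin} shows that $\varphi(\fN_{i-1})\subset\fN_i$ is equivalent to the cyclic system
\[\mu_i(a_i+b_i\mu_{i-1}^{\varphi})=E(u)(c_i+d_i\mu_{i-1}^{\varphi}),\qquad i\in\Z/g\Z,\]
i.e.\ $\mu_i=F_i(\mu_{i-1})$ with $F_i(y)=E(u)(c_i+d_iy^\varphi)/(a_i+b_iy^\varphi)$, and the constructed quotient has $\deg_i(\fN)=\tfrac1ev_u(a_i+b_i\mu_{i-1}^\varphi)$. The first step is to solve this system in $\fS/p=k[[u]]$ by iteration, starting from $\mu_i^{(0)}=0$. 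The key valuation check is inductive: the hypothesis $w_i+pw_{i-1}<p$ gives $pe(1-w_{i-1})>ew_i=v_u(a_i)$, so if $v_u(\mu_{i-1}^{(n)})\ge e(1-w_{i-1})$, then the denominator $a_i+b_i(\mu_{i-1}^{(n)})^\varphi$ has $u$-valuation exactly $ew_i$ (using that $b_i$ is a unit when $w_i>0$, from $a_id_i-b_ic_i\in(\fS/p)^\times$), and the new slope lies in $\fS/p$ with $v_u(\mu_i^{(n+1)})\ge e(1-w_i)$. Because $y\mapsto y^\varphi$ multiplies $u$-valuations by $p$, the composite $F=F_0\circ\cdots\circ F_1$ contracts differences $u$-adically by a factor $p^g$, so the iterates converge to a unique fixed point with $v_u(\mu_i)=e(1-w_i)$; this fixed point yields the desired $C$, and the partial degrees drop out immediately as $\deg_i(C)=1-w_i$.

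For the uniqueness statement, one first checks that the constructed $C$ satisfies the condition $\deg_i(C)+p\deg_{i-1}(C)=1+p-(w_i+pw_{i-1})>1$, so the hypothesis is consistent. Now suppose $C'\ne C$ is another $\Z_{p^g}$-cyclic subgroup with $\deg_i(C')+p\deg_{i-1}(C')>1$ for all $i$. Applying Lemma~\ref{lem-two-cyclic} to the pair $(C,C')$ and substituting $\deg_{i-j}(C)=1-w_{i-j}$ yields
\[\sum_{j=0}^{g-1}p^j\deg_{i-j}(C')\le\sum_{j=0}^{g-1}p^jw_{i-j}.\]
On the other hand, the degree inequality for $C'$, iterated cyclically along the lines of $\deg_{i-j}(C')>(1-\deg_{i-j+1}(C'))/p$, combined with the strict hypothesis $w_i+pw_{i-1}<p$, produces a lower bound on the left-hand side that contradicts the above. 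Equivalently and more cleanly, any such $C'$ landing in the same chart is described by slopes $(\mu'_i)$ satisfying the same recursion with $v_u(\mu'_i)$ lying in the contraction regime, so the fixed-point analysis already done forces $\mu'=\mu$ and hence $C'=C$; the boundary cases where some $\mu'_i$ fails to exist (forcing the use of the opposite chart $\epsilon_i+\nu_i\delta_i$) are ruled out by showing that the resulting $\nu$-recursion cannot yield the required degrees under the given valuation constraints.

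The main obstacle will be the careful bookkeeping of $u$-valuations throughout the iteration, in particular ensuring that the denominators $a_i+b_i\mu_{i-1}^\varphi$ keep their ``expected'' valuation $ew_i$ along the whole orbit; this is precisely where the strict inequality $w_i+pw_{i-1}<p$ is indispensable, and it is the same estimate that underlies both existence of the canonical subgroup and its uniqueness among cyclic subgroups meeting the degree lower bound.
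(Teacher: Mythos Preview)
Your existence argument is essentially the paper's: you set up the same rank-one submodule $\fN_i$ (the paper calls it $\fL_i$) generated by $\delta_i+\mu_i\epsilon_i$, derive the same cyclic recursion, and solve it by iteration using the $u$-adic contraction coming from $\varphi$. This part is correct and matches the paper.

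The uniqueness argument has a genuine gap. Your first attempt via Lemma~\ref{lem-two-cyclic} does not close: the inequality $\sum_j p^j\deg_{i-j}(C')\le\sum_j p^j w_{i-j}$ together with $\deg_i(C')+p\deg_{i-1}(C')>1$ and $w_i+pw_{i-1}<p$ are not mutually inconsistent as numerical constraints (try $g=2$, $p\ge 3$, $w_1=w_2=\tfrac12$, $\deg_1(C')=\deg_2(C')=\tfrac12$), so no contradiction follows from these inequalities alone. Your second attempt is the right route, but the assertion ``with $v_u(\mu'_i)$ lying in the contraction regime'' is precisely the step that requires proof, and you have not indicated how to obtain it. In the paper's argument one first uses $\deg_i(C')>0$ to place $\fL'_i$ in the $\delta_i+x_i\epsilon_i$ chart; then the two $\varphi$-stability equations \eqref{equ-a-1}--\eqref{equ-a-2} give $v_u(A'_i)\ge e(1-r_i)$ with $r_i=\tfrac1e v_u(x_i)$, which combined with the degree hypothesis yields $r_i+pr_{i-1}>1$ for all $i$; finally a short contradiction argument using $v_u(a_i)=ew_i$ forces $r_i\ge 1-w_i$. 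Only after this bound can one write $x_i=u^{e(1-w_i)}z'_i$ and invoke uniqueness of the fixed point. Without it the contraction-mapping uniqueness does not apply: a priori $\mu'_i$ could have small $u$-valuation, and then the denominator $a_i+b_i(\mu'_{i-1})^\varphi$ need not have the expected valuation $ew_i$, so $F_i$ is not a contraction on the relevant region.
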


\begin{proof}

Let $\fM=\oplus_{i\in\Z/g\Z}\fM_i$ be the Breuil-Kisin module associated with $G$. We choose an adpated basis $(\delta_i,\epsilon_i)_{i\in \Z/g\Z}$ of $\fM$ so that $\varphi$ is represented by matrices \eqref{equ-kisin}. Note that $w_{i+1}+pw_{i}<p$ implies that $w_i<1$; so by Lemma \ref{part-hodge}, we have $v_u(a_i)=ew_i$. By Lemma \ref{lem-deg-kisin}, we have to show  that there exists  a quotient $\fN=\oplus_{i\in\Z/g\Z}\fN_i$ of $\fM$ such that $\deg_i(\fN)=1-w_i$ and it is the unique quotient satisfying $\deg_i(\fN)+p\deg_{i-1}(\fN)>1$ for $i\in\Z/g\Z$.

We prove first the existence of $\fN$. We construct a direct summand $\fL=\oplus_{i\in \Z/g\Z}\fL_i$ of $\fM$ such that $\fL_i$ is the submodule of $\fM_i$ generated by $$\eta_i=(\delta_i,\epsilon_i)\begin{bmatrix}1\\u^{e(1-w_i)}z_i\end{bmatrix}, $$ where $z_i\in \fS_1$ is some element to be determined. If we require that $\fL$ is an sub-oject of $\fM$,  there should exists a certain $A_i\in \fS_1$ such that
$\varphi(\eta_{i-1})=A_i\eta_i$. Using the equation \eqref{equ-kisin}, we get
\begin{equation}\label{equ-exist}
\begin{cases}a_i+b_iu^{ep(1-w_{i-1})}z_{i-1}^p&=A_i\\
u^e(c_i+u^{ep(1-w_{i-1})}d_iz_{i-1}^p)&=u^{e(1-w_i)}z_iA_i
\end{cases}
\end{equation}
Since $v_u(a_i)=ew_i$, there exists a unit $\hat{a}_i\in \fS_1$ such that $a_i\hat{a}_i=u^{ew_i}$. Then we have $z_{i}=g_i(z_{i-1})$, where 	
\[g_i(z)=\frac{c_i+u^{ep(1-w_{i-1})}d_iz^p}{\hat{a}_i+u^{e(p-pw_{i-1}-w_i)}b_iz^p}.\]
Note that $1-w_{i-1}>0$ and $p-pw_{i-1}-w_{i}>0$ by assumption. We get therefore
\[z_i=g_i\circ g_{i-1}\circ\cdots \circ g_{i-g+1}(z_i).\]
By iteration, it's easy to see that the equation admits  a unique solution in $\fS_1$ for $z_i$. This well defines the sub-oject $\fL\subset \fM$. From
$$ew_i=v_u(a_i)<ep(1-w_{i-1})\leq v_u(b_iu^{ep(1-w_{i-1})}z_{i-1}^p)$$ by assumption, we deduce that
$\deg_i(\fL)=\frac{1}{e}v_u(A_i)=w_i.$ We can take $\fN$ to be $\fM/\fL$.

For the uniqueness of $\fN$, we assume that  $\fN'$ is a quotient of $\fM$ with $\deg_{i+1}(\fN')+p\deg_{i}(\fN')>1$ for $i\in \Z/g\Z$. We have to show that $\fN=\fN'$. Since $\deg_{i+1}(\fN')\leq 1$, we have $\deg_{i}(\fN')>0$, i.e.  $\fN'_i/(1\otimes \varphi)\varphi^*(\fN'_{i-1})\neq 0$. Let $\fL'$ be the kernel of $\fM\ra \fN'$. We have for each $i\in \Z/g\Z$ an exact sequence of $\cO_K/p$-modules
\[0\ra \fL'_{i}/(1\otimes \varphi)\varphi^*(\fL'_{i-1})\ra \fM_{i}/(1\otimes\varphi)\varphi^*(\fM_{i-1})\ra \fN'_{i}/(1\otimes \varphi)\varphi^*(\fN'_{i-1})\ra 0.\]
Because $\fM_{i}/(1\otimes \varphi)\varphi^*(\fM_{i-1})\simeq \fS_1/u^e\epsilon_i$, we see that $\fN'_{i}$ is generated  by the image of $\epsilon_i$ in $\fN'_i$. Hence there exists  $x_i\in \fS_1^\times$ such that $\eta_i'=\delta_i+x_i\epsilon_i\in \fL'_i$.  We put $r_i=\frac{1}{e}v_u(x_i)$. As $\varphi(\fL'_{i-1})\subset \fL'_{i}$, there exists $A'_i\in \fS_1$ such that $\varphi(\eta_{i-1})=A'_i\eta_{i}$. We have
$$v_u(A'_i)=e\deg_i(\fL')= e(1-\deg_i(\fN')),$$
where the second equality comes from the additivity of the degree function and the fact that $\deg_i(\fM)=1$.
Therefore, we get
\begin{equation}\label{equ-f-1}
v_u(A'_i)+pv_u(A'_{i-1})=e(p+1-\deg_i(\fN')-p\deg_{i-1}(\fN'))<ep.
\end{equation}
On the other hand, using \eqref{equ-kisin} as above, we have equations
\begin{align}
a_i+b_ix_{i-1}^p&=A'_i\label{equ-a-1}\\
u^e(c_i+d_i x_{i-1}^p)&=x_iA'_i.\label{equ-a-2}
\end{align}
We claim that $r_{i}\geq 1-w_i$ for any $i\in \Z/g\Z$. Admitting this claim for the moment, we can write  $x_i=u^{e(1-w_i)}z_i'$. Then the $z_i'$'s will satisfy the equations \eqref{equ-exist}. But we have seen that \eqref{equ-exist} admits a unique solution $z_i$ for each $i\in \Z/g\Z$. So we have $z_i'=z_i$, and hence $\fL=\fL'$. It remains to prove the claim.
We deduce first from \eqref{equ-a-2} that $v_u(A'_i)\geq e(1-r_i)$. In view of \eqref{equ-f-1}, we get
\begin{equation}\label{equ-f-2}r_i+pr_{i-1}>1\quad\; \text{for all }i\in \Z/g\Z. \end{equation}
If $r_i<1-w_i$ for some $i\in \Z/g\Z$, we have $v_u(A'_i)\geq e(1-r_i)>ew_i$. Because of \eqref{equ-a-1}, we have
\[ew_i=v_u(a_i)=v_{u}(b_ix_{i-1}^{p})\geq epr_{i-1}.\]
So we have $1-r_i>w_i\geq pr_{i-1}$, i.e. $r_i+pr_{i-1}<1$, which contradicts with \eqref{equ-f-2}. This completes the proof.

\end{proof}

 \begin{rem} The subgroup $C\subset G$ given by the theorem is called the \emph{canonical subgroup} of $G$. By the same argument as in \cite[Thm. 5.4.2]{GK}, it's not hard to see that the subgroup $C$ verifies the ``Frobenius lifting property'':\emph{ If we denote $w=\max_{i\in \Z/g\Z}\{w_i\}<1$, then $C\otimes_{\cO_K}(\cO_{K}/p^{1-w}\cO_K)$ coincides with the kernel of Frobenius of $G\otimes_{\cO_K}(\cO_{K}/p^{1-w}\cO_K),$ where $p^{1-w}$ denotes any element in $\cO_K$ with $p$-adic valuation $1-w$.}
\end{rem}

Let $I$ be a subset of $\Z/g\Z$,  $I^c$ be its complementary, and $|I|$ be the cardinality. We denote by $\sigma(I)$ the image of $I$ under the action of Frobenius $\sigma: \Z/g\Z\ra \Z/g\Z$ given by $i\mapsto i+1$. Let $G$ be a truncated Barsotti-Tate group of level $1$ with RM by $\Z_{p^g}$ over $\cO_K$. We say that a $\Z_{p^g}$-cyclic subgroup $H$ of $G$ is \emph{ special of type $I$} if $\deg_i(H)=1$ for $i\in I$ and $\deg_i(H)=0$ for $i\in I^c$.

 \begin{prop}\label{prop-gen-split}
  Let the notation be as above.

  \emph{(a)} Assume that $G$ admits a special subgroup $H$ of type $I$.
  \begin{enumerate}

    \item[(1)] The group $H$ is necessarily a truncated Barsotti-Tate group of level $1$ of height $g$ and dimension $|I|$ over $\cO_K$. Moreover, we have $w_{i}(G)=1$ for $i\in \sigma(I)\cap I^{c}$, and $w_{i}(G)=0$ for $i\in (\sigma(I)\cap I)\cup (\sigma(I^c) \cap I^c)$.

        \item[(2)] If $H'$ be another $\Z_{p^g}$-cyclic subgroup of $G$ distinct from $H$, then we have
        $$\deg_{i}(H')\leq \frac{1}{p-1}(1-\frac{1}{p^{g-1}})\quad\text{for all }i\in I.$$ In particular, if $I\neq \emptyset$, then $G$ admits at most one special subgroup of type $I$.

        \item[(3)]  If $G$ admits  another special subgroup $H'$ of type  $I'$ with $H'\neq H$, then   either $I=I'=\emptyset$, or  $I'=I^c$ and the natural map $H\times H'\ra G$  is an isomorphism of finite flat group schemes over $\cO_K$. In the second case, if $H''$ is a $\Z_{p^g}$-cyclic subgroup of $G$ distinct from $H$ and $H'$, then we have
            $$\deg_{i}(H'')\leq \frac{1}{p-1}(1-\frac{1}{p^{g-1}})\quad \text{ for all }i\in \Z/g\Z.$$

    \end{enumerate}

 \emph{(b)} Conversely, assume that $\sigma(I^{c})\subset I$, $w_i(G)=1$ for $i\in \sigma(I)\cap I^{c}$, $w_{i}(G)=0$ for $i\in \sigma(I)\cap I$, and $w_i(G)>0$ for $i\in \sigma(I^c)$. Then $G$ admits a special subgroup of type $I$.
\end{prop}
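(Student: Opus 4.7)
The proof rests on the Breuil--Kisin module formalism of this section. Write $\fM = \bigoplus_{i\in\Z/g\Z}\fM_i$ for the BK module of $G$, with each $\fM_i$ free of rank $2$ over $\fS_1 = k[[u]]$.

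\emph{Part (a).} Suppose $H$ is special of type $I$, corresponding under the anti-equivalence to a surjection $\fM \twoheadrightarrow \fN$ with kernel $\fL$ (the BK module of $G/H$). Degree additivity and $\deg_i(G) = 1$ force $\deg_i(\fN) = \mathbf{1}_{i\in I}$ and $\deg_i(\fL) = \mathbf{1}_{i \notin I}$. Choose a basis $(\delta_i, \epsilon_i)$ of $\fM_i$ in which $\delta_i$ generates $\fL_i$ and $\epsilon_i$ projects to a generator of $\fN_i$; then $\varphi$ is represented by $\bigl(\begin{smallmatrix} a_i & b_i \\ 0 & d_i \end{smallmatrix}\bigr)$ with $v_u(a_i) = e\cdot \mathbf{1}_{i\notin I}$ and $v_u(d_i) = e \cdot \mathbf{1}_{i\in I}$.

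For (a)(1), the height $g$ is immediate from $|H| = p^g$, and Proposition \ref{prop-deg-kisin} identifies $\omega_H$ with $\bigoplus_{i\in I}\cO_K/p$, giving dimension $|I|$; the BT level $1$ structure then follows from a standard criterion applied to the shape of $\fN$ (each $\fN_i$ free of rank $1$ over $\fS_1$). For the partial Hodge heights, pass to an adapted basis; the change of basis is trivial when $i \in I$, and for $i \notin I$ is given by $\delta^{\mathrm{ad}}_i = b_i\delta_i + d_i\epsilon_i$, $\epsilon^{\mathrm{ad}}_i = \delta_i$. Computing the $(1,1)$-entry of $\varphi$ in the adapted basis case-by-case on $(\mathbf{1}_{i-1 \in I}, \mathbf{1}_{i \in I})$ yields $a_i$, $\varphi(d_{i-1})$, $0$, $a_i\varphi(b_{i-1}) + b_i\varphi(d_{i-1})$, with the first three respectively a unit, a unit, and zero. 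Lemma \ref{part-hodge} then gives the stated values of $w_i$.

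For (a)(2), apply Lemma \ref{lem-two-cyclic} at the index $i + g - 1 \pmod g$, so that $\deg_i(H)$ and $\deg_i(H')$ appear with coefficient $p^{g-1}$. Using $\deg_i(H) = 1$ and discarding the non-negative middle terms gives $p^{g-1}(1 + \deg_i(H')) \le \frac{p^g - 1}{p-1}$, which rearranges to the stated bound. Uniqueness is immediate since this bound is strictly less than $1$.

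For (a)(3), inputting $\deg_{i-k}(H) = \mathbf{1}_{i-k \in I}$ and $\deg_{i-k}(H') = \mathbf{1}_{i-k \in I'}$ into Lemma \ref{lem-two-cyclic} yields, for every $i$,
\begin{equation*}
\sum_{k=0}^{g-1} p^k \bigl(\mathbf{1}_{i-k \in I} + \mathbf{1}_{i-k \in I'} - 1\bigr) \le 0.
\end{equation*}
Summing over $i$ gives $|I| + |I'| \le g$. If $|I| + |I'| = g$ but $I' \neq I^c$, then $I \cap I'$ and $I^c \cap I'^c$ are both non-empty; placing an element of $I \cap I'$ at position $k = g-1$ contributes $+p^{g-1}$, which strictly exceeds the maximum negative contribution $\sum_{k=0}^{g-2} p^k = \frac{p^{g-1}-1}{p-1}$, contradicting the inequality. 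If $|I| + |I'| < g$ and $I' \neq I$, a similar placement yields a contradiction unless $I = I' = \emptyset$; the case $I = I' \neq \emptyset$ is ruled out by uniqueness in (a)(2). Hence $I' = I^c$ (or the trivial case $I = I' = \emptyset$). In the former case, $\deg_i(H) + \deg_i(H') = 1 = \deg_i(G)$ for all $i$, and $H$, $H'$ are distinct $\F_{p^g}$-lines on $\Kb$-points, which by a comparison of orders forces $H + H' = G$ and $H \cap H' = 0$ scheme-theoretically; the natural map $H \times H' \to G$ is therefore an isomorphism. The bound on $\deg_i(H'')$ follows by applying (a)(2) separately to $(H, H'')$ and $(H', H'')$, whose respective bounds cover $i \in I$ and $i \in I^c$.

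\emph{Part (b).} Work in an adapted basis of $\fM$ where $\varphi$ has matrix $\bigl(\begin{smallmatrix} a_i & b_i \\ u^e c_i & u^e d_i \end{smallmatrix}\bigr) \in \GL_2(\fS_1)$. By Lemma \ref{part-hodge}, the hypotheses translate to: $a_i \in \fS_1^\times$ for $i \in \sigma(I) \cap I$, $a_i \in u^e \fS_1^\times$ for $i \in \sigma(I) \cap I^c$, and $a_i \in u\fS_1$ for $i \in \sigma(I^c)$. The assumption $\sigma(I^c) \subset I$ ensures that $\sigma(I^c) \cap I^c = \emptyset$, so these three cases partition $\Z/g\Z$. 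We seek a sub-BK module $\fL \subset \fM$ with $\deg_i(\fL) = \mathbf{1}_{i \notin I}$, generated by
\begin{equation*}
\eta_i = \delta^{\mathrm{ad}}_i + u^{e\mathbf{1}_{i \in I}} z_i \epsilon^{\mathrm{ad}}_i, \qquad z_i \in \fS_1.
\end{equation*}
Imposing $\varphi(\eta_{i-1}) = A_i \eta_i$ and solving yields a recursion $z_i = g_i(z_{i-1})$ for explicit rational functions $g_i$; iterating $g$ times gives the fixed-point equation $z_i = g_i \circ \cdots \circ g_{i-g+1}(z_i)$. The positivity of the $u$-valuations in the numerators of $g_i$, guaranteed by the three cases above, makes this composite a $u$-adic contraction on $\fS_1$, exactly as in the proof of Theorem \ref{thm-can}, so the equation has a unique solution in $\fS_1$. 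The corresponding $\fL \subset \fM$ yields a quotient $\fN = \fM/\fL$ whose associated subgroup scheme $H \subset G$ is special of type $I$.

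The main technical obstacles are the careful case analysis of the adapted-basis matrix in (a)(1), the combinatorial argument ruling out $I' \neq I, I^c$ in (a)(3), and the verification that the iterated recursion in (b) is a $u$-adic contraction under the given hypotheses on the partial Hodge heights.
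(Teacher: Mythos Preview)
Your arguments for (a)(1), (a)(2) and (b) are essentially the paper's, carried out in a slightly different basis; the minor points (the odd ordering of the four cases in (a)(1), and the phrase ``comparison of orders'' in (a)(3) where you really mean Fargues's degree criterion \cite[Cor.~3]{Fa}) are cosmetic.

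The substantive gap is in (a)(3). Your argument there tries to deduce the dichotomy $I'=I^c$ or $I=I'=\emptyset$ purely from the inequalities of Lemma~\ref{lem-two-cyclic}, but this is not enough. Once you invoke (a)(2) you already know $I'\subset I^c$, so the case $|I|+|I'|=g$ with $I'\neq I^c$ is vacuous and your placement argument there proves nothing new. The real content is the case $I'\subsetneq I^c$ with $I\neq\emptyset$, and here your ``similar placement'' cannot work: take $g=3$, $I=\{0\}$, $I'=\{1\}$. Then $I'\subset I^c$, and for each $i\in\Z/3\Z$ one checks directly that $\sum_{k=0}^{2}p^k(\mathbf{1}_{i-k\in I}+\mathbf{1}_{i-k\in I'}-1)\le 0$, so no choice of $i$ violates Lemma~\ref{lem-two-cyclic}. (Since $I\cap I'=\emptyset$, each summand is $\le 0$, so the sum is automatically $\le 0$.) The same obstruction arises for $I=\emptyset$, $I'$ a nonempty proper subset.

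What the paper does instead is use (a)(1), which you have already proved: from $I'\subset I^c$, if $I'\neq I^c$ pick $i\in I^c\cap I'^c$. Were $i-1\in I$, then $i\in\sigma(I)\cap I^c$ forces $w_i(G)=1$ via $H$, while $i\in\sigma(I'^c)\cap I'^c$ forces $w_i(G)=0$ via $H'$; contradiction. Hence $i-1\in I^c$, and symmetrically $i-1\in I'^c$. Iterating gives $\Z/g\Z=I^c\cap I'^c$, i.e.\ $I=I'=\emptyset$. You should replace your combinatorial paragraph with this Hodge-height argument.
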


\begin{proof}

 (a)  First, by Raynaud's explicit classification \eqref{Ray-class}, a group scheme  $H$ of $\F_{p^g}$-vector spaces of dimension $1$ over $\cO_K$ is a truncated Barsotti-Tate group of level $1$ if and only if $\deg_i(H)\in\{0,1\}$ for all $i\in \Z/g\Z$.  Let $\fM=\oplus_{i\in \Z/g\Z}\fM_i$ be the Breuil-Kisin module associated to $G$, and $(\delta_i,\epsilon_i)$ be an adapted  basis of $\fM_i$ so that  $(1\otimes \varphi)\varphi^*(\fM_{i-1})=\fS_1\delta_i\oplus E(u)\fS_1 \epsilon_i$. Let $\fL=\oplus_{i\in \Z/g\Z}\fL_{i}$ be the Breuil-Kisin submodule of $\fM$ attached to the quotient $G/H$, and $\fN=\fM/\fL$ be  associated to $H$. By Lemma \ref{lem-deg-kisin}, we have $\deg_i(\fN)=\deg_i(H)=1$ for $i\in I$, i.e.
 \[\fN_i/(1\otimes \varphi)\varphi^*(\fN_{i-1})=\fM_{i}/((1\otimes\varphi)\varphi^*(\fM_{i-1})+\fL_i)\simeq \fS_1/u^e\epsilon_i.\]
  Since $\fL_i$ is a direct summand of $\fM_i$, there exists $x_i\in \fS_1$ such that $\fL_i=\fS_1(\delta_i+u^e x_i\epsilon_i)$. Up to replacing $\delta_i$ by $\delta_i+u^ex_i\epsilon_i$, we may assume that $\fL_i=\fS_1\delta_i$ for  $i\in I$. Similarly, since $\deg_i(\fN)=\deg_i(H)=0$ for $i\in I^c$, we see that
  \[\fM_i=\fL_i+(1\otimes \varphi)\varphi^*(\fM_{i-1}).\]
  Up to modifying $\epsilon_i$, we may assume that $\fL_i=\fS_1\epsilon_i$ if $i\in I^c$.  The following facts follow  easily from the condition that $\varphi(\fL_{i-1})\subset \fL_i$.
  \begin{itemize}
\item If $i\in I\cap \sigma(I)$,  there exists $a_i\in \fS_1$ such that $\varphi(\delta_{i-1})=a_i\delta_{i}$. As $\deg_{i}(\fL)=1-\deg_i(H)=0$, we have $a_i\in \fS_1^{\times}$. In particular, we have $w_i(G)=0$ by \ref{part-hodge}.

\item  If $i\in I^c\cap \sigma(I)$, there exists $c_i\in \fS_1$ such that $\varphi(\delta_{i-1})=u^ec_i\epsilon_i$. In particular, $w_i(G)=1$ by \ref{part-hodge}.

 \item If $i\in I^c\cap \sigma(I^c)$, there exists $d_i\in \fS_1$ such that $\varphi(\epsilon_{i-1})=u^ed_{i}\epsilon_i$. As $(1\otimes \varphi)\varphi^*(\fM_{i-1})=\fS_1\delta_i+E(u)\fS_1\epsilon_i$, we see that if  $\varphi(\delta_{i-1})=a_i\delta_i+u^ec_i\epsilon_i$, then $a_i$ is a unit in  $\fS_1$. In particular, $w_i(G)=0$.
   \end{itemize}
 This prove statement (1).

 For (2), it follows from Lemma \ref{lem-two-cyclic} that
$$p^{g-1}(1+\deg_i(H'))=p^{g-1}(\deg_i(H)+\deg_i(H'))\leq \frac{p^g-1}{p-1}\quad \text{for } i\in I,$$ whence statement (2). For (3), we note first (2) implies $I'\subset I^c$, or equivalently $I\subset I'^c$. We have to show that if $I'\neq I^c$, then $I=I'=\emptyset$.  Let $i\in I^c\cap I'^c$. If  $i-1$ were in $ I\subset I'^c$, then  $i\in \sigma(I)\cap I^c$ and (1) would imply that $w_i(G)=1$. But we have also $i\in \sigma(I'^c)\cap I'^c$, so (1) applied to $H'$ implies that $w_i(G)=0$. This is a contradiction,  hence $i-1\in I^c$. In the same way, we have   $i-1\in I'^c$. Repeating this argument, we see that  $\Z/g\Z=I^c\cap I'^c$, i.e. $I=I'=\emptyset$. Note that the natural map $f: H\times H'\ra G$ is an isomorphism over the generic fibers. If $I'=I^c$, then $\deg(H)+\deg(H')=\deg(G)$. Therefore, $f$ is an isomorphism by \cite[Cor. 3]{Fa}. The second part of (3) follows directly from (2).

(b) We may  assume $\varphi$ is given by the matrices \eqref{equ-kisin}
%\[\varphi(\delta_{i-1},\epsilon_{i-1})=(\delta_i,\epsilon_i)\bem a_i& b_i\\ u^ec_i& u^ed_i\enm\]
such that $\bem a_i &b_i\\ c_i&d_i\enm$ is invertible for $i\in \Z/g\Z$. Under the assumption of statement (b), Lemma \ref{part-hodge} implies that $a_i\in \fS_1^{\times }$ for $i\in \sigma(I)\cap I$, $b_i, c_i\in \fS_1^{\times}$ if $i\in (\sigma(I)\cap I^c)\cup \sigma(I^c)$. Up to modifying the basis vectors, we may assume $\delta_i=\varphi(\delta_{i-1})$ if $i\in \sigma(I)\cap I$, and $\delta_i=\varphi(\epsilon_{i-1})$ if $i\in (\sigma(I)\cap I^c)\cup \sigma(I^c)$. Then the matrices of $\varphi$ can be simplified as
\begin{align*}
\varphi(\delta_{i-1},\epsilon_{i-1})&=(\delta_i,\epsilon_i)\bem1 & b_i\\0& u^e d_i\enm \quad \text{if } i\in \sigma(I)\cap I;\\
\varphi(\delta_{i-1},\epsilon_{i-1})&=(\delta_i,\epsilon_i)\bem a_i &1\\ u^e c_i &0\enm \quad \text{if } i\in (\sigma(I)\cap I^c)\cup \sigma(I^c).
\end{align*}
We write $a_i=u^e a_i'$ if $i\in \sigma(I)\cap I^c$.
The existence of $H$ is equivalent to the existence of a Breuil-Kisin submodule $\fL=\oplus_{i\in \Z/g\Z}\fL_i$ of $\fM$ such that $\deg_i(\fL)=0$ if $i\in I$, and $\deg_i(\fL)=1$ if $i\in I^c$. By the discussion in (a), we may assume $\fL_i=(\delta_i+u^e x_i\epsilon_i)\fS_1$ for $i\in I$ and $\fL_i=(\epsilon_i+x_i\delta_i)\fS_1$ for $i\in I^c$, where the  $x_i$'s are some elements in $\fS_1$ to be determined later.
\begin{itemize}
\item If $i\in \sigma(I)\cap I$, then $\varphi(\delta_{i-1}+u^e x_{i-1}\epsilon_{i-1})=(1+u^{ep}x_{i-1}^pb_i)\delta_i +u^{ep+1}x_{i-1}^pd_i \epsilon_i$. The condition $\varphi(\fL_{i-1})\subset \fL_i$ implies that
    \[x_i=F_{i}(x_{i-1})=\frac{u^{ep}x^p_{i-1}d_i}{1+u^{ep}x_{i-1}^pb_i}.\]

    \item If $i\in \sigma(I)\cap I^c$, then a similar computation shows that
    \[x_i=F_i(x_{i-1})=\frac{1}{c_i}(u^{e(p-1)}x_{i-1}^p+a_i').\]

    \item If $i\in \sigma(I^c)\subset I$,  we have
    \[x_i=F_{i}(x_{i-1})=\frac{x_{i-1}^pc_i}{1+a_ix_{i-1}^p}.\]
\end{itemize}
By iteration, we have $x_i=F_i\circ F_{i-1}\circ\cdots\circ F_{i-g+1}(x_i)$ for all $i\in \Z/g\Z$.
Since all the functions $F_i$ are contracting for the $u$-adic topology on $\fS_1$, there exists a unique solution for every $x_i$. This proves the existence of $\fL$, whence the special subgroup $H$ of type $I$.

\end{proof}
\begin{rem} Note that the condition in \ref{prop-gen-split}(b) is stronger than the converse of \ref{prop-gen-split}(a)(1): we made the extra assumption that $\sigma(I^c)\subset I$. I don't know whether statement \ref{prop-gen-split}(b) still holds without this assumption.
\end{rem}

An interesting special case of the Proposition is the following

\begin{cor}\label{cor-split-CM} Assume $g$ is even. Let $G$ be a truncated Barsotti-Tate group of level $1$ with RM by $\Z_{p^g}$ over $\cO_K$ with $w_i(G)>0$ for $i\in \Z/g\Z$. Put $I_{+}\subset \Z/g\Z$ be the subset consisting of elements $i\equiv 0\mod 2$, and  $I_{-}=I_{+}^c$. Then $G$ admits a unique special subgroup $H_{+}$ $($resp. $H_{-})$ of type $I_{+}$ $($resp. $I_{-})$ if and only if $w_{i}(G)=1$ for all $i\in I_{-}$ $($resp. for all $i\in I_{+})$. In particular, $G$ admits both special subgroups $H_{+}$ and $H_{-}$ of type $I_{+}$ and $I_{-}$ if and only if $w_{i}(G)=1$ for all $i\in \Z/g\Z$.
\end{cor}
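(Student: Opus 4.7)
The plan is to derive this corollary as a direct application of Proposition \ref{prop-gen-split}, which already contains the essential content; the role of the corollary is to unpack what the hypotheses of that proposition reduce to in the ``alternating'' case where $I = I_{+}$ or $I = I_{-}$ and $g$ is even.

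First I would record the combinatorial behavior of the Frobenius shift $\sigma\colon i \mapsto i+1$ on $\Z/g\Z$ in the even case. Since $g$ is even, $\sigma$ exchanges $I_{+}$ and $I_{-}$: one has $\sigma(I_{+})=I_{-}$ and $\sigma(I_{-})=I_{+}$. In particular $\sigma(I_{+}^{c}) = \sigma(I_{-}) = I_{+} \subset I_{+}$, so the containment hypothesis $\sigma(I^{c})\subset I$ of Prop.~\ref{prop-gen-split}(b) is automatic for $I = I_{+}$, and symmetrically for $I = I_{-}$. Moreover, for $I = I_{+}$ we compute $\sigma(I)\cap I^{c} = I_{-}\cap I_{-} = I_{-}$, $\sigma(I)\cap I = I_{-}\cap I_{+} = \emptyset$, and $\sigma(I^{c}) = I_{+}$.

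Next I would prove the two ``iff'' statements separately (the $I_{-}$ case being identical by symmetry, so I would only spell out $I_{+}$). For necessity, if $G$ admits a special subgroup $H_{+}$ of type $I_{+}$, then Prop.~\ref{prop-gen-split}(a)(1) gives $w_{i}(G) = 1$ for $i \in \sigma(I_{+}) \cap I_{+}^{c} = I_{-}$, which is exactly the stated condition. Uniqueness comes from Prop.~\ref{prop-gen-split}(a)(2): since $I_{+}\neq \emptyset$ (as $g \geq 2$), there can be at most one special subgroup of type $I_{+}$. For sufficiency, I would verify the three bullet-point hypotheses of Prop.~\ref{prop-gen-split}(b) with $I=I_{+}$: on $\sigma(I_{+})\cap I_{+}^{c} = I_{-}$ one needs $w_{i}(G) = 1$, which is our assumption; on $\sigma(I_{+})\cap I_{+} = \emptyset$ the condition $w_{i}(G) = 0$ is vacuous; and on $\sigma(I_{+}^{c}) = I_{+}$ one needs $w_{i}(G) > 0$, which is given by hypothesis on $G$. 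Hence Prop.~\ref{prop-gen-split}(b) produces the desired $H_{+}$.

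Finally, for the ``in particular'' clause, I would simply combine the two equivalences: simultaneous existence of $H_{+}$ and $H_{-}$ is equivalent to $w_{i}(G) = 1$ on both $I_{-}$ and $I_{+}$, i.e.\ on all of $\Z/g\Z = I_{+}\cup I_{-}$. There is no real obstacle here beyond bookkeeping on $\sigma$; the entire argument is a verification that the combinatorial conditions of Prop.~\ref{prop-gen-split} collapse to the clean statement in the corollary, and the nontrivial analytical input (solving the contracting fixed-point equations for the Breuil--Kisin submodule) is already encapsulated in that proposition.
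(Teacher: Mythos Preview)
Your argument is correct and is precisely the intended one: the paper presents this result as an immediate special case of Prop.~\ref{prop-gen-split} without writing out a proof, and your verification that the combinatorial conditions collapse (using $\sigma(I_{+})=I_{-}$, $\sigma(I_{-})=I_{+}$ so that the sets $\sigma(I)\cap I$ and $\sigma(I^{c})\cap I^{c}$ are empty) is exactly the bookkeeping needed.
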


\begin{rem}
In appendix B, we will give a family version of Cor. \ref{cor-split-CM} over the  deformation space of a superspecial $p$-divisible group with RM by $\Z_{p^g}$ (cf. Prop. \ref{prop-window} and Remark \ref{rem-window}).
\end{rem}
Now we focus on the case $g=2$, and we identify $\Z/2\Z\simeq \{1,2\}$. The following Proposition refines the preceding corollary in the case $g=2$.

\begin{prop}\label{prop-split-kisin}
Let $G$ be truncated Barsotti-Tate group of level $1$ over $\cO_K$ with RM by $\Z_{p^{2}}$. Assume that $w_1=w_1(G)$ and $w_2=w_2(G)$ are both $>0$. Let $i\in \Z/2\Z$.

\emph{(a)} We have $w_i=1$ if and only if there exists a unique special subgroup $H\subset G$  of type $\{i+1\}$, i.e. we have $\deg_{i}(H)=0$ and $\deg_{i+1}(H)=1$.

\emph{(b)} If $w_i=1$ and $\frac{p}{p+1}<w_{i+1}\leq 1$, then there exists a unique $\Z_{p^2}$-cyclic subgroup $H'\subset G$ disjoint from the $H$ in \emph{(a)} such that $\deg_i(H')=1-p(1-w_{i+1})$ and $\deg_{i+1}(H')=1-w_{i+1}$.
\end{prop}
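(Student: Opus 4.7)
For part (a), I would invoke Proposition \ref{prop-gen-split} with $I=\{i+1\}$. Then $\sigma(I)=\{i\}$, $\sigma(I^c)=\{i+1\}=I$, so $\sigma(I^c)\subset I$ holds automatically, $\sigma(I)\cap I^c=\{i\}$, and $\sigma(I)\cap I=\emptyset$. The assumption $w_{i+1}>0$ covers the ``$w_j>0$ for $j\in\sigma(I^c)$'' requirement in (b), so existence of a special subgroup of type $\{i+1\}$ is equivalent to $w_i=1$ by parts (a)(1) and (b) of that proposition. Uniqueness follows from (a)(2), which gives $\deg_{i+1}(H')\le \tfrac{1}{p-1}(1-\tfrac{1}{p})=\tfrac{1}{p}<1$ for any competing $\Z_{p^2}$-cyclic subgroup, ruling out $\deg_{i+1}(H')=1$.

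Part (b) I would attack by direct Breuil-Kisin computation. The boundary case $w_{i+1}=1$ gives target degrees $(\deg_i(H'),\deg_{i+1}(H'))=(1,0)$ and reduces to Corollary \ref{cor-split-CM}, which supplies the unique special subgroup of type $\{i\}$; disjointness from $H$ is automatic in this case. So assume $w_{i+1}<1$. Let $\fM=\fM_i\oplus\fM_{i+1}$ be the Breuil-Kisin module of $G$ in an adapted basis $(\delta_j,\epsilon_j)$ with $\varphi$ given by the matrices in \eqref{equ-kisin}. Invertibility of those matrices, combined with $v_u(\bar a_i)\ge e$ (from $w_i=1$) and $v_u(\bar a_{i+1})=ew_{i+1}>0$, forces $b_i,c_i,b_{i+1},c_{i+1}$ to be units in $\fS_1=k[[u]]$. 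I look for the sub-Breuil-Kisin module $\fL'\subset\fM$ corresponding to the quotient $G\to G/H'$; since $\deg_j(H')\in(0,1)$ at both indices, an elementary analysis of rank-one direct summands of $\fM_j$ forces the ansatz
\[
\eta'_j=\delta_j+u^{e\deg_j(H')}\,z_j\,\epsilon_j,\qquad \fL'_j=\eta'_j\fS_1,\qquad z_j\in\fS_1^{\times}.
\]

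Expanding $\varphi(\eta'_j)=A'_{j+1}\eta'_{j+1}$ via \eqref{equ-kisin} and separating the $\delta_{j+1}$- and $\epsilon_{j+1}$-components yields two equations. The hypothesis $w_{i+1}>p/(p+1)$ is exactly what makes the relevant $u$-valuations line up: the first equation then automatically holds with $v_u(A'_{j+1})=e\deg_{j+1}(\fL')$, while the second determines $z_{j+1}$ as an explicit rational expression $G_{j+1}(z_j)$ with unit numerator and denominator involving $z_j^p$ multiplied by strictly positive powers of $u$. The main step will be to show that the composite $F=G_i\circ G_{i+1}:\fS_1^{\times}\to\fS_1^{\times}$ has a unique fixed point: using $\varphi(z)=z^p$ and $(z-z')^p=z^p-z'^p$ in characteristic $p$, together with the strict positivity of the perturbation exponents, one checks that $F$ is strictly contracting in the $u$-adic topology of $\fS_1$, so a unique fixed point exists and produces the desired $\fL'$ and $H'$. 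For uniqueness of $H'$ among competing subgroups, any such competitor has the same shape by the ansatz discussion, hence solves the same fixed-point equation; disjointness from the $H$ of part (a) is automatic since $\deg_{i+1}(H')=1-w_{i+1}<1=\deg_{i+1}(H)$ forces $H'\ne H$, and two distinct $\F_{p^2}$-cyclic subgroups of $G$ meet trivially on $\Kb$-points.
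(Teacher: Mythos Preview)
Your proposal is correct and follows the same overall strategy as the paper: part (a) via Proposition~\ref{prop-gen-split}, part (b) via an explicit Breuil--Kisin fixed-point computation, with the boundary case $w_{i+1}=1$ handled separately. The one substantive difference is a normalization trick the paper uses and you omit. Before searching for $\fL'$, the paper first adjusts the adapted basis so that the submodule $\fL$ attached to the special subgroup $H$ from (a) becomes $\fL_i=\fS_1\epsilon_i$, $\fL_{i+1}=\fS_1\delta_{i+1}$; this forces the $\varphi$-matrices to the simple shapes
\[
\begin{pmatrix}a_{i+1}&1\\ u^e c_{i+1}&0\end{pmatrix},\qquad
\begin{pmatrix}0&1\\ u^e c_i&0\end{pmatrix},
\]
whence the fixed-point equation collapses to $y_i=\frac{c_i}{c_{i+1}^p}\bigl(\hat a_{i+1}^p+u^{ep(p^2-1)(w_{i+1}-\frac{p}{p+1})}y_i^{p^2}\bigr)$, visibly contracting. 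In your unnormalized basis the maps $G_j$ are messier; in particular your $G_i$ has denominator $b_i z_{i+1}^p+u^{\alpha_i}a_i'$ with the leading unit term depending on $z_{i+1}$, so $G_i$ alone is not a contraction. The composite $F=G_i\circ G_{i+1}$ is still contracting, since a short computation gives $v_u\bigl(F(z)-F(z')\bigr)\ge p(p\alpha_i-ew_{i+1})+p^2\,v_u(z-z')$ with $p\alpha_i-ew_{i+1}=e(p^2-1)(w_{i+1}-\tfrac{p}{p+1})>0$; but this check is not quite as immediate as your phrase ``one checks'' suggests, and the paper's normalization is what makes it a one-liner. Your ``elementary analysis of rank-one direct summands'' forcing the ansatz is exactly the Nakayama argument the paper spells out for uniqueness.
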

\begin{proof} We may assume that $i=1$ to simplify the notation. Statement (a) is a special case of the preceding corollary. If  $w_1=w_2=1$, then (b) follows also from the preceding proposition. In the sequels, we assume that $w_1=1$ and $\frac{p}{p+1}<w_2<1$. Let $\fM=\fM_1\oplus \fM_2$ be the Breuil-Kisin module attached to $G$. Let  $(\delta_j,\epsilon_j)_{j=1,2}$ be an adapted basis of $\fM$ so that $(1\otimes\varphi)\varphi^*(\fM_{j-1})$ is free over $\fS_1$ generated by $\delta_j$ and $u^e\epsilon_{j}$. Let $\fL\subset \fM$ be the Breuil-Kisin module corresponding to the quotient $G/H$ given in  (a). By Lemma \ref{lem-deg-kisin}, we have $\deg_1(\fL)=1-\deg_1(H)=1$ and $\deg_2(\fL)=0$. As in the proof of the preceding proposition, we may assume $\fL_1=\fS_1\epsilon_1$ and $\fL_2=\fS_2\delta_2$. Then  we have
\[\varphi(\delta_1,\epsilon_1)=(\delta_2,\epsilon_2)\begin{bmatrix}a_2 &1\\
u^ec_2 &0\end{bmatrix}\quad\quad \varphi(\delta_2,\epsilon_2)=(\delta_1,\epsilon_1)\begin{bmatrix}0 &1\\ u^ec_1 & 0\end{bmatrix},\]
with $v_u(a_2)=ew_2$ and  $c_1, c_2\in \fS_1^\times$. To prove the existence of $H'$, it suffices to construct a Breuil-Kisin submodule $\fK=\fK_1\oplus \fK_2$ of $\fM$ such that
$\deg_1(\fK)=p(1-w_2)$ and $\deg_2(\fK)=w_2$.

We assume that $\fK_1$ and $\fK_2$ are respectively generated over $\fS_1$ by
\[\xi_1=\delta_1 +u^{e(1-p(1-w_2))}y_1\epsilon_1\quad \text{and}\quad \xi_2=\delta_2+u^{e(1-w_2)}y_2\epsilon_2,\]
and we will prove that there exist $y_1,y_2\in \fS_1$ such that $\fK$ becomes the required  Breuil-Kisin submodule of $\fM$. We have
\[\begin{cases}
\varphi(\xi_1)&=(a_2+u^{ep(1-p(1-w_2))}y_1^p)\delta_2+u^ec_2 \epsilon_2,\\
\varphi(\xi_2)&=u^{ep(1-w_2)}y_2^p\delta_1+u^ec_1\epsilon_1.
\end{cases}\]
In order for $\varphi(\xi_1)\in \fK_2$ and $\varphi(\xi_2)\in \fK_1$, we should have
\begin{eqnarray}
&(a_2+u^{ep(1-p(1-w_2))}y_1^p)y_2=u^{ew_2}c_2\label{form-1}\\
&y_1y_2^p=c_1.\label{form-2}
\end{eqnarray}
As $v_u(a_2)=ew_2$, there exists $\hat{a}_2\in \fS_1^{\times}$ such that $a_2=u^{ew_2}\hat{a}_2$. It follows from the equality \eqref{form-1}  that
\begin{equation}\label{form-3}
y_2=\frac{c_2}{\hat{a_2}+u^{e(p^2-1)(w_2-\frac{p}{p+1})}y_1^p}.
\end{equation}
In view of \eqref{form-2}, we get
\[y_1=\frac{c_1}{c_2^p}(\hat{a}_2^p+u^{ep(p^2-1)(w_2-\frac{p}{p+1})}y_1^{p^2}).\]
Since $w_2>\frac{p}{p+1}$ by assumption, it's easy to see that the above equation admits a unique solution for $y_1\in \fS_1^{\times}$, and so $y_2\in \fS_1^{\times}$ is uniquely determined by \eqref{form-3}. With these solutions for $y_1$ and $y_2$, we see that
\[\begin{cases}
\varphi(\xi_1)&=u^{ew_2}(\hat{a}_2+u^{e(p^2-1)(w_2-\frac{p}{p+1})}y_1^p)\xi_2\\
\varphi(\xi_2)&=u^{ep(1-w_2)}y_2^p\xi_1.
\end{cases}\]
Therefore, we have $\deg_1(\fK)=p(1-w_2)$ and $\deg_2(\fK)=w_2$.

For the uniqueness of $H'$, we assume that $\tilde{H}'$ is a $\Z_{p^2}$-cyclic subgroup scheme of $G$ with $\deg_1(\tilde{H}')=1-p(1-w_2)$ and $\deg_2(\tilde{H}')=1-w_2$. Let $\tilde{\fJ}=\fM/\tilde{\fK}$ be the quotient module of $\fM$ corresponding to $\tilde{H}'$. By the definition of  degree, we have
\[\deg(\tilde{\fJ}_1/(1\otimes\varphi)\varphi^*(\tilde{\fJ}_2))=\deg\bigl(\fM_1/(\tilde{\fK}_1+(1\otimes\varphi)\varphi^*(\fM_2))\bigr)=1-p(1-w_2)>0.\]
Since $\fM_1/(1\otimes\varphi)\varphi^*(\fM_2)$ is generated by the image of $\epsilon_1$ and $\tilde{\fJ}_1$ is free of rank $1$ over $\fS_1$, we see by Nakayama that $\tilde{\fJ}_1=\fS_1\overline{\epsilon}_1$, where $\overline{\epsilon}_1$ denotes the image of $\epsilon_1$ in $\tilde{\fJ}_1$. In the same way, we see that $\tilde{\fJ}_2=\fS_1\overline{\epsilon}_2$. Consider the image $\overline{\delta}_1$ of $\delta_1$ in $\tilde{\fJ}_1$. As $\delta_1\in \varphi^*(\fM_2)$, so  we have
$$\overline{\delta}_1\in \varphi^*(\tilde{\fJ}_2)=u^{e(1-p(1-w_2))}\fS_1\overline{\epsilon}_1,$$
i.e. there exists $\tilde{y}_1\in \fS_1$ such that $\tilde{\xi}_1=\delta_1+u^{e(1-p(1-w_2))}\tilde{y}_1\epsilon_1\in \tilde{\fK}_1$. In the same way, there exists $\tilde{y}_2\in \fS_2$ such that $\tilde{\xi}_2=\delta_2+u^{e(1-w_2)}\tilde{y}_2\in \tilde{\fK}_2$. Then it follows that $\tilde{\xi}_1$ and $\tilde{\xi}_2$ generate respectively $\fK_1$ and $\fK_2$, and $\tilde{y}_1,\tilde{y}_2$ satisfy the same equations as $y_1$ and $y_2$; hence we must have $\tilde{y}_1=y_1$ and $\tilde{y}_2=y_2$ as these equations admit a unique solution. This implies $\tilde{\fK}=\fK$ and hence $\tilde{H}'=H'$.

\end{proof}

\begin{prop}\label{prop-subgp-sg} Let $G$ be a truncated Barsotti-Tate group of level $1$ over $\cO_K$ with RM by $\Z_{p^2}$. Assume that $w_1=w_1(G)>0$ and $w_2=w_2(G)=0$.

 \emph{(a)} If there exists a $\Z_{p^2}$-cyclic closed subgroup scheme $H\subset G$ with $\deg_1(H)=0$ and $ \deg_2(H)\geq \frac{1}{p}$, then we have $w_1=1$.

\emph{(b)} Conversely, if $w_1=1$, then any $\Z_{p^2}$-cyclic subgroup $H\subset G$ has $\deg_1(H)=0$ and $ 1/p\leq \deg_2(H)\leq 1$. More precisely, we have the following two possibilities:

 \begin{enumerate}
 \item There is exactly one such subgroup $H$ with $\frac{p+1}{p^2+1}<\deg_2(H)\leq 1$, and all the other $p^2$ cyclic subgroups $H'\subset G$ satisfy  $\frac{1}{p}\leq \deg_2(H')<\frac{p+1}{p^2+1}$ and
      $$\deg_2(H')=\frac{1}{p^2}(1+p-\deg_2(H)).$$

\item All the $(p^2+1)$ $\Z_{p^2}$-cyclic subgroups $H\subset G$ has $\deg_2(H)=\frac{p+1}{p^2+1}$.

    \end{enumerate}
\end{prop}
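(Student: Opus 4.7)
The argument will proceed by an explicit Breuil-Kisin module computation, in the style of Theorem \ref{thm-can}, Proposition \ref{prop-gen-split}, and Proposition \ref{prop-split-kisin}. Let $\fM = \fM_1 \oplus \fM_2$ be the $\Z_{p^2}$-Breuil-Kisin module of $G$, equipped with an adapted basis $(\delta_i, \epsilon_i)_{i=1,2}$ realizing the form \eqref{equ-kisin}. By Lemma \ref{part-hodge}, $w_2 = 0$ forces $a_2 \in \fS_1^\times$, so after replacing $\delta_2$ by $\varphi(\delta_1)$ we may assume $\varphi(\delta_1) = \delta_2$; the hypothesis $w_1 > 0$ together with the invertibility of the Frobenius matrix for $i=1$ forces $b_1, c_1 \in \fS_1^\times$. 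By Theorem \ref{thm-kisin-1}, a $\Z_{p^2}$-cyclic subgroup $H \subset G$ corresponds to a $\varphi$-stable submodule $\fL \subset \fM$ (the BK module of $G/H$) with each $\fL_i$ a rank-one $\fS_1$-direct summand of $\fM_i$. I parametrize projectively: either type (A) $\fL_i = \fS_1(\delta_i + x_i \epsilon_i)$ with $x_i \in \fS_1$, or type (B) $\fL_i = \fS_1(uy_i \delta_i + \epsilon_i)$ with $y_i \in \fS_1$. Writing $\varphi(v_i) = A_{i+1} v_{i+1}$ gives $\deg_i(H) = 1 - v_u(A_i)/e$, and a short direct computation on $\fN = \fM/\fL$ shows that in the (A,A) chart $\deg_1(H) = v_u(u^e d_1 - b_1 x_1)/e$ and analogously for $\deg_2(H)$.

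For part (a), suppose $\deg_1(H) = 0$ and $\deg_2(H) \geq 1/p$. The configurations (A,B), (B,A) and (B,B) are ruled out by valuation comparisons in the $\varphi$-stability equations: each either admits no solution, or yields a subgroup with $\deg_1 > 0$ or $\deg_2 = 0$ (these are the configurations corresponding to special subgroups of type $\{1\}$, whose existence would require $w_2 = 1$ by Proposition \ref{prop-gen-split}(b)). In the remaining (A,A) case, the $\varphi$-stability reads
\begin{equation*}
(1 + x_1^p b_2)\, x_2 = u^e x_1^p d_2, \qquad (a_1 + x_2^p b_1)\, x_1 = u^e(c_1 + x_2^p d_1).
\end{equation*}
The condition $\deg_1(H) = 0$ gives $x_1 \in \fS_1^\times$; combined with $\deg_2(H) \geq 1/p$ and the first equation, this forces $v_u(x_2) \geq e/p$, hence $v_u(x_2^p b_1) \geq e$. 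The second equation then compares: its right-hand side has valuation exactly $e$ (from $u^e c_1$), while the left has valuation $\min(v_u(a_1), v_u(x_2^p b_1))$, and matching these forces $v_u(a_1) \geq e$, i.e.\ $w_1 = 1$ by Lemma \ref{part-hodge}.

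For part (b), assume $w_1 = 1$, so $a_1 = u^e a_1'$ with $a_1' \in \fS_1$. Reusing the case analysis, only the (A,A) parametrization yields valid $\Z_{p^2}$-cyclic subgroups, and each automatically satisfies $\deg_1(H) = 0$: in the second equation the right-hand side has valuation exactly $e$, and the only way for the left to match (given $a_1 \in u^e \fS_1$ and $b_1$ a unit) is $v_u(x_1) = 0$. Eliminating $x_1$ between the two equations produces
\begin{equation*}
P(x_2) = u^{ep}(u^e d_2 - x_2 b_2)(c_1 + x_2^p d_1)^p - x_2 (u^e a_1' + x_2^p b_1)^p \in \fS_1[x_2],
\end{equation*}
a polynomial of degree $p^2 + 1$ with unit leading coefficient (from $-b_1^p$) and constant term of valuation $e(p+1)$. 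Its $p^2+1$ roots in a suitable algebraic closure correspond bijectively to the $\Z_{p^2}$-cyclic subgroups of $G$, and $\deg_2(H)$ is read off from the $u$-adic valuation of the corresponding root.

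The main step, and the principal obstacle, is the Newton polygon analysis of $P(x_2)$. The coefficient of $x_2^1$ in $P$ is, up to higher-order corrections, $-u^{ep}(b_2 c_1^p + (a_1')^p)$, of valuation $\geq ep$, with equality precisely when $(a_1')^p + b_2 c_1^p \not\equiv 0 \pmod u$. If the valuation of this coefficient equals some $m$ with $ep \leq m < ep^2(p+1)/(p^2+1)$, the Newton polygon breaks at $(1,m)$, giving one distinguished root of valuation $e(p+1)-m$ and $p^2$ roots of common valuation $m/p^2$; the relation $(e(p+1)-m)/e + p^2 \cdot (m/(ep^2)) = p+1$ translates into $\deg_2(H^\ast) + p^2\deg_2(H') = p+1$, producing the formula of case (1), while the constraint $m \geq ep$ yields $\deg_2(H^\ast) \leq 1$ and hence $\deg_2(H') \geq 1/p$. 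If instead $m \geq ep^2(p+1)/(p^2+1)$, the polygon collapses to a single segment of slope $e(p+1)/(p^2+1)$, all $p^2+1$ roots share this valuation, and we get case (2) with $\deg_2(H) = (p+1)/(p^2+1)$; note that $(p+1)/(p^2+1)$ is exactly the fixed point of $d \mapsto (1+p-d)/p^2$. The remaining technical bookkeeping is to verify, by direct valuation estimates on the expansion of the two defining terms, that the intermediate coefficients of $P$ at $x_2^k$ for $k \geq 2$ never stick out below the polygon to introduce further breaks.
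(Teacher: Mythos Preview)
Your approach is essentially the paper's: set up the Breuil--Kisin module in an adapted basis, parametrize the rank-one summand $\fL$, write out the $\varphi$-stability equations, and analyze the resulting degree-$(p^2+1)$ polynomial by its Newton polygon. The overall strategy is correct.

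Where the paper's execution is sharper: instead of staying in the $\delta_1$-chart $\fL_1 = \fS_1(\delta_1 + x_1\epsilon_1)$, the paper parametrizes $\fL_1$ in the $\epsilon_1$-chart as $\eta_1 = \epsilon_1 + y_1\delta_1$ with $y_1 \in \fS_1$ arbitrary (and $\fL_2$ as $\eta_2 = \delta_2 + y_2\epsilon_2$, which is your $x_2$). This chart is chosen because the degree conditions directly force the image of $\epsilon_1$ to generate $\fN_1$; the only configuration left to exclude is $\fL_1 \ni \delta_1 + x_1\epsilon_1$ with $x_1 \in u\fS_1$, which the paper kills by a short $\varphi^2$-computation. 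More importantly, after observing $v_u(y_2) \geq e/p$ the paper substitutes $y_2 = u^{e/p}z_2$ and obtains the single equation
\[
z_2^{p^2+1} + \bigl(b_2 c_1^p + (a_1')^p\bigr) z_2 = c_1^p\, u^{e(1-1/p)} d_2,
\]
which has only three effective terms; its Newton polygon is then immediate, with the break governed by $\alpha = v_u(b_2 c_1^p + (a_1')^p)/e$. This is precisely the quantity hiding in your coefficient of $x_2^1$, but the substitution eliminates the ``technical bookkeeping'' you flag---there are no intermediate coefficients to check. Your chart choice and your decision not to rescale $x_2$ are what create that bookkeeping, and your hand-wave that (A,B), (B,A), (B,B) are ``ruled out by valuation comparisons'' would also need to be written out; the paper's chart choice sidesteps both issues.
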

\begin{proof} The proof is similar to the preceding Lemma. Note that $w_1>0$ implies that $G$ has no multiplicatitve part. Up to replacing $K$ by a finite extension, we may assume that $p$ divides $e$ and all the $\Z_{p^2}$-cyclic closed subgroup schemes of $G$ are defined over $\cO_K$. Let $\fM=\fM_1\oplus\fM_2$ be the Breuil-Kisin module of $G$. Choose an adapted basis $(\delta_i,\epsilon_i)$ of $\fM_i$, we have
\[\varphi(\delta_1,\epsilon_1)=(\delta_2,\epsilon_2)\begin{bmatrix}a_2 &b_2\\u^e c_2 & u^ed_2\end{bmatrix},\quad
\varphi(\delta_2,\epsilon_2)=(\delta_1,\epsilon_1)\begin{bmatrix}a_1 &b_1\\
u^ec_1&u^ed_1\end{bmatrix}.\]
We have $v_u(a_2)=ew_2=0$ and $v_u(a_1)>0$ by assumption, hence $b_1,c_1\in \fS_1^{\times}$. Up to replacing $\delta_2$ by $\varphi(\delta_1)$ and $\delta_1$ by $\varphi(\epsilon_2)$, we may assume
\[\varphi(\delta_1,\epsilon_1)=(\delta_2,\epsilon_2)\begin{bmatrix}1 & b_2\\ 0 &u^e d_2\end{bmatrix},\quad
\varphi(\delta_2,\epsilon_2)=(\delta_1,\epsilon_1)\begin{bmatrix}a_1 & 1\\ u^e c_1 & 0\end{bmatrix}
\]
with $d_2,c_1\in \fS_1^{\times}$.
Let $H\subset G$ be a $\Z_{p^2}$-cyclic subgroup.  Let $\fN=\fN_1\oplus\fN_2$ be the quotient of $\fM$ corresponding to $H$,
and $\fL=\fL_1\oplus\fL_2$ be the kernel corresponding to $G/H$.

 (a) First, we suppose that  $\deg_1(H)=0$ and $\deg_2(H)\geq \frac{1}{p}$.  According to Lemma \ref{part-hodge}, we have to prove that $v_u(a_1)\geq e$. We have $\deg_1(\fN)=\deg_1(H)=0$ and $\deg_2(\fN)=\deg_2(H)\geq 1/p$ by \ref{lem-deg-kisin}. In particular, the $\fS_1$-module \[\fN_2/(1\otimes\varphi)\varphi^*(\fN_1)=\fM_2/((1\otimes \varphi)\varphi^*(\fM_1)+\fL_2)\]
 is non-zero. As $\fM_2/(1\otimes\varphi)\varphi^*(\fM_1)=(\fS_1/u^e\fS_1)\overline{\epsilon_2}$, we see by Nakayama that the image of $\epsilon_2$ generates $\fN_2$. There exists thus  $y_2\in \fS_1$ such that $\eta_2=\delta_2+y_2\epsilon_2$ is a basis of $\fL_2$ over $\fS_1$. An easy computation shows that $v_u(y_2)=e\deg_2(\fN)\geq e/p$.  Similarly, from $\deg_1(\fN)=0$, we deduce that the image of $\varphi(\epsilon_2)=\delta_1$ generates $\fN_1$. Therefore, there is $y_1\in \fS_1$ such that $\eta_1=\epsilon_1+y_1\delta_1$ forms a basis of $\fL_1$. We have
 \beq\label{equ-c-1}\begin{cases}
\varphi(\eta_1)&=(y_1^p+b_2)\delta_2+u^ed_2\epsilon_2,\\
\varphi(\eta_2)&=(a_1+y_2^p)\delta_1+u^ec_1\epsilon_1.
\end{cases}
\eeq
Since $\varphi(\eta_2)\in \fL_1=\fS_1\eta_1$, we see that
\beq\label{equ-c-2}a_1+p_2^p=y_1u^ec_1.\eeq
Since $v_u(y_2)\geq e/p$, we have $v_u(a_1)\geq e$.

(b) Now we suppose $w_1(G)=1$. By \ref{part-hodge}, we can write $a_1=u^ea_1'$ with $a_1'\in \fS_1$.  We claim first that $\fL_1$ contains no element of the form $\delta_1+x_1\epsilon_1$ with $x_1\in u\fS_1$. Indeed, if this were not the case, we would have
\[\varphi^2(\delta_1+x_1\epsilon_1)=u^e[(1+x_1^{p^2}b_2^p)a_1'+u^{e(p-1)}d_2^p]\delta_1+u^e(1+x_1^{p^2}b_2^p)c_1\epsilon_1\in \fL_1.\]
Since $\fL_1$ is free of rank $1$ over $\fS_1$, we should have
\[(1+x_1^{p^2}b_2^p)c_1=x_1[(1+x_1^{p^2}b_2^p)a_1'+u^{e(p-1)}d_2^p].\]
This is impossible, since the left hand side is a unit in $\fS_1$ but $x_1\in u\fS_1$. This proves the claim.

To prove $\deg_1(H)=0$, we  suppose conversely that $\deg_1(H)>0$. Then
\[\fN_1/(1\otimes \varphi)\varphi^*(\fN_2)=\fM_1/((1\otimes\varphi)\varphi^*(\fM_2)+\fL_1)\neq 0.\]
Since $\fM_1/\varphi^*(\fM_2)$ is generated by the image of $\epsilon_1$, we see  that the image of $\epsilon_1$ is a basis of $\fN_1$. As $\delta_1=\varphi(\epsilon_2)\in \varphi^*(\fM_2)$, we see that there exists $x_1\in \fS_1$ such that $\eta_1=\delta_1+x_1\epsilon_1\in \fL_1$. But $\eta_1$ must be a basis of $\fL_1$, since $\fL_1$ is a direct summand of $\fL_1$. It follows from $\deg_1(\fN)=\deg_1(H)>0$ that $v_u(x_1)>0$.  By the claim above,  this is absurd.

Next we show that $\deg_2(H)>0$. Otherwise, we would have $\deg_2(\fN)=\deg_2(H)=0$, i.e.
\[\fN_2/(1\otimes\varphi)\varphi^*(\fN_1)=\fM_2/((1\otimes\varphi)\varphi^*(\fM_1)+\fL_2)=0.\]
Since $(1\otimes\varphi)\varphi^*(\fM_1)=\fS_1\delta_2\oplus \fS_1E(u)\epsilon_2$, we see easily that $\fL_2$ contains an element of the form $\epsilon_2+x_2\delta_2$ with $x_2\in \fS_1$. Then we have
\[\varphi(\epsilon_2+x_2\delta_2)=(1+u^ex_2^pa_1')(\delta_1+\frac{x_2^pu^ec_1}{1+u^ex_2^pa_1'}\epsilon_1)\in \fL_1.\]
This contradicts with the claim mentioned above.

In summary, we have proved that $\deg_1(H)=\deg_1(\fN)=0$ and $\deg_2(H)=\deg_2(\fN)>0$. The same arguments as in part (a) show
that there exist  $y_1,y_2\in \fS_1$ such that $\eta_1=\epsilon_1+y_1\delta_1$ and  $\eta_2=\delta_2+y_2\epsilon_2$ form a basis of $\fL_1$ and $\fL_2$. Thus the same equations \eqref{equ-c-1} and \eqref{equ-c-2} are satisfied.
It follows that $v_u(y_2)\geq \frac{1}{p}\min\{v_u(a_1),e\}=\frac{e}{p}$, hence $\deg_2(H)\geq 1/p$. Recall that we have assumed $p|e$. Let's write $y_2=u^{e/p}z_2$. We get from \eqref{equ-c-2} that
\begin{equation*}
y_1=\frac{1}{c_1}(a_1'+z_2^{p}).
\end{equation*}
On the other hand, we deduce from  \eqref{equ-c-1} and $\varphi(\eta_1)\in \fL_2$ that $y_2(y_1^p+b_2)=u^ed_2$. Substituting $y_1$ and $y_2$,  we get finally
\[z_2^{p^2+1}+(b_2c_1^p+(a'_1)^{p})z_2=c_1^pu^{e(1-\frac{1}{p})}d_2.\]
It's clear that the $p^2+1$ roots of $z_2$ correspond to the $(p^2+1)$ $\Z_{p^2}$-cyclic closed subgroup schemes of $G$, and by our hypothesis on $K$, all the roots of $z_2$ are  in $\fS_1$. If $H$ is the closed subgroup corresponding to a root $z_2$, then we have
\begin{align*}\deg_2(H)&=\deg_2(\fN)=1-\deg_2(\fL)\\
&=\min\{1,\frac{1}{e}v_u(y_2)\}=\min\{1,\frac{1}{p}+\frac{1}{e}v_u(z_2)\}.
\end{align*} We put $\alpha=v_u(b_2c_1^p+(a'_1)^p)/e$ and recall that $c_1,d_2\in \fS_1^{\times}$. By considering the Newton polygon of the equation of $z_2$, we deduce that
\begin{itemize}
\item if $\alpha<\frac{p(p-1)}{p^2+1}$, there is exactly one root  $z_2$ with $v_u(z_2)=e(1-\frac{1}{p}-\alpha)$ and all the other $p^2$ roots of $z_2$ with $v_u(z_2)=\frac{e}{p^2}\alpha$;

\item if $\alpha\geq \frac{p(p-1)}{p^2+1}$, all the $p^2+1$ roots of $z_2$ satisfy $v_u(z_2)=e\frac{p-1}{p(p^2+1)}$.
\end{itemize}
Now the proposition follows easily from the correspondence between the $\Z_{p^2}$-cyclic subgroups of $G$ and the roots of $z_2$.
\end{proof}

\section{Goren-Kassaei's Stratification and Canonical Subgroups}

We retake the notation in \ref{notation}.

\subsection{} We recall briefly the Dieudonn\'e theory for HBAV. Let $R$ be a $\kappa$-algebra,  $A$ be a HBAV over $R$ equipped with a prime-to-$p$ polarization. In \cite[Ch. 3]{BBM}, the authors defined the (contravariant) Dieudonn\'e crystal associated with the finite and locally free group scheme $A[p]$ over $R$. We denote by $\bD(A[p])$  the evaluation of this crystal  on the trivial divided power immersion $\Spec(R)\hra \Spec(R)$. This is a locally free $(R\otimes \cO_F)$-module of rank 2 equipped with two natural morphisms of $(R\otimes \cO_F)$-modules
\[F: \bD(A[p])^{(p)}\ra \bD(A[p])\quad\text{and}\quad V:\bD(A[p])\ra \bD(A[p])^{(p)},\]
called respectively the Frobenius and the Verschiebung, where $(\_)^{(p)}$ denotes the base change by the absolute Frobenius endomorhism $F_R:a\mapsto a^{p}$ on $R$.
 We have a decomposition
\[\bD(A[p])=\bigoplus_{\beta\in \bB}\bD(A[p])_{\beta},\]
 where each $\bD(A[p])_{\beta}$ is a locally free $R$-module of rank 2 and $\cO_F$ acts on it via $\chi_{\beta}$.  For each $\beta\in \bB$, we have an exact sequence of $R$-modules
 \[\xymatrix{\cdots\ar[r]^-{V}&\bD(A[p])_{\beta}^{(p)}\ar[r]^{F} &\bD(A[p])_{\sigma\circ\beta}\ar[r]^{V} &\bD(A[p])_{\beta}^{(p)}\ar[r]^-{F}&\cdots.
 }\]
 Let $\omega_{A/R}$ be the modules of invariant differentials of $A$ relative to $R$, and $\Lie(A)$ be the Lie algebra of $A$.  They are both locally free $(R\otimes \cO_F)$-modules of rank 1, and  we have similar decompositions:
 \[\omega_{A/R}=\bigoplus_{\beta\in\bB} \omega_{A/R,\beta}\quad \text{and}\quad \Lie(A)=
 \bigoplus_{\beta\in\bB}\Lie(A)_{\beta}.\]
 For each $\beta\in \bB$, we have the $\beta$-component of the Hodge filtration
 \begin{equation}\label{beta-hodge}
 0\ra \omega_{A/R, \beta}\ra \bD(A[p])_{\beta}\ra \Lie(A)_{\beta}\ra 0.
 \end{equation}
 Here, the quotient is canonically $\Lie(A^\vee)_{\beta}$, but we have identified $\Lie(A)_{\beta}$ with $\Lie(A^\vee)_{\beta}$ using the polarization on $A$.
 Since the Frobenius induces the zero map on differential forms, we have  a commutative diagram
 \begin{equation}\label{diag-Ha}
 \xymatrix{
 \bD(A[p])_{\sigma^{-1}\circ\beta}^{(p)}\ar[rr]^{F}\ar[d]&&\bD(A[p])_{\beta}\ar[d]\\
 \Lie(A)^{(p)}_{\sigma^{-1}\circ\beta}\ar[rr]^{\mathrm{HW_{\beta}}}\ar[rru]^{\iota_{\beta}}&&\Lie(A)_{\beta},
 }\end{equation}
where $\mathrm{HW_{\beta}}$ is the $\beta$-component of the usual Hasse-Witt map. Note that $\iota_{\beta}$ is injective, since it is the case if $R$ is a perfect field and the diagram commutes with any base change. The morphism $\mathrm{HW}_{\beta}$ is just the dual map of the partial Hasse invariant $h_{\beta}$ defined in \eqref{part-Ha}. This fact will be used later to compute  partial Hasse  invariants.

\subsection{} We recall Goren-Oort's stratification of $X_{\kappa}$ defined in \cite{GO}. Let $A$ be a HBAV over a field containing $\kappa$. We put
\[\tau(A)=\{\beta\in \bB\;|\; h_{\beta}(A)=0\},\]
where $h_{\beta}(A)$ is the partial Hasse invariant of $A$ \eqref{part-Ha}.  For any subset $\tau\subset \bB$, let $Z_{\tau}$ be the closed subsubset of $X_{\kappa}$ where $h_{\beta}$ vanishes for any $\beta\in \tau$, \ie, we have
\[Z_{\tau}=\{x\in X_{\kappa}\;|\; \tau\subset \tau(A_x)\},\]
where $A_x$ is the fiber of the universal HBAV at $x$. It's clear that $Z_{\tau'}\subseteq Z_{\tau}$ for any subsets $\tau'\supseteq \tau$.  We put
$$W_{\tau}=Z_{\tau}\backslash \bigcup_{\tau'\supsetneq \tau}Z_{\tau'}=\{x\in X_{\kappa}\;|\; \tau(A_{x})=\tau\}.$$
Goren and Oort showed that $\{W_\tau\}_{\tau\subseteq \bB}$ form a stratification of $X$, and each stratum $W_{\tau}$ is smooth and equidimensional of dimension $g-|\tau|$, where $|\tau|$ denotes the cardinality of $\tau$. We note that $W_{\emptyset}=X_{\kappa}^\ord$, and $W_{\bB}$ is the set of superspecial points of $X_{\kappa}$, \ie, the points where the $p$-divisible group of the corresponding HBAV is isomorphic to a product of $g$ copies of the $p$-divisible group of a supersingular elliptic curves.

\subsection{}\label{strat-Y} Goren and Kassaei defined a similar stratification on $Y_{\kappa}$ in \cite{GK}. If $S$ is a subset of $\bB$, we denote by $\sigma^{-1}(S)$ the subset of $\bB$ formed by $\sigma^{-1}\circ \beta$ for $\beta\in S$,  by  $\sigma(S)$ the subset formed by $\sigma\circ\beta$ for $\beta\in S$, and by $S^{c}$ the complement $\bB\backslash S$. We say a pair $(\varphi,\eta)$ of subsets of $\bB$ is \emph{ admissible} if $\eta\supset \sigma^{-1}(\varphi^c)$ or equivalently $\varphi\supset \sigma(\eta^c)$. For an admissible pair $(\varphi,\eta)$, we have  decompositions
 \[\eta=\sigma^{-1}(\varphi^c)\coprod I \quad\text{and} \quad \varphi=\sigma(\eta^c)\coprod \sigma(I) \]
 with $I=\eta\cap \sigma^{-1}(\varphi)$.

 Admissible pairs of subsets of $\bB$ arise naturally from the points of $Y_{\kappa}$. Let $k$ be a perfect field containing $\kappa$,  $(A, H)$ be a $k$-point of $Y_{\kappa}$,  $f:A\ra B=A/H$ be the natural isogeny, and
\[\bD(f):\bD(B[p])\ra \bD(A[p])\]
be the morphism induced
 on contravariant Dieudonn\'e modules.  For a $k$ vector space $M$, we identify $M$ with $M^{(p)}=M\otimes_{\sigma} k$ by $x\mapsto x\otimes 1$; so  we regard the Frobenius $F$ on the Dieudonn\'e modules as $\sigma$-linear maps, and the Verschiebung as $\sigma^{-1}$-linear. We have a commutative diagram of exact sequences
 \[\xymatrix{
 \cdots\ar[r]^-{F}&\bD(B[p])_{\sigma\circ\beta}\ar[r]^-{V}\ar[d]^{\bD(f)_{\sigma\circ\beta}}&\bD(B[p])_{\beta}
 \ar[d]^{\bD(f)_{\beta}}\ar[r]^-{F}&\bD(B[p])_{\sigma\circ\beta}\ar[d]^{\bD(f)_{\sigma\circ\beta}}\ar[r]^-{V}&\cdots\\
 \cdots\ar[r]^-{F}&\bD(A[p])_{\sigma\circ\beta}\ar[r]^-{V}&\bD(A[p])_{\beta}\ar[r]^-{F}&\bD(A[p])_{\sigma\circ\beta}
 \ar[r]^-{V}&\cdots.
 }\]
 Since $H=\Ker(f)$ is a $(\cO_F/p)$-cyclic subgroup scheme, $\im(\bD(f)_{\beta})\subseteq \bD(A[p])_{\beta}$ is a one-dimensional vector space over $k$. Note that there are two special $k$-lines in $\bD(A[p])_{\beta}$, namely $(\Ker F)_{\beta}=(\im V)_{\beta}$ and $(\im F)_{\beta}=(\Ker V)_{\beta}$, where $(\_)_{\beta}$ means  the $\beta$-component.  We put
\begin{align*}
\varphi(A,H)&=\{\beta\in \bB\;|\; \im(\bD(f))_{\beta}=(\Ker V)_{\beta}=(\im F)_{\beta}\}\\
\eta(A,H)&=\{\beta\in \bB\;|\; \im(\bD(f))_{\beta}=(\Ker F)_{\beta}=(\im V)_{\beta}\}.
\end{align*}
The pair  $(\varphi(A,H),\eta(A,H))$ is then admissible \cite[2.3.3]{GK}. If we denote by $f^t:B\ra A$ the unique isogeny with $f\circ f^t=p\cdot 1_{B}$ and $f^t\circ f=p\cdot 1_A$, then we have \cite[2.3.2]{GK}
 \begin{align}
 \varphi(A,H)&=\{\beta\in \bB\;|\; \Lie(f)_{\sigma^{-1}\circ \beta}=0\},\label{defn-varphi}\\
 \eta(A,H)&=\{\beta\in \bB\;|\; \Lie(f^t)_{\beta}=0\},\nonumber\\
 I(A,H)&=\eta(A,H)\cap \sigma^{-1}(\varphi(A,H))=\{\beta\in \bB\;|\; \Lie(f)_{\beta}=\Lie(f^t)_{\beta}=0\}.\nonumber
 \end{align}
 We call the elements of $I(A,H)$ the \emph{critical indexes}.
 If $x=(A_x,H_x)$ is an arbitrary point of $Y_{\kappa}$, we define respectively $\varphi(x)$ and $\eta(x)$ as $\varphi(A_x\otimes k, H_{x}\otimes k)$ where $k$ is a perfect extension of the residue field $\kappa(x)$.  It's easy to see that the definition is independent of the choice of $k$.

 For an admissible pair $(\varphi, \eta)$, we put
\[
Z_{\varphi,\eta}=\{x\in Y_{\kappa}\;|\; \varphi(x)\supseteq \varphi, \eta(x)\supseteq \eta\}.
\]
 The subset $Z_{\varphi, \eta}$ is closed in $Y_{\kappa}$ \cite[2.5.1]{GK}. Let  $(\varphi',\eta')$ and $(\varphi,\eta)$ be two admissible pairs. We write $(\varphi',\eta')\supseteq (\varphi, \eta)$ if  $\varphi'\supseteq \varphi$ and $\eta'\supseteq \eta$.  We have $Z_{\varphi',\eta'}\subseteq Z_{\varphi, \eta}$ if $(\varphi',\eta')\supseteq (\varphi, \eta)$. We put
\[W_{\varphi,\eta}=Z_{\varphi,\eta}\backslash \bigcup_{(\varphi',\eta')\supsetneq (\varphi,\eta)}Z_{\varphi',\eta'}.\]
Goren and Kasseai show that \cite[2.5.2]{GK}:
\begin{itemize}
\item each $W_{\varphi,\eta}$ is non-empty and its Zariski closure is $Z_{\varphi,\eta}$;

\item the collection $\{W_{\varphi,\eta}\}$ with $(\varphi,\eta)$ admissible forms a stratification of $Y_{\kappa}$;

\item each stratum $W_{\varphi,\eta}$ is smooth and equi-dimensional of dimension $2g-|\varphi|-|\eta|$.

\end{itemize}
Note that there are $3^g$ strata in this stratification of $Y_{\kappa}$.  The relation between the stratifications on $X_{\kappa}$ and $Y_{\kappa}$ recalled above are given as follows. For an admissible pair $(\varphi,\eta)$, we have \cite[2.6.16]{GK}
\[\pi(W_{\varphi,\eta})=\bigcup_{\substack{
(\varphi\cap\eta) \subset\tau\\
\tau\subset [(\varphi\backslash \eta)\cup (\eta\backslash \varphi)]^c}
}W_{\tau};\]
in particular, for any point $x=(A,H)\in W_{\varphi,\eta}$, we have
$(\varphi\cap\eta)\subset \tau(A)\subset[(\varphi\backslash \eta)\cup (\eta\backslash \varphi)]^c $.

\subsection{Local coordinates}\label{loc-par}
Let $P$ be a closed point of $X_{\kappa}$, $\widehat{\cO}_{X, P}$ be the local ring of $X$ at $P$ with the maximal ideal $\m_{P}$, and $\omegab_{\beta, P}$  be the pull-back of $\omegab_{\beta}$ to $\Spec(\widehat{\cO}_{X,P}/p)$. We choose a basis $e_{\beta}$ of $\omegab_{\beta, P}$ for each $\beta\in \bB$. Then the partial Hasse invariant $h_{\beta}: \omegab_{\beta, P}\ra \omegab_{\sigma^{-1}\circ \beta, P}^{(p)}$, where $\omegab_{\sigma^{-1}\circ \beta, P}^{(p)}$ denotes the base change by the absolute Frobenius,  is given by
\[h_{\beta}(e_{\beta})=\overline{t}_{\beta}e_{\sigma^{-1}\circ \beta}^{(p)}\]
 for some $\overline{t}_{\beta}\in \cO_{X_{\kappa},P}/p$.   Note that $\overline{t}_{\beta}\in \overline{\m}_{P}$ if and only if $\beta\in \tau(P)$, where $\overline{\m}_{P}$ denotes the maximal ideal of $\widehat{\cO}_{Y,P}/p$. By Kodaira-Spencer isomorphism, the elements  $\{\overline{t}_{\beta}:\beta\in \tau(P)\}$ form part of a system of regular parameters of the regular local ring $\cO_{X_{\kappa},P}/p$. Let $t_{\beta}\in \widehat{\cO}_{X,P} $ be a lift of $\overline{t}_{\beta}$. Then if $x\in W_{\bB}$, we have $\widehat{\cO}_{X_{\kappa},P}\simeq W(\kappa(P))[[\{t_{\beta}: \beta\in \bB\}]]$.

 Let $Q$ be a closed point of $Y_{\kappa}$, $\widehat{\cO}_{Y,Q}$ be the completion of the local ring of $Y$ at $Q$ with the maximal ideal $\m_Q$. We denote by $(\cA_Q,\cH_Q)$  the base change of the universal object on $Y$ to $\Spec(\widehat{\cO}_{Y,Q})$, and by $f: \cA_{Q}\ra \cB_{Q}=\cA_{Q}/\cH_{Q}$ the canonical quotient, and by $f^t:\cB_Q\ra \cA_Q$ the unique isogeny with $f^t\circ f=p\cdot 1_{\cA_Q}$ and $f\circ f^{t}=p\cdot 1_{\cB_Q}$. Let $\omegab_{\cA_{Q},\beta}$ and $\omegab_{\cB_Q,\beta}$ be respectively the $\beta$-components of the invariants differentials of $\cA_Q$ and $\cB_Q$. They are both free $\widehat{\cO}_{Y,Q}$-modules of rank $1$; we choose generators  $e_\beta\in \omegab_{\cA_Q,\beta} $ and $\epsilon_{\beta}\in \omegab_{\cB_Q,\beta}$. For each $\beta\in \bB$, there are elements  $x_{\beta}, y_{\beta}\in \widehat{\cO}_{Y,Q}$ such that
 \begin{equation}\label{defn-y_beta}
 f^{t*}(e_{\beta})=x_{\beta}\epsilon_{\beta}\quad \text{and}\quad f^*(\epsilon_\beta)=y_{\beta}e_{\beta}.
 \end{equation}
 We have  $x_{\beta}y_{\beta}=p$ because of $f^*\circ f^{t*}=p$, and $x_\beta, y_{\beta}\in \m_{Q}$ if and only if $\beta$ is critical, i.e. $\beta\in I(Q)=\sigma^{-1}(\varphi(Q))\cap \eta(Q)$.
 Actually, Stamm \cite{St} showed that we have an isomorphism
 \beq\label{equ-loc-ring}
 \widehat{\cO}_{Y,Q}\simeq W(\kappa(Q))[[\{x_\beta,y_{\beta}:\beta\in I(Q)\}, \{z_{\gamma}:\gamma\in \bB-I(Q)\}]]/(\{x_{\beta}y_{\beta}-p: \beta\in I(Q)\}).
 \eeq
The following proposition,  proved in \cite[2.8.1]{GK} and referred as ``Key Lemma'' there, will play an important role in the proof of our main results.  We give here another  proof using Dieudonn\'e theory.

\begin{prop}[Goren-Kassaei]\label{prop-key-lemma} Let $Q$ be a closed point of $Y$, $P=\pi(Q)$, $\beta\in \varphi(Q)\cap\eta(Q)\subset \tau(P)$, and $\pi^*:\widehat{\cO}_{X,P}\ra \widehat{\cO}_{Y,Q}$ be the natural induced morphism.

\emph{(a)} If $\sigma\circ\beta\in \varphi(Q)$ and $\sigma^{-1}\circ\beta\in\eta(Q)$, we have
\[\pi^*(t_{\beta})\equiv ux_{\beta}+vy^p_{\sigma^{-1}\circ\beta}\mod p, \]
where $u,v$ are some units in $\widehat{\cO}_{Y,Q}$.

\emph{(b)} If  $\sigma\circ\beta\in \varphi(Q)$ and $ \sigma^{-1}\circ\beta\notin \eta(Q)$, we have
\[\pi^*(t_{\beta})\equiv ux_{\beta} \mod p\] for some unit $u\in\widehat{\cO}_{Y,Q}$.

\emph{(c)} If $\sigma\circ\beta\notin \varphi(Q)$ and $\sigma^{-1}\circ\beta\in \eta(Q)$, we have
\[\pi^*(t_{\beta})\equiv vy^p_{\sigma^{-1}\circ\beta}\mod p\] for some unit $v\in \widehat{\cO}_{Y,Q}$.

\emph{(d)} If $\sigma\circ\beta\notin \varphi(Q)$ and $\sigma^{-1}\circ\beta \notin \eta(Q)$, we have $\pi^*(t_\beta)\equiv 0\mod p$.

\end{prop}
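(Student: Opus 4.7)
My plan is to compute the partial Hasse invariant of $\cA_Q$ via the contravariant Dieudonn\'e crystal of the universal $p$-torsion over $R = \widehat{\cO}_{Y,Q}$, then extract a formula modulo $p$. First, I would observe that cases (b), (c), (d) reduce to case (a). Indeed, by \eqref{defn-varphi} the condition $\sigma\circ\beta \notin \varphi(Q)$ means that $\Lie(f)_\beta$ is invertible at $Q$, so the function $y_\beta$ defined by $f^*(\epsilon_\beta)=y_\beta e_\beta$ is a unit in $R$, and the relation $x_\beta y_\beta = p$ from \eqref{equ-loc-ring} then forces $x_\beta \equiv 0 \pmod p$. Symmetrically, $\sigma^{-1}\circ\beta \notin \eta(Q)$ gives $x_{\sigma^{-1}\beta}$ a unit and hence $y_{\sigma^{-1}\beta}^p \equiv 0 \pmod p$. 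So once case (a) is proved in the universal form $\pi^*(t_\beta) \equiv u\, x_\beta + v\, y_{\sigma^{-1}\beta}^p \pmod p$ with $u, v \in R^\times$, the other three cases follow by inspecting which of the two terms is forced to vanish.

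For case (a) itself, I would choose, for each $\gamma \in \bB$, an adapted basis $(e_\gamma, e'_\gamma)$ of $\bD(\cA_Q[p])_\gamma$ with $e_\gamma$ generating the Hodge line $\omegab_{\cA_Q,\gamma}$, and an analogous basis $(\epsilon_\gamma, \epsilon'_\gamma)$ for $\bD(\cB_Q[p])_\gamma$. Because $\bD(f)_\gamma$ preserves the Hodge filtration and satisfies $\bD(f)\circ \bD(f^t)=p$, the matrix of $\bD(f)_\gamma$ in these bases is upper-triangular with diagonal entries $y_\gamma$ and $u_\gamma x_\gamma$ for some $u_\gamma \in R^\times$. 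By diagram \eqref{diag-Ha}, the Frobenius $F_{\cA,\beta}\colon \bD(\cA_Q[p])_{\sigma^{-1}\beta}^{(p)} \to \bD(\cA_Q[p])_\beta$ factors through $\Lie(\cA_Q)^{(p)}_{\sigma^{-1}\beta}$, hence vanishes on $\omegab^{(p)}_{\sigma^{-1}\beta}$; its image of $e'^{(p)}_{\sigma^{-1}\beta}$ therefore has $e'_\beta$-component exactly $\pi^*(t_\beta)$ modulo $\omegab_{\cA_Q,\beta}$, the partial Hasse invariant being identified with the dual of the Hasse--Witt map. An analogous statement holds for $F_{\cB,\beta}$ with coefficient $t_\beta^B$. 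Applying the Frobenius-equivariance $\bD(f)_\beta\circ F_{\cB,\beta} = F_{\cA,\beta}\circ \bD(f)^{(p)}_{\sigma^{-1}\beta}$ to $\epsilon'^{(p)}_{\sigma^{-1}\beta}$ yields two scalar relations, and the dual compatibility with $\bD(f^t)$ provides companion identities.

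Reducing the Lie-to-Lie component of these relations modulo $p$ gives an equation of the form $\pi^*(t_\beta)\cdot u_{\sigma^{-1}\beta}^p \cdot x_{\sigma^{-1}\beta}^p \equiv u_\beta \cdot x_\beta \cdot t^B_\beta \pmod p$, while the off-diagonal component produces an extra contribution involving $y^p_{\sigma^{-1}\beta}$ coming from the $\epsilon'_\beta$-coefficient of $\bD(f)_\beta\circ F_{\cB,\beta}$. Using $x_{\sigma^{-1}\beta} y_{\sigma^{-1}\beta} \equiv 0 \pmod p$ together with the symmetric role of $\cA$ and $\cB$ via the Cartier pairing $\bD(\cA^\vee[p])\simeq \bD(\cA[p])^\vee$, one can eliminate $t_\beta^B$ and the auxiliary off-diagonal entries, arriving at a congruence $\pi^*(t_\beta) \equiv u\, x_\beta + v\, y^p_{\sigma^{-1}\beta} \pmod p$ with explicit $u, v\in R$. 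The main technical obstacle is confirming that both coefficients are genuine units, rather than accidentally vanishing inside $R/p$. This will hinge on Stamm's presentation \eqref{equ-loc-ring} of $\widehat{\cO}_{Y,Q}$, which guarantees that $x_\beta$ and $y_{\sigma^{-1}\beta}$ are part of a regular system of parameters precisely when both $\beta$ and $\sigma^{-1}\beta$ are critical --- the standing hypothesis in case (a) --- so that the leading terms produced by the Dieudonn\'e computation cannot cancel.
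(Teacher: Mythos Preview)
Your overall strategy---compute $t_\beta$ via the Dieudonn\'e crystal over $R=\widehat{\cO}_{Y,Q}/p$ and exploit the Frobenius-equivariance of $\bD(f)$ and $\bD(f^t)$---is exactly the paper's. But two steps, as written, do not go through.

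\textbf{The reduction of (b)--(d) to (a) is circular.} The formula $\pi^*(t_\beta)\equiv u\,x_\beta+v\,y^p_{\sigma^{-1}\beta}$ with $u,v\in R^\times$ is what is \emph{proved} in case~(a), under the hypothesis that both $\beta$ and $\sigma^{-1}\beta$ are critical. That hypothesis is used to normalize the bases (the paper's equation \eqref{equ-D-delta}) and to verify that $v$ is a unit (via $s_\beta\in\m_R$, which needs $y_\beta,x_{\sigma^{-1}\beta}\in\m_R$). In cases (b)--(d) these hypotheses fail, so you cannot invoke (a)'s conclusion. The paper treats each case with a different basis choice: e.g.\ in (b) one picks $d_{\sigma^{-1}\beta}\in\Ker\bD(f^t)$, which is possible precisely because $\sigma^{-1}\beta\notin\eta(Q)$.

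\textbf{The relation you extract in case (a) has the wrong shape.} Your ``Lie-to-Lie'' comparison of $\bD(f)_\beta\circ F_{\cB,\beta}$ and $F_{\cA,\beta}\circ\bD(f)^{(p)}_{\sigma^{-1}\beta}$ on $\epsilon'^{(p)}_{\sigma^{-1}\beta}$ gives $\pi^*(t_\beta)\cdot u^p_{\sigma^{-1}\beta}x^p_{\sigma^{-1}\beta}\equiv u_\beta x_\beta\, t^B_\beta$; since $x_{\sigma^{-1}\beta}\in\m_R$ in case (a), this cannot be solved for $\pi^*(t_\beta)$, and the vague plan to ``eliminate $t^B_\beta$ via Cartier pairing'' does not fix this. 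The paper's move is different: it compares the $\epsilon_\beta$-components (the \emph{Hodge}-line side, not the Lie side) of $\bD(f^t)(F(d_{\sigma^{-1}\beta}))=F(\bD(f^t)(d_{\sigma^{-1}\beta}))$. The point is that after normalizing so that $\bD(f^t)(d_\beta)=\epsilon_\beta-y_\beta\delta_\beta$ (the off-diagonal entry is the unit $1$, possible exactly because $\beta$ is critical), the left side has $\epsilon_\beta$-coefficient $-u\,x_\beta+t_\beta$, and the right side has $\epsilon_\beta$-coefficient $v\,y^p_{\sigma^{-1}\beta}$. This gives $t_\beta=u\,x_\beta+v\,y^p_{\sigma^{-1}\beta}$ directly, with no spurious non-unit factor. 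The unit property of $v$ then comes from the companion identity for $s_\beta$ (the partial Hasse invariant of $\cB$). Your upper-triangular description of $\bD(f)_\gamma$ does not record this crucial unit off-diagonal entry, which is why your computation stalls.
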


\begin{proof} It suffices to prove the corresponding equalities in $R=\widehat{\cO}_{Y,Q}/p=\widehat{\cO}_{Y_{\kappa}, Q}$. By abuse of notation, we still denote by  $t_{\beta}$, $x_{\beta}$ and $y_{\beta}$ their image in $R$. Let $(\cAb,\cHb)$ be the universal HBAV over $\Spec(R)$, $f:\cAb\ra \cBb=\cAb/\cHb$ and $f^t:\cBb\ra\cAb$ be the canonical isogenies,  $(\cAb_0,\cHb_0)$, $f_0:\cAb_0\ra \cBb_0$ and  $f^t_0:\cBb_0\ra \cAb_0$  be the corresponding fibers at the closed point. We have the Dieudonn\'e modules $\bD(\cAb[p])=\oplus_{\gamma\in\bB} \bD(\cAb[p])_{\gamma}$ and $\bD(\cBb[p])=\oplus_{\gamma\in\bB}\bD(\cBb[p])_{\gamma}$, and a commutative diagram of exact sequences
\begin{equation}\label{diag-DF}\xymatrix{
\cdots\ar[r]&\bD(\cBb[p])_{\sigma^{-1}\circ\gamma}\ar[rr]^{\bD(f)_{\sigma^{-1}\circ\gamma}}\ar[d]^F
&&\bD(\cAb[p])_{\sigma^{-1}\circ\gamma}\ar[rr]^{\bD(f^t)_{\sigma^{-1}\circ\gamma}}\ar[d]^{F}&&\bD(\cBb[p])_{\sigma^{-1}\circ\gamma}\ar[r]\ar[d]^F& \cdots\\
\cdots\ar[r]&\bD(\cBb[p])_{\gamma}\ar[rr]^{\bD(f)_{\gamma}}&& \bD(\cAb[p])_{\gamma}\ar[rr]^{\bD(f^t)_{\gamma}}&&\bD(\cBb[p])_{\gamma}\ar[r]&\cdots
}\end{equation}
for each $\gamma\in \bB$. Let $\{e_{\gamma},d_{\gamma}\}$ and $\{\epsilon_{\gamma},\delta_{\gamma}\}$ be respectively a basis of $\bD(\cAb[p])_{\gamma}$ and $\bD(\cBb[p])_{\gamma}$ such that the cotangent modules $\omega_{\cAb/R}$ and $\omega_{\cBb/R}$ are generated respectively by $e_\gamma$ and $\epsilon_{\gamma}$. So we have $F(e_\gamma)=0$ and $F(\epsilon_\gamma)=0$. By \eqref{defn-y_beta}, we  may assume
\begin{equation}\label{equ-x-beta}
\bD(f^t)(e_{\gamma})=x_{\gamma}\cdot \epsilon_{\gamma} \quad \text{and} \quad \bD(f)(\epsilon_{\gamma})=y_{\gamma}\cdot e_{\gamma}.
\end{equation}
 For an element $z$ in  $\bD(\cAb[p])$ (or $\bD(\cBb[p])$), we denote by $z_0$ its image in $\bD(\cAb_0[p])$ (or $\bD(\cBb_0[p])$). If $\gamma\in \eta(Q)$, then we have
 \[\Ker\bD(f^t_0)_{\gamma}=\im\bD(f_0)_{\gamma}=(\im V)_{\gamma}=\kappa(Q)e_{\gamma,0}.\]
 Up to modifying $\delta_{\gamma}$, we may assume $\bD(f_0)(\delta_{\gamma,0})=e_{\gamma, 0}$ and even $\bD(f)(\delta_{\gamma})=e_{\gamma}+U_{\gamma} d_{\gamma}$
 for some $U_{\gamma}\in \m_R$. If $\gamma\in \sigma^{-1}(\varphi(Q))$, it follows from \eqref{defn-varphi} that
 $$
\Ker\bD(f_0)_{\gamma}=\im\bD(f^t_0)_{\gamma}=(\im V)_{\gamma}=\kappa(Q)\epsilon_{\gamma,0}.
$$  Hence, we may assume  $\bD(f^t_0)(d_{\gamma, 0})=\epsilon_{\gamma,0}$ and therefore $\bD(f^t)(d_{\gamma})=\epsilon_{\gamma}+V_{\gamma} \delta_{\gamma}$ for some $V_\gamma\in \m_R$. If $\gamma\in I(Q)=\eta(Q)\cap \sigma^{-1}(\varphi(Q))$, \ie, $\gamma$ is critical, we deduce from $\bD(f)\circ\bD(f^t)=0$ that
\begin{align*}
\bD(f)(\epsilon_{\gamma})&=-V_{\gamma}\bD(f)(\delta_{\gamma})=-V_{\gamma}(e_{\gamma}+U_{\gamma}d_{\gamma})\\
\bD(f^t)(e_{\gamma})&=-U_{\gamma}\bD(f^t)(d_{\gamma})=-U_{\gamma}(\epsilon_\gamma+V_{\gamma}\delta_{\gamma}).
\end{align*}
In view of \eqref{equ-x-beta}, we have $U_\gamma=-x_{\gamma}$ and $V_{\gamma}=-y_{\gamma}$. In summary, if $\gamma$ is critical, we have
\begin{equation}\label{equ-D-delta}
\bD(f)(\delta_{\gamma})=e_{\gamma}-x_{\gamma}d_{\gamma}\quad \text{and}\quad\bD(f^t)(d_{\gamma})=\epsilon_{\gamma}-y_{\gamma}\delta_{\gamma}.
\end{equation}

  Let $\beta\in \bB$ be in the statement of the proposition. Assume that
  \begin{equation}\label{equ-F-d}
  \begin{cases}F(d_{\sigma^{-1}\circ\beta})&=-ue_{\beta}+t_{\beta}d_{\beta}\\
  F(\delta_{\sigma^{-1}\circ\beta})&=-v\epsilon_{\beta}+s_{\beta}\delta_{\beta}\end{cases}
  \end{equation}
  for some $u,v,t_{\beta},s_{\beta}$ in $R$.
   By the remark below \eqref{diag-Ha}, $t_{\beta},s_{\beta}$ compute the partial Hasse invariants of $\cAb$ and $\cBb$ respectively. Note that $t_{\beta}\in \m_R$ since $\beta\in \varphi(Q)\cap\eta(Q)\subset \tau(Q)$ by assumption. Hence $u$ has to be a unit in $R$, because $(\im F)_{\beta}$ is a direct summand of $\bD(\cAb[p])_{\beta}$. Similarly, at least one of $v$ and $s_{\beta}$ is invertible in $R$. We distinguish the four cases in the statement:

   \textbf{Case (a).} In this case, both $\beta$ and $\sigma^{-1}\circ \beta$ are critical, hence the formula  \eqref{equ-D-delta} applies to $\gamma=\beta,\sigma^{-1}\circ \beta$. It results from \eqref{equ-F-d} that
\[
\begin{cases}
\bD(f^t)(F(d_{\sigma^{-1}\circ \beta}))&=(-ux_{\beta}+t_{\beta})\epsilon_{\beta}-t_{\beta}y_{\beta}\delta_{\beta}\\
\bD(f)(F(\delta_{\sigma^{-1}\circ \beta}))&=(-vy_{\beta}+s_{\beta})e_{\beta}-s_{\beta}x_{\beta}d_{\beta}.
\end{cases}
\]
On the other hand, it follows from the commutative diagram \eqref{diag-DF} that
\begin{equation}\label{equ-com-F-D}
\begin{cases}
\bD(f^t)(F(d_{\sigma^{-1}\circ\beta}))&=F(\bD(f^t)(d_{\sigma^{-1}\circ \beta}))=-y^p_{\sigma^{-1}\circ\beta}(-v\epsilon_{\beta}+s_{\beta}\delta_{\beta})\\
\bD(f)(F(\delta_{\sigma^{-1}\circ\beta}))&=F(\bD(f)(\delta_{\sigma^{-1}\circ\beta}))=-x^{p}_{\sigma^{-1}\circ\beta}(-u e_{\beta}+t_{\beta}d_{\beta}).
\end{cases}
\end{equation}

Comparing the coefficients of $\epsilon_{\beta}$ and $e_{\beta}$, we get
\[\begin{cases}t_{\beta}&=ux_{\beta}+vy^p_{\sigma^{-1}\circ\beta}\\
s_{\beta}&=v y_{\beta}+u x^p_{\sigma^{-1}\circ\beta}.\end{cases}
\]
We see that $s_{\beta}\in \m_R$, and it follows that  $v$ is a unit in $R$ as remarked above. This completes the proof in case (a).

 \textbf{Case (b).} In this case,  $\beta$ is critical.  The fact  $ \sigma^{-1}\circ\beta\notin \eta(Q)$ implies that
\[\im \bD(f_0)_{\sigma^{-1}\circ \beta}\neq (\im V)_{\sigma^{-1}\circ \beta}=\kappa(Q)e_{\sigma^{-1}\circ \beta}.\]
Therefore, up to modifying $d_{\sigma^{-1}\circ \beta}$, we may assume $d_{\sigma^{-1}\circ \beta}\in \im\bD(f)=\Ker\bD(f^t)$.
 Since $F$ commutes with $\bD(f^t)$, we have
\[0=F(\bD(f^t)(d_{\sigma^{-1}\circ\beta}))=\bD(F(d_{\sigma^{-1}\circ\beta}))=\bD(f^t)(-ue_{\beta}+t_{\beta}d_{\beta}).\]
Now the equality $t_\beta=ux_{\beta}$ follows from \eqref{equ-D-delta} applied to $\gamma=\beta$.

 \textbf{Case (c).} In this case,  $\sigma^{-1}\circ \beta$ is critical. The assumption  $\sigma\circ\beta\notin \varphi(Q)$ implies that
\[\Ker\bD(f_0)_{\beta}\neq (\im V)_{\beta}=\kappa(Q)\epsilon_{\beta,0}.\]
 It follows that $\bD(f_0)(\epsilon_{\beta,0})=y_{\beta}e_{\beta,0}\neq 0$, i.e. $y_{\beta}$ is invertible and $x_{\beta}=py^{-1}_{\beta}=0$ in $R$. Therefore, $\Ker \bD(f^t)_{\beta}=\im \bD(f)_{\beta}=R e_{\beta}$. Up to modifying $\delta_{\beta}$, we may assume $\bD(f^t)(d_{\beta})=\delta_{\beta}$. As in case (a), it follows from \eqref{equ-F-d} that
 \[
 \begin{cases}
\bD(f^t)(F(d_{\sigma^{-1}\circ\beta}))&=\bD(f^t)(-ue_{\beta}+t_{\beta}d_{\beta})=t_{\beta}\delta_{\beta}\\
\bD(f)(F(\delta_{\sigma^{-1}\circ \beta}))&=\bD(f)(-v\epsilon_{\beta}+s_{\beta}\delta_{\beta})=-vy_{\beta} e_{\beta}.
 \end{cases}
 \]
Using the fact that $F$ commutes with $\bD(f)$ and $\bD(f^t)$ and that $\sigma^{-1}$ is critical, we get the same formula \eqref{equ-com-F-D} as in case (a). Comparing the coefficients, we obtain
\[
v=-u y_{\beta}^{-1}x^p_{\sigma^{-1}\circ\beta}\quad \text{and}\quad
t_{\beta}=-y^p_{\sigma^{-1}\circ \beta}s_{\beta}.
\]
To complete the proof in case (c), we note that $v\in \m_R$, hence $s_{\beta}$ is invertible in $R$.

\textbf{Case (d).} The same argument as in case (b) shows that $t_{\beta}=ux_{\beta}$ (we didn't use the fact that $\beta$ is critical to get this). Now the same argument as in case (c) shows that $x_\beta=0$.

\end{proof}

\subsection{}\label{sect-val-X} We recall the  valuations  on the rigid spaces $\fX_{\rig}$ and $\fY_{\rig}$ defined by Goren-Kassaei \cite[4.2]{GK}. Let $\C_p$ be the completion of an algebraic closure of $\Q_\kappa$, and $v_p$ be the valuation on $\C_p$ normalized by $v_p(p)=1$. We define
\[\nu(x)=\min \{v_p(x),1\}.\]
Let $K$ be a finite extension of $\Q_{\kappa}$,  $\cO_K$ be its ring of integers, and $P$ be a  $K$-valued rigid point of $\fX_{\rig}$, i.e. $P$ corresponds to a  polarized HBAV $A$ with $\Gamma_1(N)$-level structure over $\cO_K$. For any $\beta\in \bB$, let
$t_{\beta}$ be a local lift of the $\beta$-partial Hasse invariant around $P$. We define the \emph{$\beta$-th partial Hodge height} of $P$ (or of $A$) to be
\begin{equation}\label{equ-part-hodge}
w_{\beta}(P)=w_{\beta}(A)=\nu(t_{\beta}(P)).
\end{equation}
 It's easy to see that the definition does not depend on the lift  $t_{\beta}$. We have  $w_{\beta}(P)>0$ if and only if $\beta\in \tau(\Pb)$, where $\Pb\in Y_{\kappa}$ is the specialization of $P$. Therefore  $P\in \fX_{\rig}^{\ord}$ if and only if $w_{\beta}(P)=0$ for all $\beta\in \bB$. Let $\fp$ be a prime ideal of $\cO_{F}$ dividing $p$, and $\cO_{F_{\fp}}$ be the completion  of $\cO_F$ of its localization at $\fp$. Note that $\bB_{\fp}\subset \bB$ is identified with the set of embeddings of $\cO_{F_{\fp}}$ into $\cO_K$. Then the finite flat group scheme $A[\fp]$ is a truncated Barsotti-Tate group of level $1$ with RM by $\cO_{F_{\fp}}$ in the sense of \ref{defn-gp-RM}, and the $\beta$-th partial Hodge height of $A$ coincides with that of $A[\fp]$ defined in \ref{sect-hodge}.

  \subsection{Partial degrees}\label{sect-mult-deg} Let $K$ be  as above,  $G$ be a finite flat group scheme over $\cO_K$ equipped with an action of $\cO_F$, and $\omega_{G}$ be the invariant differential module of $G$. Similarly as \ref{defn-deg}, we define, for each $\beta\in \bB$, the \emph{$\beta$-degree} of $G$ to be \[\deg_{\beta}(G)=\deg(\omega_{G,\beta}),\]
  where $\omega_{G,\beta}$ is the direct summand of $\omega_G$ on which $\cO_F$ acts via $\chi_{\beta}$. The ``total'' degree of $G$ defined in \cite{Fa} is thus
  \[\deg(G)=\sum_{\beta\in \bB}\deg_{\beta}(G).\]
   Let  $Q=(A,H)$ be a $K$-valued rigid point of $\fY_{\rig}$.    We put $\nu_{\fY}(Q)=(\nu_{\beta}(Q))_{\beta\in \bB}$ with
   \begin{equation}\label{defn-val-Y}
   \nu_{\beta}(Q)=\deg_{\beta}(H).
   \end{equation}
    This definition is slightly different from that in \cite[4.2]{GK}, and their relationship is given by the following
 \begin{prop}\label{prop-val} Let $Q=(A,H)\in \fY_{\rig}$ be a rigid point defined over a finite extension $L/\Q_{\kappa}$, and $\Qb\in Y_{\kappa}$ be its specialization. Then our definition of $\nu_{\beta}(Q)$ is $1$ minus that of Goren-Kassaei, i.e., we have
 \[\nu_{\beta}(Q)=\begin{cases}
0 &\text{if }\beta\in \eta(\Qb)\backslash I(\Qb),\\
\nu(y_{\beta}(Q))& \text{if } \beta\in I(\Qb),\\
1 &\text{if } \beta\notin \eta(\Qb),\end{cases}\]
 where $y_{\beta}$ is the local parameter around $Q$ introduced in \eqref{defn-y_beta}.

 \end{prop}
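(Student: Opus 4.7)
The plan is to compute $\nu_\beta(Q)=\deg_\beta(H)$ directly from the cotangent exact sequence attached to $f:A\to B=A/H$, and then to identify the answer with the formula in the statement using the relation $x_\beta y_\beta=p$ together with the Dieudonn\'e-theoretic descriptions of $\varphi(\Qb)$ and $\eta(\Qb)$ recalled earlier.

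First I would exploit the short exact sequence $0\to H\to A\xra{f} B\to 0$ of fppf group schemes over $\cO_L$ to produce an exact sequence of $(\cO_L\otimes\cO_F)$-modules
\[
0\lra \omega_B\xra{f^*} \omega_A \lra \omega_H\lra 0
\]
(as used in \cite{Fa}): left-exactness holds because $\omega_A$ is $\cO_L$-torsion free and $f^*$ is an isomorphism after inverting $p$, and right-exactness is the standard identification $\omega_H\simeq e_A^*\Omega^1_{A/B}$ for the $H$-torsor $A\to B$. Decomposing under the $\cO_F$-action and using the bases $e_\beta\in\omegab_{A,\beta}$ and $\epsilon_\beta\in\omegab_{B,\beta}$ from \eqref{defn-y_beta} with $f^*(\epsilon_\beta)=y_\beta e_\beta$, the map $f^*_\beta$ is multiplication by $y_\beta$, so $\omega_{H,\beta}\simeq \cO_L/(y_\beta(Q))$ and
\[
\nu_\beta(Q)=\deg_\beta(H)=v_p(y_\beta(Q)).
\]

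Next I would use the relation $x_\beta y_\beta=p$ --- which follows from $f^*\circ f^{t*}=p\cdot\mathrm{id}$ and is recorded below \eqref{defn-y_beta} --- to see that $v_p(y_\beta(Q))\in [0,1]$, so the truncation in $\nu$ is already automatic on $y_\beta(Q)$. To match with the three cases of the proposition, I then translate the combinatorial conditions defining $\eta(\Qb)$ and $\varphi(\Qb)$ into vanishing statements for $x_\beta,y_\beta$ modulo $\m_Q$: by the characterization \eqref{defn-varphi}, $\beta\in \eta(\Qb)$ is equivalent to $\Lie(f^t)_\beta$ vanishing at $\Qb$, and by duality this reads $x_\beta\in\m_Q$; similarly $\sigma\circ\beta\in\varphi(\Qb)$ is equivalent to $y_\beta\in\m_Q$.

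Combining these inputs settles each case immediately. If $\beta\in \eta(\Qb)\setminus I(\Qb)$, then $\sigma\circ\beta\notin \varphi(\Qb)$, so $y_\beta$ is a unit at $Q$ and $\nu_\beta(Q)=0$. If $\beta\in I(\Qb)$, both $x_\beta$ and $y_\beta$ vanish at $\Qb$ and the equality $\nu_\beta(Q)=\nu(y_\beta(Q))$ is the formula already obtained. Finally, if $\beta\notin \eta(\Qb)$, then $x_\beta(Q)\in\cO_L^\times$, so $x_\beta y_\beta=p$ forces $v_p(y_\beta(Q))=1$. The only step requiring any real care is the exactness of the cotangent sequence in the first paragraph; once this is in place, the rest is pure bookkeeping with the identity $x_\beta y_\beta=p$ inside the explicit local ring \eqref{equ-loc-ring}.
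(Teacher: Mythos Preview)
Your proof is correct and follows essentially the same approach as the paper: both compute $\nu_\beta(Q)=v_p(y_\beta(Q))$ from the cotangent exact sequence of $f:A\to B$, and then read off the three cases from whether $x_\beta$ or $y_\beta$ is a unit at $\Qb$. The only cosmetic difference is that in the case $\beta\notin\eta(\Qb)$ the paper invokes the dual exact sequence for $A[p]/H$ and additivity $\deg_\beta(H)=1-\deg_\beta(A[p]/H)$, whereas you use the equivalent relation $x_\beta y_\beta=p$ directly.
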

\begin{proof} Let $f:A\ra B=A/H$ be the canonical isogeny, and $f^t: B\ra A$ be the isogeny with kernel $A[p]/H$. We have exact sequences of invariant differential modules:
 \[0\ra \omega_{B}\xra{f^*}\omega_{A}\lra\omega_{H}\ra 0,\quad\quad 0\ra \omega_{A}\xra{f^{t*}}\omega_{B}\lra \omega_{A[p]/H}\ra0. \]
 So by the definitions of $x_{\beta},y_{\beta}$, we have
 \begin{align*}\deg_{\beta}(A[p]/H)&=\deg(\omega_{B,\beta}/f^{t*}\omega_{A,\beta})=v_p(x_p(Q)),\\
 \deg_{\beta}(H)&=\deg(\omega_{A,\beta}/f^*\omega_{B,\beta})=v_p(y_p(Q)).
 \end{align*}
 Thus the case where $\beta\in I(\Qb)$ follows immediately. If $\beta\in \eta(\Qb)\backslash I(\Qb)$, we have  $\Lie(f_{\Qb})_{\beta}\neq0$, where $f_{\Qb}$ denotes the special fiber of $f$. It follows that $f^*_{\beta}:\omega_{B,\beta}\ra \omega_{A,\beta}$ is surjective, hence $\nu_{\beta}(Q)=\deg_{\beta}(H)=0.$ If $\beta \notin \eta(\Qb)$, then we have $\beta\in \eta^c(\Qb)\subset \sigma^{-1}(\varphi(\Qb))$, i.e., $\Lie(f^{t}_{\Qb})_{\beta}\neq 0$ by \eqref{defn-varphi}. This means $f^{t*}_{\beta}:\omega_{A,\beta}\ra \omega_{B,\beta}$ is surjective, hence we have $\deg_{\beta}(A[p]/H)=0$ and $$\nu_{\beta}(Q)=\deg_{\beta}(H)=\deg_{\beta}(A[p])-\deg(A[p]/H)=1.$$
\end{proof}

It follows from this Proposition that $Q\in \fY_{\rig}^{\ord}$ if and only if $\nu_{\beta}(Q)=1$ for all $\beta\in \bB$.
 Recall that for every prime ideal $\fp$ of $\cO_F$ above $p$, $\bB_{\fp}$ is the subset of $\bB=\Hom_{\Q}(F,\Q_{\kappa})$ which induces the valuation on $F$ corresponding to $\fp$. Following \cite[5.3]{GK}, we define an admissible open subset of $\fX_{\rig}$ or of $\fY_{\rig}$ by
 \begin{align}
 \cU_{\fp}&=\{P\in \fX_{\rig}\;|\; w_{\beta}(P)+p w_{\sigma^{-1}\circ\beta}(P)<p, \forall \beta\in \bB_{\fp}\},\nonumber\\
 \cV_{\fp}&=\{Q\in \fY_{\rig}\;|\; \nu_{\beta}(Q)+p\nu_{\sigma^{-1}\circ\beta}(Q)>1, \forall \beta\in \bB_{\fp}\},\label{defn-p-can}\\
 \cW_{\fp}&=\{Q\in \fY_{\rig}\;|\; \nu_{\beta}(Q)+p\nu_{\sigma^{-1}\circ\beta}(Q)<1, \forall \beta\in \bB_{\fp}\}.
\end{align}

\begin{thm}[Goren-Kassaei]\label{thm-GK-p}
\emph{(a)} For every prime ideal $\fp$ of $\cO_F$ above $p$, we have
$\pi^{-1}(\cU_{\fp})=\cV_{\fp}\cup\cW_{\fp}.$

\emph{(b)} Let $L$ be a finite extension of $\Q_{\kappa}$, and $P\in \cU_{\fp}(L)$ be rigid point corresponding to a HBAV $A$ over $\cO_L$. For every rigid point $Q=(A,H)$ of $\cV_{\fp}$ above $P$, the $\fp$-component $H[\fp]$ of $H$ is the canonical subgroup of $A[\fp]$ given by Theorem \ref{thm-can}.
\end{thm}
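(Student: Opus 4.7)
The plan is to handle both parts together, by reducing to the canonical subgroup theorem applied to $A[\fp]$. For a rigid point $P \in \cU_\fp(L)$ corresponding to a HBAV $A$ over $\cO_L$, the finite flat group scheme $A[\fp]$ is a truncated Barsotti--Tate group of level $1$ with formal real multiplication by $\cO_{F_\fp}$ in the sense of Definition \ref{defn-gp-RM}, and its partial Hodge heights in the sense of \ref{sect-hodge} coincide with $(w_\beta)_{\beta \in \bB_\fp}$ by \ref{sect-val-X}. The defining inequalities $w_\beta + p w_{\sigma^{-1}\beta} < p$ of $\cU_\fp$ are precisely the hypothesis of Theorem \ref{thm-can}, which therefore produces a unique $\cO_{F_\fp}$-cyclic canonical subgroup $C \subset A[\fp]$ with $\deg_\beta(C) = 1 - w_\beta$, characterized uniquely by $\deg_\beta(C) + p \deg_{\sigma^{-1}\beta}(C) > 1$ for every $\beta \in \bB_\fp$.

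For any rigid point $Q = (A, H) \in \fY_\rig$ above $P$, the decomposition $H = \prod_{\fq \mid p} H[\fq]$ together with the fact that $\omega_{H[\fq]}$ has support contained in $\bB_\fq$ yields $\nu_\beta(Q) = \deg_\beta(H[\fp])$ for $\beta \in \bB_\fp$. Part (b) is then immediate: if $Q \in \cV_\fp$, the condition $\nu_\beta(Q) + p\nu_{\sigma^{-1}\beta}(Q) > 1$ is precisely the uniqueness criterion for the canonical subgroup of $A[\fp]$, forcing $H[\fp] = C$. Conversely, when $H[\fp] = C$, one computes
\[\nu_\beta(Q) + p \nu_{\sigma^{-1}\beta}(Q) = (1 - w_\beta) + p(1 - w_{\sigma^{-1}\beta}) = 1 + p - w_\beta - p w_{\sigma^{-1}\beta} > 1\]
by the $\cU_\fp$ condition, so $Q \in \cV_\fp$.

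For part (a), the disjointness $\cV_\fp \cap \cW_\fp = \emptyset$ is automatic. The inclusions $\cV_\fp, \cW_\fp \subset \pi^{-1}(\cU_\fp)$ follow by a dual version of the canonical subgroup argument applied to the quotient $A[\fp]/H[\fp]$: the existence of a cyclic subgroup $H[\fp]$ satisfying either set of inequalities forces enough quasi-ordinariness on $A[\fp]$ to recover $w_\beta + p w_{\sigma^{-1}\beta} < p$. The nontrivial direction is $\pi^{-1}(\cU_\fp) \subset \cV_\fp \cup \cW_\fp$, which by the preceding paragraph reduces to: if $Q$ lies above $P \in \cU_\fp$ and $H[\fp] \neq C$, then $\nu_\beta(Q) + p \nu_{\sigma^{-1}\beta}(Q) < 1$ strictly for every $\beta \in \bB_\fp$.

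This strict pointwise bound is the main obstacle. Applying Lemma \ref{lem-two-cyclic} to the distinct cyclic subgroups $H[\fp]$ and $C$ and substituting $\deg_\beta(C) = 1 - w_\beta$ yields only the cumulative inequality
\[\sum_{j=0}^{f_\fp - 1} p^j \nu_{\sigma^{-j}\beta}(Q) \leq \sum_{j=0}^{f_\fp - 1} p^j w_{\sigma^{-j}\beta} \qquad \forall\, \beta \in \bB_\fp,\]
which by itself is not sharp enough to conclude. I plan to upgrade it via the Breuil--Kisin analysis from the proof of Theorem \ref{thm-can}: every $\cO_{F_\fp}$-cyclic subgroup of $A[\fp]$ corresponds to a fixed point of a contracting composition $g_{\beta_0} \circ g_{\beta_1} \circ \cdots \circ g_{\beta_{f_\fp - 1}}$ on $\fS_1$, and the $|\kappa(\fp)|$ non-canonical fixed points can be described explicitly as in the proofs of Propositions \ref{prop-split-kisin} and \ref{prop-subgp-sg}; direct inspection of the $u$-adic valuations then yields $\deg_\beta(H[\fp]) + p \deg_{\sigma^{-1}\beta}(H[\fp]) < 1$ strictly for every $\beta$. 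Alternatively, following Goren--Kassaei more closely, one can translate the problem via the Key Lemma \ref{prop-key-lemma} and Proposition \ref{prop-val} into explicit constraints on $v_p(x_\beta(Q))$ and $v_p(y_\beta(Q))$ (using the relation $x_\beta y_\beta = p$ at critical indices), and handle the four cases of \ref{prop-key-lemma} separately according to the position of $\beta$ in the Goren--Kassaei stratification of $Y_\kappa$.
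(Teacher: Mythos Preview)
The paper does not give its own proof of this theorem: it simply records that the result is essentially \cite[7.1.3]{GK}, and remarks that part (b) is a direct consequence of Theorem~\ref{thm-can}. Your treatment of (b) is exactly that remark, carried out in full, and is correct.

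For part (a) you are attempting to supply an argument the paper does not give. Your plan has the right shape, but two pieces are genuinely incomplete rather than routine. First, the inclusion $\cV_\fp\cup\cW_\fp\subset\pi^{-1}(\cU_\fp)$ does \emph{not} follow from ``a dual version of the canonical subgroup argument'': knowing only that $H[\fp]$ satisfies the degree inequalities tells you nothing about $w_\beta(A)$ without invoking the relation between $\nu_\beta(Q)$ and the local parameters $x_\beta,y_\beta$ (Proposition~\ref{prop-val}) together with the Key Lemma~\ref{prop-key-lemma}. You must run the case analysis of~\ref{prop-key-lemma} here too, exactly as in the proof of Proposition~\ref{prop-hecke-can}.

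Second, for the hard inclusion $\pi^{-1}(\cU_\fp)\subset\cV_\fp\cup\cW_\fp$, your Breuil--Kisin plan is plausible but does not follow from the machinery already in the paper. The proof of Theorem~\ref{thm-can} sets up the iterated map $z_i=g_i(z_{i-1})$ and extracts the \emph{unique contracting} fixed point; it does not parametrize the remaining $p^{f_\fp}$ cyclic subgroups. To carry your approach through you would have to write down the degree-$(p^{f_\fp}+1)$ equation in $\fS_1$ whose roots index all $(\cO_F/\fp)$-cyclic subgroups (as is done in the special cases of Propositions~\ref{prop-split-kisin} and~\ref{prop-subgp-sg}) and run a Newton-polygon argument to show that every non-canonical root has $\deg_\beta+p\deg_{\sigma^{-1}\beta}=w_{\sigma\beta}/p+w_\beta<1$. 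That computation is feasible but is not in the paper; the route Goren--Kassaei actually take is the Key-Lemma case analysis you mention as an alternative.
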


This theorem is essentially [\emph{loc. cit.} 7.1.3], and statement (b) is also a direct consequence of Theorem \ref{thm-can}. Let $Q=(A,H)$ be a rigid point of $\fY_{\rig}$, and  $H[\fp]$ be the subgroup killed by $\fp\subset \cO_F$, so that $H=\prod_{\fp|p}H[\fp]$. Following [\emph{loc. cit.} 5.4.1], we say $H$  (or $Q$) is
 \begin{itemize}
 \item  \emph{canonical at $\fp$}  if $Q\in \cV_{\fp}$;

 \item   \emph{anti-canonical at $\fp$} if $Q\in \cW_{\fp}$;

 \item    \emph{canonical} if it is canonical at   all primes  $\fp$ above $p$;

 \item  \emph{anti-canonical} if it is anti-canonical at $\fp$ at all primes $\fp$ above $p$;

 \item \emph{too singular at $\fp$} is it is neither canonical nor anti-canonical at $\fp$.
 \end{itemize}

We put
 \begin{align}
 \cU_{\can}&=\bigcap_{\fp|p}\cU_{\fp},\quad \quad \cV_{\can}=\bigcap_{\fp|p}\cV_{\fp},\nonumber\\
 \cW&=\bigcup_{\emptyset \neq S\subseteq \{\fp|p\}}\bigl[\bigcap_{\fp\in S}\cW_{\fp}\cap\bigcap_{\fp\notin S}\cV_{\fp} \bigr]\label{anti-can}
 \end{align}
   Then $\cU_{\can}$ and $\cV_{\can}$ are respectively strict neighborhoods of $\fX_{\rig}^{\ord}$ and $\fY_{\rig}^\ord$. The following theorem is a consequence of Theorem \ref{thm-GK-p}.
 \begin{thm}\cite[5.3.1, 5.3.7]{GK}\label{thm-GK}
 With the notation above, we have
 $\pi^{-1}(\cU_{\can})=\cV_{\can}\cup \cW$, and the restriction $\pi|_{\cV_{\can}}:\cV_{\can}\ra \cU_{\can}$ is an isomorphism, i.e., there exists a section $s^{\dagger}:\cU_{\can}\ra \cV_{\can}$ extending the section $s^{\circ}:\fX_{\rig}^{\ord}\ra \fY_{\rig}^{\ord}$ defined in \ref{sect-ord}.
 \end{thm}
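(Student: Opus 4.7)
The plan is to assemble the theorem from \ref{thm-GK-p} together with the canonical subgroup result \ref{thm-can}, with most of the work being a prime-by-prime construction of the section $s^{\dagger}$.

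For the set-theoretic identity, I would begin with the tautology
$$\pi^{-1}(\cU_{\can}) \;=\; \pi^{-1}\Bigl(\bigcap_{\fp|p}\cU_{\fp}\Bigr) \;=\; \bigcap_{\fp|p}\pi^{-1}(\cU_{\fp})$$
and apply \ref{thm-GK-p}(a) to each factor to obtain $\bigcap_{\fp|p}(\cV_{\fp}\cup\cW_{\fp})$. Distributing this intersection over the union and partitioning by the subset $S=\{\fp\mid p : Q\in\cW_{\fp}\}$, the piece $S=\emptyset$ produces $\cV_{\can}$ and each non-empty $S$ produces one of the pieces appearing in \eqref{anti-can}. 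This identification is valid as admissible opens, not merely at the level of points, since each $\cV_{\fp}$ and $\cW_{\fp}$ is admissible open.

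For the isomorphism statement, I would construct the section $s^{\dagger}$ prime by prime. Over $\cU_{\fp}$, for any rigid point $P=(A/\cO_K,\dots)$, the truncated Barsotti-Tate group $A[\fp]$ of level $1$ satisfies $w_\beta(P)+p\,w_{\sigma^{-1}\circ\beta}(P)<p$ for every $\beta\in\bB_{\fp}$, which is exactly the hypothesis of \ref{thm-can}. By \ref{thm-can} there is a unique $\Z_{p^{f_\fp}}$-cyclic subgroup $C_\fp(A)\subset A[\fp]$ with $\deg_\beta(C_\fp(A))=1-w_\beta(P)$. To upgrade this pointwise construction to a morphism of rigid spaces one performs exactly the same Breuil--Kisin argument relatively over a strict neighborhood of the ordinary locus in the formal scheme $\fX^{\ord}$: the matrix computation producing the solutions $z_i\in\fS_1$ in the proof of \ref{thm-can} is carried out simultaneously over a family of Breuil--Kisin modules, using that the contracting fixed-point equation has a unique solution in the relevant adic setting. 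This gives a section $s^{\dagger}_\fp:\cU_{\fp}\to\pi^{-1}(\cU_{\fp})$ of the $\fp$-component of the projection; from the explicit formula $\deg_\beta(C_\fp(A))=1-w_\beta$ and the hypothesis defining $\cU_{\fp}$ one checks $\nu_\beta+p\nu_{\sigma^{-1}\circ\beta}>1$ on $\bB_\fp$, so that $s^{\dagger}_\fp$ lands in $\cV_\fp$.

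Taking the product over primes $\fp|p$, I would set $s^{\dagger}(A)=(A,\prod_{\fp|p}C_\fp(A))$ on $\cU_{\can}$. The resulting subgroup is automatically $(\cO_F/p)$-cyclic since each factor is $(\cO_F/\fp)$-cyclic, and it is isotropic for the Weil pairing because the $\fp$- and $\fp'$-parts of $A[p]$ are orthogonal for distinct primes. Hence $s^{\dagger}$ factors through $\cV_{\can}$, and the identities $\pi\circ s^{\dagger}=\mathrm{id}_{\cU_{\can}}$ and $s^{\dagger}\circ\pi|_{\cV_{\can}}=\mathrm{id}_{\cV_{\can}}$ both reduce, after checking on rigid points and invoking the uniqueness clause of \ref{thm-can}, to the fact that the $\fp$-part $H[\fp]$ of a canonical $(\cO_F/p)$-cyclic subgroup satisfies $\deg_\beta(H[\fp])+p\deg_{\sigma^{-1}\circ\beta}(H[\fp])>1$ on $\bB_\fp$, and so coincides with $C_\fp(A)$ by uniqueness; compatibility with the ordinary section $s^{\circ}$ of \ref{sect-ord} is immediate because on $\fX_{\rig}^{\ord}$ the canonical subgroup $C_\fp(A)$ of \ref{thm-can} is nothing but the multiplicative part of $A[\fp]$.

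The only non-formal step is the passage from the pointwise \ref{thm-can} to a family version defining $s^{\dagger}_\fp$ as a morphism of rigid spaces; this is the main obstacle and is precisely the content of \cite[\S5]{GK}, which the theorem cites. Once that relative construction is in hand, the rest of the argument is combinatorial and formal.
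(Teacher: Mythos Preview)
Your deduction of the set-theoretic identity $\pi^{-1}(\cU_{\can})=\cV_{\can}\cup\cW$ from Theorem~\ref{thm-GK-p}(a) by distributing the intersection is exactly right, and this is all the paper does: it states the theorem as a citation from \cite{GK} and remarks that it is a consequence of Theorem~\ref{thm-GK-p}, without writing out a proof.

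Your route to the isomorphism $\pi|_{\cV_{\can}}\colon\cV_{\can}\xra{\sim}\cU_{\can}$ is correct but more laborious than necessary. You propose to build the section $s^{\dagger}$ directly by running the Breuil--Kisin contracting-map argument of Theorem~\ref{thm-can} in families, and you correctly flag this relative construction as the genuine obstacle, deferring to \cite[\S5]{GK}. However, Breuil--Kisin modules as used in this paper are intrinsically over complete discrete valuation rings, so ``the same argument relatively'' does not literally go through; what \cite{GK} actually does is a different construction. A cleaner way to finish, once you have the first part, is to observe that $\pi\colon\fY_{\rig}\to\fX_{\rig}$ is finite \'etale (it is the good-reduction locus of the analytification of the finite \'etale map $Y_{\Q_\kappa}\to X_{\Q_\kappa}$), and that $\cV_{\can}$ and $\cW$ are disjoint admissible opens covering $\pi^{-1}(\cU_{\can})$, so $\pi|_{\cV_{\can}}\colon\cV_{\can}\to\cU_{\can}$ is again finite \'etale. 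Theorem~\ref{thm-GK-p}(b) (or the uniqueness clause in Theorem~\ref{thm-can}) gives bijectivity on geometric fibers, and a finite \'etale morphism that is bijective on geometric points is an isomorphism. The section $s^{\dagger}$ is then just the inverse, and its compatibility with $s^{\circ}$ on the ordinary locus is immediate since the multiplicative part of $A[\fp]$ has $\deg_\beta=1$ for all $\beta$. This bypasses any need for a family-level Breuil--Kisin construction.
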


 We call $\cV_{\can}$ (resp. $\cU_{\can}$) the \emph{canonical locus}  of $\fY_{\rig}$ (resp. $\fX_{\rig}$). The following Proposition describes the dynamics of Hecke correspondence over the canonical and anti-canonical locus.

 \begin{prop}[Goren-Kassaei]\label{prop-hecke-can} Let $\fp$ be a prime ideal of $\cO_F$ dividing $p$, $U_{\fp}$ be the set theoretic Hecke correspondence \eqref{defn-set-U_p} on $\fY_{\rig}$, and  $Q=(A,H)\in \fY_{\rig}$ be a rigid point.

\emph{(a)} Assume that $Q$ is canonical at $\fp$. Then for every $(\cO_F/\fp)$-cyclic subgroup $H'$ of $A[\fp]$ distinct from   $H[\fp]$, we have
$$\deg_{\beta}(H')=\frac{1}{p}(1-\nu_{\sigma\circ\beta}(Q))\quad \text{for }\beta\in \bB_{\fp};$$
or equivalently all $Q_1\in U_{\fp}(Q)$  are canonical at $\fp$, and  we have $\nu_{\beta}(Q_1)=\frac{p-1}{p}+\frac{1}{p}\nu_{\sigma\circ\beta}(Q)$ for $\beta\in \bB_{\fp}$.

\emph{(b)} Assume $Q$ is anti-canonical at $\fp$. Let $C\subset A[\fp]$ be its canonical subgroup. For any $(\cO_F/p)$-cyclic subgroup $H'\subset A[\fp]$ distinct from $H[\fp]$, we have
$$\deg_{\beta}(H')=\begin{cases}1-p\nu_{\sigma^{-1}\circ\beta}(Q) &\text{if } H'=C;\\
\nu_{\beta}(Q)&\text{if }H'\neq C\end{cases}$$ for all $\beta\in \bB_{\fp}$.
Equivalently, if $Q_1=(A/H', (H+H')/H')\in U_p(Q) $, we  have
\[\nu_\beta(Q_1)=\begin{cases}p\nu_{\sigma^{-1}\circ \beta}(Q)&\text{if $H'=C$;}\\
1-\nu_{\beta}(Q)&\text{if $H'\neq C$.} \end{cases}\]
In particular, if $H'\neq C$, $Q_1$ is canonical at $\fp$.

\end{prop}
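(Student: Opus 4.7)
This proposition reformulates \cite[Thm.\,6.2.1, 7.3.2]{GK} in the partial-degree framework of Prop.~\ref{prop-val}. The plan is to handle both parts uniformly via Breuil-Kisin modules, extending the analysis in the proof of Thm.~\ref{thm-can}. Since $Q \in \pi^{-1}(\cU_\fp) = \cV_\fp \cup \cW_\fp$ by Thm.~\ref{thm-GK-p}(a), the truncated Barsotti-Tate group $G = A[\fp]$ with RM by $\Z_{p^g}$ (where $g = [\kappa(\fp):\F_p]$) admits the canonical subgroup $C$ of Thm.~\ref{thm-can}, with $\deg_\beta(C) = 1-w_\beta$ for $\beta \in \bB_\fp$, where the partial Hodge heights $w_\beta = w_\beta(A)$ satisfy $w_\beta + p w_{\sigma^{-1}\beta} < p$.

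\textbf{Key local computation.} Using the adapted-basis presentation of the $\Z_{p^g}$-BK module $\fM = \bigoplus_i \fM_i$ of $G$, one classifies the $p^g + 1$ corank-$g$ BK submodules of $\fM$ and shows that every non-canonical submodule (i.e.\ every submodule distinct from the canonical one corresponding to $C$) determines a cyclic subgroup $H'' \subset G$ satisfying
\[
\deg_\beta(H'') = w_{\sigma\beta}/p \quad (\beta \in \bB_\fp). \qquad (\star)
\]
The canonical-locus hypothesis $w_\beta + p w_{\sigma^{-1}\beta} < p$ forces the non-canonical branch of the system $\varphi(\fL''_{i-1}) \subset \fL''_i$ to admit a unique periodic solution giving $\deg_i(\fL'') = 1 - w_{i+1}/p$ (hence $(\star)$ by Lemma~\ref{lem-deg-kisin}); the $p^g$-fold degeneracy corresponds to the scalar parameter $(y''_i)$ that remains free in the BK equations.

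\textbf{Reduction to $(\star)$.} In case (a), Thm.~\ref{thm-GK-p}(b) identifies $H[\fp]$ with $C$, so $\nu_\beta(Q) = 1 - w_\beta$, and every $H' \neq H[\fp]$ is non-canonical; $(\star)$ yields $\deg_\beta(H') = w_{\sigma\beta}/p = \tfrac{1}{p}(1 - \nu_{\sigma\beta}(Q))$. In case (b), $H[\fp]$ itself is non-canonical with $\nu_\beta(Q) = w_{\sigma\beta}/p$; the two sub-cases $H' = C$ and $H' \neq C$ then give respectively $\deg_\beta(H') = 1 - w_\beta = 1 - p\, \nu_{\sigma^{-1}\beta}(Q)$ and $\deg_\beta(H') = w_{\sigma\beta}/p = \nu_\beta(Q)$.

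The equivalent formulas for $\nu_\beta(Q_1) = \deg_\beta((H+H')/H')$ follow from analyzing the isogeny $f\colon A \to B = A/H'$. When $H' \neq C$, a BK-module calculation shows that the Hodge heights of $B$ satisfy $w_\beta(B) = w_{\sigma\beta}(A)/p$---quotient by a non-canonical subgroup is essentially a partial Verschiebung modulo $p^{1-w}$---so $(H+H')/H'$ is the canonical subgroup of $B[\fp]$ of degree $1 - w_{\sigma\beta}/p$, giving $\nu_\beta(Q_1) = \tfrac{p-1}{p} + \tfrac{1}{p}\nu_{\sigma\beta}(Q)$. When $H' = C$, the Frobenius-lifting property (Remark after Thm.~\ref{thm-can}) yields $B \equiv A^{(p)}$ modulo $p^{1-w}$, whence $\nu_\beta(Q_1) = p\, w_\beta = p\, \nu_{\sigma^{-1}\beta}(Q)$. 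The canonicity of $Q_1$ asserted in the statement is then a direct verification of $\nu_\beta(Q_1) + p\, \nu_{\sigma^{-1}\beta}(Q_1) > 1$. The main technical obstacle is the identity $(\star)$: the weighted inequality $\sum p^j \deg_{\sigma^{-j}\beta}(H'') \leq \sum p^j w_{\sigma^{-j}\beta}$ coming from Lemma~\ref{lem-two-cyclic} is strictly weaker, and promoting it to the exact formula requires the explicit BK-submodule analysis sketched above, generalizing the calculations in Prop.~\ref{prop-split-kisin} and~\ref{prop-subgp-sg}.
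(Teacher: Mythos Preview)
Your route through Breuil--Kisin modules is genuinely different from the paper's, and considerably harder. The paper's argument is a two-line trick using the Key Lemma (Prop.~\ref{prop-key-lemma}) together with the dichotomy $\pi^{-1}(\cU_\fp)=\cV_\fp\cup\cW_\fp$ of Theorem~\ref{thm-GK-p}: the partial Hodge height $w_\beta(A)$ depends only on $A$, not on the choice of cyclic subgroup, so one computes it once using the point $(A,H)$ and once using $(A,H')$. When $(A,H)$ is canonical the Key Lemma gives $w_\beta(A)=1-\nu_\beta(Q)$; since $(A,H')$ is then forced to be anti-canonical, the same lemma gives $w_\beta(A)=p\deg_{\sigma^{-1}\beta}(H')$. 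Equating the two yields $(\star)$ immediately. Part (b) is symmetric. No BK computation is needed, and in particular there is no need to classify all $p^g+1$ submodules of $\fM$ or to solve the non-canonical branch of the periodic system.

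Your BK approach should work in principle, but you have correctly identified that the crux $(\star)$ is only sketched: Lemma~\ref{lem-two-cyclic} gives an inequality, and upgrading it to the exact equality requires writing down and solving the system for the non-canonical $\fL''$, which you have not actually done. This is a genuine gap in the proposal as written.

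Separately, your treatment of the equivalence with the $\nu_\beta(Q_1)$ formulas is badly overcomplicated and contains an error. Since $H[\fp]$ and $H'$ are distinct $(\cO_F/\fp)$-cyclic subgroups of $A[\fp]$, their sum is all of $A[\fp]$, so $(H+H')[\fp]/H'=A[\fp]/H'$ and hence $\nu_\beta(Q_1)=\deg_\beta(A[\fp]/H')=1-\deg_\beta(H')$ directly. There is no need to compute Hodge heights of $B=A/H'$ or invoke the Frobenius-lifting property. Your line ``$\nu_\beta(Q_1)=p\,w_\beta$'' in the $H'=C$ case is also wrong: it should read $\nu_\beta(Q_1)=1-\deg_\beta(C)=w_\beta$, which then equals $p\,\nu_{\sigma^{-1}\beta}(Q)$ because $H[\fp]$ is non-canonical and satisfies $(\star)$.
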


This is essentially contained in \cite[5.4.3]{GK}. For the convenience of the reader, we reproduce its proof here.

\begin{proof} The equivalence between the statement on $H'$ and that on $Q_1\in U_{\fp}(Q)$ follows from the fact that
\[\nu_{\beta}(Q_1)=\deg_{\beta}(A[\fp]/H')=1-\deg_{\beta}(H'),\]
since $\deg_{\beta}(A[\fp])=1$ for all $\beta\in \bB_{\fp}$.

(a) Since $Q$ is canonical at $\fp$, we have $1-\nu_{\beta}(Q)<p\nu_{\sigma^{-1}\circ\beta}(Q)$ for all $\beta\in \bB_{\fp}$.
 It follows thus from Prop. \ref{prop-key-lemma} and \ref{prop-val} that
\[w_{\beta}(A)=\nu(x_{\beta}(Q))=1-\nu_{\beta}(Q),\]
where $x_{\beta}$ is the local parameters on $Y$ introduced in \ref{loc-par}.
The subgroup $H'$ must be anti-canonical at $\fp$  by Theorem \ref{thm-GK-p}, i.e., we have
$p\deg_{\sigma^{-1}\circ\beta}(H')<1-\deg_{\beta}(H')$
for $\beta\in \bB_{\fp}$.
It follows from \ref{prop-key-lemma} that
$$
w_{\beta}(A)=p\deg_{\sigma^{-1}\circ\beta}(H').
$$
Hence $\deg_{\sigma^{-1}\circ\beta}(H')=\frac{1}{p}(1-\nu_{\beta}(Q))$, i.e. $\deg_{\beta}(H')=\frac{1}{p}(1-\nu_{\sigma\circ\beta}(Q))$.

(b) We proceed in the same way as in (a). Since $Q$ is anti-canonical at $\fp$, we have $w_{\beta}(A)=p\nu_{\sigma^{-1}\circ\beta}(Q)$ for $\beta\in \bB_{\fp}$. If $H'=C$, we have $w_{\beta}(A)=1-\deg_{\beta}(H')$. Thus the equality
\[
\deg_{\beta}(H')=1-p\nu_{\sigma^{-1}\circ\beta}(Q)
\]
follows immediately.
If $H'\neq C$, then $H'$ is anti-canonical at $\fp$ by Theorem \ref{thm-GK-p}, so we have $w_{\beta}(A)=p\deg_{\sigma^{-1}\circ\beta}(H')$ for $\beta\in \bB_{\fp}$.  We deduce immediately that $\deg_{\beta}(H')=\nu_{\beta}(Q)$.
\end{proof}

Recall that for any weight $\vk\in \Z^{\bB}$ and any admissible open subset $U\subset \fY_{\rig}$,   we have defined  in \ref{norms} $|f|_U$ for  the space  $f\in H^0(U,\omegab^{\vk})$. Note that if $U$ is not quasi-compact, it's possible that $|f|_U=\infty$. We have the following basic estimation of norms under Hecke correspondence.

\begin{lemma}\label{lemma-basic-est}
Let $Q=(A,H)$ be a rigid point of $\fY_{\rig}$ defined over a finite extension $K$ of $\Q_{\kappa}$, and $\omega$ be a basis of the free $(\cO_K\otimes \cO_F)$-module $\omegab_{A/\cO_{K}}$. Let $\fp$ be a prime ideal of $\cO_F$ dividing $p$, $H'\subset A[\fp]$ be a $(\cO_F/\fp)$-cyclic closed group disjoint from $H$, and $\hat{\phi}:A/H'\ra A$ be the canonical isogeny with kernel $A[p]/H'$.  Let $V$ be an admissible open subset  containing the rigid point $Q'=(A/H',(H+H')/H')$. If $f$ is  a section of $\omegab^{\vk}$ over  $V$ such that $|f|_{V}$ is finite, we have
\[|f(Q')|=|f(A/H',(H+H')/H', p^{-1}\hat{\phi}^*\omega)|\leq p^{-\sum_{\beta\in \bB_{\fp}}k_{\beta}\deg_{\beta}(H')}\;|f|_{V}.\]
\end{lemma}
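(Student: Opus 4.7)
The plan is to reduce the estimate to a computation of how $\hat{\phi}^*$ rescales a basis of $\omegab$, and then to invoke the Katz-style transformation rule for Hilbert modular forms recalled after Definition~\ref{defn-Hilb}.

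First, I would fix a generator $\omega'$ of the free $(\cO_{K'}\otimes \cO_F)$-module $\omegab_{A/H'}$, where $K'$ is the field of definition of the rigid point $Q'$, and for each $\beta\in\bB$ write $\hat{\phi}^*(\omega_\beta)=b_\beta\,\omega'_\beta$ with $b_\beta\in \cO_{K'}$. Since the isogeny $\hat{\phi}\colon A/H'\to A$ has kernel $A[p]/H'$, there is a short exact sequence of $(\cO_{K'}\otimes\cO_F)$-modules
$$0\to \omegab_A\xra{\hat{\phi}^*}\omegab_{A/H'}\to \omegab_{A[p]/H'}\to 0.$$
Passing to $\beta$-components (each being a free $\cO_{K'}$-module of rank $1$ on the two left-hand terms) gives $v_p(b_\beta)=\deg_\beta(A[p]/H')$.

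Next, I would compute $\deg_\beta(A[p]/H')$. Because $A[p]=\prod_{\fq\mid p}A[\fq]$ and $H'\subset A[\fp]$, additivity of $\deg_\beta$ on the exact sequence $0\to H'\to A[\fp]\to A[\fp]/H'\to 0$, combined with the isotypic vanishing $\deg_\beta(A[\fq])=0$ for $\beta\notin\bB_\fq$, yields
$$\deg_\beta(A[p]/H')=1-\deg_\beta(H')\quad\text{for every }\beta\in\bB,$$
using that $\deg_\beta(H')=0$ for $\beta\notin\bB_\fp$. Setting $a=(a_\beta)_\beta\in (K'\otimes \cO_F)^\times$ so that $p^{-1}\hat{\phi}^*\omega=a\,\omega'$, this gives $v_p(a_\beta)=v_p(b_\beta)-1=-\deg_\beta(H')$, hence $|a_\beta|_p=p^{\deg_\beta(H')}$.

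Finally, the transformation rule $f(A'',H'',\lambda\omega'')=\chi_{\vk}(\lambda)^{-1}f(A'',H'',\omega'')$ for $\lambda\in (R\otimes\cO_F)^\times$, applied with $\lambda=a$ and $\omega''=\omega'$, gives
$$f(A/H',(H+H')/H',\,p^{-1}\hat{\phi}^*\omega)=\chi_{\vk}(a)^{-1}\,f(A/H',(H+H')/H',\omega').$$
Taking absolute values, $|\chi_{\vk}(a)^{-1}|_p=\prod_{\beta\in\bB}|a_\beta|_p^{-k_\beta}=\prod_{\beta\in\bB_\fp}p^{-k_\beta\deg_\beta(H')}$, and the remaining factor $|f(A/H',(H+H')/H',\omega')|$ is bounded above by $|f|_V$ by the very definition of the norm in \ref{norms} (since $\omega'$ is a generator of the free $(\cO_{K'}\otimes\cO_F)$-module $\omegab_{A/H'}$ and $Q'\in V$). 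Combining yields the claimed inequality. There is no serious obstacle here: the argument is a direct unwinding of definitions, and the key point is the isotypic identity $\deg_\beta(A[p]/H')=1-\deg_\beta(H')$, after which the factor $p^{-\sum_{\beta\in\bB_\fp}k_\beta\deg_\beta(H')}$ emerges mechanically from the transformation law.
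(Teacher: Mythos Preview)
Your proof is correct and follows essentially the same approach as the paper's own proof. The paper simply asserts that $p^{-1}\hat{\phi}^*\omega=a\omega'$ with $|\chi_\beta(a)|=p^{\deg_\beta(H')}$ and then applies the transformation rule, whereas you spell out this computation via the exact sequence $0\to\omegab_A\to\omegab_{A/H'}\to\omegab_{A[p]/H'}\to 0$ and the identity $\deg_\beta(A[p]/H')=1-\deg_\beta(H')$; this is exactly the verification the paper leaves implicit.
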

\begin{proof}
Let $\omega'$ be a basis of $\omegab_{A'/\cO_K}$ as $(\cO_K\otimes\cO_{F})$-module, where $A'=A/H'$. Then it's easy to see that $p^{-1}\hat{\phi}^*\omega=a\omega'$ for some
$a\in (K\otimes F)^{\times}$ with $|\chi_{\beta}(a)|=p^{\deg_{\beta}(H')}$ for all $\beta\in\bB$. Note that $\deg_{\beta}(H')=0$ if $\beta\notin \bB_{\fp}$. Therefore, we have
\begin{align*}
|f(A/H',(H+H')/H',p^{-1}\hat{\phi}^*\omega)|&=|(\prod_{\beta\in \bB}\chi^{
-k_{\beta}}_{\beta}(a))f(A/H',(H+H')/H',\omega')|\\
&=p^{-\sum_{\beta\in\bB_{\fp}}k_{\beta}\deg_{\beta}(H')}\;|f(A/H',(H+H')/H',\omega')|\\
&\leq p^{-\sum_{\beta\in \bB_{\fp}}k_{\beta}\deg_{\beta}(H')}\;|f|_{V}.
\end{align*}
\end{proof}

From the formula \ref{equ-U_p}, we deduce immediately that

\begin{cor}\label{cor-est-norm}
Let $f$ be a section of $\omegab^{\vk}$ defined over an admissible open subset $V$ of $\fY_{\rig}$ with $|f|_V$ finite, and $\fp$ be a prime ideal of $\cO_F$ above $p$ with $f_{\fp}=[\kappa(\fp):\F_p]$. Let  $Q=(A,H)$ be a rigid point of $\fY_{\rig}$ such that $U_{\fp}(Q)\subset V$. Assume that $k_{\beta_0}=\min_{\beta\in \bB_{\fp}}\{k_{\beta}\}$, and there exists $c>0$ such that $\sum_{\beta\in \bB_{\bB}}\deg_{\beta}(H')$ for any $(\cO_{F}/\fp)$-cyclic subgroups of $H'\subset A[\fp]$ different from $H$. Then we have
\begin{equation*}|U_{\fp}(f)(Q)|\leq p^{f_{\fp}-k_{\beta_0}c}|f|_{V}.
\end{equation*}
\end{cor}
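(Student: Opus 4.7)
The plan is to combine the explicit averaging formula \eqref{equ-U_p} for $U_{\fp}$ at the rigid point $Q = (A,H)$ with the pointwise estimate provided by Lemma \ref{lemma-basic-est}, and then invoke the non-archimedean triangle inequality. There is essentially no obstacle here; this is a bookkeeping corollary of Lemma \ref{lemma-basic-est}, and the role of the hypotheses is simply to package uniform bounds on the individual summands.

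More concretely, I would proceed as follows. Fix a basis $\omega$ of the free $(\cO_K \otimes \cO_F)$-module $\omegab_{A/\cO_K}$, where $K$ is a finite extension of $\Q_\kappa$ over which $Q$ is defined. By \eqref{equ-U_p}, the value of $U_{\fp}(f)$ at $Q$ is
\[
(U_{\fp}f)(A, H, \omega) \;=\; \frac{1}{|\kappa(\fp)|}\sum_{\substack{H' \subset A[\fp]\\ H'\cap H=0}} f\bigl(A/H',(H+H')/H',\, p^{-1}\hat{\phi}^*\omega\bigr),
\]
where the sum runs over the $|\kappa(\fp)| = p^{f_{\fp}}$ distinct $(\cO_F/\fp)$-cyclic subgroups $H' \subset A[\fp]$ disjoint from $H$; by hypothesis $U_{\fp}(Q) \subset V$, so each of these terms is defined and bounded in absolute value.

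For each such $H'$, Lemma \ref{lemma-basic-est} gives
\[
\bigl|f(A/H',(H+H')/H',p^{-1}\hat{\phi}^*\omega)\bigr| \;\leq\; p^{-\sum_{\beta \in \bB_{\fp}} k_{\beta}\deg_{\beta}(H')}\,|f|_V.
\]
Using $k_{\beta} \geq k_{\beta_0}$ for all $\beta \in \bB_{\fp}$ together with the assumption $\sum_{\beta \in \bB_{\fp}} \deg_{\beta}(H') \geq c$ (valid for every $H' \neq H$ by hypothesis), one concludes that each summand is at most $p^{-k_{\beta_0} c}\,|f|_V$.

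Finally, I would apply the non-archimedean triangle inequality to the averaging formula. Since $|1/|\kappa(\fp)||_p = p^{f_{\fp}}$, and the maximum of the individual summands is bounded by $p^{-k_{\beta_0} c}\,|f|_V$, we get
\[
|U_{\fp}(f)(Q)| \;\leq\; p^{f_{\fp}} \cdot \max_{H'}\bigl|f(A/H',(H+H')/H',p^{-1}\hat{\phi}^*\omega)\bigr| \;\leq\; p^{f_{\fp} - k_{\beta_0} c}\,|f|_V,
\]
which is the desired bound.
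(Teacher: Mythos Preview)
Your proposal is correct and follows essentially the same argument as the paper: both apply the explicit averaging formula \eqref{equ-U_p} for $U_{\fp}$, invoke Lemma \ref{lemma-basic-est} termwise, and then use $k_{\beta}\geq k_{\beta_0}$ together with $\sum_{\beta\in\bB_{\fp}}\deg_{\beta}(H')\geq c$ to bound the exponent uniformly before taking the non-archimedean maximum.
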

\begin{proof}
By the the definition of $U_{\fp}(f)$ \eqref{equ-U_p} and the preceding Lemma,  we have
\begin{align*}
|U_{\fp}(f)(Q)|&\leq p^{f_{\fp}}\sup_{\substack{H'\subset A[p]\\ H'\cap H=0}}|f(A/H',(H+H')/H')|\\
&\leq p^{f_{\fp}}\sup_{\substack{H'\subset A[p]\\ H'\cap H=0}}
\{p^{-\sum_{\beta\in \bB_{fp}}k_{\beta}\deg_{\beta}(H')}\}|f|_{V}.
\end{align*}
The corollary follows from the fact that
$$\sum_{\beta\in \bB_{\fp}}k_{\beta}\deg_{\beta}(H')\geq k_{\beta_0}\sum_{\beta\in\bB_{\beta}}\deg_{\beta}(H')\geq ck_{\beta_0}.$$
\end{proof}
Now we prove the first result on analytic continuation of $p$-adic Hilbert modular forms.

\begin{prop}\label{prop-cont-can}
 Let $f$ be an overconvergent $p$-adic Hilbert modular form of weight $\vk\in \Z^{\bB}$. Assume that for all prime $\fp|p$, we have  $U_{\fp}(f)=a_{\fp}f$ with $a_{\fp}\in \C_p^{\times}$. Then $f$ extends uniquely to a section of $\omegab^{\vk}$ over $\cV_{\can}$ such that $U_{\fp}(f)=a_{\fp}f$ remains true for all $\fp|p$. Moreover,  $|f|_{\cV_{\can}}=\sup_{Q\in \cV_{\can}}{|f(Q)|}$ is finite.
\end{prop}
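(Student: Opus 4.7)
The plan is to apply the Buzzard--Kassaei analytic continuation technique: iterate the functional equation
\[
f = \Bigl(\prod_{\fp|p} a_\fp\Bigr)^{-1} U_{(p)}(f), \qquad U_{(p)} := \prod_{\fp|p} U_\fp,
\]
to push $f$ outward from its initial domain along the canonical locus. The crucial dynamical input is Proposition \ref{prop-hecke-can}(a) applied simultaneously at every $\fp | p$: for $Q \in \cV_{\can}$ and any $Q_1 \in U_{(p)}(Q)$ one has
\[
1 - \nu_\beta(Q_1) = \tfrac{1}{p}\bigl(1 - \nu_{\sigma\circ\beta}(Q)\bigr) \quad \text{for every } \beta \in \bB,
\]
so $Q_1$ is again canonical and the quantity $\lambda(Q) := \max_{\beta}(1 - \nu_\beta(Q))$ is contracted by a factor of $p$ under $U_{(p)}$.

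Introduce the admissible opens
\[
V_n := \{Q \in \cV_{\can} : 1 - \nu_\beta(Q) < p^n \epsilon_0 \text{ for all } \beta \in \bB\}, \qquad n \geq 0,
\]
where $\epsilon_0 > 0$ is chosen small enough that $V_0$ is a quasi-compact admissible open contained in the initial domain of $f$. Admissibility can be verified either via Proposition \ref{prop-val} together with Stamm's description \eqref{equ-loc-ring}, or more cleanly by transferring along $s^{\dagger}$ of Theorem \ref{thm-GK} and using the identity $1 - \nu_\beta(Q) = w_\beta(\pi(Q))$ on $\cV_{\can}$, which presents $V_n$ as a locus defined by Hasse-invariant inequalities. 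The family $\{V_n\}$ forms a fundamental system of strict neighborhoods of $\fY_{\rig}^{\ord}$ as $\epsilon_0 \to 0$, the dynamics yield $U_{(p)}(V_n) \subset V_{n-1}$, and since $\nu_\beta \in [0,1]$ the defining condition of $V_n$ is vacuous on $\cV_{\can}$ as soon as $p^n\epsilon_0 \geq 1$; hence $V_N = \cV_{\can}$ for some finite $N$, so the chain is actually finite.

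Next, extend $f$ inductively on $V_n$ by
\[
f := \Bigl(\prod_\fp a_\fp\Bigr)^{-1} U_{(p)}(f),
\]
which makes sense because $U_{(p)}(V_n) \subset V_{n-1}$ where $f$ is already defined, and uniqueness and compatibility on overlaps are tautological. Each individual identity $U_\fp(f) = a_\fp f$ then persists on $\cV_{\can}$ by analytic continuation, since the section $U_\fp(f) - a_\fp f$ of $\omegab^{\vk}$ vanishes on the open $V_0$ which meets every connected component of $\cV_{\can}$. For the norm estimate, applying Lemma \ref{lemma-basic-est} to each factor of $U_{(p)}$ and using $\deg_\beta(H') \geq 0$ yields
\[
|f|_{V_n} \leq \Bigl(\prod_{\fp|p}|a_\fp|_p^{-1}\Bigr)\, p^g\, |f|_{V_{n-1}},
\]
so after $N$ iterations from the finite value $|f|_{V_0}$ (provided by the maximum modulus principle on the quasi-compact $V_0$) one obtains $|f|_{\cV_{\can}} = |f|_{V_N} < \infty$.

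Given Proposition \ref{prop-hecke-can}(a), the argument is essentially formal and I do not foresee a genuine obstacle. The only mild technical points are the admissibility of the $V_n$ and their finite exhaustion of $\cV_{\can}$, both of which are routine once one makes the identification $\lambda(Q) = \max_\beta w_\beta(\pi(Q))$ on the canonical locus together with the uniform bound $\nu_\beta \leq 1$.
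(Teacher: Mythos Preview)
Your proposal is correct and follows essentially the same approach as the paper: both define an exhausting family of quasi-compact neighborhoods of $\fY_{\rig}^{\ord}$ inside $\cV_{\can}$, show via Proposition~\ref{prop-hecke-can}(a) that $U_{(p)}$ contracts this family by a factor of $p$, and then iterate the functional equation finitely many times. The only cosmetic difference is the choice of parameter---the paper uses $\cV_{\can}(r)=\{Q:\nu_\beta+p\nu_{\sigma^{-1}\circ\beta}\geq p+1-r\}$ while you use $\{Q:1-\nu_\beta<p^n\epsilon_0\}$---and the norm bound the paper obtains from Corollary~\ref{cor-est-norm} is exactly your direct appeal to Lemma~\ref{lemma-basic-est}.
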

\begin{proof}
For any rational number $0<r<p$ and any prime ideal $\fp$ of $\cO_F$ dividing $p$, we put
\begin{align*}\cV_{\can}(\fp;r)&=\{Q\in \cV_{\can}\;|\; \nu_{\beta}(Q)+p\nu_{\sigma^{-1}\circ\beta}(Q)\geq p+1-r\; \text{for all }\beta\in \bB_{\fp} \}.\\
\cV_{\can}(r)&=\bigcap_{\fp|p}\cV_{\can}(r).
\end{align*}
Note that $\cV_{\can}(r)$ is a quasi-compact admissible open subset of $\cV_{\can}$, and  $\{\cV_{\can}(r)\}_{r\ra 0^{+}}$ form  a fundamental system of strict neighborhoods of $\fY_{\rig}^{\ord}$.
Using Prop. \ref{prop-hecke-can}(a), it's easy to check that $U_{\fp}(\cV_{\can}(\fp;r))\subset \cV_{\can}(\fp;r/p)$, and hence
\[(\prod_{\fp|p}U_{\fp})(\cV_{\can}(r))\subset \cV_{\can}(r/p).\]
 We may assume that $f$ is defined over some $\cV_{\can}(r_0)$. Let $n\geq 1$ be the minimal integer such that $p^nr_0>p$, then we have $(\prod_{\fp|p}U_{\fp})(\cV_{\can})\subset \cV_{\can}(r_0)$. Therefore, $(\prod_{\fp}\frac{1}{a^n_{\fp}}U^n_{\fp})f$ is a well-defined section of $\omegab^{\vk}$ over $\cV_{\can}$ extending $f$. It's clear that the (extended) form $f$ still satisfy the functional equations $U_{\fp}(f)=a_{\fp}f$. The finiteness of  $|f|_{\cV_{\can}}$ follows immediately from Corollary \ref{cor-est-norm}.

\end{proof}

The following useful Proposition is motivated by \cite[\S 7]{Pi}.

\begin{prop}\label{prop-exten}
Let $Z$ be any closed  scheme of $Y_{\kappa}$ with $\mathrm{codim}_{Y_{\kappa}}(Z)\geq 2$, and $]Y_{\kappa}-Z[$ be the rigid analytic tube in $\fY_{\rig}$ of the open subset $Y_{\kappa}-Z$. Then for any finite extension $L/\Q_{\kappa}$ and any weight $\vk\in \Z^{\bB}$, the natural restriction map
\[ H^0(\fY_{\rig,L},\omegab^{\vk})\ra H^0(]Y_{\kappa}-Z[_{L},\omegab^{\vk})\]
is an isomorphism. Moreover, for any $f\in H^0(\fY_{\rig,L},\omegab^{\vk})$, we have $|f|_{\fY_{\rig,L}}\leq |f|_{]Y_{\kappa}-Z[_L}$.
\end{prop}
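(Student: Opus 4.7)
The proposition is a rigid-analytic Hartogs-type extension theorem, and its proof rests on two ingredients: the normality of $\fY$ at every closed point of the special fiber, and the codimension hypothesis on $Z$. The normality follows from Stamm's description \eqref{equ-loc-ring}: the ring $W(\kappa(Q))[[x_\beta,y_\beta,z_\gamma]]/(x_\beta y_\beta - p: \beta\in I(Q))$ is a complete intersection which is regular in codimension~$1$, hence normal by Serre's criterion $(R_1)+(S_2)$.

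First, I would reduce the question to a local one. Cover $\fY$ by finitely many admissible affine formal opens $\Spf(A_\alpha)$ on each of which $\omegab^{\vk}$ is trivialized, and cover $Z$ by the corresponding ideals $\fa_\alpha \subset A_\alpha$ whose reductions cut out $Z\cap \Spec(A_\alpha/p)$. It then suffices to show that, for each such $A = A_\alpha$ and $\fa = \fa_\alpha$, any analytic function on $\Spf(A)_{\rig,L} \cap\, ]Y_{\kappa} - Z[_L$ extends uniquely to an analytic function on $\Spf(A)_{\rig,L}$ with the same supremum norm, where $A$ is normal and $V(\fa) \subset \Spec(A/p)$ has codimension $\geq 2$.

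Second, the existence and uniqueness of the extension come from the classical algebraic Hartogs theorem applied to the normal ring $A$: since $V(\fa)$ has codimension $\geq 2$ in $\Spec(A)$, one has $\Gamma(\Spec(A) - V(\fa), \cO) = A$. Passing this to rigid generic fibers via Raynaud's construction yields the rigid analytic extension. This is precisely the mechanism of Corollary \ref{cor-ext-1} from Appendix A, which was invoked in the proof of Proposition \ref{prop-koecher} to extend sections of $\omegab^{\vk}$ across the cuspidal locus of $\fY^*_\kappa$ (a codimension $2g$ stratum along which $Y^*_\kappa$ is normal). The present situation is strictly parallel.

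Third, for the norm inequality $|f|_{\fY_{\rig,L}} \leq |f|_{]Y_{\kappa}-Z[_L}$: working affine-locally as above, the extended section $f$ corresponds to an element of the $L$-affinoid algebra $A\otimes_W L$, and its supremum norm on $\Spf(A)_{\rig,L}$ coincides with the Gauss norm of its power series expansion. By the maximum modulus principle in rigid analysis, this norm is attained on the set of rigid points whose specialization covers a Zariski dense subset of $\Spec(A/p)$; since $V(\fa)$ is of codimension $\geq 2$, any such dense subset necessarily meets $\Spec(A/p) - V(\fa)$, so the supremum is already attained on $]Y_\kappa - Z[_L$. The main obstacle in the plan is precisely this norm comparison: one must ensure that the tube of $Z$ cannot carry any ``extra'' norm beyond what is already seen outside it. This is a rigid analogue of the classical fact that a bounded holomorphic function's sup norm on a polydisk is attained on its distinguished boundary, and once phrased in terms of Gauss norms of series expansions it reduces to the codimension hypothesis together with normality, both of which are built into Step 2.
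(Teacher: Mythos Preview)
You correctly identify Corollary~\ref{cor-ext-1} as the tool that delivers both the extension and the norm bound, and your reduction to a local question is sound. But the hypothesis you verify is not the one that corollary requires. Corollary~\ref{cor-ext-1} asks that for every $x$ in the closed subset $Z$ of the \emph{special fiber} $Y_\kappa$, the local ring $\cO_{Y_\kappa,x}$ satisfy $S_2$. You instead argue that the completed local ring of $\fY$ over $W$, namely $W(\kappa(Q))[[x_\beta,y_\beta,z_\gamma]]/(x_\beta y_\beta-p)$, is normal. That is true but beside the point: the relevant ring is the special-fiber completion $\kappa(Q)[[x_\beta,y_\beta,z_\gamma]]/(x_\beta y_\beta)$, and this is \emph{not} normal whenever $I(Q)\neq\emptyset$ (already $k[[x,y]]/(xy)$ fails $R_1$). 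What it \emph{is}, is Cohen--Macaulay as a complete intersection, hence $S_2$; this is exactly what the paper checks, and excellence of $Y_\kappa$ then transfers $S_2$ from the completion to $\cO_{Y_\kappa,Q}$.

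Relatedly, your description of the mechanism of Corollary~\ref{cor-ext-1} as ``algebraic Hartogs on the normal ring $A$, then pass to rigid generic fibers'' does not match how that corollary is proved. The argument runs through Lemma~\ref{lem-ext-aff}: one first shows $H^0(X_0,\cF_0)\xrightarrow{\sim} H^0(U_0,\cF_0)$ on the special fiber (this is where depth $\geq 2$ enters, via \cite[II~3.5]{Gr}), and then lifts modulo successive powers of $p$. Knowing $\Gamma(\Spec A\setminus V(\fa),\cO)=A$ does not directly control rigid sections on the tube $]U_0[$, since those are $p$-adic limits rather than algebraic sections on the scheme $\Spec A\setminus V(\fa)$. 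For the norm inequality, the paper simply invokes Remark~\ref{rem-ext}: because the isomorphism in Lemma~\ref{lem-ext-aff} already holds integrally, $H^0(X,\cF)\xrightarrow{\sim} H^0(U,\cF)$, the bound $|f|_{X_K}\leq |f|_{]U_0[}$ is immediate and your separate maximum-modulus argument is not needed.
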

\begin{proof}
We will just work in the case  $L=\Q_{\kappa}$ to simplify the notation, as the general case can be treated in the same way.
Let $Q$ be a closed point of $Y_{\kappa}$, and $\widehat{\cO}_{Y_{\kappa},Q}$ be the completion of the local ring of $Y_{\kappa}$ at $Q$. Then with the notation in \eqref{equ-loc-ring}, we have
\[\widehat{\cO}_{Y_{\kappa},Q}\simeq \kappa(Q)[[\{x_{\beta},y_{\beta}:\beta\in I(Q)\}, \{z_{\gamma}: \gamma\in \bB-I(Q)\}]]/(\{x_{\beta}y_{\beta}: \beta\in I(Q)\}).\]
We see that $\widehat{\cO}_{Y_{\kappa},Q}$  is Cohen-Macaulay, in particular it satisfy the condition $S_2$. Since $Y_{\kappa}$ is excellent by \cite[7.8.3(ii)]{ega}, it follows from [\emph{loc. cit.} 7.8.3(v)] that $\cO_{Y_{\kappa},Q}$ also Cohen-Macaulay. Now the Proposition follows directly from Corollary \ref{cor-ext-1} and Remark \ref{rem-ext}.
\end{proof}

The following proposition is an analogue of   \cite[2.4]{Pi} in the Hilbert case.

\begin{prop}\label{prop-pilloni}
 Let $\fp$ be a prime ideal of $\cO_F$ dividing $p$, $f_{\fp}=[\kappa(\fp):\F_{p}]$, $n\geq 1$ be an integer,  $U_{\fp}:\fY_{\rig}\ra \fY_{\rig}$ be the set theoretic Hecke correspondence \eqref{defn-set-U_p}. Let $L$ be a finite extension of $\Q_\kappa$,  $Q=(A,H)$ be an $L$-valued rigid point of $\fY_{\rig}$, and $Q_n=(A_n,H_n)\in U_{\fp}^n(Q)$ defined over a finite extension of $L$. Assume that the $\fp$-divisible group $A[\fp^{\infty}]$ is not ordinary. Then we have $\nu_{
\beta}(Q_n)=\nu_{\beta}(Q)$ for $\beta\notin \bB_{\fp}$, and
\begin{equation}\label{equ-QQ}
\sum_{i=0}^{f_{\fp}-1}p^i\nu_{\sigma^{-i}\circ\beta}(Q_n)\geq \sum_{i=0}^{f_{\fp}-1}p^i\nu_{\sigma^{-i}\circ\beta}(Q)
\end{equation}
for $\beta\in \bB_{\fp}$.
The equalities above hold for all $\beta\in \bB_{\fp}$ if and only if the following properties are verified:

\emph{(a)} The subgroup scheme $H[\fp]$ is a truncated Barsotti-Tate group of level $1$. Moreover,  there exists a subset $I_{\fp}\subsetneq \bB_{\fp}$ such that $\nu_{\beta}(Q)=1$ for $\beta\in I_{\fp}$ and $\nu_{\beta}(Q)=0$ for $\beta\in I_{\fp}^c=\bB_{\fp}-I_{\fp}$, $w_{\beta}(A)=0$ for $\beta\in (\sigma(I_{\fp})\cap I_{\fp})\cup (\sigma(I_{\fp}^c)\cap I_{\fp}^c) $, and $w_{\beta}(A)=1$ for $\beta\in (\sigma(I_{\fp})\cap I_{\fp}^c)\cup (\sigma(I_{\fp}^c)\cap I_{\fp})$.

 \emph{(b)} There exists  a truncated  Barsotti-Tate subgroup  $G_n\subset A[\fp^n]$ of level $n$, stable under the action of $\cO_F$, such that the natural maps
  $$G_n\times H[\fp]\ra A[\fp^n]$$
   is a closed embedding.

 \emph{(c)} We have $Q_n=(A/G_n, H)\in \fY_{\rig}$, where  we have considered $H$ as a subgroup via \emph{(b)}. In particular, $Q_n$ can be defined over $L$.

\emph{(d)}  Let $Q_n'\in U_{\fp}^n(Q)$ distinct from $Q_n=(A/G_n,H)$. If $p\geq 3$, we have
\[\nu_{\beta}(Q_n')\geq \frac{p-2}{p-1}+\frac{1}{p^{g-1}(p-1)}\quad \text{for any }\beta\in \bB_{\fp};\]
in particular, $Q_n'$ is canonical at $\fp$.
\end{prop}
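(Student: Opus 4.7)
The first assertion, that $\nu_\beta(Q_n) = \nu_\beta(Q)$ for $\beta \notin \bB_\fp$, is immediate: the Hecke correspondence $U_\fp$ alters only the $\fp$-component of the level structure, and the prime-to-$\fp$ part of $H$ maps isomorphically into $H_n$ via the canonical isogeny. For the main content, write $Q = Q_0, Q_1, \dots, Q_n$ for a chain of iterates with $Q_{i+1} = (A_i/H'_i,\, (H_i + H'_i)/H'_i)$, so that composing yields an $\cO_F$-linear isogeny $\phi : A \to A_n$ of degree $p^{n f_\fp}$ with $\cO_F$-stable kernel $G_n := \ker(\phi) \subset A[\fp^n]$. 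Restricting $\phi$ to $H[\fp]$ defines a homomorphism $\phi_H : H[\fp] \to H_n[\fp]$ of $\cO_F$-stable finite flat group schemes over $\cO_L$ that is an isomorphism on generic fibers, because $H'_i \cap H_i = 0$ on generic fibers at every step.

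Both $H[\fp]$ and $H_n[\fp]$ are schemes of $1$-dimensional $\F_{p^{f_\fp}}$-vector spaces over $\cO_L$ in Raynaud's sense. Lemma \ref{lem-ray} applied to $\phi_H$, combined with the identifications $\deg_\beta(H[\fp]) = \nu_\beta(Q)$ and $\deg_\beta(H_n[\fp]) = \nu_\beta(Q_n)$ from Proposition \ref{prop-val}, yields precisely the cyclic-sum inequality \eqref{equ-QQ}. Applying the same lemma to each one-step map $\phi_{H,i} : H_i[\fp] \to H_{i+1}[\fp]$ makes the cyclic sum non-decreasing in $i$, so equality for all $\beta \in \bB_\fp$ in \eqref{equ-QQ} forces equality at every intermediate step, and is equivalent to $\phi_H$ being an isomorphism of finite flat $\cO_L$-group schemes.

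Assume now equality. At step $0$ the equality case of Lemma \ref{lem-ray} gives $H[\fp] \cap H'_0 = 0$ scheme-theoretically in $A[\fp]$; by order count the inclusion $H[\fp] \oplus H'_0 \hookrightarrow A[\fp]$ is then an equality of $\cO_F$-stable finite flat group schemes. Such a direct-sum decomposition of $A[\fp]$ splits its Dieudonn\'e module compatibly with the $\beta$-grading, so at each $\beta \in \bB_\fp$ exactly one summand is multiplicative and the other \'etale. This gives $\deg_\beta(H[\fp]) \in \{0,1\}$ and identifies $H[\fp]$ (resp.\ $H'_0$) as special of type $I_\fp := \{\beta : \nu_\beta(Q) = 1\}$ (resp.\ $I_\fp^c$); the non-ordinarity of $A[\fp^\infty]$ rules out $I_\fp = \emptyset$ or $I_\fp = \bB_\fp$, and the partial-Hodge-height pattern of (a) follows from Proposition \ref{prop-gen-split}(a)(1) applied to both subgroups. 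Iterating the analysis to the chain $Q_1, \dots, Q_{n-1}$, whose equality conditions are forced by the monotonicity observed above, builds $G_n$ as an iterated extension of BT-$1$ groups, hence a truncated Barsotti--Tate subgroup of level $n$ inside $A[\fp^n]$; the order count $|G_n|\cdot|H[\fp]| = p^{(n+1)f_\fp}$ together with $G_n \cap H[\fp] = 0$ yields the closed immersion of (b). Finally (c) follows because $G_n \cap H = 0$ means $H$ survives the quotient with image equal to $H_n$.

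Property (d) is the subtlest claim and is the main obstacle. Let $Q'_n \in U^n_\fp(Q) \setminus \{Q_n\}$ and track a chain producing $Q'_n$: there is a first index $i_0 < n$ where the chosen $(\cO_F/\fp)$-cyclic subgroup $H''_{i_0} \subset A_{i_0}[\fp]$ differs from the unique special subgroup of type $I_\fp^c$ that produces $Q_n$. Proposition \ref{prop-gen-split}(a)(2) quantifies the deviation as $\deg_\beta(H''_{i_0}) \leq \frac{1}{p-1}\bigl(1 - p^{-(g-1)}\bigr)$ for $\beta \in I_\fp$ (with $g = f_\fp$), which translates into $\nu_\beta(Q'_{i_0+1}) \geq \frac{p-2}{p-1} + \frac{1}{p^{g-1}(p-1)}$ on all of $\bB_\fp$. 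For $p \geq 3$ this lower bound exceeds $\frac{1}{p+1}$, so $Q'_{i_0+1}$ lies in the canonical locus $\cV_\fp$. From that step onward the recursion of Proposition \ref{prop-hecke-can}(a), $\nu_\beta \mapsto \frac{p-1}{p} + \frac{1}{p}\nu_{\sigma \circ \beta}$, preserves membership in $\cV_\fp$ and strictly improves the lower bound, so the estimate propagates through the remaining $n - i_0 - 1$ steps. The delicate point is this bookkeeping: isolating the unique deviating step, confirming that the constants survive both the single non-canonical iteration and the subsequent canonical dynamics, and doing so uniformly in $\beta \in \bB_\fp$.
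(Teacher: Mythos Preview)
Your argument follows the same skeleton as the paper's, and most steps are correct, but there are two genuine gaps.

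The serious one is in (b). You write that iterating the splitting ``builds $G_n$ as an iterated extension of BT-$1$ groups, hence a truncated Barsotti--Tate subgroup of level $n$.'' This implication is false in general: an iterated extension of truncated Barsotti--Tate groups of level $1$ need not be truncated Barsotti--Tate of level $n$ (e.g.\ $\mu_p\times\mu_p$ is an extension of $\mu_p$ by $\mu_p$ but is killed by $p$). What the paper actually checks is that the multiplication-by-$p$ map induces an isomorphism $G_{m+1}/G_1\xrightarrow{\sim} G_m$ for every $m$, and this is verified by a degree count:
\[
\deg(G_{m+1})-\deg(G_m)=\deg(A_m[\fp])-\deg(H[\fp])=\deg(A[\fp])-\deg(H[\fp])=\deg(G_1),
\]
after which \cite[Cor.~3]{Fa} gives the isomorphism. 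The same degree comparison is needed to upgrade the generic closed immersion $G_n\times H[\fp]\hookrightarrow A[\fp^n]$ to one over $\cO_L$. You also do not argue that $G_n$ is defined over $L$; the paper deduces this from the \emph{uniqueness} of the special subgroup of type $I_\fp^c$ (Prop.~\ref{prop-gen-split}(a)(2)), which forces each $D_m$, hence each $A_m$, to descend to $\cO_L$.

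The second gap is in (d). Invoking Prop.~\ref{prop-gen-split}(a)(2) with respect to the special subgroup $H_{i_0}[\fp]$ of type $I_\fp$ only bounds $\deg_\beta(H''_{i_0})$ for $\beta\in I_\fp$, not for all $\beta\in\bB_\fp$ as you claim. You need Prop.~\ref{prop-gen-split}(a)(3), which uses that $H''_{i_0}$ is distinct from \emph{both} special subgroups $H_{i_0}[\fp]$ and $D_{i_0+1}$ (of types $I_\fp$ and $I_\fp^c$) to get the bound for every $\beta\in\bB_\fp$. Finally, a cosmetic point: the phrase ``exactly one summand is multiplicative and the other \'etale'' is not correct---neither $H[\fp]$ nor $H'_0$ need be multiplicative or \'etale. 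The right argument is that $\omega_{A[\fp],\beta}\simeq\cO_L/p$ is indecomposable as an $\cO_L$-module, so the direct sum $\omega_{H[\fp],\beta}\oplus\omega_{H'_0,\beta}\simeq\cO_L/p$ forces one summand to vanish, whence $\deg_\beta(H[\fp])\in\{0,1\}$.
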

\begin{proof}
We have a canonical decomposition of $p$-divisible groups
\[A[p^\infty]=\prod_{\fq|p}A[\fq^\infty].\]
Since the Hecke correspondence $U_{\fp}$ concerns only the $\fp$-component, the natural isogeny $A\ra A'$ induces an isomorphism of finite flat  group schemes
\[H[\fq]\xra{\sim}H'[\fq]\]
for $\fq\neq \fp$,  hence $\nu_{\beta}(Q')=\nu_{\beta}(Q)$ if $\beta\notin \bB_{\fp}$. There exists a finite extension $L'/L$, and a sequence $Q_0=Q,\, Q_1, \,\ldots,\, Q_n\in \fY_{\rig}$ defined over $L'$ such that $Q_{m}\in U_{\fp}(Q_{m-1})$ for $1\leq m\leq n$. Assume $Q_m=(A_m,H_m)$ and $A_{m}\simeq A_{m-1}/D_{m}$ for some closed subgroup $D_{m}\subset A_{m-1}[\fp]$ with $D_{m}$ distinct form $H_{m-1}$ for $1\leq m\leq n$ ($H_0=H$). We have a sequence of homomorphisms of groups schemes of $1$-dimensional $(\cO_F/\fp)$-vector spaces over $\cO_{L'}$:
\[H[\fp]\ra H_1[\fp]\ra H_2[\fp]\ra\cdots\ra H_n[\fp]\]
which are generically isomorphisms.
The first part of the proposition follows from Lemma \ref{lem-ray}.

 For the second part, the ``if'' direction is trivial. Assume now the equalities in \eqref{equ-QQ} hold for all $\beta\in \bB_{\fp}$.  Lemma \ref{lem-ray} implies that the natural morphism $H[\fp]\ra H_n[\fp]$  is an isomorphism,  hence so is $H[\fp]\ra H_m[\fp]$ for $1\leq m\leq n$. It follows that
 the exact sequence $0\ra D_m\ra A_{m-1}[\fp]\ra H_{m}[\fp]\ra 0$
 splits for $1\leq m\leq n$, i.e.
 \begin{equation}\label{equ-split}
 A_{m-1}[\fp]=H_{m}[\fp]\times D_m\simeq H[\fp]\times D_m.
 \end{equation} As direct summands of  a Barsotti-Tate group of level $1$, both $H[\fp]$ and $D_m$ are Barsotti-Tate groups of level $1$. Moreover, since $\omega_{A[\fp]}=\omega_{H[\fp]}\oplus\omega_{D_1}$ and $\omega_{A[\fp]}$ is a free $\cO_{L}\otimes (\cO_{F}/\fp)$-module, we have $\nu_{\beta}(Q)=\deg_{\beta}(H)\in \{0,1\}$ for $\beta\in \bB_{\fp}$. Therefore, there exists a subset $I_{\fp}\subset \bB_{\fp}$ such that $H[\fp]$  (resp. $D_1$) is a special subgroup of $A[\fp]$ of type $I_{\fp}$  (resp. of type $I_{\fp}^c$) in the sense of Prop. \ref{prop-gen-split}.
   The hypothesis that $A[\fp^{\infty}]$ is not ordinary implies that $I_{\fp}\neq \bB_{\fp}$. Condition (a) follows immediately from \ref{prop-gen-split}(a).

  To simplify notation, we denote simply by $H$ and $A$ their base change to $\cO_{L'}$. To prove (b) and (c), we construct inductively a sequence of closed subgroup schemes $G_m\subset A[\fp^m]$ for $1\leq m\leq n$ such that  $G_{m}\subset G_{m+1}$ and $A_m=A/G_m$. We put $G_1=D_1$, and assume that $m\geq 2$ and $G_{m-1}$ has been constructed.  We have a left exact sequence
 \[0\ra G_{m-1}(\overline{L})\ra A[\fp^{m-1}](\overline{L})\ra A_{m-1}[\fp^{m-1}](\overline{L}),\]
 where $\overline{L}$ is an algebraic closure of $L$.  We define $G_{m}$ to be the scheme theoretic closure in $A[\fp^{m-1}]$ of the inverse image of $D_{m}({\overline{L}})\subset A_{m-1}[\fp](\Lb)$. Since  the image of $A[\fp]$ in $A_{m}[\fp]$ is $H_m[\fp]\simeq H[\fp]$, we have $G_{m}({\overline{L}})\cap A[\fp](\overline{L})=G_1(\Lb)\simeq \cO_F/\fp$. It follows that $G_{m}(\Lb)\simeq \cO_F/\fp^{m}$ for any $1\leq m\leq n$.  Now we can show $G_n$ is a Barsotti-Tate group of level $n$, i.e.  the natural map $G_{m+1}/G_1\ra G_{m}$ induced by the multiplication by $p$ is an isomorphism for any $m$. It is clearly an isomorphism on the generic fibers, so we just need to prove that $\deg(G_{m+1})=\deg(G_1)+\deg(G_{m})$ by \cite[Cor. 3]{Fa}. This  results from the splitting  \eqref{equ-split}:
\begin{align*}
 \deg(G_{m+1})-\deg(G_m)&=\deg(A_m[\fp])-\deg(H)\\
 &=\deg(A[\fp])-\deg(H)\\
 &=\deg(G_1).
\end{align*}
In this same way, the natural map $G_n\times H[\fp]\ra A[\fp^n]$ is clearly a closed embedding over the generic fibers,  and it's an isomorphism over $\cO_{L'}$ because the degree of $G_n\times H[\fp]$ equals that of its image.
This proves (b) and (c) except for the rationality of $G_n$ over $L$.
Actually,  $D_{m}$ is the unique special subgroup of $A_{m-1}[\fp]$ of type $I^c_{\fp}\neq \emptyset$ by \ref{prop-gen-split}(a). By induction, all $A_m=A_{m-1}/D_m$  for $0\leq m\leq n$ are defined over $\cO_L$,  so is $G_n$.

 For condition (d), note that there exists a sequence of rigid points $Q_0'=Q, Q_1',\cdots,Q_n'$ such that $Q_{m}'\in U_{\fp}(Q_{m-1}')$ for $1\leq m\leq n$.   Let $r\leq n$ be the minimal integer such that $Q_r'\neq Q_r$. We have $Q_r'=(A_{r-1}/H', (H+H')/H')$ for some $(\cO_{F}/\fp)$-cyclic subgroup $H'\subset A_{r-1}[\fp]$ distinct from $H$ and $D_{r}$. Note that $H[\fp]$ and $D_{r}$ are respectively special subgroups of $A_{r-1}[\fp]$ of type $I_{\fp}$ and $I_{\fp}^c$. It follows from \ref{prop-gen-split}(a)(3)  that
  $$\nu_{\beta}(Q_r')=1-\deg_{\beta}(H')\geq \frac{p-2}{p-1}+\frac{1}{p^{g-1}(p-1)};$$  in particular, $\nu_{\beta}(Q_r')+p\nu_{\sigma^{-1}\circ \beta}(Q_r')>1$ for $\beta\in \bB_{\fp}$ if $p\geq 3$. By definition, $Q'_r$ is canonical at $\fp$. This finishes the proof if $r=n$. If $r<n$, Prop. \ref{prop-hecke-can}(a) implies that $$\nu_{\beta}(Q_m')=\frac{p-1}{p}+\frac{1}{p}\nu_{\sigma\circ\beta}(Q_{m-1}')>\frac{p-2}{p-1}+\frac{1}{p^{g-1}(p-1)}$$
 for any $r<m\leq n$ and $\beta\in \bB_{\fp}$.
\end{proof}

\begin{cor}
 Let $I_{\fp}\subset \bB_{\fp}$ be a subset, and $I_{\fp}^c$ be its complementary in $\bB_{\fp}$. Let $Q=(A,H)$ be rigid point of $\fY_{\rig}$ with  $\nu_{\beta}(Q)=1$ for $\beta\in I_{\fp}$ and $\nu_{\beta}(Q)=0$ for $\beta\in I_{\fp}^c$. Assume that $\sigma(I_{\fp})\subset I_{\fp}^c$. Then there exists a rigid point $Q_1\in U_{\fp}(Q)$ such that $\nu_{\beta}(Q_1)=\nu_{\beta}(Q)$ for all $\beta$, i.e. the equivalent conditions in the Proposition are satisfied for $n=1$, if and only if $w_{\beta}(A)=1$ for $\beta\in
\sigma(I_{\fp}^c)\cap I_{\fp}$.

 \end{cor}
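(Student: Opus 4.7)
The plan is to translate the hypothesis on $\nu_\beta(Q)$ into a statement about the $\beta$-degrees of $H[\fp]$, so that $H[\fp]$ becomes a special subgroup of $A[\fp]$ of type $I_{\fp}$ in the sense of Section 3, and then apply Propositions \ref{prop-gen-split} and \ref{prop-pilloni} in tandem. By Proposition \ref{prop-val} and the definition \eqref{defn-val-Y}, the hypothesis says $\deg_\beta(H[\fp])=1$ for $\beta\in I_\fp$ and $\deg_\beta(H[\fp])=0$ for $\beta\in I_\fp^c$. Hence $H[\fp]\subset A[\fp]$ is a special $(\cO_F/\fp)$-cyclic subgroup of type $I_\fp$, which, by Proposition \ref{prop-gen-split}(a)(1), forces $H[\fp]$ to be a truncated Barsotti--Tate group of level $1$ and automatically imposes
\[
w_\beta(A)=1\quad\text{for }\beta\in \sigma(I_\fp)\cap I_\fp^c=\sigma(I_\fp),\qquad w_\beta(A)=0\quad\text{for }\beta\in\bigl(\sigma(I_\fp)\cap I_\fp\bigr)\cup\bigl(\sigma(I_\fp^c)\cap I_\fp^c\bigr),
\]
where we have used the standing assumption $\sigma(I_\fp)\subset I_\fp^c$ (so $\sigma(I_\fp)\cap I_\fp=\emptyset$).

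For the ``only if'' direction, I would simply invoke Proposition \ref{prop-pilloni} with $n=1$: if some $Q_1\in U_\fp(Q)$ has $\nu_\beta(Q_1)=\nu_\beta(Q)$ for every $\beta\in\bB_\fp$, then condition (a) of that proposition gives $w_\beta(A)=1$ on $(\sigma(I_\fp)\cap I_\fp^c)\cup(\sigma(I_\fp^c)\cap I_\fp)$. The first piece we already know from the paragraph above, and the second piece $w_\beta(A)=1$ on $\sigma(I_\fp^c)\cap I_\fp$ is precisely the conclusion sought. Note there is a harmless degenerate case where $A[\fp^\infty]$ might be ordinary: if so, then $I_\fp=\bB_\fp$ (forcing $\sigma(I_\fp^c)\cap I_\fp=\emptyset$, so the condition is vacuous) and the conclusion is trivially true.

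For the ``if'' direction, the task is to construct a $Q_1\in U_\fp(Q)$ witnessing the equality of all $\nu_\beta$. By Proposition \ref{prop-pilloni}(b)(c), it would suffice to produce an $(\cO_F/\fp)$-cyclic subgroup $G_1\subset A[\fp]$ disjoint from $H[\fp]$, special of the complementary type $I_\fp^c$, and then set $Q_1=(A/G_1,(H+G_1)/G_1)$. Such a $G_1$ is provided by the converse statement Proposition \ref{prop-gen-split}(b), applied to $I_\fp^c$: its three hypotheses are, respectively, $\sigma((I_\fp^c)^c)=\sigma(I_\fp)\subset I_\fp^c$ (our standing assumption), the vanishing $w_\beta(A)=0$ on $\sigma(I_\fp^c)\cap I_\fp^c$ and positivity $w_\beta(A)>0$ on $\sigma(I_\fp)$ (both already secured by \ref{prop-gen-split}(a)(1) applied to $H[\fp]$), and the equality $w_\beta(A)=1$ on $\sigma(I_\fp^c)\cap I_\fp$ (which is our hypothesis). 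Once $G_1$ is constructed, the splitting $A[\fp]\simeq G_1\times H[\fp]$ makes $(H+G_1)/G_1\simeq H[\fp]$ canonically, so $\deg_\beta((H+G_1)/G_1)=\deg_\beta(H[\fp])=\nu_\beta(Q)$ for all $\beta\in\bB_\fp$, and $\nu_\beta(Q_1)=\nu_\beta(Q)$ for $\beta\notin\bB_\fp$ by the first part of Proposition \ref{prop-pilloni}.

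There is no genuine obstacle here; the corollary is essentially a bookkeeping consequence once one matches the Hasse invariant conditions of Propositions \ref{prop-gen-split} and \ref{prop-pilloni}. The only subtlety to keep an eye on is ensuring that the ``automatic'' Hasse conditions from \ref{prop-gen-split}(a)(1) applied to the existing special subgroup $H[\fp]$ together with the single extra condition stated in the corollary precisely exhaust the list in \ref{prop-pilloni}(a); the hypothesis $\sigma(I_\fp)\subset I_\fp^c$ is exactly what makes this dichotomy clean.
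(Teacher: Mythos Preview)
Your proof is correct and follows essentially the same approach as the paper: recognize $H[\fp]$ as a special subgroup of type $I_\fp$, read off the automatic Hasse conditions from Proposition~\ref{prop-gen-split}(a)(1), invoke Proposition~\ref{prop-pilloni}(a) for necessity, and apply Proposition~\ref{prop-gen-split}(b) with $I=I_\fp^c$ to build the complementary special subgroup $D_1=G_1$ for sufficiency. One small slip in your parenthetical about the ordinary case: under the standing hypothesis $\sigma(I_\fp)\subset I_\fp^c$, ordinarity of $A[\fp^\infty]$ forces $I_\fp=\emptyset$ (not $I_\fp=\bB_\fp$, which would contradict $\sigma(I_\fp)\subset I_\fp^c$); but your conclusion that $\sigma(I_\fp^c)\cap I_\fp=\emptyset$ and the condition is vacuous remains valid.
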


\begin{proof} From \ref{prop-pilloni}(a), the condition that $w_{\beta}(A)=1$ for $\beta\in  \sigma(I_{\fp}^c)\cap I_{\fp}$ is clearly necessary.  We note  that $H[\fp]$ is a special subgroup of $A[\fp]$ of type $I_{\fp}$. It follows from Prop. \ref{prop-gen-split}(a) that $w_{\beta}(A)=0$ for $\beta\in \sigma(I_{\fp}^c)\cap I_{\fp}^c$, and  $w_{\beta}(A)=1$ for $\beta\in \sigma(I_{\fp})\cap I_{\fp}^c=\sigma(I_{\fp})$. If  $w_{\beta}(A)=1$ for $\beta\in \sigma(I_{\fp}^c)\cap I_{\fp}$, then \ref{prop-gen-split}(b) implies that  there exists a special subgroup $D_1\subset A[\fp]$ of type $I_{\fp}^c$. The point $Q_1=(A/D_1, (H+D_1)/D_1)$ satisfies the required property.
\end{proof}

 \section{Proof of Theorem \ref{thm-quad}:  Case of \ref{thm-main} when $g=2$.}

 In this section, we assume $g=2$, and $p$ is inert in $F$ so that $\cO_F/p\simeq \F_{p^2}$. We identify  $\bB=\mathrm{Emd}_{\Q}(F,\Q_{\kappa})$ with $\Z/2\Z=\{1,2\}$.

 \subsection{}\label{sect-strata} We start with the stratifications on $X_{\kappa}$ and $Y_{\kappa}$ described in the previous section. To simplify the notation, we drop the symbol ``$\{\_\}$'' in the subscript, when there is only one element in the set. For example, we denote $W_{\{1\}}$ and $W_{\{1\},\{1\}}$ simply by $W_1$ and $W_{1,1}$. There are 4 strata in $X_{\kappa}$: $W_{\emptyset}, W_{1}, W_{2}$ and $W_{\bB}$.  The $2$-dimensional stratum $W_{\emptyset}$ is the ordinary locus,  $W_{1}$ and $W_{2}$ are the loci where the fibers of the universal HBAV are supersingular and with $a$-number equal to $1$, and $W_{\bB}$ is the discrete set consisting  of superspecial points. There are 9 strata in the Goren-Kassaei stratification of $Y_{\kappa}$.  Here is a list of the strata  according to their dimensions:
\begin{itemize}
\item 2-dimensional: $W_{\bB,\emptyset}$, $W_{\emptyset,\bB}$, $W_{1,1}$, $W_{2,2}$.

    \item 1-dimensional: $W_{\bB,1}$, $W_{\bB, 2}$, $W_{1,\bB}, W_{\bB,2}$.

    \item 0-dimensional: $W_{\bB,\bB}$.
\end{itemize}
We have  $\pi(W_{\bB,\emptyset})=\pi(W_{\emptyset, \bB})=W_{\bB}$, $\pi(W_{\bB,i})=\pi(W_{i,\bB})=W_{i}$ for $i\in \Z/2\Z$, $\pi(W_{\bB,\bB})=W_{\bB}$, and finally $\pi(W_{i,i})=W_{i}\cup W_{\bB}$. In particular, the projection $\pi$ is not quasi-finite on the strata $W_{1,1}$ and $W_{2,2}$. For $i\in \Z/2\Z$, we put
\[
W_{i,i}^{sg}=W_{i,i}\cap \pi^{-1}(W_{i}),\quad\quad
W_{i,i}^{ss}=W_{i,i}\cap \pi^{-1}(W_{\bB}).
\]
We call
\begin{equation}\label{defn-sg-ss}
Y_{\kappa}^{sg}=W_{1,1}^{sg}\cup W_{2,2}^{sg}
\end{equation}
the supergeneral locus, and
\[
Y_{\kappa}^{ss}=\pi^{-1}(W_{\bB})=W_{1,1}^{ss}\cup W^{ss}_{2,2}\cup W_{\bB,\bB}.\nonumber
\]
 the superspecial locus.

\subsection{} Let $k$ be an algebraically closed field containing $\F_{p^2}$. For a finite group scheme $G$ over $k$, we denote by $\omega_{G}$ its module of invariant differentials. If $G$ is equipped with an action of $\cO_{F_p}\simeq \Z_{p^2}$, we have a decomposition $\omega_{G}=\omega_{G,1}\oplus \omega_{G,2}$, where $\cO_F$ acts on $\omega_{G,\beta_i}$ via $\chi_{\beta_i}$.

In \cite[4.1]{Pi}, Pilloni classified the commutative finite group schemes of order $p^2$ over $k$. There are only 4 isomorphism classes: $\alpha_p\times \alpha_p$, $\alpha_{p^2}$, $\alpha_{p^2}^\vee$ and $\alpha$. Here, $\alpha_{p^n}=\Ker (F^n: \G_a\ra \G_a)$ is the kernel of $n$-th iterated Frobenius of the additive group for $n=1,2$, $\alpha_{p^2}^\vee$ is the Cartier dual of $\alpha_{p^2}$, and $\alpha$ is the $p$-torsion of a supersingular elliptic curve over $k$. The groups $\alpha_p\times \alpha_p$ and $\alpha$ are isomorphic to their Cartier dual. These groups can be characterized by the dimension of their invariant differential modules and those of their duals:
\begin{align*}
\dim_k(\omega_{\alpha_p\times \alpha_p})&=2 \quad\quad\dim_k(\omega_{\alpha})=1\\
\dim_k(\omega_{\alpha_{p^2}})&=1 \quad\quad \dim_k(\omega_{\alpha^\vee_{p^2}})=2.
\end{align*}

\begin{lemma}\label{lem-fiber-sg}
Let $P$ be a $k$-valued point of the stratum $W_{1}\subseteq X_{\kappa}$, and $A$ be the corresponding HBAV. Then we have $\pi^{-1}(P)_{\red}\simeq \bP^1_k$ with two distinguished points corresponding to
 $$H=\Ker(F_A:A\ra A^{(p)})\quad \text{and}\quad H=\Ker(V_{A^{(p^{-1})}}:A\ra A^{(p^{-1})}).$$
 In the first distinguished case, we have $H\simeq \alpha_{p^2}^\vee$ and $(A,H)\in W_{\bB,1}$; in the second distinguished case, we have $(A,H)\in W_{1,\bB}$; otherwise, we have $H\simeq \alpha$ and $(A,H)\in W_{1,1}$. Moreover, we have $\dim_{k}(\omega_{H,2})=1$ in all the cases, and
 \[\dim_k(\omega_{H,1})=\begin{cases}0 & \text{if } H\simeq \alpha\;\text{or }\; \alpha_{p^2}\\
 1 &\text{if } H\simeq \alpha_{p^2}^\vee.\end{cases}
 \]
\end{lemma}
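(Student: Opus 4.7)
The plan is to run the analysis entirely on contravariant Dieudonn\'e modules. Since $P \in W_1$, we have $h_1(A) = 0$ and $h_2(A) \neq 0$; combined with the general identifications $\omega_{A, \beta} = (\Ker F)_\beta = (\im V)_\beta$ and $(\im F)_\beta = (\Ker V)_\beta$, and the observation from diagram \eqref{diag-Ha} that $h_\beta = 0$ is equivalent to $(\im F)_\beta \subset \omega_{A, \beta}$, this means that in $\bD(A[p])_1$ all four special lines coincide with $\omega_{A,1}$, whereas $\bD(A[p])_2$ carries two distinct distinguished lines $\omega_{A, 2}$ and $(\im F)_2$. I choose a $k$-basis $\{e_1, d_1\}$ of $\bD(A[p])_1$ and $\{e_2, d_2\}$ of $\bD(A[p])_2$ with $e_i$ spanning $\omega_{A, i}$ and $d_2$ spanning $(\im F)_2$. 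A direct computation using $FV = VF = 0$ on $\bD(A[p])$ together with the rank equalities $\mathrm{rank}(F) = \mathrm{rank}(V) = g = 2$ (valid since $A$ is supersingular but not superspecial) shows that the $\sigma$-linear operators are pinned down, up to rescaling, by $F(d_1) = d_2$, $F(d_2) = ce_1$, $V(d_1) = \alpha e_2$, $V(e_2) = \beta e_1$ with $c, \alpha, \beta \in k^\times$, and $F, V$ vanishing on the remaining basis vectors.

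An isotropic $(\cO_F/p)$-cyclic subgroup $H \subset A[p]$ corresponds under contravariant Dieudonn\'e theory to a $(\F_{p^2} \otimes k)$-submodule $M = M_1 \oplus M_2 \subset \bD(A[p])$ with each $M_\beta$ a $k$-line, stable under $F$ and $V$, and such that $\bD(A[p])/M$ is again $(\F_{p^2} \otimes k)$-free of rank $1$ (the isotropy condition is automatic, since every rank-one $(\cO_F/p)$-submodule of the rank-two module $A[p]$ is Lagrangian for the alternating modified Weil pairing). Writing $M_1 = k(a e_1 + b d_1)$ and $M_2 = k(a' e_2 + b' d_2)$, the conditions $F(M_2), V(M_2) \subset M_1$ read $b'c e_1 \in M_1$ and $a'\beta e_1 \in M_1$: if $b \neq 0$ both would force $b' = 0$ and $a' = 0$, which is impossible; hence $b = 0$, i.e.\ $M_1 = \omega_{A, 1}$. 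Conversely, once $M_1 = \omega_{A, 1}$ all stability relations hold trivially because $F(e_1) = V(e_1) = 0$ and $F(M_2), V(M_2) \subset \omega_{A, 1} = M_1$. Thus $\pi^{-1}(P)_{\red}$ is parametrized by the line $M_2 \subset \bD(A[p])_2 \simeq k^2$ and is isomorphic to $\bP^1_k$.

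The two distinguished points are precisely those for which $M_2$ equals one of the two distinguished lines of $\bD(A[p])_2$. If $M_2 = kd_2 = (\im F)_2$, then $M = \im(F)$, which is the Dieudonn\'e submodule $\bD(A[p]/\Ker F_A)$ coming from the relative Frobenius isogeny, so $H = \Ker(F_A : A \to A^{(p)})$. On the quotient $\bD(H) = \bD(A[p])/M$, both $F(d_1) = d_2$ and $F(d_2) = ce_1$ lie in $M$, so $F \equiv 0$; but $V(d_1) = \alpha e_2$ survives mod $M$, so $V \not\equiv 0$. By the classification in Pilloni's list, $H \simeq \alpha_{p^2}^\vee$. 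If instead $M_2 = ke_2 = (\im V)_2 = \omega_{A, 2}$, then $M = \omega_A = \im(V) = \bD(A[p]/\Ker V_{A^{(p^{-1})}})$, so $H = \Ker(V_{A^{(p^{-1})}})$; here $F \not\equiv 0$ on $\bD(H)$ (since $d_2 \notin M$) but $V \equiv 0$, yielding $H \simeq \alpha_{p^2}$. For any other $M_2$, both $F$ and $V$ act nontrivially on $\bD(H)$, and the only commutative group scheme of order $p^2$ with both $F, V$ nonzero is $\alpha = E[p]$ for $E$ supersingular; hence $H \simeq \alpha$. To identify the stratum, recall from \ref{strat-Y} that $\beta \in \varphi(A, H)$ exactly when $M_\beta = (\im F)_\beta$ and $\beta \in \eta(A, H)$ exactly when $M_\beta = \omega_{A, \beta}$; since $M_1 = \omega_{A, 1} = (\im F)_1$ we always have $1 \in \varphi \cap \eta$, and the status of $2$ separates the three cases, placing $(A, H)$ in $W_{\bB, 1}$, $W_{1, \bB}$, or $W_{1, 1}$ respectively.

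The dimensions of $\omega_{H, \beta}$ then follow from $\omega_{H, \beta} = (\Ker F)_\beta \subset \bD(H)_\beta$. For $H \simeq \alpha_{p^2}^\vee$, $F \equiv 0$ on $\bD(H)$, so both $\omega_{H, 1}$ and $\omega_{H, 2}$ are one-dimensional. For $H \simeq \alpha_{p^2}$ or $H \simeq \alpha$, the induced $F$ sends the generator $\bar d_1 \in \bD(H)_1$ to a nonzero element of $\bD(H)_2$, so $(\Ker F)_1 = 0$ whereas $(\Ker F)_2 = \bD(H)_2$ is one-dimensional; thus $\dim_k \omega_{H, 1} = 0$ and $\dim_k \omega_{H, 2} = 1$. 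The main technical care needed is in tracking the direction of arrows in contravariant Dieudonn\'e theory, in particular identifying $\Ker F_A$ and $\Ker V_{A^{(p^{-1})}}$ with the specific submodules $\im(F)$ and $\im(V) = \omega_A$ of $\bD(A[p])$; once these identifications are pinned down, the rest is linear algebra in the four-dimensional $k$-vector space $\bD(A[p])$ equipped with its $\F_{p^2}$-grading.
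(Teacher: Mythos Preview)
Your proof is correct and follows essentially the same approach as the paper: both compute directly with the contravariant Dieudonn\'e module $\bD(A[p])$, show that the line $M_1$ (the paper's $\bL_1$) is forced to be $\omega_{A,1}$, parametrize the free line $M_2$ by $\bP^1_k$, and read off the isomorphism type of $H$ and the stratum from which distinguished line $M_2$ hits. The paper writes down explicit matrices for $F$ and $V$ in terms of the partial Hasse invariant $t_2$, whereas you derive the structure of $F,V$ from the conditions $h_1(A)=0$, $h_2(A)\neq 0$; and for the ``moreover'' part the paper simply remarks that $\bD(H)_i=\bD(A[p])_i/\bL_i$, while you spell out the computation of $(\Ker F)_\beta$ on the quotient. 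These are stylistic differences only.
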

\begin{proof}
Up to isomorphisms, the contravariant Dieudonn\'e module of $A[p]$ can be described as  $\bD(A[p])=\bD(A[p])_{1}\oplus \bD(A[p])_{2}$ with $\bD(A[p])_{i}=k e_i\oplus k d_i$, and the Frobenius and Verschiebung are given by
\begin{align*}F(e_1,d_1)&=(e_2,d_2)\begin{bmatrix}0 &1\\ 0&t_2\end{bmatrix},\quad \quad
 F(e_2,d_2)=(e_1,d_1)\begin{bmatrix}
0&1\\0&0\end{bmatrix}, \\
V(e_1,d_1)&=(e_2,d_2)\begin{bmatrix}0&1 \\ 0 &0\end{bmatrix},\quad\quad
V(e_2,d_2)=(e_1,d_1)\begin{bmatrix} -t_2^{1/p} &1\\0&0\end{bmatrix},
\end{align*}
where $t_2\in k^\times$ is the value of the partial Hasse invariant $h_{2}$ at $A$ with respect to certain basis. Giving an $(\cO_F/p)$-cyclic subgroup $H\subset A[p]$ corresponds to giving a $k$-line $\bL_{i}=\bD(A[p]/H)_{\beta_i}\subset \bD(A[p])_{i}$ for each $i=1,2$ satisfying
\[F(\bL_{i})\subseteq \bL_{i+1}\quad \text{and}\quad V(\bL_{i})\subseteq \bL_{i+1}.\]
It's easy to see that the only choice for $\bL_{1}$ is $k e_1=(\Ker F)_{1}=(\Ker V)_{1}$, and the choices for $\bL_{2}$ are $k(ae_2+b(e_2+t_2 d_2))$ with $(a:b)\in \bP^1_k$. If $(a:b)=(1:0)$, we have $\bL_{2}=(\im V)_{2}$ and $H=\Ker(V_{A^{(p^{-1})}})\simeq \alpha_{p^2}$. If $(a:b)=(0:1)$, we have $\bL_{2}=(\im F)_{2}$ and $H=\Ker(F_A)\simeq \alpha_{p^2}^\vee$. Otherwise, $\bL_{2}$ is neither $(\im V)_{2}$ nor $(\im F)_{2}$, we have $H\simeq \alpha$. The ``moreover'' part of the Lemma follows from the fact that $\bD(H)_{i}=\bD(A[p])_{i}/\bL_{i}$.

\end{proof}

\begin{lemma}\label{lem-fiber-ss}
 Let $P$ be a $k$-point of $W_{\bB}\subset X_{\kappa}$, and $A$ be the corresponding HBAV.

\emph{(a)} The reduced fiber $\pi^{-1}(P)_{\red}$ consists of two copies of $\bP^1_k$, denoted respectively by $\bP^1_{k,1}$  and $\bP^1_{k,2}$, intersecting transversally at a single point $Q$.

\emph{(b)} We have $\pi^{-1}(P)_{\red}\cap W_{i,i}=\bP^1_{k,i}\backslash \{Q\}$ for $i\in\Z/2\Z$, and  $\pi^{-1}(P)_{\red}\cap W_{\bB,\bB}=\{Q\}$.

\emph{(c)} For a point $(A,H)\in \pi^{-1}(P)_{\red}$, we have $(A,H)\in \bP^{1}_{k,i}$ if and only if $H\simeq \alpha$ and $\omega_{H,i}\neq 0$, and we have $(A,H)=Q$, if and only if $H=\Ker(F_A)=\Ker(V_{A^{(1/p)}})\simeq \alpha_p\times \alpha_p$.
\end{lemma}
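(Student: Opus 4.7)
The strategy is to mimic the proof of Lemma \ref{lem-fiber-sg}, but at a superspecial point where both partial Hasse invariants vanish. I would begin by choosing a basis $\{e_i,d_i\}_{i\in\Z/2\Z}$ of $\bD(A[p])_i$ with $\omega_{A,i}=ke_i$. Specializing the formulas for $F$ and $V$ of Lemma \ref{lem-fiber-sg} to the superspecial case (set $t_2=0$, and do the analogous computation for the other index), both $F$ and $V$ take the matrix form $\bem0&1\\0&0\enm$ on each graded piece; equivalently $F(e_i)=V(e_i)=0$ and $F(d_i)=V(d_i)=e_{i+1}$. The exact shape of $F,V$ is the first calculation to pin down, and it is essentially immediate from the vanishing of both partial Hasse invariants.

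Having these explicit matrices, classify the $(\cO_F/p)$-cyclic subgroups $H\subset A[p]$ by parameterizing their Dieudonn\'e modules as pairs of $k$-lines $(\bL_1,\bL_2)$ with $\bL_i\subset \bD(A[p])_i$ satisfying $F(\bL_i),V(\bL_i)\subset\bL_{i+1}$. Writing $\bL_i=k(a_ie_i+b_id_i)$ with $[a_i:b_i]\in\bP(\bD(A[p])_i)=\bP^1_k$, the conditions $F(\bL_1)\subset\bL_2$ and $F(\bL_2)\subset \bL_1$ both translate into the single equation $b_1b_2=0$ (and the $V$-compatibility gives the same condition). Hence $\pi^{-1}(P)_{\red}\subset\bP^1_k\times\bP^1_k$ is cut out by $b_1b_2=0$, which is manifestly the union of two $\bP^1$s -- $\bP^1_{k,1}=\{b_1=0\}$ and $\bP^1_{k,2}=\{b_2=0\}$ -- meeting at the single point $Q=([1{:}0],[1{:}0])$ corresponding to $\bL_i=ke_i$. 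This proves part (a) set-theoretically.

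For part (b), apply the characterization $\varphi(A,H)=\{\beta:\bL_\beta=(\im F)_\beta\}$, $\eta(A,H)=\{\beta:\bL_\beta=(\im V)_\beta\}$, together with the fact that in the superspecial Dieudonn\'e module $(\im F)_i=(\im V)_i=ke_i$. Reading this on each component shows $i\in\varphi(A,H)\cap\eta(A,H)$ exactly when $\bL_i=ke_i$, so the generic point of each $\bP^1_{k,i}$ lies in $W_{\{i\},\{i\}}=W_{i,i}$ and $Q$ lies in $W_{\bB,\bB}$. For part (c), at $Q$ one has $\bL=ke_1\oplus ke_2=\im F=\im V$, so both $F$ and $V$ act as zero on $\bD(H)=\bD(A[p])/\bL$; hence $H\simeq \alpha_p\times\alpha_p$, and moreover $H=\ker F_A=\ker V_{A^{(1/p)}}$ since all three subgroups have the same Dieudonn\'e module. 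Away from $Q$, $F$ and $V$ act nontrivially with one-dimensional image on $\bD(H)$, which (combined with the self-duality forced by the existence of a polarization) identifies $H\simeq\alpha$; the distinction between the two components is encoded in the non-vanishing component of $\omega_H$, which one reads off from the formula $\omega_{H,\gamma}\neq 0\Leftrightarrow \Lie(f)_\gamma=0\Leftrightarrow \sigma(\gamma)\in\varphi(A,H)$ (the same mechanism as in the proof of Lemma \ref{lem-fiber-sg}).

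The main obstacle is checking the transversality at $Q$ claimed in (a), since this requires passing from the purely set-theoretic classification above to the local scheme structure. The plan is to use Stamm's description \eqref{equ-loc-ring}: since $Q\in W_{\bB,\bB}$, all four elements of $\bB$ are critical at $Q$, and $\widehat{\cO}_{Y,Q}\simeq W(k)[[x_1,y_1,x_2,y_2]]/(x_1y_1-p,x_2y_2-p)$. By case (a) of the Key Lemma \ref{prop-key-lemma}, applied to each $\beta\in\bB$ (which lies in $\varphi(Q)\cap\eta(Q)$ with $\sigma\circ\beta\in\varphi$ and $\sigma^{-1}\circ\beta\in\eta$), we have $\pi^*(t_i)\equiv u_ix_i+v_iy_{i-1}^p\pmod{p}$ for units $u_i,v_i$. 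Cutting out the fiber over $P$ amounts to imposing $p=0$ and $\pi^*(t_1)=\pi^*(t_2)=0$; eliminating $x_1,x_2$ yields the ring $k[[y_1,y_2]]/(y_1y_2^p,y_1^py_2)$, whose reduction is $k[[y_1,y_2]]/(y_1y_2)$. This is exactly the local ring of two smooth curves meeting transversally at the origin, matching the two $\bP^1$ components found above and proving the transversality at $Q$.
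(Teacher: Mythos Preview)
Your proposal is correct and follows precisely the approach the paper intends: the paper omits the proof entirely, stating only that it is ``quite similar to the previous one, and will be left to the reader as an exercise,'' and your argument is exactly that exercise carried out---specialize the Dieudonn\'e-module computation of Lemma~\ref{lem-fiber-sg} to $t_1=t_2=0$, parametrize the admissible pairs of lines, and read off the stratification and $\omega_H$. Your additional verification of transversality via Stamm's local ring \eqref{equ-loc-ring} together with case~(a) of Proposition~\ref{prop-key-lemma} is a welcome supplement that the paper does not spell out.
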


The proof of this lemma is quite similar to the previous one, and will be left to the reader as an exercise.

\subsection{}\label{sect-pic} Consider the valuation $\nu_{\fY}=(\nu_{1},\nu_{2}): \fY_{\rig}\ra [0,1]\times [0,1]$ defined in \ref{defn-val-Y}. Let $\spe: \fY_{\rig}\ra Y_{\kappa}$ be the specialization map. If $Z\subset Y_{\kappa}$ is a locally closed subset, we denote by $]Z[\;=\spe^{-1}(Z)$ the tube of $Z$ in $\fY_{\rig}$.  By Prop. \ref{prop-val},  we have, for $i\in\Z/2\Z$, that
\begin{align*}
&Q\in\;]W_{\bB,\emptyset}[\; \Leftrightarrow \nu_{\fY}(Q)=(1,1);\\
&Q\in \;]W_{\emptyset, \bB}[\;  \Leftrightarrow \nu_{\fY}(Q)=(0,0);\\
&Q\in \;]W_{\bB,i}[\;\Leftrightarrow 0<\nu_{i}(Q)< 1\quad \text{and}\quad \nu_{i+1}(Q)=1; \\
&Q\in \;]W_{i,i}[\; \Leftrightarrow \nu_{i}(Q)=0\quad \text{and}\quad \nu_{i+1}(Q)=1;\\
&Q\in \;]W_{i,\bB}[\; \Leftrightarrow \nu_{i}(Q)=0\quad \text{and}\quad 0<\nu_{i+1}(Q)<1;\\
&Q\in \;]W_{\bB,\bB}[\; \Leftrightarrow 0<\nu_{1}(Q),\nu_2(Q)<1.
\end{align*}
In summary,  the $4$ vertices of the square $[0,1]\times[0,1]$ correspond to the strata of dimension $2$, the $4$ edges correspond to the $1$-dimensional strata, and the interior corresponds to the unique stratum of dimension $0$. We have the following graph:

 \begin{center}
 \setlength{\unitlength}{2cm}
 \begin{picture}(1.7,1.8)(-0.2,-0.2)

 \put(0,0){\vector(1,0){1.4}}
 \put(0,0){\vector(0,1){1.3}}
 \put(-0.1,-0.1){$0$}
 \put(-0.3,1.26){\makebox(0,0){$\nu_{2}(Q)$}}
 \put(1.55,-0.1){\makebox(0,0){$\nu_{1}(Q)$}}
 \put(0,1){\line(1,0){1}}
 \put(1,0){\line(0,1){1}}
 \put(1,1.07){$(1,1)$}
 \put(1,-0.2){$1$}
 \put(-0.1,0.94){{$1$}}
 \put(0.3333,0){\line(-1,3){0.3333}}
 \put(0.27,-0.2){$\frac{1}{p}$}
 \put(0,0.3333){\line(3,-1){1}}
 \put(-0.15,0.26){$\frac{1}{p}$}
 \put(0.56,0.56){$\cV_{\can}$}
 \put(0.07,0.07){$\cW$}

 \end{picture}
\end{center}
Here, the two line segments with end points $\{(0,1),(1/p,0)\}$ and $\{(0,1/p),(1,0)\}$ are respectively $p\nu_{1}(Q)+\nu_{2}(Q)=1$ and $\nu_{1}(Q)+p\nu_{2}(Q)=1$. They intersect at the point $(\frac{1}{p+1},\frac{1}{p+1})$. The anti-canonical locus $\cW$ is strictly below these two lines, and the canonical  locus $\cV_{\can}$ is strictly above them. Note that $\cV_{\can}$ contains $\;]W_{\bB,\emptyset}[\;$ and $\;]W_{\bB,i}[\;$ for $i\in\Z/2\Z$.

 Let $U_p:\fY_{\rig}\ra \fY_{\rig}$ be the set theoretic Hecke correspondence \eqref{defn-set-U_p}. We want to understand the dynamics of $U_p$ on the too-singular locus.

\begin{prop}\label{prop-dyn-sg} Let $Q=(A,H)\in \fY_{\rig}$ be a rigid point,  $i\in \Z/2\Z$.

\emph{(a)}   If $Q\in \;]W_{i,i}^{sg}[\;$, then we have
\[\nu_{i}(Q_1)=1\quad \text{and}\quad  \nu_{i+1}(Q_1)=1-\frac{1}{p}\]
for  all $Q_1\in U_p(Q)$. In particular, the orbit $U_p(Q)$ is contained in the canonical locus $\cV_{\can}$.

\emph{(b)} If $Q\in \;]W_{i, \bB}[\;$ with $\frac{p+1}{p^2+1}\leq\nu_{i+1}(Q)<1$, we have $\nu_{\beta_{i}}(Q_1)=1$ and $\nu_{i+1}(Q_1)=1-\frac{1}{p}-\frac{1}{p^2}(1-\nu_{i+1}(Q))$ for all $Q_1\in U_p(Q)$. In particular,  the orbit $U_p(Q)$ is contained in $\cV_{\can}$.

\emph{(c)} If $Q\in \;]W_{i,\bB}[\;$ with $\frac{1}{p}\leq \nu_{{i+1}}(Q)<\frac{p+1}{p^2+1}$. Then there is a unique point $Q_1\in U_p(Q)$ with $\nu_i(Q_1)=1$ and $\nu_{i+1}(Q_1)=p^2(\nu_{i+1}(Q)-\frac{1}{p})$, and the all the other $(p^2-1)$ points $Q_1'\in U_p(Q)$ satisfy $\nu_i(Q_1')=1$ and $\nu_{i+1}(Q_1')=1-\nu_{i+1}(Q)$. In particular, if $\nu_{i+1}(Q)>\frac{1}{p}$, we have $U_p(Q)\subset \cV_{\can}$; and if $\nu_{i+1}(Q)=\frac{1}{p}$, then there is a unique point $Q_1\in U_p(Q)$ contained in  $\,]W_{i,i}[\,$, and the other points of $U_p(Q)$ are all in $\cV_{\can}$.

\end{prop}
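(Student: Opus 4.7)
The strategy is to reduce each statement to the local classification of $(\cO_F/p)$-cyclic subgroups of $A[p]$ established in Section~3, applied to the truncated Barsotti--Tate group $A[p]$ of level $1$ with formal real multiplication by $\Z_{p^2}$. In each of (a)--(c), the hypothesis furnishes $\deg_i(H) = \nu_i(Q) = 0$ and $\deg_{i+1}(H) = \nu_{i+1}(Q) \geq 1/p$, while the specialization $\overline{Q}$ lies in a stratum with $\tau(\overline{A}) = \{i\}$, so that $w_{i+1}(A) = 0$ and $w_i(A) > 0$. Applying Proposition~\ref{prop-subgp-sg}(a) (after the symmetric relabeling interchanging the two indices of $\bB$) upgrades this to $w_i(A) = 1$. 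With $(w_i(A), w_{i+1}(A)) = (1,0)$, Proposition~\ref{prop-subgp-sg}(b) then completely classifies the $p^2+1$ distinct $(\cO_F/p)$-cyclic closed subgroup schemes $H'' \subset A[p]$: each satisfies $\deg_i(H'') = 0$ and $1/p \leq \deg_{i+1}(H'') \leq 1$, and either (dichotomous case) exactly one subgroup $H^\star$ has $\deg_{i+1}(H^\star) \in (\tfrac{p+1}{p^2+1},\, 1]$ while the other $p^2$ share the common value $\tfrac{1}{p^2}(1+p-\deg_{i+1}(H^\star))$, or (uniform case) all $p^2+1$ subgroups have $\deg_{i+1} = \tfrac{p+1}{p^2+1}$.

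To translate this into the Hecke correspondence, observe that for $H'' \neq H$ disjoint from $H$, the equality $H + H'' = A[p]$ implies that $(H+H'')/H''$ coincides with $A[p]/H''$ as a closed subgroup scheme of $(A/H'')[p]$. The exact sequence $0 \to \omega_{A[p]/H''} \to \omega_{A[p]} \to \omega_{H''} \to 0$ together with $\deg_\beta(\omega_{A[p]}) = 1$ for every $\beta \in \bB$ yields
\[\nu_\beta(Q_1) \;=\; \deg_\beta\bigl((H+H'')/H''\bigr) \;=\; 1 - \deg_\beta(H''),\qquad \beta \in \bB,\]
where $Q_1 = (A/H'', (H+H'')/H'') \in U_p(Q)$. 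In particular $\nu_i(Q_1) = 1$ unconditionally, and the three parts of the proposition follow by reading off the possible values of $\deg_{i+1}(H'')$ for $H'' \neq H$. In (a), $\nu_{i+1}(Q) = 1$ identifies $H$ with $H^\star$, forcing $\deg_{i+1}(H'') = 1/p$ for every $H'' \neq H$ and hence $\nu_{i+1}(Q_1) = 1 - 1/p$. In (b), the same identification $H = H^\star$ applies (the threshold value $\tfrac{p+1}{p^2+1}$ is consistent with the uniform sub-case), so each $H'' \neq H$ gives $\deg_{i+1}(H'') = \tfrac{1}{p^2}(1+p-\nu_{i+1}(Q))$, whence the claimed formula. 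In (c), $H$ is one of the $p^2$ small-degree subgroups; exactly one of the $p^2$ choices $H'' \neq H$ is the distinguished $H^\star$, whose degree $1+p-p^2\nu_{i+1}(Q)$ produces the unique special $Q_1$, while the remaining $p^2-1$ all share $\deg_{i+1} = \nu_{i+1}(Q)$, giving $\nu_{i+1}(Q_1) = 1-\nu_{i+1}(Q)$.

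The canonicality assertions follow at once: since $\bB_{(p)} = \bB$ has two elements swapped by $\sigma$, being canonical at $(p)$ means $\nu_i(Q_1) + p\nu_{i+1}(Q_1) > 1$ and $\nu_{i+1}(Q_1) + p\nu_i(Q_1) > 1$, both of which hold automatically whenever $\nu_i(Q_1) = 1$ and $\nu_{i+1}(Q_1) > 0$. This covers every case except the single boundary situation of (c) with $\nu_{i+1}(Q) = 1/p$, where the special $Q_1$ has $\nu_{i+1}(Q_1) = 0$ and hence lies on a two-dimensional stratum rather than in $\cV_{\can}$. The one piece of bookkeeping to monitor---though no substantial obstacle---is the coherence between the dichotomous and uniform sub-cases of Proposition~\ref{prop-subgp-sg}(b) at the threshold $\deg_{i+1} = \tfrac{p+1}{p^2+1}$; a direct substitution verifies the formulas agree there, so the description of $U_p(Q)$ varies continuously across the sub-intervals of $\nu_{i+1}(Q) \in [1/p, 1]$.
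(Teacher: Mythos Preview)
Your proof is correct and follows exactly the paper's route: the paper's own argument consists of the single formula $\nu_\beta(Q_1)=1-\deg_\beta(H')$ together with the remark that the proposition is then a direct consequence of Proposition~\ref{prop-subgp-sg}. You have simply unpacked this citation, verifying the hypotheses $w_i(A)>0$, $w_{i+1}(A)=0$ via the stratification (which the paper leaves implicit), applying part (a) of Proposition~\ref{prop-subgp-sg} to upgrade to $w_i(A)=1$, and then reading off the case analysis of part (b) for the subgroups $H''\neq H$; your handling of the threshold $\nu_{i+1}(Q)=\tfrac{p+1}{p^2+1}$ is also correct and matches the dichotomy there.
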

\begin{proof}  If $Q_1=(A/H',A[p]/H')$ is the rigid point corresponding to a $(\cO_F/p)$-cyclic closed subgroup scheme $H'$ of $A[p]$, then we have
\begin{equation}\label{equ-hecke-deg}
\nu_{i}(Q_1)=\deg_i(A[p]/H')=1-\deg_{i}(H').
\end{equation}
Now the Proposition is a direct consequence of Prop. \ref{prop-subgp-sg}.

\end{proof}

To describe the dynamics of $U_p$ on $\,]W_{i,i}^{ss}[\,$ for $i\in \Z/2\Z$, we need the following

\begin{prop}\label{lem-subgp-ss}
 Let $K$ be a finite extension of $\Q_{\kappa}$, and $Q=(A,H)$ be a $K$-valued rigid point of  $\,]W_{i,i}^{ss}[\,$. Then we have $w_i(A)=1$, $0< w_{i+1}(A)\leq 1$, and all the $(\cO_{F}/p)$-cyclic closed subgroup schemes $H'$ of $A[p]$ different from $H$ satisfy
\begin{equation}\label{equ-deg}
\deg_i(H')+p\deg_{i+1}(H')=1.
\end{equation}
Moreover,

\emph{(a)} if $0<w_{i+1}\leq \frac{p}{p+1}$, all the $p^2$ $(\cO_F/p)$-cyclic subgroups $H'\subset A[p]$ different from $H$ satisfy
\[\deg_i(H')=\frac{w_{i+1}(A)}{p}\quad \text{and}\quad \deg_{i+1}(H')=\frac{1}{p}-\frac{w_{i+1}(A)}{p^2};\]

\emph{(b)} if $\frac{p}{p+1}<w_{i+1}(A)\leq 1$, there is a unique $(\cO_F/p)$-cyclic subgroup $H'_0$ different from $H$ with
\[\deg_i(H'_0)=1-p(1-w_{i+1}(A))\quad\text{and}\quad\deg_{i+1}(H'_0)=1-w_{i+1}(A),\]
and all the other $p^2-1$ subgroups $H'$ satisfy $\deg_1(H')=\deg_2(H')=\frac{1}{p+1}$.

\end{prop}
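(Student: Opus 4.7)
The condition $Q \in\; ]W_{i,i}^{ss}[$ translates, via Proposition \ref{prop-val} and the description of the stratification in \ref{sect-pic}, into $\deg_i(H) = 0$, $\deg_{i+1}(H) = 1$ together with $w_1(A), w_2(A) > 0$. Since the pair $(\deg_i(H),\deg_{i+1}(H)) = (0,1)$ exhibits $H$ as a special subgroup of $A[p]$ of type $\{i+1\}$ in the sense of Proposition \ref{prop-gen-split}, Proposition \ref{prop-split-kisin}(a) applied to $G = A[p]$ forces $w_i(A) = 1$, establishing the first assertion and giving also $w_{i+1}(A) \in (0,1]$ automatically.

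For the remaining $(\cO_F/p)$-cyclic subgroups $H' \neq H$, I would pass to the Breuil--Kisin module $\fM = \fM_1 \oplus \fM_2$ of $A[p]$ and, taking $i=1$ without loss of generality, choose an adapted basis $(\delta_j, \epsilon_j)$ in which $\varphi$ takes the simplified form from the proof of Proposition \ref{prop-split-kisin}(b), with $v_u(a_2) = ew$ (where $w := w_{i+1}(A)$) and $c_1, c_2 \in \fS_1^{\times}$. An argument along the lines of that proof shows that the sub-Breuil--Kisin module $\fK \subset \fM$ corresponding to the quotient $A[p]/H'$ is necessarily generated over $\fS_1$ by elements of the form $\xi_1 = \delta_1 + y_1 \epsilon_1$ and $\xi_2 = \delta_2 + y_2 \epsilon_2$ for some $(y_1, y_2) \in \fS_1^2$ (the alternative $\fK_2 = \fS_1 \delta_2$ being ruled out because $c_2 \in \fS_1^{\times}$). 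The $\varphi$-stability conditions $\varphi(\fK_j) \subset \fK_{j+1}$ yield the system
\[
(a_2 + y_1^p)\, y_2 = u^e c_2, \qquad y_1\, y_2^p = u^e c_1,
\]
from which I read off directly $\deg_{i+1}(H') = v_u(y_2)/e$ and $\deg_i(H') = 1 - p\,v_u(y_2)/e$, already establishing the identity $\deg_i(H') + p\,\deg_{i+1}(H') = 1$.

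Eliminating $y_2$ between the two equations and exploiting the identity $(x+y)^p = x^p + y^p$ in the characteristic-$p$ ring $\fS_1$, the coordinate $y_1$ collapses to the equation
\[
P(y_1) := y_1^{p^2} - u^{e(p-1)}\alpha\, y_1 + a_2^p = 0, \qquad \alpha := c_2^p/c_1 \in \fS_1^{\times}.
\]
After enlarging $K$ if necessary, the $p^2$ roots of $P$ are in bijection with the $p^2$ subgroups $H' \neq H$, consistent with the Raynaud count $p^2+1$. The Newton polygon of $P$ has candidate vertices $\{(0, epw),\, (1, e(p-1)),\, (p^2, 0)\}$, and a direct check shows that $(1, e(p-1))$ lies strictly below the segment joining the outer two precisely when $w > p/(p+1)$. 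Hence for $w \leq p/(p+1)$ the polygon is a single segment of slope $-ew/p$, giving $v_u(y_1) = ew/p$ and therefore $v_u(y_2) = e(p-w)/p^2$ uniformly on all $p^2$ roots, yielding the uniform degrees of statement (a). For $w > p/(p+1)$ the polygon splits into two segments: one root has $v_u(y_1) = e(pw - p + 1)$, producing a distinguished subgroup $H'_0$ with $\deg_i(H'_0) = 1 - p(1-w)$ and $\deg_{i+1}(H'_0) = 1 - w$ (which one checks coincides with the subgroup of Proposition \ref{prop-split-kisin}(b)), while the remaining $p^2 - 1$ roots all have $v_u(y_1) = e/(p+1)$, giving $\deg_1(H') = \deg_2(H') = 1/(p+1)$. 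The only delicate point is the clean simplification into $P(y_1)$ provided by characteristic $p$ and the careful Newton polygon bookkeeping afterwards; no conceptual input beyond the techniques already used in the proofs of Propositions \ref{prop-subgp-sg} and \ref{prop-split-kisin} is required.
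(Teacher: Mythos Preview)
Your argument is correct and takes a genuinely different route from the paper. The paper argues geometrically: for each $H' \neq H$ it places the point $Q' = (A,H')$ in one of the strata $]W_{i+1,i+1}^{ss}[$ or $]W_{\bB,\bB}[$ via Lemma~\ref{lem-fiber-ss}, and then extracts the constraint \eqref{equ-deg} and the case split (a)/(b) by applying the Key Lemma (Proposition~\ref{prop-key-lemma}(a)) to both partial Hasse invariants $t_i$ and $t_{i+1}$ at $Q'$, together with the dictionary $\deg_j(H') = \nu(y_j(Q'))$ of Proposition~\ref{prop-val}; the uniqueness of $H'_0$ in (b) is then imported from Proposition~\ref{prop-split-kisin}(b). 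Your approach bypasses the stratification and the Key Lemma entirely: you stay inside the Breuil--Kisin module, parametrize all candidate submodules by the single coordinate $y_1$ satisfying a degree-$p^2$ equation over $\fS_1$, and let the Newton polygon do the case analysis. This is more self-contained and closer in spirit to the proof of Proposition~\ref{prop-subgp-sg}, at the cost of losing the geometric picture the paper exploits elsewhere.

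One small caveat: your identification $v_u(a_2) = ew$ with $w = w_{i+1}(A)$ is literally valid only for $w < 1$; when $w = 1$ one knows only $v_u(a_2) \geq e$, and if the inequality is strict your distinguished Newton-polygon root has $v_u(y_1) > e$, forcing $v_u(y_2) < 0$. In that boundary situation the submodule attached to $H'_0$ is not of the form $\fS_1(\delta_2 + y_2\epsilon_2)$ but of the alternate form $\fS_1(x_2\delta_2 + \epsilon_2)$ with $x_2 \in u\fS_1$. The conclusion $(\deg_i,\deg_{i+1})(H'_0) = (1,0)$ is unchanged and follows at once in that parametrization (or directly from Corollary~\ref{cor-split-CM}), so this is a cosmetic patch rather than a genuine gap.
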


\begin{proof} The fact that $w_i(A)=1$ follows from Prop. \ref{prop-key-lemma}(d) or Prop. \ref{prop-split-kisin}(a), and that $w_{i+1}(A)>0$ follows from the definition of $\;]W_{i,i}^{ss}[\;$. Let $H'$ be an $(\cO_F/p)$-cyclic subgroup of $A[p]$ different from $H$, and put $Q'=(A,H')$. Lemma \ref{lem-fiber-ss} implies that $Q'$ lies  either in $\,]W_{i,i}^{ss}[\;$, or in $]W_{i+1,i+1}^{ss}[\,$, or in $\,]W_{\bB,\bB}[\,$. Since Prop. \ref{prop-split-kisin}(a) implies that $H$ is the unique $(\cO_F/p)$-cyclic subgroup of $A[p]$ in $\,]W_{i,i}^{ss}[\,$, the first case is excluded. If $Q'\in ]W_{i+1,i+1}^{ss}[$, we have $\deg_{i+1}(H')=0$ and $\deg_i(H')=1$; in particular, \eqref{equ-deg} holds for $H'$. If $Q'\in \,]W_{\bB,\bB}[\,$, then Prop. \ref{prop-key-lemma}(a) implies that there exists units $u,v\in\cO_K$ such that
\[1=w_i(A)=\min\{1,v_p(ux_{i}(Q')+vy_{i+1}^p(Q'))\},\]
where $x_i, y_{i+1}$ are the local parameters defined in \ref{loc-par}. Since $0<v_p(x_i(Q')),v_p(y_{i+1}(Q'))<1$, we see that $1-v_p(y_i(Q'))=v_p(x_i(Q'))=v_p(y_{i+1}^p(Q'))$, i.e. $v_p(y_{i}(Q'))+pv_p(y_{i+1}(Q'))=1$. We deduce \eqref{equ-deg}  from Prop. \ref{prop-val}. It remains to prove the ``Moreover'' part of the Proposition. Applying Prop. \ref{prop-key-lemma}(a) to the $(i+1)$-th partial Hasse invariant, we see that there exist units $r,s\in \cO_K$ with
\[w_{i+1}(A)=\min\{1,v_p(rx_{i+1}(Q')+sy^p_{i}(Q'))\}.\]
It follows from Prop. \ref{prop-val} and \eqref{equ-deg} that
\begin{align*}
v_p(x_{i+1}(Q'))&=1-\deg_{i+1}(H')=1-\frac{1}{p}+\frac{\deg_i(H')}{p},\\
v_p(y_{i}^p(Q'))&=p\deg_{i}(H').
\end{align*}
We have three cases:
\begin{itemize}
\item{} If $0<\deg_{i}(H')<\frac{1}{p+1}$, we have $v_p(x_{i+1}(Q'))>v_p(y^p_{i}(Q'))$. Therefore, we have
$$w_{i+1}(A)=v_p(y^p_{i}(Q'))=p\deg_i(H')<\frac{p}{p+1},$$
i.e. $\deg_i(H')=\frac{w_{i+1}(A)}{p}$ and $\deg_{i+1}(H')=\frac{1}{p}(1-\deg_i(H'))=\frac{1}{p}-\frac{1}{p^2}w_{i+1}(A).$

\item{} If $\deg_i(H')=\frac{1}{p+1}$, we have $v_p(x_{i+1}(Q'))=v_p(y_{i+1}^p(Q'))=\frac{p}{p+1}$, and consequently $w_{i+1}(A)\geq \frac{p}{p+1}$ and $\deg_{i+1}(H')=\frac{p}{p+1}$.

\item{} If $\deg_{i}(H')>\frac{1}{p+1}$, we have $v_p(x_{i+1}(Q'))<v_p(y^p_{i}(Q'))$. Hence, we have
\[w_{i+1}(A)=v_p(x_{i+1}(Q'))=1-\frac{1}{p}+\frac{\deg_i(H')}{p},\]
i.e. $\deg_i(H')=1-p(1-w_{i+1}(A))$ and $\deg_{i+1}(H')=1-w_{i+1}(A)$.
\end{itemize}
From this list, statement (a) is clear. For (b), we have seen that if $\frac{p}{p+1}<w_{i+1}(A)\leq 1$, the possible values for $\deg_{i}(H')$ are $\frac{1}{p+1}$ and $1-p(1-w_{i+1}(A))$. On the other hand, Prop. \ref{prop-split-kisin}(b) implies that there is exactly one  $H'_0$ with $\deg_i(H'_0)=1-p(1-w_{i+1}(A))$ (note that this is even true when $w_{i+1}(A)=1$). Hence statement (b) follows.

\end{proof}

\begin{cor}\label{prop-dyn-ss}
Let $Q=(A,H)$ be a rigid point of $\;]W^{ss}_{i,i}[\;$. Then all the points $Q_1\in U_p(Q)$ satisfy
\begin{equation}\label{equ-deg=1}
\nu_{i}(Q_1)+p\nu_{i+1}(Q_1)=p.
\end{equation}
More precisely, we have two cases:

\emph{(a)} If $0<w_{i+1}(A)\leq\frac{p}{p+1}$, then we have
$$\nu_i(Q_1)=1-\frac{w_{i+1}(A)}{p}\quad\text{and}\quad \nu_{i+1}(Q_1)=1-\frac{1}{p}+\frac{1}{p^2}w_{i+1}(A)$$
 for all $Q_1\in U_p(Q)$. In particular, we have $U_p(Q)\subset \cV_{\can}$.

\emph{(b)} If  $\frac{p}{p+1}<w_{i+1}(A)\leq 1$, there is a unique point $Q_1\in U_p(Q)$ with
$$\nu_i(Q_1)=p(1-w_{i+1}(A))\quad \text{and}\quad\nu_{i+1}(Q_1)=w_{i+1}(A),$$
 and all the other points $Q_1'\in U_p(Q)$ satisfy $\nu_1(Q_1')=\nu_2(Q_1')=\frac{p}{p+1}$. In particular, we have $U_p(Q)\subset \cV_{\can}$ execpt when $w_{i+1}(A)=1$; in the exceptional case, there is exactly one $Q_1\in U_p(Q)$ contained in $\;]W_{i,i}[\;$, and all the other $Q_1$'s are contained in $\cV_{\can}$.
\end{cor}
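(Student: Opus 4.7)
The plan is to reduce Corollary \ref{prop-dyn-ss} directly to Proposition \ref{lem-subgp-ss} by converting the degree statements about cyclic subgroups into valuations $\nu_i(Q_1)$. The key translation formula, which is exactly \eqref{equ-hecke-deg} already used in the proof of Proposition \ref{prop-dyn-sg}, asserts that for any $(\cO_F/p)$-cyclic subgroup $H'\subset A[p]$ disjoint from $H$, the resulting rigid point $Q_1=(A/H',(H+H')/H')\in U_p(Q)$ satisfies
\[
\nu_i(Q_1)=\deg_i\bigl((H+H')/H'\bigr)=\deg_i(A[p]/H')=1-\deg_i(H').
\]
The middle equality rests on the fact that in our setting $H$ and $H'$ are $(\cO_F/p)$-cyclic and disjoint, so the natural map $H\hookrightarrow A[p]/H'$ is a closed immersion that is an isomorphism on generic fibres between two finite flat $\cO_K$-group schemes of the same degree, hence an isomorphism by \cite[Cor.\ 3]{Fa}; the last equality follows from additivity of $\deg_i$ on $0\to H'\to A[p]\to A[p]/H'\to 0$ together with $\deg_i(A[p])=1$. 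Since there are exactly $|\kappa(\fp)|=p^2$ such $H'$, this accounts for all points of $U_p(Q)$.

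With this translation in hand, I proceed case by case, plugging the degree formulas of Proposition \ref{lem-subgp-ss} into $\nu_i(Q_1)=1-\deg_i(H')$. In case (a) (i.e.\ $0<w_{i+1}(A)\leq p/(p+1)$), all $p^2$ admissible subgroups $H'$ share the same $(\deg_i,\deg_{i+1})=(w_{i+1}/p,\,1/p-w_{i+1}/p^2)$, so every $Q_1\in U_p(Q)$ has $\nu_i(Q_1)=1-w_{i+1}/p$ and $\nu_{i+1}(Q_1)=1-1/p+w_{i+1}/p^2$. In case (b) ($p/(p+1)<w_{i+1}(A)\leq 1$), Proposition \ref{lem-subgp-ss}(b) distinguishes a unique $H'_0$ with $(\deg_i,\deg_{i+1})=(1-p(1-w_{i+1}),1-w_{i+1})$, giving the special point $Q_1$ with $\nu_i(Q_1)=p(1-w_{i+1})$, $\nu_{i+1}(Q_1)=w_{i+1}$; the other $p^2-1$ subgroups all satisfy $\deg_1=\deg_2=1/(p+1)$, yielding $\nu_1(Q_1')=\nu_2(Q_1')=p/(p+1)$.

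The identity \eqref{equ-deg=1} is then a one-line verification in each sub-case:
\[
\bigl(1-\tfrac{w_{i+1}}{p}\bigr)+p\bigl(1-\tfrac1p+\tfrac{w_{i+1}}{p^2}\bigr)=p,\qquad p(1-w_{i+1})+p\,w_{i+1}=p,\qquad \tfrac{p}{p+1}+p\cdot\tfrac{p}{p+1}=p.
\]
For the assertion on the canonical locus, recall that since $p$ is inert, $\cV_{\can}=\cV_\fp$ is cut out by the two inequalities $\nu_1+p\nu_2>1$ and $\nu_2+p\nu_1>1$. One of the two is \eqref{equ-deg=1} and thus automatic. The other inequality is a direct numerical check: in case (a) the expression $\nu_{i+1}(Q_1)+p\nu_i(Q_1)=p+1-1/p+w_{i+1}(1/p^2-1)$ is a strictly decreasing linear function of $w_{i+1}\in(0,p/(p+1)]$ whose value at the right endpoint is easily seen to exceed $1$; for the generic points $Q_1'$ in case (b) the two inequalities are identical and both equal $p>1$; for the distinguished $Q_1$ in case (b), $\nu_{i+1}(Q_1)+p\nu_i(Q_1)=w_{i+1}+p^2(1-w_{i+1})=p^2-(p^2-1)w_{i+1}$, which is $>1$ iff $w_{i+1}<1$, and exactly equals $1$ (with $\nu_i=0$, $\nu_{i+1}=1$) when $w_{i+1}(A)=1$, placing that $Q_1$ precisely on $\;]W_{i,i}[\;$ by the picture in \ref{sect-pic}.

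No serious obstacle is expected; the only point requiring a little care is the identification $\nu_i(Q_1)=1-\deg_i(H')$ at the level of finite flat group schemes over $\cO_K$ (not just generically), and this is handled by the Fargues-degree argument above. The rest is bookkeeping, translating the two cases of Proposition \ref{lem-subgp-ss} through this formula and doing the elementary arithmetic to locate the resulting points in the square $[0,1]\times[0,1]$ described in \ref{sect-pic}.
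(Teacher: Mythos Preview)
Your proposal is correct and follows exactly the paper's approach: translate Proposition~\ref{lem-subgp-ss} into statements about $\nu_i(Q_1)$ via the identity $\nu_i(Q_1)=\deg_i(A[p]/H')=1-\deg_i(H')$, then read off the two cases. The paper's proof is the same three-line reduction; you have simply spelled out the arithmetic verifications of \eqref{equ-deg=1} and of membership in $\cV_{\can}$ that the paper leaves implicit. One small quibble: your justification that $(H+H')/H'=A[p]/H'$ via ``same degree'' and \cite[Cor.~3]{Fa} is slightly off, since Fargues' criterion compares \emph{degrees}, which you do not know to coincide a priori; the clean argument is that $H'\cap H=\{0\}$ scheme-theoretically (this is part of the moduli problem for $\cC(p)$), so $H\to A[p]/H'$ is a closed immersion of finite flat group schemes of the same \emph{order} $p^2$, hence an isomorphism.
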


\begin{proof}
Let $H'$ be a $\Z_{p^2}$-cyclic subgroup of $A[p]$ disjoint from $H$ corresponding to  $Q_1=(A/H',A[p]/H')$. We have $\nu_i(Q_1)=\deg_i(A[p]/H')=1-\deg_{i}(H')$ by the definition \eqref{defn-val-Y}. The Corollary follows immediately from the Proposition.
\end{proof}

\subsection{} We can interpret the results above geometrically as follows.   For $i\in \Z/2\Z$ and any rational number $\varepsilon$ with $0<\varepsilon\leq 1$, we put
\[U_{i}(\varepsilon)= \{P\in \fX_{\rig}\;|\; w_{i}(P)=1, w_{i+1}(P)\geq \varepsilon\}.\]
Then every rigid point in  $U_i(\varepsilon)$ has necessarily superspecial reduction.
Similarly, we put
\[V_{i}(\varepsilon)=\{Q\in \;]W_{i,i}[\;|\; w_{i+1}(Q)\geq \varepsilon \}\]
They are respectively quasi-compact admissible open subsets of $\fX_{\rig}$
and $\fY_{\rig}$.  Prop. \ref{lem-subgp-ss} implies that the natural projection $\pi:\fY_{\rig}\ra \fX_{\rig}$ induces an isomorphism
\[\pi|_{V(\varepsilon)}:V_{i}(\varepsilon)\ra U_{i}(\varepsilon).\] Let $\Pb\in W_{\bB}$, i.e. a superspecial point of $X_{\kappa}$, and $\widehat{\cO}_{\fX,\Pb}$ be the completion of the local ring of $\fX$ at $\Pb$. We choose  local lifts $t_{\Pb,1},t_{\Pb,2}\in \hat{\cO}_{\fX,\Pb}$ of the partial Hasse invariants. Then we have  an isomorphism \eqref{loc-par}
\[\widehat{\cO}_{\fX,\Pb}\simeq W(\kappa(\Pb))[[t_{\Pb,1},t_{\Pb,2}]].\]
Let $\DD_{\Pb}$ be the  2-dimensional rigid open unit disk associated with the formal scheme $\Spf(\widehat{\cO}_{\fX,\Pb})$. Then we have
\[\DD_{\Pb,i}(\varepsilon)=U_i(\varepsilon)\cap \DD_{\Pb}=\{P\in \DD_{\Pb}\;|\; v_p(t_{\Pb,i}(P))\geq 1, v_p(t_{\Pb,i+1}(P))\geq \varepsilon\},\]
and  $U_{i}(\varepsilon)$ is a disjoint union of the closed polydisks $\DD_{\Pb,i}(\varepsilon)$ for all $\Pb\in W_{\bB}$. Composed with the isomorphism $\pi|_{V_i(\varepsilon)}$, we see that  $t_{\Pb,1},t_{\Pb,2}$ for each superspecial point $\Pb$ establish an isomorphism
\beq\label{iso-V_i}
V_i(\varepsilon)\xra{\sim} \coprod_{\Pb\in W_{\bB}}\DD_{\Pb}(\varepsilon).
\eeq

Now suppose $\frac{p}{p+1}<\varepsilon\leq 1$. Let $\pi_1:\cC(p)_{\rig}\ra \fY_{\rig}$ be the first projection of the Hecke correspondence \ref{sect-rig-U_p}. Then  $\pi_1^{-1}(V_{i}(\varepsilon))$ is  a disjoint union of two rigid analytic spaces
\[
\pi^{-1}_1(V_{i}(\varepsilon))=\cC(p)^{\circ}_{\rig}|_{V_{i}(\varepsilon)}\coprod \cC(p)_{\rig}^{s}|_{V_{i}(\varepsilon)},
\]
where $\cC(p)^{s}_{\rig}|_{V_{i}(\varepsilon)}$ corresponds to the unique $(\cO_F/p)$-cyclic subgroup $H'_0\subset A[p]$ disjoint from $H$ given by Prop. \ref{lem-subgp-ss}(b), and $\cC(p)^{\circ}_{\rig}|_{V_i(\varepsilon)}$ corresponds to the remaining $(\cO_F/p)$-cyclic subgroups. Correspondingly, we have a decomposition of set theoretic Hecke correspondences
$U_p=U_p^\circ\coprod U_p^s$ from  $V_{i,1}(\varepsilon)$ to  $\fY_{\rig}$,
given by
$$U^{\circ}_p(Q)=\pi_2((\pi^{\circ}_1)^{-1}(Q))\quad\text{and}\quad U_p^s(Q)=\pi_2((\pi_1^s)^{-1}(Q)).$$
 By Corollary \ref{prop-dyn-ss}, we have $U^{\circ}_p(Q)\subset \cV_{\can}$,   $U_p^{s}(Q)\subset \cV_{\can}$ for $Q\in V_i(\varepsilon)- V_i(1)$, and $U_p^{s}(Q)\subset ]W_{i,i}[$ for $Q\in V_i(1)$. We have  a diagram
\[\xymatrix{
&\cC(p)^{s}_{\rig}|_{V_{i}(\varepsilon)}\ar[dl]^{\sim}_{\pi_1^s}\ar[rd]^{\pi_2}\\
V_i(\epsilon)&& \fY_{\rig},
}\]
where $\pi^s_1$ is an isomorphism of rigid analytic spaces. Hence, the correspondence $U_p^s$ comes from a genuine  morphism of rigid analytic spaces:
\[\pi_{12}^{s}=\pi_2\circ(\pi_1^s)^{-1}: V_{i}(\varepsilon)\ra \cV\cup \,]W_{1,1}[\,\cup \,]W_{2,2}[\,.\]
Note that $\pi_{12}^s(V_i(1))\subset ]W_{i,i}[$, and $\pi_{12}^s(V_{i}(\epsilon)-V_1(1))\subset \cV_{\can}$ for $\frac{p}{p+1}<\epsilon< 1$. Let $Q=(A,H)$ be a  rigid point of $V_{i}(\varepsilon)$ over a finite extension $L/\Q_{\kappa}$, and $H'_0\subset A[p]$ be the unique $(\cO_F/p)$-cyclic subgroup distinct from $H$ given by Prop. \ref{lem-subgp-ss}(b). Let $f$ be a section of $\omegab^{\vk}$ defined over a neighborhood of $Q'=(A/H'_0,A[p]/H_0')$.  Then $U_p^s(f)$ is defined at $Q$, and we have
\beq\label{equ-U_p-s}
U_p^s(f)(A,H,\omega)=\frac{1}{p^2}f(A/H_0',A[p]/H_0', p^{-1}\hat{\phi}^*\omega),
\eeq
where  $\omega$ is a basis of  $\omega_{A/\cO_L}$ as a $(\cO_L\otimes \cO_F)$-module, and $\hat{\phi}:A/H_0'\ra A$
is the canonical isogeny with kernel $A[p]/H_0'$. Similarly, we can define a section $U_p^{\circ}(f)$ of $\omegab^{\vk}$ over $V_i(\varepsilon)$ whenever $f$ is defined over $\cV_{\can}$.

\subsection{}\label{sect-V_n} Fix a rational number $\varepsilon$ with $\frac{p}{p+1}<\varepsilon<1$. We slightly change our notation by  putting $V_{i,1}(\varepsilon)=V_{i}(\varepsilon)$, $V_{i,1}=V_{i}(1)$. For any  integer $n\geq 2$, we define inductively
\[
V_{i,n}(\varepsilon)=(\pi_{12}^s)^{-1}(V_{i, n-1}(\varepsilon)),\quad\text{and}\quad V_{i,n}=(\pi_{12}^s)^{-1}(V_{i, n-1}).\]

We have natural inclusions
\[V_{i,1}(\varepsilon)\supset V_{i,1}\supset V_{i,2}(\varepsilon)\supset V_{i,2}\supset \cdots \supset V_{i,n}(\varepsilon)\supset V_{i,n}\supset\cdots.\]
 By composing $\pi_{12}^s$ with itself $n$-times, we get a  morphism of rigid analytic spaces
\[(\pi_{12}^s)^{n}:V_{i,n}\ra \,]W_{i,i}[\,.\]
\begin{lemma}\label{lem-V_n}

 Let $K$ be a finite extension of $\Q_{\kappa}$, and  $Q=(A,H)$ be a $K$-valued rigid point of $]W_{i,i}[$.

\emph{(a)} For any integer $n\geq 1$, we have $Q\in V_{i,n}$ if and only if there exists a unique $(\cO_{F}/p^n)$-cyclic  truncated  Barsotti-Tate closed subgroup $G_n\subset A[p^n]$ of level $n$ such that the natural morphism
\[G_n\times H\ra A[p^n]\]
is a closed embedding. In that case, we have $(\pi_{12}^s)^n(Q)=(A/G_{n},H)$, i.e. the set $(U_p^s)^n(Q)$ consists of the unique rigid point $Q_n=(A/G_n,H)$, where we have identified $H$ with its image in $A/G_n$.

\emph{(b)} For any integer $n\geq 2$, we have $Q\in V_{i,n}(\varepsilon)$ if and only if the point $Q_{n-1}=(A/G_{n-1},H)\in V_{i,1}(\varepsilon)$.

\end{lemma}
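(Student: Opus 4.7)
The plan is to prove part (a) by induction on $n$, using Prop. \ref{prop-pilloni} as the main engine, and then deduce part (b) by unwinding the definitions. For the base case $n=1$, I would observe that $Q=(A,H)\in V_{i,1}=V_i(1)$ if and only if $w_{i+1}(A)=1$; by Prop. \ref{lem-subgp-ss}(b), this is equivalent to the existence of a unique $(\cO_F/p)$-cyclic subgroup $H'_0\subset A[p]$ distinct from $H$ with $\deg_i(H'_0)=1$ and $\deg_{i+1}(H'_0)=0$. Such an $H'_0$ is the special subgroup of type $\{i\}$ in the sense of Prop. \ref{prop-gen-split}, hence a truncated Barsotti-Tate group of level $1$ by part (a)(1) of that proposition, and one checks that $G_1:=H'_0$ fulfils the stated conditions (the map $G_1\times H\to A[p]$ is in fact an isomorphism by Fargues' degree comparison \cite[Cor. 3]{Fa}). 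By construction of $\pi_{12}^s$ in \ref{sect-pic}, we then have $\pi_{12}^s(Q)=(A/G_1,H)$.

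For the inductive step, assuming the statement for $n-1$, I would proceed as follows for $Q\in V_{i,n}$: the point $Q_1:=\pi_{12}^s(Q)=(A/G_1,H)$ lies in $V_{i,n-1}$, so by induction there is a unique truncated Barsotti-Tate subgroup $G'_{n-1}\subset (A/G_1)[p^{n-1}]$ of level $n-1$ with $G'_{n-1}\times H\to (A/G_1)[p^{n-1}]$ a closed embedding and $(\pi_{12}^s)^{n-1}(Q_1)=((A/G_1)/G'_{n-1},H)$. Setting $G_n$ to be the preimage of $G'_{n-1}$ under the quotient $A\to A/G_1$, one has to verify that $G_n$ is again a truncated Barsotti-Tate group of level $n$, that $G_n\times H\to A[p^n]$ is a closed embedding, and that $(\pi_{12}^s)^n(Q)=(A/G_n,H)$. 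Rather than checking these by hand, I would invoke Prop. \ref{prop-pilloni}: since the iterate $(\pi_{12}^s)^n(Q)$ remains in $]W_{i,i}[$ and therefore has $\nu_{\fY}$-values $(0,1)$ equal to those of $Q$, the equalities in \eqref{equ-QQ} hold for all $\beta\in\bB$, so parts (b) and (c) of that proposition deliver precisely the required $G_n$ together with the identification $(\pi_{12}^s)^n(Q)=(A/G_n,H)$. For uniqueness, any candidate $\tilde G_n$ has $\tilde G_n[p]$ a $(\cO_F/p)$-cyclic subgroup of $A[p]$ disjoint from $H$ with $(\deg_i,\deg_{i+1})=(1,0)$, forcing $\tilde G_n[p]=G_1$ via Prop. \ref{lem-subgp-ss}(b); then $\tilde G_n/G_1$ plays the role of $G'_{n-1}$, so $\tilde G_n=G_n$ by induction. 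The converse direction of (a) runs the same way: existence of $G_n$ implies $G_n[p]=G_1$ is special of type $\{i\}$, so Prop. \ref{prop-gen-split}(a)(1) gives $w_{i+1}(A)=1$, hence $Q\in V_{i,1}$, and iteration then places $Q$ in $V_{i,n}$.

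Part (b) will follow formally from (a): by definition $V_{i,n}(\varepsilon)=(\pi_{12}^s)^{-1}(V_{i,n-1}(\varepsilon))$, and the inclusion chain $V_{i,1}(\varepsilon)\supset V_{i,1}\supset V_{i,2}(\varepsilon)\supset\cdots$ implies $V_{i,n}(\varepsilon)\subset V_{i,n-1}$, so that $G_{n-1}$ from part (a) is well-defined on $V_{i,n}(\varepsilon)$. Iterating the definition then yields $Q\in V_{i,n}(\varepsilon)$ if and only if $(\pi_{12}^s)^{n-1}(Q)\in V_{i,1}(\varepsilon)$, and by part (a) this iterate is precisely $Q_{n-1}=(A/G_{n-1},H)$. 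The main obstacle will be in the inductive step of (a), namely verifying that the preimage $G_n=\phi^{-1}(G'_{n-1})$ is a truncated Barsotti-Tate group of level $n$: this is \emph{not} automatic from the extension $0\to G_1\to G_n\to G'_{n-1}\to 0$ and requires the careful degree and Dieudonn\'e/Breuil-Kisin module analysis already carried out in the proof of Prop. \ref{prop-pilloni}. Routing through that proposition is by far the most economical strategy.
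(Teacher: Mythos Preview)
Your approach is correct and matches the paper's own proof, which is a single sentence stating that the lemma is a consequence of Prop.~\ref{prop-pilloni}. You have simply fleshed out the details the paper leaves implicit: the base case via Prop.~\ref{lem-subgp-ss}(b) and Prop.~\ref{prop-gen-split}, the inductive structure with the converse direction, and the formal unwinding of the definitions for part~(b).
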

\begin{proof} This lemma is  a consequence of Prop. \ref{prop-pilloni}.
\end{proof}

The following technical Lemma will play an important role in the sequel, and it relies largely on the results proven in Appendix B.

\begin{lemma}\label{lem-tech-ss}
Let $\overline{\kappa}$ be an algebraic closure of $\kappa$, $\Pb$ be a superspecial closed point of $X_{\kappa}$, $\DD_{\Pb}(1)\subset V_{i,1}$ be the corresponding closed polydisc by \eqref{iso-V_i}. Then there exist local parameters $t_{\Pb,1}, t_{\Pb,2}$ of $\DD_{\Pb}(1)$ defined over $W[\overline{\kappa}][1/p]$ such that we have, for any $n\geq 2$,
\begin{align*}\DD_{\Pb}(1)\cap V_{i,n}(\varepsilon)&=\{Q\in \DD_{\Pb}(1)\;|\;  v_p(t_{\Pb,i+1}(Q))\geq n-1+\varepsilon\}\\
\DD_{\Pb}(1)\cap V_{i,n}&=\{Q\in \DD_{\Pb}(1)\;|\;  v_p(t_{\Pb,i+1}(Q))\geq n\}.
\end{align*}
In particular, $V_{i,n}(\varepsilon)$ is a strict neighborhood of $V_{i,n}$.
\end{lemma}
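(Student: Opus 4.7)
The plan is to reduce the lemma to an explicit description of the dynamics of $\pi_{12}^{s}$ on the formal deformation space of the superspecial point $\Pb$, using the canonical parameters provided by Appendix B. Since the superspecial $p$-divisible group $A_{\Pb}[p^{\infty}]$ is in general only defined after base change to the algebraic closure $\overline{\kappa}$, the parameters constructed will naturally be defined over $W[\overline{\kappa}][1/p]$, which accounts for the field of definition stated in the lemma.

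The first step is to invoke Prop.~\ref{prop-window} and Remark~\ref{rem-window} of Appendix B, i.e.\ the family version of Cor.~\ref{cor-split-CM} applied to the inert case $g=2$. This produces canonical local parameters $T_{1},T_{2}$ of the formal deformation space $\Spf(\widehat{\cO}_{\fX,\Pb}\otimes_{W}W[\overline{\kappa}])$ whose vanishing loci $T_{\sigma}=0$ parametrize those deformations of the universal HBAV whose $p$-torsion admits a special subgroup of type $I_{\sigma}\in\{\{1\},\{2\}\}$ in the sense of Cor.~\ref{cor-split-CM}. Now for any $Q=(A,H)\in V_{i,1}=V_{i}(1)$, Prop.~\ref{prop-split-kisin}(a) forces $H$ to be a special subgroup of $A[p]$ of type $\{i+1\}$; through the isomorphism~\eqref{iso-V_i} and the identification of the universal special subgroup on $V_{i,1}$ with one of the $T_{\sigma}$-coordinate hyperplanes, the polydisc $\DD_{\Pb}(1)$ then carries natural local parameters $t_{\Pb,i},t_{\Pb,i+1}$ obtained from suitable rescalings of $T_{1},T_{2}$ by powers of $p$, defined over $W[\overline{\kappa}][1/p]$.

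The crux of the proof is the dynamical identity
$$
(\pi_{12}^{s})^{*}(t_{\Pb,i+1})\;=\;u\cdot p\cdot t_{\Pb,i+1},\qquad (\pi_{12}^{s})^{*}(t_{\Pb,i})\;=\;v\cdot t_{\Pb,i},
$$
on $\DD_{\Pb}(1)$, for some units $u,v\in \cO(\DD_{\Pb}(1))^{\times}$. The reason one expects this is that for $Q=(A,H)\in V_{i,1}$, the unique anti-canonical subgroup $H_{0}'\subset A[p]$ supplied by Prop.~\ref{prop-split-kisin}(b) in the boundary case $w_{i+1}(A)=1$ is precisely the special subgroup of the opposite type $I_{\sigma'}=\{i\}$; quotienting $A$ by $H_{0}'$ should then shift the deformation exactly one step along the coordinate axis $T_{\sigma'}=0$, scaling $t_{\Pb,i+1}$ by $p$ while leaving $t_{\Pb,i}$ changed only by a unit.

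Granting the displayed identity, the description of $V_{i,n}$ and $V_{i,n}(\varepsilon)$ follows by induction on $n$: by Lemma~\ref{lem-V_n}, $Q\in V_{i,n}$ (resp.~$V_{i,n}(\varepsilon)$) if and only if $(\pi_{12}^{s})^{n-1}(Q)\in V_{i,1}$ (resp.~$V_{i,1}(\varepsilon)$), and iterated use of the identity translates this into $v_{p}(t_{\Pb,i+1}(Q))\geq n$ (resp.~$\geq n-1+\varepsilon$). The final assertion that $V_{i,n}(\varepsilon)$ is a strict neighborhood of $V_{i,n}$ is then immediate, since one may interpolate through $v_{p}(t_{\Pb,i+1})\geq n-1+\varepsilon'$ for $\varepsilon<\varepsilon'<1$. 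The principal obstacle of the whole argument is the dynamical identity in Step~3: its proof requires careful tracking, via Zink's Dieudonn\'e window theory and the universal window constructed in Appendix B, of how the isogeny $A\to A/H_{0}'$ transforms the Dieudonn\'e data of the universal deformation, and identifying the pulled-back window with the pull-back of the universal window along the self-map $T_{\sigma'}\mapsto pT_{\sigma'}$ of the deformation space.
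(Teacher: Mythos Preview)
Your plan is exactly the paper's: identify $\DD_{\Pb}(1)$ with the polydisc $\DD(1,1)$ in the rigid generic fibre of the universal deformation space of the superspecial $p$-divisible group via Serre--Tate, take the canonical parameters $t_{\Pb,1}=T_{1},\ t_{\Pb,2}=T_{2}$ from \S\ref{p-div-RM}, invoke Prop.~\ref{prop-window}(b) to see how quotienting by $H_{0}'$ transforms these coordinates, and induct on $n$.

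The error is in the dynamical identity itself. With the conventions of Remark~\ref{rem-window}(b), for $i=1$ the subgroup $H$ is the pullback of $H^{1,1}_{+}$ and $H_{0}'$ that of $H^{1,1}_{-}$. Reading off Prop.~\ref{prop-window}(b) with $m=n=1$, the Dieudonn\'e window of $G^{1,1}/H^{1,1}_{-}$ has Frobenius matrices of the universal shape \eqref{univ-phi} with entries $t_{2}$ and $p^{2}t_{1}$, i.e.\ in the $T$-coordinates one has $T_{1}(Q_{1})=p\,t_{\Pb,1}(Q)$ and $T_{2}(Q_{1})=t_{\Pb,2}(Q)/p$. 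So the correct law is
\[
t_{\Pb',i}\bigl(\pi_{12}^{s}(Q)\bigr)=p\cdot t_{\Pb,i}(Q),\qquad
t_{\Pb',i+1}\bigl(\pi_{12}^{s}(Q)\bigr)=t_{\Pb,i+1}(Q)/p,
\]
with the $(i{+}1)$-th parameter \emph{divided} by $p$, not multiplied. Your displayed formula $(\pi_{12}^{s})^{*}(t_{\Pb,i+1})=u\cdot p\cdot t_{\Pb,i+1}$ runs the induction the wrong way and would yield $V_{i,n}=\{v_{p}(t_{\Pb,i+1})\geq n-2\}$ rather than $\{v_{p}(t_{\Pb,i+1})\geq n\}$. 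Once you reverse the factor of $p$ (and swap the roles of $i$ and $i{+}1$ in your heuristic paragraph), the induction goes through exactly as in the paper. A minor further point: $\pi_{12}^{s}$ need not carry $\DD_{\Pb}(1)$ into itself, since $A/H_{0}'$ may reduce to a different superspecial point $\Pb'$; that is why the paper phrases the $n=2$ step in terms of the partial Hasse invariants of $A/H_{0}'$ rather than as a literal self-map of one polydisc.
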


\begin{proof}
We may assume $i=1\in \Z/2\Z$ to simplify the notation. We consider first the case $n=2$.
Let $\Ab$ be the HBAV corresponding to $\Pb$. We will consider $\Ab$ as a HBAV  over $\overline{\kappa}$. Let $G_{\Pb}=\Ab[p^\infty]$ be the associated $p$-divisible group. In the terminology of Appendix  \ref{p-div-RM}, $G_{\Pb}$ is a superspecial $p$-divisible group with RM by $\Z_{p^2}$. By Serre-Tate's theory on the deformations of abelian varieties, the completion of the local ring $\widehat{\cO}_{X_{\kappab},\Pb}\simeq W(\kappab)[[t_{\Pb,1},t_{\Pb,2}]]$ is canonically identified with the universal deformation ring $R^\univ$ of $G_{\Pb}$. We take local lifts of partial Hasse invariants $t_{\Pb,1}=T_1, t_{\Pb,2}=T_2$ in  $\widehat{\cO}_{X_{\kappab},\Pb}$ as in \ref{p-div-RM}. The subdisc $\DD_{\Pb}(1)$ is the rigid subspace $\DD(1,1)$ defined in \ref{rem-window} of the rigid generic fiber of the deformation space of $G_{\Pb}$. Let $K$ be a finite extension of $W(\kappab)[1/p]$ with ring of integers $\cO_K$, $Q=(A,H)$ be a $K$-valued rigid point of $\DD_{\Pb}(1)$. We denote by  $(A,H,H'_0)$  the unique point above $Q$ in $\cC(p)^{s}_{\rig}$, and by $Q_1=(A/H'_0,A[p]/H'_0)$ the unique rigid point in $U_p^s(Q)$. By Remark \ref{rem-window},  the subgroup $H$, $H'_0$ are respectively just the pull-back to $\cO_K$ via $Q$ of the subgroup $H^{1,1}_{+}, H_{-}^{1,1}$ obtained in \ref{prop-window}. Now Prop. \ref{prop-window}(b) implies that $pt_{\Pb,1}(Q)$ and $t_{\Pb,2}(Q)/p$ lifts the partial Hasse invriants of $(A/H'_0)\otimes_{\cO_K}\cO_{K}/p$; in particular,  we have $w_1(Q_1)=1$ and $w_2(Q_1)=\max\{1,v_p(t_{\Pb,2}(Q)/p)\} $. By definition, we have $Q\in V_{2,i}(\varepsilon)$ if and only if $Q_1\in V_{1,i}(\varepsilon)$, i.e. $v_p(t_{\Pb,2}(Q))\geq 1+\varepsilon$.  Similarly, we have $Q\in V_{2,i}$ if and only if $Q_1\in V_{1,i}$. This proves this Lemma for $n=2$. The general case  follows by an easy induction on $n$.

\end{proof}

\begin{lemma}\label{lem-est-V_n} Let $\vk=(k_1,k_2)\in \Z^2$ with $k_1\geq k_2$, and $f$ be a section of $\omega^{\vk}$ over $\cV_{\can}$ such that $|f|_{\cV_{\can}}$ is finite.

\emph{(a)} For any integer $n\geq 1$,  the form $g_n=\frac{1}{a_p^n}(U_p^s)^n(f)$ is a well-defined section of $\omegab^{\vk}$ over $V_{i,n}(\varepsilon)- V_{i,n}$. We have
\[|g_n|_{V_{i,n}(\varepsilon)- V_{i,n}}\leq p^{-n(k_2-v_p(a_p)-2)+(1-\varepsilon)(p-1)k_2}|f|_{\cV_{\can}}.\]

\emph{(b)} For any integer $n\geq 1$, the form $h_n=\frac{1}{a_p^n}(U_p^s)^{n-1}(U^{\circ}_p(f))$ is a well-defined section of $\omegab^{\vk}$ over $V_{i,n}(\varepsilon)$. We have
\[|h_n|_{V_{i,n}(\varepsilon)}\leq p^{-n(k_2-v_p(a_p)-2)+(p-1)k_2/(p+1)}|f|_{\cV_{\can}}. \]
\end{lemma}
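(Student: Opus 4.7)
The plan is to iterate the degree-based norm estimate of Lemma \ref{lemma-basic-est} along the $\pi_{12}^s$-orbit of a point, tracking the partial degrees of the successive isogeny kernels. For (a), fix $Q = (A,H) \in V_{i,n}(\varepsilon) - V_{i,n}$ and write $Q_j = \pi_{12}^s(Q_{j-1}) = (A_j, H_j)$ with $A_j = A_{j-1}/D_j$ for $j = 1, \ldots, n$. Lemma \ref{lem-V_n} places $Q_j \in V_{i, n-j}(\varepsilon) - V_{i, n-j}$ for $j < n$, and Prop. \ref{prop-dyn-ss}(b) places $Q_n$ in the canonical locus $\cV_{\can}$; thus $(U_p^s)^n(f)(Q)$ is well-defined. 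By Lemma \ref{lem-tech-ss} the coordinate $v_p(t_{\Pb, i+1}(Q_{j-1}))$ exceeds $1$ whenever $j \leq n-1$, forcing $w_{i+1}(A_{j-1}) = 1$; Prop. \ref{lem-subgp-ss}(b) (applicable throughout since $\varepsilon > p/(p+1)$) then gives $\deg_i(D_j) = 1$ and $\deg_{i+1}(D_j) = 0$. At the terminal step, $w := w_{i+1}(A_{n-1}) \in [\varepsilon, 1)$, yielding $\deg_i(D_n) = 1 - p(1-w)$ and $\deg_{i+1}(D_n) = 1-w$.

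Writing $G_n$ for the kernel of the composite $A \to A_n$, additivity of partial degrees along the filtration with successive quotients $D_1, \ldots, D_n$ gives $\deg_i(G_n) + \deg_{i+1}(G_n) = n + 1 - p + (p-1)w \geq n - (p-1)(1-\varepsilon)$. An inductive application of Lemma \ref{lemma-basic-est} — equivalently, the expansion $(U_p^s)^n(f)(A, H, \omega) = p^{-2n} f(A_n, H_n, p^{-n}\hat\Phi^*\omega)$ combined with $|\chi_\beta(p^{-n}\hat\Phi^*\omega/\omega_n)|_p = p^{\deg_\beta(G_n)}$ — then yields
\[
|(U_p^s)^n(f)(Q)| \leq p^{-2n - \sum_{\beta \in \bB} k_\beta \deg_\beta(G_n)} |f|_{\cV_{\can}},
\]
and bounding the exponent via $k_\beta \geq k_2$ together with $\deg_\beta(G_n) \geq 0$ by $-2n - k_2(n - (p-1)(1-\varepsilon))$, then dividing by $|a_p|_p^n = p^{-n v_p(a_p)}$, produces the asserted estimate on $|g_n|_{V_{i,n}(\varepsilon) - V_{i,n}}$.

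For (b), the same reasoning applied to the first $n - 1$ applications of $U_p^s$ — all of which act at points where $w_{i+1} = 1$ — shows that the cumulative kernel $G_{n-1}$ of $(U_p^s)^{n-1}$ satisfies $\deg_i(G_{n-1}) = n - 1$ and $\deg_{i+1}(G_{n-1}) = 0$. The terminal step evaluates $U_p^\circ(f)$ at $Q_{n-1} \in V_{i,1}(\varepsilon)$; by Prop. \ref{lem-subgp-ss}(b), the $p^2 - 1$ subgroups $H' \subset A_{n-1}[p]$ appearing in the sum each satisfy $\deg_1(H') = \deg_2(H') = 1/(p+1)$, and by Prop. \ref{prop-dyn-ss}(b) the resulting quotients $(A_{n-1}/H', \ldots)$ all lie in $\cV_{\can}$, so Lemma \ref{lemma-basic-est} with the ultrametric inequality gives $|U_p^\circ(f)|_{V_{i,1}(\varepsilon)} \leq p^{-2 - (k_1+k_2)/(p+1)} |f|_{\cV_{\can}}$. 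Chaining this with the iterated estimate for $(U_p^s)^{n-1}$ and using $k_1 + k_2 \geq 2 k_2$ delivers the bound on $|h_n|_{V_{i,n}(\varepsilon)}$. The only delicate aspect of either argument is the consistent bookkeeping of how each successive isogeny rescales the chosen basis of invariant differentials; all geometric inputs — that intermediate orbit points lie in the region $w_{i+1} = 1$, and that both the terminal $U_p^s$-image and every $U_p^\circ$-image land in $\cV_{\can}$ — are already supplied by Lemmas \ref{lem-tech-ss}, \ref{lem-V_n} and Propositions \ref{lem-subgp-ss}, \ref{prop-dyn-ss}.
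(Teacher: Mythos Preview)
Your approach is correct and essentially identical to the paper's: both iterate the basic degree estimate (Lemma~\ref{lemma-basic-est}) along the $\pi_{12}^s$-orbit, using the partial degrees of the successive kernels computed in Prop.~\ref{lem-subgp-ss}(b). The paper organizes this as a step-by-step recursion (first bound $|g_1|$, then $|g_n|$ in terms of $|g_{n-1}|$, with the loss $(1-\varepsilon)(p-1)k_2$ absorbed entirely at the final step), while you accumulate the degrees into the composite kernel $G_n$ and apply the estimate once; these are equivalent reorganizations of the same computation.

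There is, however, a sign slip in your displayed inequality. Since $|p^{-2n}|_p = p^{+2n}$, combining your (correct) expansion $(U_p^s)^n(f)(A,H,\omega)=p^{-2n}f(A_n,H_n,p^{-n}\hat\Phi^*\omega)$ with your (correct) identity $|\chi_\beta(p^{-n}\hat\Phi^*\omega/\omega_n)|_p=p^{\deg_\beta(G_n)}$ yields
\[
|(U_p^s)^n(f)(Q)| \;\leq\; p^{\,+2n \,-\, \sum_{\beta} k_\beta \deg_\beta(G_n)}\,|f|_{\cV_{\can}},
\]
not $p^{-2n-\sum\ldots}$. The same slip recurs in part~(b), where the bound should read $|U_p^\circ(f)|_{V_{i,1}(\varepsilon)} \leq p^{\,+2 - (k_1+k_2)/(p+1)}|f|_{\cV_{\can}}$. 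With the sign corrected the arithmetic does produce the asserted bounds (the exponent becomes $2n - k_2(n-(p-1)(1-\varepsilon))$, and after multiplying by $|a_p^{-n}|_p=p^{\,n\,v_p(a_p)}$ you recover exactly $-n(k_2-v_p(a_p)-2)+(1-\varepsilon)(p-1)k_2$); with the sign as written it does not.
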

\begin{proof}
 (a) By definition, we have
$$(\pi_{12}^{s})^n(V_{i,n}(\varepsilon)-V_{i,n})\subset \pi^s_{12}(V_{i,1}(\varepsilon)-V_{i,1})\subset \cV_{\can},$$ where the second inclusion used Corollary \ref{prop-dyn-ss}(b). Therefore, $g_n$ is well-defined over $V_{i,n}(\varepsilon)-V_{i,n}$. To finish the proof of (a), it suffices  to show that
  \begin{align*}|g_1|_{V_{i,1}(\varepsilon)-V_{i,1}}&\leq p^{-(k_2-2-v_p(a_p))+(1-\varepsilon)(p-1)k_2}|f|_{\cV_{\can}};\\
  |g_n|_{V_{i,n}(\varepsilon)-V_{i,n}}&\leq p^{-(k_2-2-v_p(a_p))}|g_{n-1}|_{V_{i,{n-1}}(\varepsilon)-V_{i,{n-1}}}\quad \text{for }n\geq 2.
  \end{align*}
  Let $Q=(A,H)$ be a rigid point of $V_{i,n}(\varepsilon)-V_{i,n}$ over a finite extension $L/\Q_{\kappa}$, $H_0'$ be the unique $(\cO_F/p)$-cyclic subgroup of $A[p]$ given by \ref{lem-subgp-ss}(b), and $\omega$ be a basis of $\omega_{A/\cO_L}$ as a $(\cO_{L}\otimes\cO_F)$-module. Consider first the case $n=1$. By \eqref{equ-U_p-s} and Lemma \ref{lemma-basic-est}, we have \begin{align*}|g_1(A,H,\omega)|&=|\frac{1}{p^2a_p}f(A/H_0',A[p]/H_0',p^{-1}\hat{\phi}^*\omega)|\\
  &\leq p^{2+v_p(a_p)-(k_1\deg_1(H_0')+k_2\deg_{2}(H_0'))}|f|_{\cV_{\can}}\\
  &\leq p^{2+v_p(a_p)-k_2(\deg_1(H_0')+\deg_2(H_0'))}|f|_{\cV_{\can}}.
  \end{align*}
  By Prop. \ref{lem-subgp-ss}(b), we have
  \[\deg_1(H_0')+\deg_2(H_0')\geq 1-(p-1)(1-\varepsilon).\]
  from which the desired estimation for $|g_1|_{V_{i,1}(\varepsilon)-V_{i,1}}$ follows. For $n\geq 2$ , we have similarly
  \[|g_n(A,H,\omega)|\leq p^{2+v_p(a_p)-k_2(\deg_1(H_0')+\deg_2(H_0'))}|g_{n-1}|_{V_{i,{n-1}}(\varepsilon)-V_{i,{n-1}}},\]
  and the estimation follows from the fact that $\deg_1(H_0')+\deg_2(H_0')=1$.

(b) Since  we have
\[U_p^{\circ}(U_p^s)^{n-1}(V_{i,n}(\varepsilon))\subset U_p^{\circ}(V_{i,1}(\varepsilon))\subset \cV_{\can},\]
we see that $h_n$ is well defined over $V_{i,{n}}(\varepsilon)$. To prove the estimation for $h_n$, it suffices to show that
\begin{align*}
|h_1|_{V_{i,1}(\varepsilon)}&\leq p^{-(k_2-v_p(a_p)-2)+(p-1)k_2/(p+1)}|f|_{\cV_{\can}}=p^{2+v_p(a_p)-2k_2/(p+1)}|f|_{\cV_{\can}},\\
|h_n|_{V_{i,n}(\varepsilon)}&\leq p^{-(k_2-v_p(a_p)-2)}|h_{n-1}|_{V_{i,n-1}(\varepsilon)}\quad \text{for }n\geq 2.
\end{align*}
The  estimation for $n\geq 2$ is exactly the same as in (a). For the case $n=1$,  we can  conclude in the same way  by using the fact that, if $Q=(A,H)\in V_{i,1}(\varepsilon)$, then  all the subgroups $H'\subset A[p]$ with $H'\neq H$ corresponding to $U_p^{\circ}$ have $\deg(H')=\deg_1(H')+\deg_2(H')=2/(p+1)$.
\end{proof}

Now we prove Theorem \ref{thm-quad}, i.e Theorem \ref{thm-main} in the case $g=2$ and $p$ inert in $F$. Since the scalar extension by a finite extension $L/\Q_{\kappa}$ is not essential, we may assume $L=\Q_{\kappa}$ in Theorem \ref{thm-quad}. We have the following

\begin{prop}\label{prop-cont-1}

 Let $f$ be an overconvergent $p$-adic Hilbert modular form of level $\Gamma_{00}(N)\cap\Gamma_0(p)$ and weight $\vk=(k_1,k_2)\in \Z^2$ with $k_1\geq k_2$, and $U_p(f)=a_pf$ for some $a_p\in \C_p^{\times}$. We put  $\cV=\,]W_{1,1}[\,\cup\, ]W_{2,2}[\,\cup \cV_{\can}$, and
 \[\cV_1=\coprod_{i\in \Z/2\Z} V_{i,1}=\{Q=(A,H)\in ]W_{1,1}\cup W_{2,2}[\;|\; w_1(A)=w_2(A)=1\}.\]

 \emph{(a)} The form $f$ extends uniquely to a section of $\omegab^{\vk}$ over $\cV-\cV_1$ such that $U_{p}(f)=a_pf$ remains true, $|f|_{\cV-\cV_1}$ is finite and
 \beq\label{equ-est-V_1}|f|_{\cV-\cV_1}\leq \max\{1,p^{2+v_{p}(a_p)-k_2/p}\}|f|_{\cV_{\can}}.\eeq

 \emph{(b)} Assume that  $v_p(a_p)<k_2-2$. Then $f$ extends further to a section of $\omegab^{\vk}$ over $\,]Y_{\kappa}-W_{\bB,\bB}[\,\cup\cV_{\can}$. Moreover, we have $U_{p}(f)=a_pf$ and
 \beq\label{est-cont-2}|f|_{]Y_{\kappa}-W_{\bB,\bB}[\cup\cV_{\can}}\leq p^{2+v_p(a_p)}|f|_{\cV_{\can}}.\eeq
\end{prop}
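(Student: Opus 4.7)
The strategy is the standard Buzzard--Kassaei analytic continuation: extend $f$ via the functional equation $f = a_p^{-1} U_p(f)$, using the detailed description of $U_p$-dynamics developed in Propositions \ref{prop-dyn-sg}, \ref{lem-subgp-ss} and Corollary \ref{prop-dyn-ss}, together with the norm bounds of Lemma \ref{lem-est-V_n}.

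For part (a), for any $Q \in \,]W_{1,1}[\,\cup\,]W_{2,2}[\,-\cV_1$, Proposition \ref{prop-dyn-sg}(a) and the non-exceptional cases of Corollary \ref{prop-dyn-ss} show that $U_p(Q) \subset \cV_{\can}$, so one can simply set $f(Q) := a_p^{-1} U_p(f)(Q)$ using the $f$ already defined on $\cV_{\can}$ by Proposition \ref{prop-cont-can}. Uniqueness and compatibility with the original $f$ on $\cV_{\can}$ are automatic, and the equation $U_p(f) = a_p f$ propagates by construction. The norm estimate \eqref{equ-est-V_1} follows from Corollary \ref{cor-est-norm} once one checks that, over this region, the minimum of $\sum_{\beta} k_{\beta} \deg_{\beta}(H')$ for $H' \subset A[p]$ running over cyclic subgroups disjoint from $H$ is at least $k_2/p$, the worst case being the supergeneral stratum where one of $\deg_1(H'), \deg_2(H')$ vanishes and the other equals $1/p$.

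For part (b), three loci remain: the tubes $\,]W_{i,\bB}[\,$ and $\,]W_{\emptyset,\bB}[\,$, which are not contained in $\cV$, and the exceptional locus $\cV_1 \subset \cV$ excluded in (a). The first two are handled by a single further application of the functional equation: Proposition \ref{prop-dyn-sg}(b)--(c) and an analogous easy analysis for $\,]W_{\emptyset,\bB}[\,$ show that $U_p$ carries each of these tubes into the previously extended domain $\cV_{\can} \cup (\cV - \cV_1)$, so $f = a_p^{-1} U_p(f)$ defines the extension there. This step is formal and does not require the slope hypothesis.

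The main obstacle is extending across $\cV_1$. For $Q \in V_{i,1}$, the orbit $U_p(Q)$ contains a distinguished point $\pi_{12}^s(Q) \in \,]W_{i,i}[\,$ (Lemma \ref{lem-fiber-ss} and Corollary \ref{prop-dyn-ss}(b)), generating the infinite descending tower $V_{i,1} \supset V_{i,2}(\varepsilon) \supset V_{i,2} \supset \cdots$ of Lemma \ref{lem-V_n}, whose intersection is the canonical/quasi-canonical lift studied in Appendix B. Decomposing $U_p = U_p^{\circ} + U_p^s$ on a strict neighborhood of $\cV_1$ as in \ref{sect-pic}, the functional equation iterated $n$ times yields, on $V_{i,n}(\varepsilon) - V_{i,n}$,
\[
f = \sum_{k=1}^{n} h_k + g_n,
\]
with $h_k$ and $g_n$ as in Lemma \ref{lem-est-V_n}. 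Under the hypothesis $v_p(a_p) < k_2 - 2$, that Lemma gives $|h_k| \to 0$ and $|g_n| \to 0$ geometrically. The telescoping identity $g_m = h_{m+1} + g_{m+1}$, obtained by applying $U_p^s/a_p$ to the case $n=1$, shows that the partial sums $F_m := \sum_{k=1}^m h_k + g_m$ agree on overlaps and that each $g_m$ extends inductively to progressively larger subdomains of $V_{i,m}(\varepsilon)$ using the $f$ already constructed in earlier steps. This produces $f$ on $V_{i,1}(\varepsilon) - \bigcap_n V_{i,n}$; the residual closed subvariety $\bigcap_n V_{i,n}$ (cut out by $t_{\Pb,i+1} = 0$ via Lemma \ref{lem-tech-ss}) is then reached by the convergent series $F_\infty := \sum_{k=1}^\infty h_k$, whose convergence is again guaranteed by the slope estimate. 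The norm bound \eqref{est-cont-2} comes from the $k=1$ term of Lemma \ref{lem-est-V_n}(b), and $U_p(f) = a_p f$ persists by continuity. The technically hardest inputs are the $p$-divisible group computation of Proposition \ref{lem-subgp-ss} and the canonical coordinates of Lemma \ref{lem-tech-ss} supplied by the deformation theory of Appendix B, without which the tower $V_{i,n}$ could not be parametrized cleanly enough to control the limit.
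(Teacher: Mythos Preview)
Your treatment of part (a) and of the extension across $\cV_1$ (the paper's Step~1) is correct in outline and matches the paper closely, modulo the explicit invocation of the gluing Lemma~\ref{lem-glue} to pass from the exhaustion $\{V_{i,1}(\varepsilon)-V_{i,n}\}_n$ to all of $V_{i,1}(\varepsilon)$.

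There is, however, a genuine gap in your handling of $\,]W_{i,\bB}[\,$ and $\,]W_{\emptyset,\bB}[\,$. Your claim that ``$U_p$ carries each of these tubes into the previously extended domain $\cV_{\can}\cup(\cV-\cV_1)$'' and that ``this step is formal and does not require the slope hypothesis'' is false. Take $Q\in\,]W_{\emptyset,\bB}[\,$, so $\nu_1(Q)=\nu_2(Q)=0$; then $Q$ is anti-canonical, and by Proposition~\ref{prop-hecke-can}(b) the point $Q_1\in U_p(Q)$ corresponding to the canonical subgroup again has $\nu_1(Q_1)=\nu_2(Q_1)=0$, so $Q_1\in\,]W_{\emptyset,\bB}[\,$ as well. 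No finite iterate of $U_p$ leaves this tube. The same phenomenon occurs for $Q\in\,]W_{i,\bB}[\,$ with $0<\nu_{i+1}(Q)<\frac{1}{p}$: the canonical-subgroup branch of $U_p$ sends $Q$ to another point of $\,]W_{\star,\bB}[\,$ with strictly smaller $\nu$-value, again never reaching $\cV$.

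What the paper actually does (its Steps~2 and~3) is precisely the Kassaei series argument you reserved for $\cV_1$: on the anti-canonical part one splits $U_p=U_p^c+U_p^a$, writes $f-g_n=a_p^{-n}(U_p^c)^n(f)$ with $g_n=\sum_{m=1}^n a_p^{-m}(U_p^c)^{m-1}U_p^a(f)$, and shows that $|f-g_n|\to 0$ using the slope hypothesis $v_p(a_p)<k_2-2$ together with the fact that $\deg(C)>1$ for the canonical subgroup on this locus. The region $\,]W_{i,\bB}[\,$ with $\nu_{i+1}>\frac{1}{p}$ is handled by one $U_p$-step as you say, but the deeper part $\nu_{i+1}\le\frac{1}{p}$ and all of $\,]W_{\emptyset,\bB}[\,$ require this convergent-series-plus-gluing argument, and the slope bound is essential there just as it is over $\cV_1$.
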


Before proving this Proposition, we note that Theorem \ref{thm-quad} follows immediately from  Prop. \ref{prop-exten} and \ref{prop-koecher}.

\begin{proof} (a) By Prop. \ref{prop-cont-can}, the form $f$ can be uniquely extended to a section of $\omegab^{\vk}$ over the canonical locus $\cV_{\can}$, and $|f|_{\cV_{\can}}$ is finite. We first define the candidate  extension of $f$ in a strict neighborhood of
$$\;]W_{1,1}\cup W_{2,2}[-\cV_1=\coprod_{i\in \Z/2\Z}(\,]W_{i,i}[\,-V_{i,1})$$ in $\cV-\cV_1$, and show that  it coincides  with the old $f$ over the overlap with $\cV_{\can}$.
 Let $0<\epsilon\leq 1/2$ be a rational number. We put for $i\in \Z/2\Z$
\[\,]W_{i,i}[^{\circ}_\epsilon=\{Q=(A,H)\in \fY_{\rig}\;|\; 0\leq \nu_i(Q)\leq \epsilon, \nu_{i+1}(Q)=1, 0\leq w_{i+1}(A)<1.\}\]
This is a strict neighborhood of $]W_{i,i}[-V_{i,1}$ in $\cV-\cV_1$. Then the admissible open subsets
  $\{\cV_{\can}, \,]W_{1,1}[^{\circ}_{\epsilon}, \,]W_{2,2}[^{\circ}_{\epsilon}\}$ form an admissible open covering of $\cV- \cV_1$. By Prop. \ref{prop-dyn-sg}(a) and \ref{prop-dyn-ss},  the image of $]W_{i,i}[^{\circ}_{\epsilon}$ under the Hecke correspondence $U_p$ is contained in $\cV_{\can}$. Hence, the form $F_i=\frac{1}{a_p}U_p(f)$ is well-defined on $]W_{i,i}[^{\circ}_{\epsilon}$, and we have
\[F_i|_{\,]W_{i,i}[^{\circ}_{\epsilon}\cap \cV_{\can}}=f|_{]W_{i,i}[^{\circ}_{\epsilon}\cap\cV_{\can}}.\]
In particular, the sections $\{f, F_1, F_2\}$ glue together to a section, still denoted by $f$,  of $\omegab^{\vk}$ over $\cV-\cV_{1}$. By our construction, it's clear that $U_p(f)=a_pf$ still holds. To prove \eqref{equ-est-V_1}, it suffices to show that for $i\in\Z/2\Z$
\[|f|_{]W_{i,i}[-V_{i,1}}=\sup_{Q\in ]W_{i,i}[-V_{i,1}}|\frac{1}{a_p}U_p(f)(Q)|\leq p^{2+v_p(a_p)-\frac{k_2}{p}}|f|_{\cV_{\can}}.\]
 By \ref{prop-subgp-sg} and \ref{lem-subgp-ss}, for any $Q=(A,H)\in\, ]W_{i,i}[-V_{i,1}$ and $(\cO_F/p)$-cyclic subgroup  $H'\subset A[p]$ with $H'\cap H=0$, we have
$\deg_1(H')+\deg_2(H')\geq k_2/p.$
 By our construction of $f$, the required estimation above follows from  Cor. \ref{cor-est-norm}.
This finishes the proof of (a).\\

 (b) The proof will be divided into 3 steps. At each step, we will always denote by $f$ the extension obtained in the previous step.

\textbf{Step 1. Extension to $\cV$.} We fix a rational number $\varepsilon$ with $\frac{p}{p+1}<\varepsilon \,<1$ as in \ref{sect-V_n}.  We have to show that, for $i\in \Z/2\Z$, there exists a section $F\in H^0(V_{i,1}(\varepsilon),\omegab^{\vk})$ such that $F|_{V_{i,1}(\varepsilon)-V_{i,1}}=f|_{V_{i,1}(\varepsilon)-V_{i,1}}$ and
\beq\label{est-norm-V_1}
|F|_{V_{i,1}(\varepsilon)}\leq p^{2+v_p(a_p)-\frac{2}{p+1}k_2}|f|_{\cV_{\can}}.
\eeq
We prove first that, for any integer $n\geq 2$, there exists a form $F_{n}\in H^0(V_{i,1}(\varepsilon)-V_{i,n},\omegab^{\vk})$ with $F_n|_{V_{i,1}(\varepsilon)-V_{i,1}}=f|_{V_{i,1}(\varepsilon)-V_{i,1}}$. We put $G_1=f|_{V_{i,1}(\varepsilon)-V_{i,1}}$ and
\[
G_n=\sum_{m=1}^{n-1}\frac{1}{a_p^m}(U_{p}^{s})^{m-1}U_p^{\circ}(f)+\frac{1}{a_p^{n-1}}(U_p^s)^{n-1}(f)
\]
for any integer $n\geq 2$. We note that $G_n$ is a well-defined section of $\omegab^{\vk}$ over $V_{i,n-1}(\varepsilon)-V_{i,n}$. Indeed, the first $n-1$ terms are even well defined over $V_{i,n-1}(\varepsilon)$ by \ref{lem-est-V_n}(b). To justify  the last term, we note that
\[(U_p^s)^{n-1}(V_{i,n-1}(\varepsilon)-V_{i,n})\subset U_p^s(V_{i,1}(\varepsilon)-V_{i,2})\subset \cV_{\can}\cup (]W_{i,i}[-V_{i,1})\subset \cV-\cV_1.\]
The same argument as in Lemma \ref{lem-est-V_n} shows that
\[|\frac{1}{a_p^{n-1}}(U_p^s)^{n-1}(f)|_{V_{i,n-1}(\varepsilon)-V_{i,n}}\leq p^{-(n-1)(k_2-2-v_p(a_p))}|f|_{\cV-\cV_1} < |f|_{\cV-\cV_1} ,\]
where the last step uses the assumption $k_2>2+v_p(a_p)$.
On the other hand,  \eqref{lem-est-V_n}(b) implies that the first $n-1$ terms in definition of $G_n$ are bounded by
\[\max_{1\leq m\leq n-1}\{p^{-m(k_2-2-v_p(a_p))+k_2(p-1)/(p+1)}\}|f|_{\cV_{\can}}=p^{2+v_p(a_p)-\frac{2}{p+1}k_2}|f|_{\cV_{\can}}.\]
Therefore,  for any $n\geq 1$, we have
\begin{align*}
|G_n|_{V_{i,n-1}(\varepsilon)-V_{i,n}}&\leq \max\{p^{2+v_p(a_p)-\frac{2}{p+1}k_2}|f|_{\cV_{\can}}, |f|_{\cV-\cV_1} \}\\ &\leq \max\{1, p^{2+v_p(a_p)-k_2/p}\}|f|_{\cV_{\can}},
\end{align*}
where we have used \eqref{equ-est-V_1} in the last step.
Using the functional equation $U_p(f)=a_pf$, it's easy to check that
$$G_n|_{V_{i,n-1}(\varepsilon)-V_{i,n-1}}=G_{n-1}|_{V_{i,n-1}(\varepsilon)-V_{i,n-1}}.$$
 Since  $\{V_{i,m-1}(\varepsilon)-V_{i,m}\}_{2\leq m\leq n}$ form an admissible open covering of $V_{i,1}(\varepsilon)-V_{i,n}$, we see that the forms $\{G_m\}_{2\leq m\leq n}$ glue together to a section $F_n$ of $\omegab^{\vk}$ over $V_{1,i}(\varepsilon)-V_{i,n}$ whose restriction to $V_{i,1}(\varepsilon)-V_{i,1}$ coincides with $f$. By the estimation above for $G_n$, we get
 \beq\label{equ-est-F_n}|F_n|_{V_{1,i}(\varepsilon)-V_{i,n}}\leq \max\{1, p^{2+v_p(a_p)-k_2/p}\}|f|_{\cV_{\can}}\eeq

 To obtain a form $F$ over $V_{1,i}(\varepsilon)$, we define
\[F_n'=\sum_{m=1}^n\frac{1}{a_p^m}(U_p^s)^{m-1}U_p^{\circ}(f)\]
over $V_{i,n}(\varepsilon)$. Then it follows from Lemma \ref{lem-est-V_n}(a) that
\begin{align*}|F_n-F'_n|_{V_{i,n}(\varepsilon)-V_{i,n}}&=|\frac{1}{a_{p}^n}(U_{p}^s)^{n}(f)|_{V_{i,n}(\varepsilon)-V_{i,n}}
\leq p^{-n(k_2-v_p(a_p)-2)+(1-\varepsilon)(p-1)k_2}|f|_{\cV_{\can}}.
\end{align*}
 Since $v_{p}(a_p)+2<k_2$, the estimation above tends to $0$ as $n$ tends to $\infty$. Note that  Lemma \ref{lem-est-V_n}(b) implies  that
$$ |F_n'|_{V_{i,n}(\varepsilon)}\leq p^{2+v_p(a_p)-\frac{2}{p+1}k_2}|f|_{\cV_\can}\leq \max\{1, p^{2+v_p(a_p)-\frac{1}{p}k_2}\}|f|_{\cV_{\can}}.$$
Applying the  gluing lemma \ref{lem-glue} to $U_0=V_{i,1}(\varepsilon)$, we get a unique section $F$ of $\omegab^{\vk}$ over $V_{i,1}(\varepsilon)$ whose restriction to $V_{i,1}(\varepsilon)-V_{i,n}$  coincides with $F_n$. Moreover, in view of the estimations for $|F_n|_{V_{i,1}-V_{i,n}}$ and $|F_n'|_{V_{i,n}(\varepsilon)}$, lemma \ref{lem-glue} also implies that  $|F|_{V_{i,1}(\varepsilon)}$ is bounded by $\max\{1, p^{2+v_p(a_p)-\frac{1}{p}k_2}|f|_{\cV_{\can}}\}$. This extends the form $f$ to $\cV$.
Combining \eqref{equ-est-V_1}, we obtain
 \begin{equation}\label{est-cont-1}
 |f|_{\cV}\leq\max\{1, p^{2+v_p(a_p)-\frac{1}{p}k_2}\}|f|_{\cV_{\can}} .
 \end{equation}

\textbf{Step 2: Extension to $\cV\cup]W_{1,\bB}[\cup ]W_{2,\bB}[$.}
For an interval $I\subset (0,1]$ and $i\in \Z/2\Z$, we put
\begin{align*}
]W_{i,\bB}[_I&=\{Q\in \,]W_{i,\bB}[\cup\, ]W_{i,i}^{sg}[\;|\; \nu_{i}(Q)=0, \nu_{i+1}(Q)\in I\},\\
]W_{\star, \bB}[_{I}&=]W_{1,\bB}[_I\cup ]W_{2,\bB}[_I.
\end{align*}
By Prop. \ref{prop-dyn-sg}, the image of  $]W_{\star,\bB}[_{(\frac{1}{p},1]}$ under the Hecke correspondence $U_p$ is contained in $\cV_{\can}$. The form $\frac{1}{a_p}U_p(f)$ is thus well defined over $]W_{\star,\bB}[_{(\frac{1}{p},1]}$, and it extends naturally $f$.  For any rigid point $Q=(A,H)$ in $]W_{\star,\bB}[_{(\frac{1}{p},1]}$, Prop. \ref{prop-subgp-sg} implies that
\[\deg(H')=\deg_1(H')+\deg_2(H')\geq \frac{1}{p}\]
for any  $(\cO_F/p)$-cyclic subgroup $H'\subset A[p]$ with $H\cap H'=0$. Therefore, it follows from Cor. \ref{cor-est-norm}  that
\begin{equation}\label{est-f-1}
|f|_{]W_{\star,\bB}[_{(\frac{1}{p},1]}}=\sup_{Q\in ]W_{\star,\bB}[_{(\frac{1}{p},1]}}|\frac{1}{a_p}(U_pf)(Q)|\leq p^{2+v_p(a_p)-k_2/p}|f|_{\cV_{\can}}.
\end{equation}
Next, we extend $f$ to the remaining part of $]W_{1,\bB}[\cup\,]W_{2,\bB}[$. Let  $\epsilon_n=\frac{p+1}{p^n(p^2+1)}$ for any integer $n\geq 0$. The quasi-compact open subsets $\{\,]W_{\star,\bB}[_{[\epsilon_n,\epsilon_{n-1}]}\,\}_{n\geq 1}$ form an admissible open covering of $]W_{\star,\bB}[_{(0,\epsilon_0]}$. Note that $\epsilon_0>\frac{1}{p}$, and that $]W_{\star,\bB}[_{[\epsilon_n,\epsilon_{n-1}]}$ for $n\geq 2$ is contained in the anti-canonical locus \eqref{anti-can}
\[\cW=\{Q\in \fY_{\rig}\,|\, \nu_{i}(Q)+p\nu_{i-1}(Q)>1 \quad \forall i\in \Z/2\Z.\}\]
 By Prop. \ref{prop-dyn-sg} and \ref{prop-hecke-can}, the $U_p$ sends $]W_{\star,\bB}[_{[\epsilon_1,\epsilon_0]}$ into
 $$\cV\cup\,]W_{\star,\bB}[_{[\epsilon_0,1]}= \cV_{\can}\cup \,]W_{1,1}[\,\cup \,]W_{2,2}[\cup ]W_{\star,\bB}[_{[\epsilon_0,1]},$$ where the form $f$ has been defined, and it sends $]W_{\star,\bB}[_{[\epsilon_{n+1},\epsilon_{n}]}$ into
 $]W_{\star,\bB}[_{[\epsilon_{n},\epsilon_{n-1}]}$ for $n\geq 1.$
 Therefore, we can define inductively a form $f_n$ on $]W_{\star,\bB}[_{[\epsilon_n,\epsilon_{n-1}]}$ by putting $f_1=\frac{1}{a_p}U_p(f)$ and $f_{n+1}=\frac{1}{a_p}U_p(f_{n})$ for $n\geq 1$. It is easy to see that $$f_1|_{]W_{\star,\bB}[_{(\frac{1}{p},\epsilon_0]}}=f|_{]W_{\star,\bB}[_{(\frac{1}{p},\epsilon_0]}},$$
 and   the forms $\{f_n\}_{\geq 1}$ coincide with each other over the overlaps of their definition domains. This proves that $f$ extends to a section of $\omegab^{\vk}$ over $\cV\cup\,]W_{\star,\bB}[_{(0,1]}$. By \ref{prop-hecke-can}(b) and \eqref{prop-subgp-sg}, for any rigid point $Q=(A,H)\in ]W_{\star,\bB}[_{[\epsilon_1,\epsilon_0]}$, every $(\cO_F/p)$-cyclic subgroup $H'\subset A[p]$ with $H'\cap H=0$ has degree
 \[\deg(H')=\deg_1(H')+\deg_2(H')\geq \epsilon_1.\]
 It results from Cor. \ref{cor-est-norm} that
 \begin{align}\label{equ-est-eps-1}
 |f|_{]W_{\star,\bB}[_{[\epsilon_1,\epsilon_0]}}&=\sup_{Q\in ]W_{\star,\bB}[_{[\epsilon_1,\epsilon_0]}}|\frac{1}{a_p}(U_pf)(Q)|\\
 &\leq p^{2+v_p(a_p)-\epsilon_1 k_2}|f|_{\cV\cup\,]W_{\star,\bB}[_{[\epsilon_0,1]}}\nonumber\\
 &\leq p^{2+v_p(a_p)-\epsilon_1k_2}\max\{1,p^{2+v_p(a_p)-\frac{1}{p}k_2}\}|f|_{\cV_{\can}},\nonumber
 \end{align}
 where we have used \eqref{est-cont-1} and \eqref{est-f-1} in  the last inequality.

 \textbf{Step 3.} It remains to extend $f$ to $]W_{\emptyset,\bB}[$. We denote by $\cC(p)_{\rig}|_{\cW}$ the inverse image of the anti-canonical locus $\cW$ by $\pi_1:\cC(p)_{\rig}\ra \fY_{\rig}$. Prop. \ref{prop-hecke-can}(b) implies that we have a decomposition of rigid analytic spaces
 \[\cC(p)_{\rig}|_{\cW}=\cC(p)_{\rig}^c\coprod \cC(p)_{\rig}^{a}.\]
Here, for every rigid point $Q=(A,H)\in \cW$, $\pi_1^{-1}(Q)\cap \cC(p)^c_{\rig}$ consists of the single point  $(A,H,C)$, where $C\subset A[p]$ is the canonical subgroup; and $\pi^{-1}_{1}(Q)\cap\cC(p)_{\rig}^a$ corresponds to the other $p^2-1$ anti-canonical subgroups. Correspondingly, we have a decomposition of Hecke correspondences $U_p=U_p^c+U_p^a$. For $\epsilon_n=\frac{p+1}{p^n(p^2+1)}$ as above, we put
\[]W_{\emptyset,\bB}[_{\epsilon_n}=\,]W_{\emptyset,\bB}[\,\cup\,]W_{\star,\bB}[_{(0,\epsilon_n]}.\]
They are strict neighborhoods of $]W_{\emptyset,\bB}[$ in $]Y_{\kappa}-W_{\bB,\bB}[$. By Prop. \ref{prop-hecke-can},  we have
$$U_p^c(]W_{\emptyset,\bB}[_{\epsilon_n})\subset ]W_{\emptyset,\bB}[_{\epsilon_{n-1}}\quad \text{for }n\geq 1,$$ and $U_p^a(]W_{\emptyset,\bB}[_{\epsilon_n})\subset \cV_{\can}$. We define  a section of $\omegab^{\vk}$ on $]W_{\emptyset,\bB}[_{\epsilon_n}$ by
\begin{align*}
g_n&=\sum_{m=1}^n \frac{1}{a_p^m}(U_p^c)^{m-1}U_p^{a}(f).
\end{align*}
By Prop. \ref{prop-hecke-can}, for any rigid point $Q=(A,H)\in ]W_{\emptyset, \bB}[_{\epsilon_n}$, the canonical subgroup $C\subset A[p]$ has degree
\[\deg(C)=\deg_1(C)+\deg_2(C)\geq 2-p\epsilon_n>1.\]
 We deduce from \ref{lemma-basic-est} that, for $m>1$ and  $Q\in ]W_{\emptyset, \bB}[_{\epsilon_n}$,
  \begin{align*}|\frac{1}{a_p^m}(U_p^c)^{m-1}(U_p^a)(f)(Q)|
  &\leq p^{2+v_p(a_p)-(k_1\deg_1(C)+k_2\deg_{2}(C))}|\frac{1}{a_p^{m-1}}(U_p^c)^{m-2}(U_p^a)(f)|_{]W_{\emptyset,\bB}[_{\epsilon_{n-1}}}\\
  &\leq p^{2+v_p(a_p)-k_2(2-p\epsilon_n)}|\frac{1}{a_p^{m-1}}(U_p^c)^{m-2}(U_p^a)(f)|_{]W_{\emptyset,\bB}[_{\epsilon_{n-1}}}.
  \end{align*}
 Since $2+v_p(a_p)-k_2(2-p\epsilon_n)<2+v_p(a_p)-k_2<0$, we get
  \begin{align}\label{est-3-c}
  |\frac{1}{a_p^m}(U_p^c)^{m-1}(U_p^a)(f)|_{]W_{\emptyset, \bB}[_{\epsilon_n}}&\leq p^{2+v_p(a_p)-k_2(2-p\epsilon_n)}|\frac{1}{a_p^{m-1}}(U_p^c)^{m-2}(U_p^a)(f)|_{]W_{\emptyset,\bB}[_{\epsilon_{n-1}}}\\
  &<|\frac{1}{a_p^{m-1}}(U_p^c)^{m-2}(U_p^a)(f)|_{]W_{\emptyset,\bB}[_{\epsilon_{n-1}}}.\nonumber
  \end{align}
 Moreover, it follows trivially from \ref{lemma-basic-est} that
$|\frac{1}{a_p}U_p^a(f)|_{]W_{\emptyset, \bB}[_{\epsilon_{n}}}\leq p^{2+v_p(a_p)}|f|_{\cV_{\can}}.$
 Hence,  we get
\begin{align}\label{est-g_n}
|g_n|_{]W_{\emptyset,\bB}[_{\epsilon_n}}&\leq \max_{1\leq m\leq n} |\frac{1}{a_p^m}(U_p^c)^{m-1}U_p^a(f)|_{]W_{\emptyset, \bB}[_{\epsilon_{n}}}= |\frac{1}{a_p}U^a_p(f)|_{]W_{\emptyset, \bB}[_{\epsilon_{n}}}\\
&\leq p^{2+v_p(a_p)}|f|_{\cV_{\can}}.\nonumber
\end{align}
Using the functional equation $f=\frac{1}{a_p}U_p(f)$, we see that $g_n-f=\frac{1}{a_p^n}(U_p^c)^n(f)$. A similar argument as in \eqref{est-3-c} shows that for any $n\geq 1$
\begin{align*}
|f-g_n|_{]W_{\star,\bB}[_{[\epsilon_{n+1},\epsilon_n]}}&=|\frac{1}{a_p^n}(U_p^c)^n(f)|_{]W_{\star,\bB}[_{[\epsilon_{n+1},\epsilon_n]}}\\
&\leq p^{2+v_p(a_p)-k_2(2-p\epsilon_n)}|\frac{1}{a_p^{n-1}}(U_p^c)^{n-1}(f)|_{]W_{\star,\bB}[_{[\epsilon_{n},\epsilon_{n-1}]}}\\
&\leq p^{n(2+v_p(a_p))-k_2(2n-p\sum_{r=1}^n\epsilon_r)}|f|_{]W_{\star,\bB}[_{[\epsilon_1,\epsilon_0]}}.
%&\leq p^{(2+v_p(a_p)-k_2)n-k_2(n-p\sum_{r=1}^{n}\epsilon_r)+2+v_p(a_p)-k_2\epsilon_1}\max\{1,p^{2+v_p(a_p)-\frac{1}{p}k_2}\}|f|_{\cV_{\can}}
\end{align*}
As $2+v_p(a_p)<k_2$, we check easily for any $n\geq 1$ that
\begin{align*}n(2+v_p(a_p))-k_2(2n-p\sum_{r=1}^n\epsilon_r)
<-k_2n+k_2\frac{(p+1)p}{(p^2+1)(p-1)}(1-\frac{1}{p^n})<0.
\end{align*}
Therefore, we get $|f-g_n|_{]W_{\star,\bB}[_{[\epsilon_{n+1},\epsilon_n]}}\leq |f|_{]W_{\star,\bB}[_{[\epsilon_1,\epsilon_0]}}$ and
\[|f-g_n|_{]W_{\star,\bB}[_{[\epsilon_{n+1},\epsilon_n]}}\ra 0\quad\text{ as } n\ra \infty.\]
 From \eqref{equ-est-eps-1}, it follows  that $$|f-g_n|_{]W_{\star,\bB}[_{[\epsilon_{n+1},\epsilon_n]}}<p^{2+v_p(a_p)}|f|_{\cV_{\can}}.$$
 In view of \eqref{est-g_n}, we deduce that
 $|f|_{]W_{\star,\bB}[_{[\epsilon_{n+1},\epsilon_n]}}\leq p^{2+v_p(a_p)}|f|_{\cV_{\can}}$ for every $n\geq 1$. Combining with  the estimations \eqref{est-cont-1}, \eqref{est-f-1} and \eqref{equ-est-eps-1}, we see that
 $$|f|_{\cV\cup ]W_{\star,\bB}[_{(0,1]}}\leq p^{2+v_p(a_p)}|f|_{\cV_{\can}}.$$
Now the assumptions of Lemma \ref{lem-glue} are satisfied for $X_K=\fY_{\rig}$ with $U_0=]Y_{\kappa}-W_{\bB,\bB}[$, $U_n=]W_{\emptyset,\bB}[_{\epsilon_n}$ for $n\geq 1$, $V_n=U_{n+1}$, $F_n=f|_{]Y_{\kappa}-W_{\bB,\bB}[-V_n}$, $F_n'=g_n$ and $C=p^{2+v_p(a_p)}|f|_{\cV_{\can}}$.  We deduce that there exists a unique section $f$ of $\omegab^{\vk}$ over $]Y_{\kappa}-W_{\bB,\bB}[$, hence also over $]Y_{\kappa}-W_{\bB,\bB}[\cup \cV_{\can}$, which extends $f$ and is bounded by $p^{2+v_p(a_p)}|f|_{\cV_{\can}}$. This completes the proof.

\end{proof}

\section{The general case of Theorem \ref{thm-main}}
In this section, we indicate how to generalize the arguments in the preceding section to prove Theorem \ref{thm-main} in the general case.

\subsection{}We denote by $\Sigma$ the set of all prime ideals of $\cO_F$ above $p$, and we assume $[\kappa(\fp):\F_{p}]\leq 2$ for all $\fp\in \Sigma$. Let $\Sigma_1$ be the subset of $\Sigma$ consisting of primes of degree $1$, and $\Sigma_2\subset \Sigma$ be the subset of primes of degree 2. Since Theorem \ref{thm-main} in the case $\Sigma=\Sigma_1$ was treated by Sasaki \cite{Sa}, we always suppose that $\Sigma_2\neq \emptyset$. We have a partition
$$\bB=(\coprod_{\fp\in \Sigma_1}\bB_{\fp})\coprod(\coprod_{\fq\in \Sigma_2}\bB_{\fq}).$$
For $\fp\in \Sigma_1$, we denote by $\beta_{\fp}$ the unique element of $\bB_{\fp}$, and $\nu_{\fp}$ the corresponding valuation of $\fY_{\rig}$ \eqref{defn-val-Y}; for $\fq\in \Sigma_2$, we denote by $\bB_{\fq}=\{\beta_{\fq,1},\beta_{\fq,2}\}$, and $\nu_{\fq,1},\nu_{\fq,2}$ be the corresponding  two valuations.
Let  $\bI=[0,1]$ and $\bJ=[0,1]^2$. We  have  a valuation map
 $$\nu_{\fY}=((\nu_{\fp})_{\fp\in \Sigma_1},(\nu_{\fq,1},\nu_{\fq,2})_{\fq\in \Sigma_2}):\fY_{\rig}\ra \bI^{\Sigma_1}\times \bJ^{\Sigma_2}.$$
 In general, if  $\Omega$ is a subset of $\bI^{\Sigma_1}\times\bJ^{\Sigma_2}$, we put
 $$\fY_{\rig}(\Omega)=\{Q\in \fY_{\rig}\;|\;\nu_{\fY}(Q)\in\Omega\}.$$

Let  $\{S_{\fp}:\fp\in \Sigma\}$ be a collection of non-empty subsets of either $\bI$ or $\bJ$. If all the  $S_\fp$ are closed and connected in the usual real topology,  the corresponding rigid subspace $\fY_{\rig}(\prod_{\fp\in \Sigma}S_\fp)$ is a quasi-compact admissible open subset of $\fY_{\rig}$. For example, if $\one=(1,1,\cdots,1)\in \bI^{\Sigma_1}\times \bJ^{\Sigma_2}$, then $\fY_{\rig}(\one)$ is the ordinary locus $\fY_{\rig}^{\ord}$. Hence, if $\prod_{\fp\in \Sigma} S_\fp$ is a neighborhood of $\one$ in $\bI^{\Sigma_1}\times \bJ^{\Sigma_2}$, then $\fY_{\rig}(\prod_{\fp\in \Sigma}S_{\fp})$ is a strict neighborhood of the ordinary locus $\fY_{\rig}^{\ord}$ in $\fY_{\rig}$.

\begin{lemma}\label{lem-tube}
 Let $\bI=[0,1]$, and $\bJc$ be the boundary of $[0,1]^2$, i.e. the union of its four closed edges.  Then $\fY_{\rig}(\bI^{\Sigma_1}\times \bJc^{\Sigma_2})$ is the tube over an open subset of $Y_{\kappa}$ whose complement has codimension $2$.
 \end{lemma}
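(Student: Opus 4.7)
The plan is to exhibit $U$ explicitly as the complement of $Z = \bigcup_{\fq \in \Sigma_2} Z_\fq$, where $Z_\fq \subset Y_\kappa$ is the closed locus consisting of points $\bar Q$ whose Goren-Kassaei critical index set $I(\bar Q) = \eta(\bar Q) \cap \sigma^{-1}(\varphi(\bar Q))$ contains all of $\bB_\fq$. Since each $\bB_\fq$ is $\sigma$-stable, the condition $\bB_\fq \subset I(\bar Q)$ is equivalent to $\varphi(\bar Q) \supset \bB_\fq$ and $\eta(\bar Q) \supset \bB_\fq$, so $Z_\fq$ is a union of Goren-Kassaei strata and hence closed in $Y_\kappa$.

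The main step is a strict dichotomy governing the values of $\nu_\beta(Q)$ in terms of whether $\beta$ is a critical index at the specialization $\bar Q$: namely, $\nu_\beta(Q) \in \{0,1\}$ if $\beta \notin I(\bar Q)$, and $\nu_\beta(Q) \in (0,1)$ strictly if $\beta \in I(\bar Q)$. The first half is immediate from Prop. \ref{prop-val}. For the second, the local structure \eqref{equ-loc-ring} of $\widehat{\cO}_{Y, \bar Q}$ shows that $x_\beta$ and $y_\beta$ both lie in the maximal ideal and satisfy $x_\beta y_\beta = p$; hence for any $L$-valued rigid point $Q$ specializing to $\bar Q$, both $v_p(x_\beta(Q))$ and $v_p(y_\beta(Q))$ are strictly positive and sum to $1$, forcing $\nu_\beta(Q) = v_p(y_\beta(Q)) \in (0,1)$.

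Granted this dichotomy, the identification $\fY_{\rig}(\bI^{\Sigma_1} \times \bJc^{\Sigma_2}) = \spe^{-1}(U)$ is immediate. Indeed, the condition $(\nu_{\fq,1}(Q), \nu_{\fq,2}(Q)) \in \bJc$ says one of these two values lies in $\{0,1\}$, which by the dichotomy happens iff at least one of $\beta_{\fq,1}, \beta_{\fq,2}$ is non-critical at $\bar Q$, iff $\bar Q \notin Z_\fq$. The conditions at primes in $\Sigma_1$ are automatic since $\nu_\fp(Q) \in \bI$ always, so quantifying over all $\fq \in \Sigma_2$ yields the claim.

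For the codimension, each stratum $W_{\varphi, \eta}$ inside $Z_\fq$ has codimension $|\varphi| + |\eta| - g$ in $Y_\kappa$ by the dimension formula recalled in \ref{strat-Y}. The admissibility condition decomposes over primes of $\cO_F$ above $p$ because $\sigma$ preserves each $\bB_\fp$, giving $|\varphi_\fp| + |\eta_\fp| \geq |\bB_\fp|$ for every $\fp$; at the distinguished prime $\fq$ the forced condition $\varphi_\fq = \eta_\fq = \bB_\fq$ gives $|\varphi_\fq| + |\eta_\fq| = 2|\bB_\fq| = 4$. Summing yields $|\varphi| + |\eta| \geq g + 2$, with the bound attained (e.g.\ by taking $(\varphi_\fp, \eta_\fp) = (\bB_\fp, \emptyset)$ at all $\fp \ne \fq$). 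Thus $Z_\fq$, and hence the finite union $Z$, has codimension exactly $2$ in $Y_\kappa$. The only delicate point is the strict dichotomy for $\nu_\beta(Q)$; the remaining steps reduce to elementary combinatorics with the Goren-Kassaei data.
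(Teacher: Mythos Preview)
Your proof is correct and follows essentially the same approach as the paper's: the closed set $Z$ you build coincides with the paper's (the union of strata with $(\varphi_{\fq},\eta_{\fq})=(\bB_{\fq},\bB_{\fq})$ for some $\fq\in\Sigma_2$), and your codimension count is identical. The one place you add content is in spelling out the dichotomy $\nu_\beta(Q)\in\{0,1\}\Leftrightarrow \beta\notin I(\bar Q)$ via Prop.~\ref{prop-val} and the relation $x_\beta y_\beta=p$ from \eqref{equ-loc-ring}, which is exactly what underlies the paper's ``one checks easily'' step.
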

 \begin{proof}
 We consider  Goren-Kassaei's stratification $\{W_{\varphi,\eta}\}$ on $Y_{\kappa}$ defined in \eqref{strat-Y}, where $(\varphi,\eta)$ runs through all the admissible pairs of subsets of $\bB$. For each $\fp\in \Sigma$, we put $\varphi_{\fp}=\varphi\cap\bB_{\fp}$ and $\eta_{\fp}=\eta\cap \bB_{\fp}$. Then we have $\varphi=\coprod_{\fp\in\Sigma}\varphi_{\fp}$ and $\eta=\coprod_{\fp\in \Sigma}\eta_{\fp}$. Similarly, for each prime $\fp$, the pair  $(\varphi_{\fp},\eta_{\fp})$ of subsets of $\bB_{\fp}$ can be called admissible in the sense that $\varphi_{\fp}\supset \sigma(\eta_{\fp}^c)$, where $\eta_{\fp}^c$ denotes the complementary subset $\bB_{\fp}- \eta_{\fp}$, and $\sigma:\bB_{\fp}\ra \bB_{\fp}$ is the action of the Frobenius. In particular, we have  $$|\varphi_{\fp}|+|\eta_{\fp}|\geq |\bB_{\fp}|=[\kappa(\fp):\F_p].$$ Now, let $Z$ be the union of all the strata $W_{\varphi,\eta}$ such that there exists at least one $\fp\in \Sigma_2$ with $(\varphi_{\fp},\eta_{\fp})=(\bB_{\fp},\bB_{\fp})$. By \cite[2.5.2]{GK} (or cf. \ref{strat-Y}), $Z$ is closed in $Y_{\kappa}$, and  each stratum $W_{\varphi,\eta}$ in $Z$ has dimension
 $$2g-|\varphi|-|\eta|=2g-\sum_{\fp\in \Sigma}(|\varphi_{\fp}|+|\eta_{\fp}|)\leq g-2,$$
 i.e. $Z$ has codimension $2$ in $Y_{\kappa}$. Now one checks easily that $\fY_{\rig}(\bI^{\Sigma_1}\times \bJc^{\Sigma_2})$ identifies with the tube over $Y_{\kappa}-Z$. This proves the Lemma.
 \end{proof}

\subsection{} We fix a prime $\fp\in \Sigma_1$ of degree $1$.  We describe the Hecke correspondence  $U_{\fp}$  on $\fY_{\rig}$. Let $L$ be a finite extension of $\Q_{\kappa}$, and $P$ be a $L$-valued rigid point of $\fX_{\rig}$ corresponding to a HBAV over $\cO_L$. We denote by $w_{\fp}(A)$ the  partial Hodge height of $A$ \eqref{equ-part-hodge} corresponding to the unique element of $\bB_{\fp}$, and $A[\fp^{\infty}]$ be the $\fp$-component of $A[p^{\infty}]$. This is a $p$-divisible group of  dimension $1$ and height $2$, since $\fp$ has degree $1$. Thus for any  $(\cO_F/\fp)$-cyclic isotropic subgroup $H$ of $A[\fp]$, its $\fp$ component $H[\fp]$ is just a subgroup of order $p$ in the $p$-divisible groups $A[\fp^{\infty}]$. The possibilities for such subgroups are well analyzed  by Katz and Lubin \cite{Ka72}, and widely used in the work of \cite{Bu}, \cite{Ks} and \cite{Sa}. We summarize the results in our language as follows.

\begin{lemma}[Lubin-Katz]\label{lem-dim-1}
Let $\fp\in \Sigma_1$ as above,  $Q=(A,H)$ be an $L$-valued rigid point of $\fY_{\rig}$. Then we have the following possibilities:

\underline{}\emph{(a)} Assume $\frac{1}{p+1}<\nu_{i}(Q)\leq 1$, i.e. $Q$ is canonical at $\fp$ in the sense of Theorem \ref{thm-GK-p}. Then we have $w_{\fp}(A)=1-\nu_{\fp}(Q)< \frac{p}{p+1}$, and all the  subgroups $H'\subset A[\fp]$ of order $p$ different from $H[\fp]$  has $\deg(H')=\frac{1}{p}(1-\nu_{\fp}(Q))$. Or equivalently, all the points $Q_1\in U_{\fp}(Q)$ satisfy $1-\nu_{\fp}(Q_1)=\frac{1}{p}(1-\nu_{\fp}(Q))$, i.e.
\[\nu_{\fp}(Q_1)=\frac{p-1}{p}+\frac{1}{p}\nu_{\fp}(Q)>\frac{p}{p+1}.\]

\emph{(b)} Assume $\nu_{\fp}(Q)=\frac{1}{p+1}$, i.e. $Q$ is too-singular at $\fp$. Then we have $w_{\fp}(A)\geq \frac{p}{p+1}$, and all the subgroups $H'\subset A[\fp]$ of order $p$ different from $H$  has $\deg(H')=\frac{1}{p+1}$, or equivalently, we have
 $\nu_{i}(Q_1)=\frac{p}{p+1}$ for all $Q_1\in U_{\fp}(Q)$.

 \emph{(c)} If $0\leq \nu_{\fp}(Q)<\frac{1}{p+1}$, i.e. $Q$ is anti-canonical at $\fp$ in the sense of Theorem \ref{thm-GK-p}. Then we have $w_{\fp}(A)=p\nu_{\fp}(Q)<\frac{p}{p+1}$. There exists a unique subgroup $C\subset A[\fp]$ of order $p$ different from $H$ with  $\deg(C)=1-p\nu_{\fp}(Q)$,  and the other subgroups $H'\subset A[\fp]$ has $\deg(H')=\nu_{\fp}(Q)$. Equivalently, there exists a unique point $Q_1\in U_{\fp}(Q)$ with $\nu_{\fp}(Q_1)=p\nu_{\fp}(Q)<\frac{p}{p+1}$, and all the other $p-1$ points $Q_1'\in U_{\fp}(Q)$ satisfy $\nu_{\fp}(Q_1')=1-\nu_{\fp}(Q)> \frac{p}{p+1}$.
\end{lemma}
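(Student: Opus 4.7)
The plan is to derive the three cases from the general machinery of Sections 3 and 4, exploiting the simplification that, since $[\kappa(\fp):\F_p]=1$, the set $\bB_\fp=\{\beta_\fp\}$ is a singleton on which the Frobenius $\sigma$ acts trivially. Under this identification, the canonicity conditions of \S\ref{sect-mult-deg} collapse to single inequalities: $\cV_\fp$ is the locus $\nu_\fp(Q)>\tfrac{1}{p+1}$, $\cW_\fp$ is $\nu_\fp(Q)<\tfrac{1}{p+1}$, and the too-singular locus at $\fp$ is precisely $\nu_\fp(Q)=\tfrac{1}{p+1}$, which matches the case split in the lemma exactly. Since $U_\fp$ modifies only the $\fp$-component of $A[p]$, it suffices to analyze $A[\fp^\infty]$, a one-dimensional $p$-divisible group of height $2$ with an action of $\cO_{F_\fp}=\Z_p$; an $(\cO_F/\fp)$-cyclic subgroup of $A[\fp]$ is simply a finite flat subgroup of order $p$.

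For case (a), Theorem \ref{thm-GK-p}(b) identifies $H[\fp]$ with the canonical subgroup of $A[\fp^\infty]$ furnished by Theorem \ref{thm-can}. When $\nu_\fp(Q)=1$ the point lies in the ordinary locus, the other $p$ order-$p$ subgroups of $A[\fp]$ are \'etale with $\deg=0$, and the formula is trivial. When $\tfrac{1}{p+1}<\nu_\fp(Q)<1$, $\beta_\fp$ is critical for $Q$, so the local parameters $x_{\beta_\fp},y_{\beta_\fp}$ both lie in the maximal ideal. Prop.~\ref{prop-key-lemma}(a), together with $\sigma\beta_\fp=\sigma^{-1}\beta_\fp=\beta_\fp$, gives $t_{\beta_\fp}\equiv u\,x_{\beta_\fp}+v\,y_{\beta_\fp}^p\pmod p$ for units $u,v$; combined with Prop.~\ref{prop-val} (so $v_p(x_{\beta_\fp}(Q))=1-\nu_\fp(Q)$ and $v_p(y_{\beta_\fp}(Q))=\nu_\fp(Q)$), this yields $w_\fp(A)=\min\{1-\nu_\fp(Q),\,p\nu_\fp(Q)\}=1-\nu_\fp(Q)$. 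Any other order-$p$ subgroup $H'$ produces an anti-canonical point $Q'=(A,H')$ at $\fp$; applying the same identity to $Q'$ now gives $w_\fp(A)=p\,v_p(y_{\beta_\fp}(Q'))=p\deg(H')$, whence $\deg(H')=(1-\nu_\fp(Q))/p$, and the assertion about $\nu_\fp(Q_1)=1-\deg(H')$ follows.

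Case (c) is symmetric to (a): Theorem \ref{thm-GK-p}(b) singles out a unique order-$p$ subgroup $C\subset A[\fp]$ distinct from $H$ for which $(A,C)$ is canonical at $\fp$, with $\deg(C)=1-p\nu_\fp(Q)$ and $w_\fp(A)=p\nu_\fp(Q)$; the remaining $p-1$ subgroups $H'\ne H,C$ yield anti-canonical points, which by the same key-lemma identity forces $\deg(H')=\nu_\fp(Q)$. For case (b), the too-singular hypothesis $\nu_\fp(Q)=\tfrac{1}{p+1}$ means the two terms in $t_{\beta_\fp}\equiv u\,x_{\beta_\fp}+v\,y_{\beta_\fp}^p$ have equal $p$-adic valuation $\tfrac{p}{p+1}$, so $w_\fp(A)\geq\tfrac{p}{p+1}$, and then for every order-$p$ subgroup $H'\ne H$ the corresponding $Q'$ is again too-singular at $\fp$, forcing $v_p(x_{\beta_\fp}(Q'))=v_p(y_{\beta_\fp}^p(Q'))=\tfrac{p}{p+1}$ and hence $\deg(H')=\tfrac{1}{p+1}$.

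The main obstacle will be case (b): one has to rule out any distinguished subgroup at a too-singular point and show that all $p+1$ order-$p$ subgroups of $A[\fp]$ have the common degree $\tfrac{1}{p+1}$. This can be handled by a direct equation-solving argument with the Breuil-Kisin module of $A[\fp]$, analogous to but strictly simpler than the one at the end of the proof of Prop.~\ref{prop-subgp-sg}(b): an adapted basis representation of $\varphi$ produces a single polynomial equation of degree $p+1$ for the parameter classifying the order-$p$ subgroups, and the too-singular hypothesis forces the Newton polygon to consist of a single slope $\tfrac{1}{p+1}$. Alternatively, since we are in the classical one-dimensional height-$2$ formal-group setting, one may invoke Lubin-Katz's original analysis of the too-singular fibre directly.
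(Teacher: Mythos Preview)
Your proposal is correct and, for parts (a) and (c), essentially reproduces the paper's own route: the paper simply cites Prop.~\ref{prop-hecke-can} for these two cases, and what you do is unwind that proposition via the Key Lemma (Prop.~\ref{prop-key-lemma}) and Prop.~\ref{prop-val}, exactly as in the paper's proof of \ref{prop-hecke-can}. The paper's proof of the present lemma is otherwise a one-line citation to Lubin--Katz \cite[3.10.7]{Ka72}.

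For part (b) you are right that Prop.~\ref{prop-hecke-can} does not apply, and your proposed Breuil--Kisin Newton-polygon argument would work. But you are making (b) harder than it is, and your own paragraph-three sketch is one line away from complete. Once you have $w_\fp(A)\geq \tfrac{p}{p+1}$, the point $\pi(Q)=A$ lies outside $\cU_\fp$ (since $|\bB_\fp|=1$ collapses the defining inequality of $\cU_\fp$ to $w_\fp<\tfrac{p}{p+1}$). Theorem~\ref{thm-GK-p}(a) then says $\pi^{-1}(\cU_\fp)=\cV_\fp\cup\cW_\fp$, so \emph{no} point $(A,H')$ above $A$ can be canonical or anti-canonical at $\fp$; every such point is too-singular. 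Again because $|\bB_\fp|=1$, too-singular at $\fp$ is the single equation $(p+1)\nu_\fp=1$, giving $\deg(H')=\nu_\fp((A,H'))=\tfrac{1}{p+1}$ for all $H'$. This closes (b) without any Breuil--Kisin computation; the phrase ``$Q'$ is again too-singular'' in your sketch just needed this justification via \ref{thm-GK-p}(a) rather than being asserted. The paper does not spell this out either---it defers all of (b) to the classical reference---so your internal argument, once patched, is in fact a self-contained improvement over the paper's citation.
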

\begin{proof} Indeed, all these results are direct consequences of \cite[3.10.7]{Ka72}. Statement (a) and (c) are also special cases of Prop. \ref{prop-hecke-can}.
\end{proof}

We have the following  proposition on the analytic continuation of a $U_{\fp}$-eigenform form, due to essentially Kassaei \cite{Ks}.

\begin{prop}\label{prop-dim-1}
 Let $\fp\in \Sigma_1$ as above, and  $\prod_{\fq\in \Sigma}S_{\fq}\subset \bI^{\Sigma_1}\times \bJ^{\Sigma_2}$ be  a closed and connected neighborhood of $\one$.  Let $\vk\in \Z^{\bB}$, $k_{\fp}\in \Z$ be its $\fp$-th component, and $f$ be a section of $\omegab^{\vk }$ over  $\fY_{\rig}(\prod_{\fq\in \Sigma} S_{\fq})$.
 Assume that $f$ is a $U_{\fp}$-eigenform of weight $\vk$ with eigenvalue $a_{\fp}\neq 0$.

\emph{(a)} The form $f$ extends uniquely to an eigenform of $U_{\fp}$ with eigenvalue $a_{\fp}$ over $\fY_{\rig}(\bI_{>0}\times\prod_{\fq\neq \fp}S_{\fq})$, where $\bI_{>0}=(0,1]$ is considered as a subset of the $\fp$-th copy of $\bI$.

 \emph{(b)} If $v_p(a_{\fp})<k_{\fp}-1$, then $f$ extends uniquely to an eigenform of $U_{\fp}$ with eigenvalue $a_{\fp}$ over $\fY_{\rig}(\bI\times \prod_{\fq\neq \fp}S_{\fq})$.
\end{prop}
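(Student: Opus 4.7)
The plan is to follow the Buzzard-Kassaei analytic continuation strategy (\cite{Bu}, \cite{Ks}) adapted to our multi-prime setting. The key simplification is that $U_\fp$ only modifies the $\fp$-component of the underlying $p$-divisible group; therefore the valuations $\nu_\fq$ for $\fq \neq \fp$ are preserved throughout the dynamics, and the product factor $\prod_{\fq \neq \fp} S_\fq$ can be ignored when tracking the $\fp$-direction. Uniqueness in both parts is automatic from the functional equation $U_\fp f = a_\fp f$.

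First I would extend $f$ to the region $\{\nu_\fp \geq 1/(p+1)\}$, i.e., the canonical-at-$\fp$ together with too-singular-at-$\fp$ points. The initial domain $S_\fp$ contains $\{\nu_\fp \geq r_\fp\}$ for some $r_\fp < 1$. By Lemma \ref{lem-dim-1}(a)--(b), $U_\fp$ sends any point with $\nu_\fp \geq 1/(p+1)$ strictly closer to ordinary: $\nu_\fp(Q_1) = (p-1)/p + \nu_\fp(Q)/p$ in the canonical case and $\nu_\fp(Q_1) = p/(p+1)$ in the too-singular case. Iterating, the orbit enters the initial domain after finitely many steps, and $f = a_\fp^{-m} U_\fp^m f$ gives the desired extension. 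This step is a direct analogue of Prop. \ref{prop-cont-can} and does not yet use any hypothesis on $v_p(a_\fp)$.

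The delicate step is the extension across the anti-canonical region $\{0 \leq \nu_\fp < 1/(p+1)\}$. Following Lemma \ref{lem-dim-1}(c), I would decompose $U_\fp = U_\fp^{(c)} + U_\fp^{(a)}$, where $U_\fp^{(c)}$ is the contribution of the unique canonical subgroup $C \subset A[\fp]$ (staying in the anti-canonical region with $\nu_\fp \mapsto p\nu_\fp$), and $U_\fp^{(a)}$ bundles the remaining $p-1$ subgroups (whose images satisfy $\nu_\fp(Q_1') = 1 - \nu_\fp(Q) > p/(p+1)$, hence lie in the canonical region just handled). Iterating the functional equation produces the Kassaei expansion
\[
f(Q) = \sum_{n=1}^{N} \frac{1}{a_\fp^n} (U_\fp^{(c)})^{n-1} U_\fp^{(a)} f(Q) + \frac{1}{a_\fp^N} (U_\fp^{(c)})^N f(Q).
\]
For part (a), $\nu_\fp(Q) > 0$ forces $\nu_\fp((U_\fp^{(c)})^N(Q)) = p^N \nu_\fp(Q)$ to exceed $1/(p+1)$ once $N$ is large enough (uniformly on any given quasi-compact piece of $\{\nu_\fp > 0\}$), so the tail term reduces to $f$ evaluated on the canonical region and the expansion becomes a finite sum, which glues analytically with the previous extension.

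Part (b) requires genuine convergence of the infinite series at the boundary $\nu_\fp = 0$, which is fixed by $U_\fp^{(c)}$. By Lemma \ref{lemma-basic-est} (with $|\kappa(\fp)| = p$), a single application of $a_\fp^{-1} U_\fp^{(c)}$ contributes at most a factor $p^{1 + v_p(a_\fp) - k_\fp \deg(C)}$ to the sup-norm; since $\deg(C) = 1 - p\nu_\fp$ tends to $1$ on the closure of the anti-canonical region, the sharp hypothesis $v_p(a_\fp) < k_\fp - 1$ is exactly what makes the bounding ratio $p^{1 + v_p(a_\fp) - k_\fp}$ strictly less than $1$, yielding uniform convergence on quasi-compact admissible subsets. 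The hard part will be carrying out this estimate uniformly up to the boundary $\nu_\fp = 0$ and invoking the gluing machinery from Appendix A so as to ensure the resulting compatible family assembles into a genuine rigid analytic section on the full domain, rather than merely a collection of Banach-space limits on an admissible covering.
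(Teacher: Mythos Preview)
Your proposal is correct and follows essentially the same Buzzard--Kassaei strategy as the paper, including the use of Lemma~\ref{lem-dim-1} to track the $U_\fp$-dynamics, the canonical/anti-canonical decomposition $U_\fp = U_\fp^{(c)} + U_\fp^{(a)}$, the Kassaei expansion, the norm estimate via Lemma~\ref{lemma-basic-est}, and the appeal to the gluing lemma of Appendix~A. The only cosmetic difference is in part~(a): the paper does not introduce the decomposition there, but simply iterates the full $U_\fp$, using that $U_\fp$ carries $\fY_{\rig}([\epsilon_{n+1},1]\times\prod_{\fq\neq\fp}S_\fq)$ into $\fY_{\rig}([\epsilon_n,1]\times\prod_{\fq\neq\fp}S_\fq)$ with $\epsilon_n = \frac{1}{p^{n-1}(p+1)}$; your finite Kassaei expansion is an equivalent repackaging of the same iteration.
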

\begin{proof} The arguments are  the same as in \cite[4.1]{Ks} and \cite{Sa}. We include a proof here for completeness.

(a) For any integer $n\geq0$, we put $\epsilon_n=\frac{1}{p^{n-1}(p+1)}$.  By assumption, $S_{\fp}$ is a neighborhood of $1\in \bI=[0,1]$. We may assume thus $[1-\epsilon_N]\subset S_{\fp}$ for some integer $N>0$. By Lemma \ref{lem-dim-1}(a) and (b),  the image of $\fY_{\rig}([1-\epsilon_{n},1]\times \prod_{\fq\neq\fp}S_{\fq})$ under the correspondence $U_{\fp}$ is contained in $\fY_{\rig}([1-\epsilon_{n+1},1]\times \prod_{\fq\neq\fp}S_{\fq})$. Therefore, the form $\frac{1}{a_{\fp}^N}U_{\fp}^N$  is well defined over $\fY_{\rig}([1-\epsilon_0,1]\times \prod_{\fq\neq \fp}S_{\fq})$. Note that $1-\epsilon_0=\epsilon_1=\frac{1}{p+1}$. This gives the unique extension of $f$ over $\fY_{\rig}([\epsilon_1,1]\times \prod_{\fq\neq \fp}S_{\fq})$. By Lemma \ref{lem-dim-1}(c), we have
\[U_{\fp}\bigl(\fY_{\rig}([\epsilon_{n+1},1]\times \prod_{\fq\neq \fp}S_{\fq})\bigr)\subset  \fY_{\rig}([\epsilon_{n},1]\times\prod_{\fq\neq \fp }S_{\fq}).\]
Using the functional equation $f=\frac{1}{a_{\fp}}U_{\fp}(f)$, we can extend inductively $f$ to $\fY_{\rig}([\epsilon_n,1]\times \prod_{\fq\neq\fp}S_{\fq})$ for any $n\geq 1$.
As the quasi-compact admissible open subsets of $\{\fY_{\rig}([\epsilon_n,1]\times \prod_{\fq\neq\fp}S_{\fq}):n\geq 1\}$ form an admissible covering of $\fY_{\rig}(\bI_{>0}\times \prod_{\fq\neq\fp}S_{\fq})$, this finishes the proof of (a).

(b) We proceed in the same way as in Step 3 of Prop. \ref{prop-cont-1}. We put $W=\fY_{\rig}([0,\frac{1}{p+1})\times \prod_{\fq\neq\fp}S_{\fq})$. This is the analogue of the anti-canonical locus in our situation. Denote by $\cC(\fp)_{\rig}|_{W}$ the inverse image  of $W$ via the first projection of Hecke correspondence $\pi_1:\cC(\fp)_{\rig}\ra \fY_{\rig}$ \eqref{defn-set-U_p}. By Lemma \ref{lem-dim-1}(c), we have a decomposition
\[\cC(\fp)_{\rig}|_{W}=\cC(\fp)_{\rig}^{c}\coprod \cC(\fp)^{a}_{\rig},\]
where $\cC(\fp)_{\rig}^{c}$ corresponds to the distinguished canonical subgroup $C\subset A[p]$ above a point $Q=(A,H)\in W$ via $\pi_1$, and $\cC(\fp)^{a}_{\rig}$ corresponds to the remaining subgroups. Consequently, we have a similar decomposition for the $U_{\fp}$-operator $U_{\fp}=U_{\fp}^{c}+U_{\fp}^a$. By Lemma \ref{lem-dim-1}(c), we have
 \begin{gather*}
U_{\fp}^{c}(\fY_{\rig}([0,\epsilon_n]\times \prod_{\fq\neq \fp}S_{\fq}))\subset \fY_{\rig}([0,\epsilon_{n-1}]\times \prod_{\fq\neq\fp}S_{\fq})\quad \text{for any } n\geq 2,\\
U_{\fp}^a(\fY_{\rig}([0,\epsilon_n]\times \prod_{\fq\neq\fp}S_{\fq}))\subset \fY_{\rig}([\epsilon_0,1]\times \prod_{\fq\neq\fp}S_{\fq}).
 \end{gather*}
Hence, the section
\[g_n=\sum_{m=1}^{n}\frac{1}{a_{\fp}^m}(U_{\fp}^c)^{m-1}U_{\fp}^a(f)\]
is well defined over $\fY_{\rig}([0,\epsilon_{n+1}]\times \prod_{\fq\neq\fp}S_{\fq})$ for $n\geq 2$. By \ref{lem-dim-1}(c), for any rigid point $Q=(A,H)\in \fY_{\rig}([0,\epsilon_{n+1}]\times \prod_{\fq\neq\fp}S_{\fq})$, the canonical subgroup $C\subset A[\fp]$ has degree
\[\deg(C)=1-p\nu_{\fp}(Q)\geq 1-p\epsilon_{n+1}=1-\epsilon_n.\]
Using Lemma \ref{lemma-basic-est}, we deduce that for $2\leq m\leq n$
\begin{align*}
 &\quad\bigl|\frac{1}{a_{\fp}^m}(U_{\fp}^c)^{m-1} U_{\fp}^a(f)\bigr|_{\fY_{\rig}([0,\epsilon_{n+1}]\times \prod_{\fq\neq\fp}S_{\fq})}\\
&\leq p^{1+v_p(a_{\fp})-k_{\fp}(1-\epsilon_n)} |\frac{1}{a_{\fp}^{m-1}}(U_{\fp}^c)^{m-2} U_{\fp}^a(f)|_{\fY_{\rig}([0,\epsilon_{n}]\times \prod_{\fq\neq\fp}S_{\fq})}\\
&\leq p^{(m-1)(1+v_p(a_{\fp})-k_{\fp})+k_{\fp}\sum_{i=n-m+2}^n\epsilon_i}\;|\frac{1}{a_{\fp}}U_{\fp}^a(f)|
_{\fY_{\rig}([0,\epsilon_{n-m+2}]\times \prod_{\fq\neq\fp}S_{\fq})}
\end{align*}
Moreover, it follows trivially from \ref{lemma-basic-est} that
\[|\frac{1}{a_{\fp}}U_{\fp}^a(f)|_{\fY_{\rig}([0,\epsilon_{n-m+2}]\times \prod_{\fq\neq\fp}S_{\fq})}\leq p^{1+v_p(a_p)}|f|_{\fY_{\rig}([\epsilon_0,1]\times \prod_{\fq\neq\fp}S_{\fq})}.\]
Combining the these two estimations above and using  $\sum_{i=n-m+2}^n\epsilon_i<\sum_{i=2}^{\infty}\epsilon_i=\frac{1}{p^2-1}$, we get finally
\begin{align*} &\quad\bigl|\frac{1}{a_{\fp}^m}(U_{\fp}^c)^{m-1} U_{\fp}^a(f)\bigr|_{\fY_{\rig}([0,\epsilon_{n+1}]\times \prod_{\fq\neq\fp}S_{\fq})}< p^{m(1+v_{p}(a_{\fp})-k_{\fp})+k_{\fp}(1+\frac{1}{p^2-1})}|f|_{\fY_{\rig}([\epsilon_0,1]\times \prod_{\fq\neq\fp}S_{\fq})}
 \end{align*}
 for all  $1\leq m\leq n$. Thanks to the assumption $k_{\fp}>1+v_p(a_{\fp})$, the estimation above tends to $0$ as $m\ra \infty$, so there exists a constant $M>0$ independent of $n$ such that \begin{equation}\label{equ-est-6-g_n}|g_n|_{\fY_{\rig}([0,\epsilon_{n+1}]\times \prod_{\fq\neq\fp}S_{\fq})}\leq \max_{1\leq m\leq n}|\frac{1}{a_{\fp}^m}(U_{\fp}^c)^{m-1} U_{\fp}^a(f)|_{\fY_{\rig}([0,\epsilon_{n+1}]\times \prod_{\fq\neq\fp}S_{\fq})}<M.
  \end{equation}
   On the other hand, via the functional equation $U_{\fp}(f)=a_{\fp}f$, it's easy to see that
\[f-g_n=\frac{1}{a_{\fp}^n}(U_{\fp}^c)^n(f).\]
By a similar argument  with Lemma \ref{lemma-basic-est}, we get
\begin{align*}
&\quad |f-g_n|_{\fY_{\rig}([\epsilon_{n+2},\epsilon_{n+1}]\times \prod_{\fq\neq\fp}S_{\fq})}\\
&\leq p^{1+v_p(a_{\fp})-k_{\fp}(1-\epsilon_n)}|\frac{1}{a_{\fp}^{n-1}}(U_{\fp}^c)^{n-1}(f)|_{\fY_{\rig}([\epsilon_{n+1},\epsilon_{n}]\times \prod_{\fq\neq\fp}S_{\fq})} \\
&\leq p^{n(1+v_p(a_{\fp})-k_{\fp})+k_{\fp}\sum_{i=1}^n\epsilon_i}\;|f|_{\fY_{\rig}([\epsilon_2,\epsilon_1]\times \prod_{\fq\neq\fp} S_{\fq})}.\end{align*}
As $\sum_{i=1}^n\epsilon_i<\frac{p}{p^2-1}$, we see that $|f-g_n|_{\fY_{\rig}([\epsilon_{n+2},\epsilon_{n+1}]\times \prod_{\fq\neq\fp}S_{\fq})} $ tends to $0$ when $n\ra \infty$. By \eqref{equ-est-6-g_n},  up to modifying the constant $M$, we may assume that $|f|_{\fY_{\rig}([\epsilon_{n+2},\epsilon_{n+1}]\times \prod_{\fq\neq\fp}S_{\fq})}<M$, and even  $|f|_{\fY_{\rig}(\bI_{>0}\times \prod_{\fq\neq\fp}S_{\fq})}<M$. Now applying Lemma \ref{lem-glue} to $U_0=\fY_{\rig}(\bI\times \prod_{\fq\neq\fp}S_{\fq})$, $U_n=\fY_{\rig}([0,\epsilon_{n+1}]\times \prod_{\fq\neq\fp}S_{\fq})$, $V_n=U_{n+1}$, $F_n=f|_{U_0-V_n}$, $F_n'=g_n$, we get a $U_{\fp}$-eigenform  $f$ defined over $\fY_{\rig}(\bI\times \prod_{\fq\neq\fp}S_{\fq})$ extending $f$.
\end{proof}

\subsection{}  Let $\fp\in \Sigma_{2}$, and $\bB_{\fp}=\{\beta_{\fp,1},\beta_{\fp,2}\}$.  Here, the subscripts $1,2$ should be considered as elements in $\Z/2\Z$, so that if $i$ is one of them then $i+1$ denote the other. We indicate how to generalize the results of Section 5 to our situation. We start with generalizing
the $9$ strata \eqref{sect-strata} of $Y_{\kappa}$.
As in the proof of Lemma \ref{lem-tube}, we say a pair $(\varphi_{\fp},\eta_{\fp})$ of subsets of $\bB_{\fp}$ is admissible if $\varphi_{\fp}\supset \sigma(\bB_{\fp}-\eta_{\fp})$.  If $(\varphi,\eta)$ is an  admissible pair of subsets of $\bB$, then $(\varphi\cap\bB_{\fp},\eta\cap\bB_{\fp})$ is an admissible pair of subsets of $\bB_{\fp}$, and we denote it by $(\varphi,\eta)_{\fp}$. For an  admissible pair $(\varphi_{\fp},\eta_{\fp})$ of subsets of $\bB_{\fp}$, we put
\[W_{\varphi_{\fp},\eta_{\fp}}=\bigcup_{(\varphi,\eta)_{\fp}=(\varphi_{\fp},\eta_{\fp})}
W_{\varphi,\eta},\]
where $W_{\varphi,\eta}$ is the usual Goren-Kassaei stratum defined in \ref{strat-Y}. We have a similar list of strata in $Y_{\kappa}$ as in \ref{sect-strata}.
\begin{itemize}
\item{} $0$-codimensional: $W_{\bB_{\fp},\emptyset}, W_{\emptyset, \bB_{\fp}}, W_{\beta_{\fp,1},\beta_{\fp,1}}, W_{\beta_{\fp,2},\beta_{\fp,2}}.$

\item{} $1$-codimensional: $W_{\bB_{\fp},\beta_{\fp,1}}, W_{\bB_{\fp},\beta_{\fp,2}}, W_{\beta_{\fp,1},\bB_{\fp}}, W_{\beta_{\fp,2},\bB_{\fp}}$.

\item{} $2$-codimensional: $W_{\bB_{\fp},\bB_{\fp}}$.
\end{itemize}

The graph in \ref{sect-pic} generalizes to our case, so that under the valuations $(\nu_{\fp,1},\nu_{\fp,2})$ the loci of codimension $0$,  $1$ and $2$ correspond respectively to the four vertices, the four edges and the interior of the square $\bJ=[0,1]^2$. For $i\in \Z/2\Z$, we can define similarly the supergeneral locus $W_{\beta_{\fp,i},\beta_{\fp,i}}^{sg}$ of $W_{\beta_{\fp,i},\beta_{\fp,i}}$ to be the region where the $a$-number of the universal $\fp$-divisible group $\cA[\fp^{\infty}]$ equals to one, and the superspecial locus $W_{\beta_{\fp, i},\beta_{\fp, i}}^{ss}$ to be the area where the universal $\fp$-divisisible group $\cA[\fp^{\infty}]$ is superspecial. Now, Propositions \ref{prop-dyn-sg}, \ref{lem-subgp-ss} and \ref{prop-dyn-ss} generalize word by word to the tubes $]W_{\beta_{\fp,i},\beta_{\fp,i}}^{sg}[$ and $]W_{\beta_{\fp,i},\beta_{\fp,i}}^{ss}[$, because all the proofs have only used the properties on the $p$-divisible group $A[\fp^{\infty}]$.

To state the generalization of Prop \ref{prop-cont-1}, we introduce
\begin{equation*}
S^{\can}=\{(x_1,x_2)\in [0,1]^2\;|\; x_1+px_2>1\quad\text{and }x_2+px_1>1\}
\end{equation*}
So the locus  canonical at $\fp$  in \eqref{defn-p-can} is also denoted by
$\cV_{\fp}= \fY_{\rig}(\bI^{\Sigma_1}\times\bJ^{ \Sigma_2-\{\fp\}}\times S^{\can}).$

\begin{prop}\label{prop-dim-2}
Let  $\prod_{\fq\in \Sigma}S_{\fq} \subset \bI^{\Sigma_1}\times \bJ^{\Sigma_2}$ be  a closed and connected neighborhood of $\one$.
 Fix a prime ideal $\fp\in \Sigma_{2}$ as above. Let $\vk\in \Z^{\bB}$, and $f$ be  a $U_{\fp}$-eigenform defined over  $\fY_{\rig}(\prod_{\fq\in \Sigma}S_{\fq})$ with eigenvalue $a_{\fp}$. Assume that $v_p(a_{\fp})<\min\{k_{\fp,1},k_{\fp,2}\}-2$. Then  $f$ extends uniquely to a $U_{\fp}$-eigenform over $\fY_{\rig}(\prod_{\fq\neq\fp}S_{\fq}\times (S_{\fp}\cup S^{\can}\cup\bJc))$.

\end{prop}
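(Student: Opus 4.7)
The plan is to mimic the proof of Proposition \ref{prop-cont-1} verbatim, using the primes $\fq\neq\fp$ as inert parameters. The key observation is that the Hecke correspondence $U_{\fp}$ only alters the $\fp$-component $A[\fp^{\infty}]$ of the universal HBAV, so for every rigid point $Q$ we have $\nu_{\fq,i}(U_{\fp}(Q))=\nu_{\fq,i}(Q)$ whenever $\fq\neq \fp$. Consequently all dynamics, degree estimates and the technical Lemma \ref{lem-tech-ss} proved for the quadratic inert case in Section~5 (where they depended only on the classification of $\Z_{p^2}$-subgroups of a $p$-divisible group with formal RM by $\Z_{p^2}$, i.e., only on the $\fp$-slot) apply verbatim here with the other coordinates held in $\prod_{\fq\neq\fp}S_{\fq}$.

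First, using the functional equation $U_{\fp}(f)=a_{\fp}f$ together with Proposition~\ref{prop-hecke-can}(a) applied at $\fp$, the form $(1/a_{\fp}^n)U_{\fp}^n(f)$ is well-defined on $\fY_{\rig}(\prod_{\fq\neq\fp}S_{\fq}\times S^{\can})$ for $n$ sufficiently large, and this gives the unique extension of $f$ over that region with finite sup-norm, exactly as in Proposition~\ref{prop-cont-can}. Next, one carries out the three-step extension of the proof of Proposition~\ref{prop-cont-1}(a)(b), slot by slot in the $\fp$-direction: Step~1 extends $f$ across $]W_{\beta_{\fp,1},\beta_{\fp,1}}[\,\cup\,]W_{\beta_{\fp,2},\beta_{\fp,2}}[$ minus the supergeneral cores $V_{i,1}$ (using only $\min\{k_{\fp,1},k_{\fp,2}\}>v_p(a_{\fp})+2$ together with the analogues of Propositions~\ref{prop-dyn-sg} and \ref{prop-dyn-ss} at $\fp$); Step~2 pushes the extension into a strict neighborhood of those cores via the series $F_n=\sum_{m=1}^{n-1}\frac{1}{a_{\fp}^m}(U_{\fp}^s)^{m-1}U_{\fp}^{\circ}(f)+\frac{1}{a_{\fp}^{n-1}}(U_{\fp}^s)^{n-1}(f)$ and the approximation $F_n'$ of Lemma~\ref{lem-est-V_n}, glued together using Lemma~\ref{lem-glue} thanks to the local-coordinate description in Lemma~\ref{lem-tech-ss}; Step~3 crosses the tubes $]W_{\beta_{\fp,i},\bB_{\fp}}[$ and then $]W_{\emptyset,\bB_{\fp}}[$ by decomposing $U_{\fp}=U_{\fp}^c+U_{\fp}^a$ into canonical and anti-canonical $\fp$-parts and constructing $g_n=\sum_{m=1}^n(1/a_{\fp}^m)(U_{\fp}^c)^{m-1}U_{\fp}^a(f)$, whose convergence and gluing again only require $v_p(a_{\fp})<\min\{k_{\fp,1},k_{\fp,2}\}-2$. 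The union of the regions reached in Steps~1--3 together with $S^{\can}$ and the input region $S_{\fp}$ is exactly $S_{\fp}\cup S^{\can}\cup \bJc$ in the $\fp$-slot.

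Uniqueness at each step follows from the functional equation $U_{\fp}(f)=a_{\fp}f$, since the extension is forced along each chain of $U_{\fp}$-iterates. The main obstacle, and the reason the argument is not entirely formal, is the verification that the local-coordinate computation of Lemma~\ref{lem-tech-ss} and the subsequent norm estimates in Lemma~\ref{lem-est-V_n} remain uniform across the parameter $\prod_{\fq\neq\fp}S_{\fq}$. This is where Appendix~B is used: the deformation-theoretic parameters $T_1,T_2$ constructed there for the superspecial $\fp$-divisible group with formal RM by $\Z_{p^2}$ are intrinsic to the $\fp$-component and vary holomorphically with the auxiliary parameters, so the estimates in Lemmas~\ref{lem-tech-ss} and \ref{lem-est-V_n} pass through unchanged. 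With this uniformity in hand the three-step extension closes exactly as in Section~5 and produces the desired section on $\fY_{\rig}(\prod_{\fq\neq\fp}S_{\fq}\times(S_{\fp}\cup S^{\can}\cup\bJc))$.
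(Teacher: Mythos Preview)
Your proposal is correct and follows essentially the same route as the paper's own proof: first extend to the $\fp$-canonical locus via Proposition~\ref{prop-cont-can}, then replicate the three-step argument of Proposition~\ref{prop-cont-1} in the $\fp$-slot while carrying $\prod_{\fq\neq\fp}S_{\fq}$ as an inert parameter, with the only nontrivial point being that the strict-neighborhood conclusion of Lemma~\ref{lem-tech-ss} survives because it depends solely on the $\fp$-divisible group $\cA[\fp^{\infty}]$. One small terminological slip: the loci $V_{i,1}$ you call ``supergeneral cores'' are in fact the superspecial loci (where both $\fp$-partial Hodge heights equal $1$), not the supergeneral ones; otherwise your outline matches the paper.
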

\begin{proof} First, the same argument as in Prop. \ref{prop-cont-can} applied to the $U_{\fp}$-operator shows that $f$ extends to $\fY_{\rig}(\prod_{\fq\neq\fp}S_{\fq}\times (S_{\fp}\cup S^{\can}))$. Since Prop. \ref{prop-dyn-sg} and \ref{prop-dyn-ss} generalize naturally to our case, we just need to copy the proof of Prop. \ref{prop-cont-1} word by word. The only place that needs more justification is Step 2 of \ref{prop-cont-1}, where we used Lemma \ref{lem-tech-ss}. We can define similarly the quasi-compact open subsets  $V_{i,n}$ and $V_{i,n}(\varepsilon)$  as in \ref{sect-V_n}. In general, their global structures maybe be complicated. However, what we really need from \ref{lem-tech-ss} is the fact that  $V_{i, n}(\varepsilon)$ is a strict neighborhood of $V_{i,n}$. This is certainly true in the general case, because the universal $\fp$-divisible group $\cA[\fp^{\infty}]$   is exactly the same as in Lemma \ref{lem-tech-ss}, and the generalized  $V_{n}$ and $V_{n,\epsilon}$ are cut out by certain special local lifts of the partial Hasse invariants at $\fp$  in the same way.
\end{proof}

\subsection{Proof of Thm. \ref{thm-main}} Let $f$ be an overconvergent eigenform as in the statement of \ref{thm-main}. We choose a closed and connected neighborhood $\prod_{\fp\in \Sigma}S_{\fp}\subset \bI^{\Sigma_1}\times \bJ^{\Sigma_2}$ of $\one $ where $f$ is defined. Applying successively Prop. \ref{prop-dim-1} and \ref{prop-dim-2} for all  $\fp\in \Sigma$, we get an eigenform $f$ defined over $\fY_{\rig}(\bI^{\Sigma_1}\times \bJc^{\Sigma_2})$. Now Theorem \ref{thm-main} follows immediately from Lemma \ref{lem-tube}, Prop. \ref{prop-exten} and Prop. \ref{prop-koecher}.

\bigskip
\setcounter{section}{0}
\setcounter{subsection}{0}
\renewcommand{\thesection}{\Alph{section}}
\renewcommand{\thesubsection}{\Alph{section}.\arabic{subsection}}
\section*{Appendix A. Some results on the extension of sections in rigid geometry}
\addtocounter{section}{1}

The purpose of this appendix to summarize some general results on the extension or gluing of sections in rigid geometry. The basic ideas of the proofs  are already contained in the work of  Kassaei \cite{Ks} and Pilloni \cite{Pi}.

\subsection{}
Let $\cO_K$ be a complete discrete valuation ring with uniformizer $\pi$, $k=\cO_K/(\pi)$, and $K$ be the fraction field of $\cO_K$. We say a topological $\cO_K$-algebra $A$ is \emph{admissible} if it is topologically of finite type and flat over $\cO_K$, i.e. $A$ is a $\pi$-torsion free quotient algebra of the usual Tate algebra of convergent power series in several variables over $\cO_K$. An \emph{admissible formal scheme} over $\cO_K$ is a quasi-compact and separated formal scheme over $\Spf(\cO_K)$ which is locally the formal spectrum of an admissible $\cO_K$-algebra. An affine formal scheme $\Spf(A)$ is admissible over $\cO_K$ if and only if $A$ is an admissible topological $\cO_K$-algebra.

Let $X$ be an admissible formal scheme over $\cO_K$. According to Raynaud, we can associate to $X$ a quasi-compact rigid analytic space $X_K$ over $K$, called the rigid generic fiber of $X$. Similarly, to each coherent sheaf $\cF$ over $X$, we can attach a coherent rigid analytic sheaf $\cF_{K}$ over $X_K$. We denote by $X_0$ the special fiber of $X$, and we have a natural specialization map $\spe: X_K\ra X_0$ of topological spaces (cf. \cite{Be} Chap. 1). If $U_0$ is a locally closed subset of $X_0$, we put $]U_0[\,=\spe^{-1}(U_0)$, and call it the tube over $U_0$ in $X_{K}$. If $U\subset X$ is the open formal subscheme with special fiber $U_0$, then we have $]U_0[=U_K$.

\begin{lemma}[cf. \cite{Pi}, Lemme 7.1]\label{lem-ext-aff} Let $X=\Spf(A)$ be an admissible affine formal scheme over $\cO_K$, $U_0\subset X_0$ be an open subset of its specail fiber, $\cF$ be a coherent $\cO_X$-module flat over $\cO_K$, and $\cF_0=\cF/\pi\cF$. If the restriction map $H^0(X_0,\cF_0)\ra H^0(U_0,\cF_0)$ is an isomorphism, then the restriction map
\[H^0(X_K,\cF_{K})\ra H^0(]U_0[,\cF_{K})\]
is also an isomorphism.
\end{lemma}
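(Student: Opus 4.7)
My approach is to translate the statement into one about the cohomology of a coherent sheaf on a formal model and then verify it modulo $\pi^n$ by induction. Concretely, let $U\subset X$ be the unique open formal subscheme whose underlying topological space is $U_0$; then $U$ is a quasi-compact noetherian admissible formal scheme over $\Spf(\cO_K)$ and its rigid generic fibre $U_K$ coincides with $]U_0[$. The standard comparison between coherent $\cO_U$-modules and their rigid generic fibres for qcqs admissible formal schemes (e.g.\ \cite[Ch.~4--5]{Ab}) gives $H^0(]U_0[,\cF_K)=H^0(U,\cF)\otimes_{\cO_K}K$, and likewise $H^0(X_K,\cF_K)=M\otimes_{\cO_K}K$ where $M:=H^0(X,\cF)$ is a finitely generated, $\pi$-adically complete $A$-module. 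It therefore suffices to prove $H^0(U,\cF)=M$.

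By Grothendieck's formal-function theorem applied to the noetherian formal scheme $U$ and the coherent sheaf $\cF$, one has $H^0(U,\cF)=\varprojlim_n H^0(U_0,\cF/\pi^n\cF)$. Since $M$ is $\pi$-adically complete, it is then enough to show that, for every $n\geq 1$, the restriction
\[\rho_n\colon M/\pi^nM\longrightarrow H^0(U_0,\cF/\pi^n\cF)\]
is an isomorphism. The case $n=1$ is precisely the hypothesis (together with the identification $M/\pi M=H^0(X_0,\cF_0)$, which holds because $X_0$ is affine and $\cF$ is $\cO_K$-flat).

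For the inductive step, I would place the short exact sequence $0\to\cF_0\xrightarrow{\pi^{n-1}}\cF/\pi^n\cF\to\cF/\pi^{n-1}\cF\to 0$ (exact by $\cO_K$-flatness of $\cF$) next to its counterpart on $X$:
\[\xymatrix{
0\ar[r] & M/\pi M\ar[r]^{\pi^{n-1}}\ar[d]^{\rho_1}_{\wr} & M/\pi^nM\ar[r]\ar[d]^{\rho_n} & M/\pi^{n-1}M\ar[r]\ar[d]^{\rho_{n-1}}_{\wr} & 0\\
0\ar[r] & H^0(U_0,\cF_0)\ar[r] & H^0(U_0,\cF/\pi^n\cF)\ar[r] & H^0(U_0,\cF/\pi^{n-1}\cF)\ar[r] & H^1(U_0,\cF_0).
}\]
Injectivity of $\rho_n$ is immediate from a chase using the isomorphisms $\rho_1$ and $\rho_{n-1}$. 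For surjectivity, given $s\in H^0(U_0,\cF/\pi^n\cF)$ with image $\bar s\in H^0(U_0,\cF/\pi^{n-1}\cF)$, write $\bar s=\rho_{n-1}(m)$ and lift $m$ to some $\tilde m\in M/\pi^n M$; the difference $\rho_n(\tilde m)-s$ then lies in the kernel of the second horizontal map, hence in $H^0(U_0,\cF_0)=\rho_1(M/\pi M)$, and a correction by $\pi^{n-1}$ times a lift puts $s$ in the image of $\rho_n$. The point is that the induction goes through without ever needing the vanishing of $H^1(U_0,\cF_0)$, which is important since $U_0$ need not be affine.

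The main obstacle I anticipate is not the inductive five-lemma argument, which is essentially formal, but rather the invocation of the two comparison theorems for the (possibly non-affine) open $U\subset X$: Grothendieck's formal-function theorem $H^0(U,\cF)\cong\varprojlim_n H^0(U_0,\cF/\pi^n\cF)$, and the formal/rigid identification $H^0(U_K,\cF_K)\cong H^0(U,\cF)\otimes_{\cO_K} K$. Both are standard for $U$ quasi-compact over a noetherian base, and both reduce to the affine case (where each holds tautologically) via a finite cover of $U_0$ by basic opens $D(\bar f_i)$ lifted to open formal subschemes $\Spf(A\langle f_i^{-1}\rangle)$ together with a \v{C}ech calculation; once they are granted, the lemma follows immediately.
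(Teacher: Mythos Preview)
Your argument is correct. Both you and the paper reduce to the integral statement $H^0(X,\cF)\xrightarrow{\sim}H^0(U,\cF)$ and then invert $\pi$, but the paper reaches this in one step rather than by induction on $n$: it observes that $H^0(U,\cF)/\pi H^0(U,\cF)\hookrightarrow H^0(U_0,\cF_0)$, that the composite $H^0(X,\cF)\twoheadrightarrow H^0(X_0,\cF_0)\xrightarrow{\sim}H^0(U_0,\cF_0)$ factors through $H^0(U,\cF)$, and then applies Nakayama (after noting that $H^0(U,\cF)/\pi$ being finite over $A/\pi$ forces $H^0(U,\cF)$ to be a finite $A$-module). Injectivity of $\iota$ is the same $\pi$-divisibility argument you implicitly run in your $n=1$ step together with $\pi$-adic separatedness. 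Your level-by-level five-lemma argument is a perfectly valid unwinding of this Nakayama step; it has the mild advantage of making explicit the isomorphisms $M/\pi^n M\cong H^0(U_0,\cF/\pi^n\cF)$ for all $n$, at the cost of invoking the formal-functions identification $H^0(U,\cF)\cong\varprojlim_n H^0(U_0,\cF/\pi^n\cF)$, which the paper's direct Nakayama argument avoids.
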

\begin{proof}
Let $U$ be the formal open subscheme of $X$ corresponding to $U_0$. The the exact sequence $0\ra \cF\xra{\times \pi}\cF\ra \cF_0\ra 0 $ of sheaves on $X$ induces a commutative diagram
\[\xymatrix{H^0(X,\cF)\ar[d]^{\iota}\ar@{->>}[rr]&&H^0(X_0,\cF_0)\ar[d]^{\sim}\\
H^0(U,\cF)\ar@{->>}[r]&H^0(U,\cF)/\pi H^0(U,\cF)\ar@{^(->}[r] &H^0(U_0,\cF_0),}\]
where the horizontal maps are natural reductions, and the vertical arrows are restrictions.  Since the right vertical arrow is an isomorphism and $H^0(U,\cF)$ has no $\pi$-torsion, an easy diagram chase shows that $\iota$ is injective. On the other hand, since $H^0(U,\cF)/\pi H^0(U,\cF)$ is a finite over $A/\pi A$, Nakayama lemma implies that $H^0(U,\cF)$ is a finite $A$-module, and the image of $\iota$ generates $H^0(U,\cF)$. Hence, the restriction map $\iota$ is an isomorphism. The Lemma follows immediately by inverting $\pi$.

\end{proof}

\begin{lemma}\label{lem-dense}
Let $X$ be an admissible formal scheme over $\cO_K$, $U_0\subset X_0$ be an open dense subscheme of the special fiber, and $\cF$ be a coherent $\cO_X$-module flat over $\cO_K$. Then the restriction map
\[H^0(X_K,\cF_K)\ra H^0(]U_0[,\cF_K)\]
is injective.
\end{lemma}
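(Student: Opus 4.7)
The strategy mirrors Lemma~\ref{lem-ext-aff}: reduce to the affine case by quasi-compactness, translate vanishing on the tube $]U_0[$ into a module-theoretic statement using the $\cO_K$-flatness of $\cF$, then iterate on powers of $\pi$.

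First, I would cover $X$ by finitely many affine admissible open formal subschemes. Since the hypotheses and conclusion are local on $X$ and since density of $U_0$ in $X_0$ restricts to any open subscheme, this reduces to the case $X = \Spf(A)$ with $\cF$ corresponding to a finitely generated $\cO_K$-flat $A$-module $M$. Any element of $H^0(X_K,\cF_K) = M\otimes_{\cO_K}K$ can be written as $g/\pi^n$ with $g\in M$. Because $M$ is $\pi$-adically separated, it would suffice to show: if $g$ restricts to zero on $]U_0[$, then $g\in \pi^r M$ for every $r\geq 0$.

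To extract information from the vanishing on $]U_0[$, I would fix a finite covering of $U_0$ by principal opens $D(\bar a_j)$ with lifts $a_j\in A$, and use that each tube $]D(\bar a_j)[$ is the affinoid subdomain of $X_K$ whose affinoid algebra is $A_{a_j}[1/\pi]$, where $A_{a_j}$ denotes the $\pi$-adic completion of $A[a_j^{-1}]$. The morphism $A\to A_{a_j}$ is flat, so $M\otimes_A A_{a_j}$ remains $\cO_K$-flat and in particular $\pi$-torsion free. Vanishing of $g$ on $]D(\bar a_j)[$ therefore improves from a statement in $M\otimes_A A_{a_j}[1/\pi]$ to one in $M\otimes_A A_{a_j}$ itself; reducing modulo $\pi$ and applying the sheaf axiom on the cover $\{D(\bar a_j)\}$ of $U_0$, one obtains $\bar g|_{U_0} = 0$ in $H^0(U_0,\cF_0)$.

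The main remaining step is to conclude $\bar g = 0$ in $H^0(X_0,\cF_0)$ from its vanishing on the dense open $U_0$; once this is in hand, one writes $g = \pi g_1$ for some $g_1\in M$ and replays the analysis with $g_1/\pi^{n-1} = g/\pi^n$ (which still vanishes on $]U_0[$), driving $g$ into $\bigcap_r \pi^r M = 0$. This density-to-vanishing step on the special fiber is the principal point of delicacy: it requires that $\cF_0$ carry no nonzero sections supported on the nowhere dense complement $X_0-U_0$, which holds when $\cF$ is locally free. Since this is the case in the intended application to Proposition~\ref{prop-koecher}, where $\cF = \omegab^{\vk}$ is the modular line bundle and a section of a locally free sheaf vanishing on a dense open vanishes on each connected component it meets, the lemma applies in the form needed.
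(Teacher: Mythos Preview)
Your approach is essentially identical to the paper's: reduce to the affine case via a finite cover, then use $\pi$-torsion-freeness of $\cF$ and $\pi$-adic separatedness to iterate the divisibility $g\in\pi M\Rightarrow g\in\pi^2 M\Rightarrow\cdots$. The paper, however, simply asserts that density of $U_0$ makes the restriction $H^0(V_{i,0},\cF_0)\to H^0(U_0\cap V_{i,0},\cF_0)$ injective, without the caveat you raise.

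You are right to flag this step, and in fact the lemma as stated in the paper is false for general coherent $\cF$ flat over $\cO_K$. Take $X=\Spf(\cO_K\langle x\rangle)$, let $\cF$ be the sheaf associated to $M=\cO_K\langle x\rangle\oplus \cO_K\langle x\rangle/(x)$ (which is $\cO_K$-flat since $\cO_K\langle x\rangle/(x)\simeq\cO_K$), and let $U_0=D(\bar x)$. Then the section $(0,1)\in M\otimes_{\cO_K}K=H^0(X_K,\cF_K)$ is nonzero but restricts to zero on $]U_0[$, since $x$ becomes a unit there. So the paper's proof has a genuine gap at precisely the point you identify. Your restriction to locally free $\cF$---or more generally to $\cF$ for which $\cF_0$ has no embedded associated points---is the correct repair, and it suffices for every use of the lemma in the paper (Proposition~\ref{prop-ext-1} and Corollary~\ref{cor-ext-1} both feed into applications where $\cF=\omegab^{\vk}$ is a line bundle).
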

\begin{proof}
Let $U\subset X$ be the open formal subscheme corresponding to $U_0$. Since $X$ is quasi-compact and separated, we have a natural isomorphism $H^0(X_K,\cF_K)\simeq H^0(X,\cF)\otimes_{\cO_K}K$; and similar statement holds with $X$ replaced by $U$. Hence, it suffices to prove that the restriction map $H^0(X,\cF)\ra H^0(U,\cF)$ is injective. Let $X=\bigcup_{i\in I}V_i$ be a finite open covering of $X$ by affine admissible formal schemes over $\cO_K$. Since $H^0(X,\cF)\ra \prod_{i\in I}H^0(V_i,\cF)$ is injective, we are reduced to proving that $H^0(V_i,\cF)\ra H^0(U\cap V_i,\cF)$ is injective for all $i\in I$. The density of $U_0$ implies that
$$H^0(V_{i},\cF)\otimes_{\cO_K}k =H^0(V_{i,0},\cF/\pi\cF)\ra H^0(U_0\cap V_{i,0}, \cF/\pi\cF)$$ is injective. Let $f\in H^0(V_{i},\cF)$ such that $f|_{U\cap V_i}=0$. Then there exists $f_1\in H^0(V_{i},\cF)$ with $f=\pi f_1$. Since $\cF$ has no $\pi$-torsion, we deduce that $f_1|_{U\cap V_i}=0$. Repeating this process, we see that $f=0$, because $H^0(V_i,\cF)$ is $\pi$-adically separated.
\end{proof}
In general, if $X_0$ is a locally noetherian scheme and $\cF$ is a coherent sheaf  on $X_0$, we denote by $\dep_{X_0}(\cF_x)$ the depth of $\cF_x$ as an $\cO_{X_0,x}$-module for any $x\in X_0$.

\begin{prop}\label{prop-ext-1}
Let $X$ be an admissible  formal scheme over $\cO_K$, $Y_0\subset X_0$ be a closed subscheme, $\cF$ be a coherent $\cO_X$-module on $X$ flat over $\cO_K$, $\cF_0=\cF/\pi\cF$. Assume that for any $x\in Y_0$, we have $\dep_{X_0}(\cF_{0,x})\geq 2$. Then the restriction map
\[\iota: H^0(X_K,\cF_K)\ra H^0(]X_0-Y_0[, \cF_K)\]
is an isomorphism.
\end{prop}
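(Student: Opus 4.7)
The strategy is to reduce to the affine case and apply Lemma \ref{lem-ext-aff}. The essential input that the depth hypothesis provides is a classical local-cohomology criterion: for a coherent sheaf $\cG$ on a locally noetherian scheme $Z_0$ and a closed subset $T\subset Z_0$, if $\dep_{Z_0}(\cG_x)\geq 2$ for every $x\in T$, then $H^0_T(\cG)=H^1_T(\cG)=0$, and consequently the restriction map
\[
H^0(Z_0,\cG)\lra H^0(Z_0-T,\cG)
\]
is an isomorphism. Applied to $Z_0=V_0$, $T=V_0\cap Y_0$ and $\cG=\cF_0|_{V_0}$ for an affine admissible open $V=\Spf(A)\subset X$, this yields exactly the hypothesis needed to invoke Lemma \ref{lem-ext-aff}.

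More concretely, the plan is as follows. First I would choose a finite covering $X=\bigcup_{i\in I}V_i$ by affine admissible formal opens $V_i=\Spf(A_i)$; since $X$ is separated, each intersection $V_{ij}=V_i\cap V_j$ (and triple intersection) is again affine admissible. The depth assumption is preserved by restriction to any open of $X_0$, so for each index $i$ (resp.\ pair $i,j$) the local-cohomology vanishing above gives isomorphisms
\[
H^0(V_{i,0},\cF_0)\xra{\sim}H^0(V_{i,0}-Y_0,\cF_0),\qquad
H^0(V_{ij,0},\cF_0)\xra{\sim}H^0(V_{ij,0}-Y_0,\cF_0).
\]
Lemma \ref{lem-ext-aff}, applied to each $V_i$ with $U_0=V_{i,0}-Y_0$, and then to each $V_{ij}$, then gives
\[
H^0(V_{i,K},\cF_K)\xra{\sim}H^0(\,]V_{i,0}-Y_0[\,,\cF_K),\qquad
H^0(V_{ij,K},\cF_K)\xra{\sim}H^0(\,]V_{ij,0}-Y_0[\,,\cF_K).
\]

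Finally I would glue. The admissible covers $\{V_{i,K}\}$ of $X_K$ and $\{\,]V_{i,0}-Y_0[\,\}$ of $\,]X_0-Y_0[\,$ produce, by the sheaf axiom for the coherent rigid sheaf $\cF_K$, two left-exact sequences fitting into a commutative diagram whose middle and right vertical arrows are the isomorphisms just established. The five-lemma (or a direct diagram chase) then gives that the restriction map $\iota:H^0(X_K,\cF_K)\to H^0(\,]X_0-Y_0[\,,\cF_K)$ is an isomorphism as well.

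The only non-formal ingredient is the local-cohomology vanishing from the depth hypothesis, which is the main (but entirely standard, cf.\ SGA 2, Exp.\ III) content of the proof; everything else — covering, applying Lemma \ref{lem-ext-aff} termwise, and gluing via the sheaf axiom — is routine. Note that when $X_0-Y_0$ happens to be dense in $X_0$ one could alternatively invoke Lemma \ref{lem-dense} for the injectivity part, but the argument above gives both injectivity and surjectivity uniformly without such a density assumption, which is slightly cleaner in our formal-scheme context where $X_0$ may have irreducible components not meeting $\Supp(\cF_0)$.
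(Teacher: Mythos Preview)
Your argument is correct and follows essentially the same route as the paper: reduce to affine pieces, invoke the depth criterion from SGA~2 to verify the hypothesis of Lemma~\ref{lem-ext-aff}, and then glue. The only difference is in the gluing step. The paper first deduces from the depth hypothesis that $Y_0$ has codimension $\geq 2$, hence $X_0-Y_0$ is dense, and then uses Lemma~\ref{lem-dense} both for global injectivity of $\iota$ and to check that the local extensions agree on overlaps $U_{i,K}\cap U_{j,K}$. You instead apply Lemma~\ref{lem-ext-aff} a second time on the affine intersections $V_{ij}$ and conclude by the sheaf sequence/five-lemma, which bypasses Lemma~\ref{lem-dense} altogether. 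Your variant is marginally cleaner in that it does not invoke the density argument (and, as you note, is insensitive to components of $X_0$ on which $\cF_0$ vanishes), while the paper's version makes the geometric content (codimension $\geq 2$) more visible; substantively the two are the same proof.
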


\begin{proof}
First, we have  $\dim(\cO_{X_0,x})\geq \dep_{X_0}(\cF_{0,x})\geq 2$ for all $x\in Y_0$ by \cite[Thm. 17.2]{Ma}, i.e. $Y_0$ has at least codimension $2$ in $X_0$. In particular, $X_0-Y_0$ is dense in $X_0$.  The morphism $\iota$ is therefore injective by Lemma \ref{lem-dense}. Let $X=\bigcup_{i\in I}U_i$ be a finite covering of $X$ by affine open admissible formal subschemes. By \cite[II 3.5]{Gr}, the natural map $H^0(U_{i,0},\cF_{0})\ra H^0(U_{i,0}- Y_0,\cF_0)$ is an isomorphism. It results  from Lemma \ref{lem-ext-aff} that
$$H^0(U_{i,K},\cF_K)\ra H^0(]U_{i,0}-Y_0[,\cF_K)$$
 is an isomorphism for all $i\in I$.  Let $f\in H^0(]X_0-Y_0[, \cF_K)$, and $f_i$ be its restriction to $]U_{i,0}-Y_0[$. By the previous discussion, $f_i$ extends naturally to $U_{i,K}$. We have to show that the $f_i|_{U_{i,K}\cap U_{j,K}}=f_{j}|_{U_{i,K}\cap U_{j,K}}$ for $i,j\in I$. As  $U_{i,0}\cap U_{j,0}-Y_0$ is dense in $U_{i,0}\cap U_{j,0}$ and $f_i|_{]U_{i,0}\cap U_{j,0}-Y_0[}=f_{j}|_{]U_{i,0}\cap U_{j,0}-Y_0[}$, we  conclude by Lemma \ref{lem-dense}.

\end{proof}

Recall that we say a noetherian local ring $A$ satisfy the condition $S_2$, if $\dep(A)\geq \min\{2,\dim(A)\}$. For instance, all the normal rings or Cohen-Macaulay noetherian local rings satisfy the condition $S_2$.

\begin{cor}\label{cor-ext-1}
Let $X$ be an admissible formal scheme over $\cO_K$, and $\cF$ be an $\cO_X$-module locally free of finite type. Let $Y_0\subset X_0$ be a closed subscheme of codimension $\geq 2$ such that  for any $x\in Y_0$, the local ring $\cO_{X_0,x}$ satisfy the condition $S_2$. Then the restriction map
\[H^0(X_K,\cF_K)\ra H^0(]X_0-Y_0[,\cF_K)\]
is an isomorphism.
\end{cor}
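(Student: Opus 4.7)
The plan is to deduce the corollary directly from Proposition \ref{prop-ext-1} by verifying its depth hypothesis. Specifically, Proposition \ref{prop-ext-1} asserts the same conclusion under the assumption that $\dep_{X_0}(\cF_{0,x}) \geq 2$ for all $x \in Y_0$, so it suffices to translate the condition $S_2$ plus codimension $\geq 2$ into this depth bound.

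First, I would observe that since $\cF$ is locally free of finite type over $\cO_X$, the reduction $\cF_0 = \cF/\pi\cF$ is locally free of finite type over $\cO_{X_0}$. In particular, for any point $x \in X_0$, the stalk $\cF_{0,x}$ is a free $\cO_{X_0,x}$-module of finite (positive) rank, hence
\[
\dep_{X_0}(\cF_{0,x}) = \dep(\cO_{X_0,x}).
\]
This reduces the problem to showing that $\dep(\cO_{X_0,x}) \geq 2$ for every $x \in Y_0$.

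Next, for $x \in Y_0$, the codimension hypothesis gives $\dim(\cO_{X_0,x}) \geq 2$ (using that $Y_0$ has codimension at least $2$ in $X_0$, which translates into a lower bound on the dimensions of the local rings at points of $Y_0$). The $S_2$ hypothesis then yields
\[
\dep(\cO_{X_0,x}) \geq \min\{2, \dim(\cO_{X_0,x})\} = 2,
\]
and combining with the previous identity gives $\dep_{X_0}(\cF_{0,x}) \geq 2$.

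Finally, the hypotheses of Proposition \ref{prop-ext-1} are met, and applying it produces the desired isomorphism $H^0(X_K,\cF_K) \xra{\sim} H^0(]X_0-Y_0[, \cF_K)$. There is no real obstacle here; the only small subtlety is making the depth-is-preserved-by-freeness observation explicit, which is standard. The substantive work was already done in Proposition \ref{prop-ext-1}; this corollary merely repackages its hypothesis into the more familiar ``$S_2$ plus codimension $\geq 2$'' form that is used throughout the body of the paper (e.g., in Proposition \ref{prop-exten}).
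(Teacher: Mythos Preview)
Your proposal is correct and follows the same approach as the paper: both deduce the corollary from Proposition~\ref{prop-ext-1} by noting that local freeness gives $\dep_{X_0}(\cF_{0,x})=\dep(\cO_{X_0,x})$ and then using $S_2$ together with codimension $\geq 2$ to obtain $\dep(\cO_{X_0,x})\geq 2$. Your write-up is simply a more detailed unpacking of the paper's one-line proof.
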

\begin{proof}
The condition $S_2$ implies that $\dep_{X_0}(\cF_{0,x})=\dep_{X_0}(\cO_{X_0,x})\geq 2$ for $x\in Y_0$.  The Corollary follows immediately form the Proposition.
\end{proof}

\subsection{Norms} Let  $|\cdot|: K\ra \R_{\geq 0}$ be a non-archimedean absolute value on $K$ with valuation ring $\cO_K$.  We fix an algebraic closure $\Kb$ of $K$. Note that $|\cdot|$ extends uniquely to $\Kb$. If $L$ is a finite extension of $K$, we always consider $L$ as a subfield of $\Kb$ by choosing a $K$-embedding, and we denote by $\cO_L$ the ring of integers in $L$. The following treatment on norms follows \cite[5.3]{Pi}.

Let $X$ be an admissible formal scheme over $\cO_K$, and $\cF$ be an $\cO_X$-module locally free of finite type. Let $x$ be a rigid point of $X_K$ with values in a finite extension $L/K$. Then $x$ comes from a morphism $\tilde{x}:\Spf(\cO_L)\ra X$ of formal schemes over $\cO_K$. Let $(e_i)_{i\in I}$ be a basis for the finite free $\cO_L$-module $\tilde{x}^*(\cF)$. We have a natural identification $x^*(\cF_K)=\tilde{x}^*(\cF)\otimes_{\cO_K}K$. For $f=\sum_{i\in I}a_i\, e_i\in x^*(\cF_K)$ with $a_i\in L$, we put $|f|=\max_{i\in I}\{|a_i|\}$. It's easy to check that this definition does not depend on the choice of the basis $(e_i)_{i\in I}$.
Let $U\subset X_K$ be an admissible open subset, and $f\in H^0(U,\cF_K)$. We put $|f|_{U}=\sup_{x\in U(\Kb)} |x^*(f)|$. Note that it's possible that $|f|_{U}=+\infty$  if $U$ is not quasi-compact. If $U$ is quasi-compact and reduced, it follows  from the maximality principle that $|f|<+\infty$, and we have $|f|=0$ if and only if $f=0$. Therefore, in this case, $H^0(U,\cF_K)$ is a Banach space over $K$.

\begin{rem}\label{rem-ext}
In  Corollary \ref{cor-ext-1}, if $f$ is  a section of $\cF_{K}$ over $]X_0-Y_0[$ with $|f|_{]X_0-Y_0[}\leq C$ for some  $C\geq 0$, then it's easy to see from the proof that its natural extension to $X_K$ still satisfy $|f|_{X_K}\leq C$.
\end{rem}

This following lemma is a variant of \cite[Lemma 2.3]{Ks}, and the proof is of also quite similar.

\begin{lemma}\label{lem-glue} Let $X$ be an  admissible formal scheme over $\cO_K$ such that its rigid generic fiber $X_{K}$ is smooth, and $\cF$ be an $\cO_{X}$-module locally free of finite type. Let $U_0$ be a quasi-compact  admissible open subset of $X_K$, and
\begin{align*}
&U_0\supset U_1\supset U_2\supset\cdots\supset U_n\supset \cdots\\
 &U_0\supset V_1\supset V_2\supset\cdots\supset V_n\supset \cdots
 \end{align*}
be two infinite sequences of quasi-compact admissible open subsets such that $U_n$ is a strict neighborhood of $V_n$. Assume that  no connected component of $U_0$ is contained in  $\bigcap_{n\geq 1}V_n$. Let $F_n\in H^0(U_0-V_n,\cF_{\rig})$ and $F_n'\in H^0(U_n,\cF_{\rig})$ satisfying the following condition.
\begin{itemize}
\item[(a)]{} $F_n|_{U_0-V_{n-1}}=F_{n-1}$ for $n\geq 2$;

\item[(b)]{}
$|F_n-F_n'|_{U_n-V_n}$ tends to $0$ as $ n\ra \infty$;

\item[(c)]{} there exists a constant $C>0$  such that  $|F_n|_{U_0-V_n}, |F'_n|_{U_n}\leq C$ for all $n\geq 1$;

\end{itemize}
 Then there exists a unique section $F\in H^0(U_0,\cF_{K})$ such that $F|_{U_0-V_n}=F_n$ and $|F|_{U_0}\leq C$.
\end{lemma}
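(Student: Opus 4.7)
The plan is to construct $F$ in two stages: first, glue the $F_n$ on their common domain $U_0-\bigcap_n V_n$ using (a); second, extend across $\bigcap_n V_n$ by showing that the approximating sequence $\{F_n'\}$ is Cauchy in an appropriate sense, using the maximum modulus principle for smooth rigid spaces. Condition (a) immediately gives a section $\tilde F$ on the admissible open $W:=U_0-\bigcap_n V_n=\bigcup_n(U_0-V_n)$ with $|\tilde F|_W\le C$ by (c). Uniqueness of any extension $F$ is then automatic: the hypothesis that no connected component of $U_0$ is contained in $\bigcap_n V_n$ implies that $W$ meets each connected component of $U_0$ in a nonempty admissible open, so on the smooth rigid space $X_K$ the principle of analytic continuation forces a section of $\cF_K$ vanishing on $W$ to vanish on all of $U_0$.

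For the existence of the extension, the core estimate is as follows. Fix integers $n\le m$, so that both $F_n'$ and $F_m'$ are defined on $U_m\subset U_n$. Write $\epsilon_k=|F_k-F_k'|_{U_k-V_k}$, which tends to $0$ by (b). On the outer region $U_m-V_n$, I claim
\[
|F_n'-F_m'|_{U_m-V_n}\;\le\;\sup_{k\ge n}\epsilon_k.
\]
The proof is a telescoping argument $F_m'-F_n'=\sum_{k=n+1}^{m}(F_k'-F_{k-1}')$: on $U_m-V_n$ one has $U_m-V_n\subset U_k-V_k$ and $U_m-V_n\subset U_{k-1}-V_{k-1}$ for $n+1\le k\le m$, and $U_m-V_n\subset U_0-V_{k-1}$ (so that (a) gives $F_k=F_{k-1}$ there), whence $|F_k'-F_{k-1}'|\le\max(\epsilon_k,\epsilon_{k-1})$ on $U_m-V_n$. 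The next step is to transfer this bound from $U_m-V_n$ to all of $U_m$ via the maximum modulus principle for smooth quasi-compact rigid spaces: on a smooth quasi-compact rigid space $Z$, a section of a locally free sheaf on $Z$ attains its supremum on any quasi-compact admissible open subset that meets every connected component of $Z$. Applied with $Z=U_m$ and the subset $U_m-V_n$, this gives $|F_n'-F_m'|_{U_m}\to 0$ as $n\to\infty$, uniformly in $m\ge n$.

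Once this uniform Cauchy estimate is established, the sequence $\{F_n'\}$ converges in the Banach limit to provide, together with $\tilde F$ on $W$, a section $F\in H^0(U_0,\cF_K)$; agreement on the overlap follows because $|F_n'-F_n|_{U_n-V_n}=\epsilon_n\to 0$, and the bound $|F|_{U_0}\le C$ is preserved in the limit by (c) and the continuity of the sup norm. The relation $F|_{U_0-V_n}=F_n$ is then built into the construction.

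The main technical obstacle is the rigorous application of the maximum modulus principle, i.e.\ the implication from $|F_n'-F_m'|_{U_m-V_n}\to 0$ to $|F_n'-F_m'|_{U_m}\to 0$. One must verify that, for $n$ sufficiently large, the admissible open $U_m-V_n$ meets every connected component of $U_m$; here the smoothness of $X_K$, the strict-neighborhood relation $U_n\supset V_n$, and the global hypothesis on connected components of $U_0$ all enter the bookkeeping. A component of $U_m$ contained in $V_n$ for every $n\ge m$ would be forced to lie in $\bigcap_n V_n$, and a careful tracing through the strict-neighborhood structure shows this would contradict the hypothesis on $U_0$. This is the delicate step, and is what makes the proof genuinely a variant of \cite[Lemma 2.3]{Ks}.
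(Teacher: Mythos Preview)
Your uniqueness argument is fine and matches the paper. The existence argument, however, rests on a false version of the maximum modulus principle. You assert that on a smooth quasi-compact rigid space $Z$, a section attains its supremum on any quasi-compact admissible open subset meeting every connected component of $Z$. This is not true: already for $Z$ the closed unit disk and $f(z)=z$, the supremum on $Z$ is $1$ while on the subdisk $\{|z|\le 1/2\}$ it is $1/2$. The rigid maximum modulus principle only says the supremum on a reduced affinoid is realized on its Shilov boundary, not on an arbitrary admissible open meeting each component. Consequently the implication
\[
|F_n'-F_m'|_{U_m-V_n}\to 0 \;\Longrightarrow\; |F_n'-F_m'|_{U_m}\to 0
\]
is unjustified, and without it your Cauchy argument collapses. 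In fact this is exactly the obstacle the lemma is designed to overcome: $F_n'-F_m'$ can be large on the inner region $V_n$ even when it is small on the outer region $U_m-V_n$.

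The paper's proof takes a completely different route and does not attempt to prove $\{F_n'\}$ is Cauchy. After reducing to $U_0$ affinoid and $\cF_K=\cO_{X_K}$, one renormalizes so that $|F_n-F_n'|_{U_n-V_n}\le |\pi|^n$ and both lie in the integral subsheaf $\tilde{\cO}_{X_K}$ of functions of norm $\le 1$. Then $F_n$ on $U_0-V_n$ and $F_n'$ on $U_n$ agree modulo $\pi^n$ on the overlap, so they glue to a section $\overline{G_n}\in H^0(U_0,\tilde{\cO}_{X_K}/\pi^n\tilde{\cO}_{X_K})$. The key input is Bartenwerfer's theorem that for a smooth affinoid there exists a nonzero $c\in\cO_K$ with $c\cdot H^1(U_0,\tilde{\cO}_{X_K})=0$; this is where smoothness of $X_K$ is actually used. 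The long exact sequence then lifts the compatible system $\{c\overline{G_n}\}$ to a genuine $G\in H^0(U_0,\tilde{\cO}_{X_K})$, and $F=G/c$ is the desired section. The argument is cohomological rather than Cauchy-theoretic, and it is precisely the bounded-torsion control on $H^1$ that substitutes for the (unavailable) maximum principle you invoked.
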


\begin{proof} By assumption, each connected component of $U_0$ has a non-empty intersection with $U_0-V_n$ for $n$ sufficiently large. The uniqueness of $F$ with $F|_{U_0-V_n}=F_n$ follows from the principle of analytic continuation in rigid geometry \cite[0.1.13]{Be}.  We note also that the estimation $|F|_{U_0}\leq C$ is trivial. It remains to prove the existence of $F$. This is a local problem for the Grothendieck topology on $X_K$.  By  choosing a finite open affinoid covering of $X_{K}$ over which $\cF_{K}$ is trivial, we reduce to the case where $U_0$ is  affinoid and $\cF_K=\cO_{X_K}$. Let $\tilde{\cO}_{X_K}$ be the subsheaf of $\cO_{X_K}$ consisting of sections with norm at most $1$. Up to renormalization, we may assume that $F_n, F_n'$ are sections of $\tilde{\cO}_{X_K}$ and
$$|F_n-F_n'|_{U_n-V_n}\leq {|\pi|^n}.$$
Consider the quotient sheaf $\tilde{\cO}_{X_K}/\pi^n\tilde{\cO}_{X_K}$. Then $F_n, F_n'$ glue together to a section $\overline{G_n}\in H^0(X_K, \tilde{\cO}_{X_K}/\pi^n\tilde{\cO}_{X_K})$ for each $n\geq 1$. Note that the exact sequences
$$0\ra \tilde{\cO}_{X_K}\xra{\times \pi^n}\tilde{\cO}_{X_K}\ra \tilde{\cO}_{X_i}/\pi^n\tilde{\cO}_{X_K}\ra 0$$
 induce an exact sequence of cohomology groups
\[0\ra H^0(U_0,\tilde{\cO}_{X_K})\ra \varprojlim_n H^0(U_0,\tilde{\cO}_{X_K}/\pi^n\tilde{\cO}_{X_K})\ra H^1(U_0, \tilde{\cO}_{X_K}).\]
By a result of Bartenwerfer \cite[Thm. 2]{Ba} (which uses the smoothness of $X_K$), there exists $c\in \cO_K$ with $c\neq 0$ such that $cH^1(U_0, \tilde{\cO}_{X_K})=0$. Therefore, the sections $\{c\overline{G}_n; n\geq 1\}$ come from a certain $G\in H^0(U_0,\tilde{\cO}_{X_K})$. Then we can take $F=\frac{G}{c}$.
\end{proof}

\setcounter{subsection}{0}
\section*{Appendix B. Zink's theory on Dieudonn\'e windows and canonical local coordinates at superspecial points}
\addtocounter{section}{1}
In this appendix, we prove some results needed in   Lemma \ref{lem-tech-ss}. We fix a prime number $p>0$.

\subsection{} Recall first the definition of windows in \cite{zink} and the extension to $p$-adic complete rings in \cite{K09}. Let $R$ be a $p$-adically complete and separated ring. A \emph{frame for $R$}, denoted by $(S,J,\varphi)$ or simply by $S$, is a surjective ring homomorphism $S\ra R$ with kernel $J$, where
\begin{enumerate}
\item $S$ is a $p$-adic ring flat over $\Z_p$, equipped with an endomorphism $\varphi$ lifting the Frobenius on $S/pS$.

\item $J$ is an ideal equipped with divided powers compatible the natural divided power structure on $pS$.

\end{enumerate}

For  a frame $(S,J,\varphi)$ for $R$, a \emph{Dieudonn\'e $S$-window} over $R$ is a finitely generated projective $S$-module $\cM$ together with the following data:
\begin{enumerate}
\item a submodule $\Fil^1\cM$ containing $J\cM$ such that $\cM/\Fil^1\cM$ is a projective $R$-module;

\item a $\varphi$-linear map $\varphi: \cM\ra \cM$ such that $\varphi(\Fil^1\cM)\subset p\cM$ and $\cM$ is generated  over $S$ by $\varphi(\cM)$ and $\varphi/p(\Fil^1\cM)$ .

\end{enumerate}

It's easy to see from (2) that $1\otimes \varphi$ is injective and $p\cM\subset (1\otimes \varphi)(\varphi^*\cM)$. So there exists a unique morphism $\psi: \cM\ra \varphi^*(\cM)$ of $S$-modules such that the composite
$\cM\xra{\psi}\varphi^*(\cM)\xra{1\otimes \varphi}\cM$
is the multiplication by $p$. We say a Dieudonn\'e $S$-window $(\cM,\Fil^1\cM,\varphi)$ is an \emph{$S$-window} if the image of
$$\varphi^{n-1*}(\psi)\circ\cdots\circ\varphi^*(\psi)\circ\psi:\cM\ra \varphi^{n*}(\cM)$$
is contained in $(p,J)\varphi^{n*}\cM$ for sufficiently large $n$.

\subsection{} Let $R$ be a $p$-adic complete and separated ring. A divided power surjection over $R$ is a surjective ring homomorphism $S'\ra R'$ with kernel $J'$, where  $R'$ is  a $R$-algebra and $J'$ is equipped with a divided power structure and consists of elements topologically nilpotent in the $p$-adic topology.  Let $G$ be a $p$-divisible group over $R$, and put $G_0=G\otimes_R(R/p)$. By \cite{BBM}, we can associate contravariantly with $G$ a crystal $\bD(G)$ over the big crystalline site of $R$. That is, to  each divided power surjection $S'\ra R'$ over $R$, we associate a finite locally free $S'$-module $\bD(G)(S'\ra R')$, such that  for a morphism of divided power surjections over $R$
\[\xymatrix{S'\ar[r]\ar[d]&S''\ar[d]\\
R'\ar[r]&R'',}\]
we have $\bD(G)(S''\ra R'')=S''\otimes_{S'}\bD(G)(S'\ra R')$.

Now let $(S,J,\varphi)$ be a frame for $R$. We  put $\cM(G)=\bD(G)(S\ra R)$. By \cite{BBM}, we have a canonical isomorphism of $S$-modules $\cM(G)=\bD(G_0)(S\ra R/p)$.  Since the crystal $\bD(G)$ commute with base change, we deduce from the  morphism of divided power surjections
\[\xymatrix{
S\ar[rr]^{\varphi}\ar[d]&&S\ar[d]\\
R/p\ar[rr]^{\mathrm{Frob}_{R/p}}&&R/p
}
\]
a canonical isomorphism
\[\bD(G_0^{(p)})(S\ra R/p)\simeq S\otimes_{\varphi,S}\bD(G_0)(S\ra R/p)=\varphi^*\cM(G).
\]
Therefore, the Frobenius homomorphism $F_{G_0}:G_0\ra G_0^{(p)}$ and the Verschiebung $V_{G_0}:G^{(p)}\ra G_0$ induce respectively homomorphisms of $S$-modules
\[1\otimes \varphi:\varphi^*\cM(G)\ra \cM(G)\quad\text{and}\quad \psi: \cM(G)\ra \varphi^*\cM(G).\]
We have $(1\otimes\varphi)\circ \psi=p$ since $V_{G_0}\circ F_{G_0}=p_{G_0}$. Note that $R\otimes_{S}\cM(G)=\bD(G)(R\xra{\id_R} R)$ by the base change property of the crystal $\bD(G)$. Let $\omega_{G}$ denote the module of invariant differentials of $G$ relative to $R$, and $\Lie(G^\vee)$ be the Lie algebra of the dual of $G$. By \cite{BBM}, we have a Hodge filtration
\[0\ra \omega_{G}\ra R\otimes_{S} \cM(G)\ra \Lie(G^\vee)\ra 0.\]
We define $\Fil^1\cM(G)$ to be the inverse image of $\omega_{G}$ in $\cM(G)$. Then we claim that $\varphi(\Fil^1\cM(G))\subset p\cM(G)$ and $\cM(G)$ is generated by  $\varphi(\cM(G))$ and $\varphi/p(\Fil^1\cM(G))$. Indeed, if $R$ is an algebraically closed field $k$ of characteristic $p$ and $S=W(k)$, this is well known in the classical Dieudonn\'e theory. Since the formation of the crystal $\cM(G)$ commutes with arbitrary base change, the general case of claim follows from this special case. Therefore, we get a Dieudonn\'e $S$-window $(\cM(G),\Fil^1\cM(G),\varphi)$. Since $\psi$ is induced by $V_{G_0}$, it's easy to see that $(\cM(G),\Fil^1\cM(G),\varphi)$ is an $S$-window, i.e. the extra nilpotent condition on  $\psi$ is verified,  if and only if $G$ has no multiplicative part.

\begin{thm}[\cite{zink}, Thm. 4]\label{thm-zink} Assume $R$ is excellent. Then the  contravariant functor
\[G\mapsto (\cM(G),\Fil^1\cM(G),\varphi)\]
constructed above
induces an anti-equivalence between the category of $p$-divisible groups over $R$ without multiplicative part and the category of $S$-windows over $R$.
\end{thm}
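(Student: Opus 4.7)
The plan is to follow the standard pattern for crystalline-style classification theorems: establish the statement in a base case (perfect field or artinian local ring), then extend to general excellent $R$ by a combination of noetherian/artinian approximation and $p$-adic limit arguments. Functoriality of the window construction has already been built into the definition of $\cM(G)$, so the task is to prove full faithfulness and essential surjectivity of the functor.

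First I would treat the case $R = k$ with $k$ a perfect field of characteristic $p$, using the frame $(W(k), pW(k), \sigma)$. Here the statement reduces to classical contravariant Dieudonn\'e theory: $S$-windows are exactly the finitely generated projective $W(k)$-modules with a Hodge filtration and semi-linear Frobenius such that the associated Verschiebung $\psi$ is topologically nilpotent. The nilpotence condition on $\psi$ corresponds precisely to the absence of a multiplicative summand in $G$, because multiplicative factors give rise to unit Verschiebung on the corresponding direct summand. Next I would handle the case where $R$ is artinian local with perfect residue field $k$, equipped with a PD-frame $S \to R$ lifting $W(k) \to k$. Over such an $R$, Grothendieck--Messing theory identifies deformations of a fixed $G_0/k$ with lifts of the Hodge filtration inside $\bD(G_0)(S \to R)$; translated into windows, this says that $S$-windows deforming the Dieudonn\'e module of $G_0$ are in bijection with $p$-divisible groups deforming $G_0$. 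Combining the base case and this deformation statement gives the anti-equivalence for artinian $R$.

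To pass from artinian $R$ to arbitrary excellent $p$-adically complete $R$, I would perform two reductions. The first is $p$-adic: both sides are $p$-adically complete in a suitable sense, so it suffices to establish the anti-equivalence modulo each $p^n$, since a $p$-divisible group over $R$ is a compatible system of truncated Barsotti-Tate groups over $R/p^n R$ and windows have analogous truncation levels coming from the $p$-adic filtration on $S$. The second reduction is to artinian quotients: excellence of $R$ ensures that after henselization the local rings have perfect residue fields, which allows one to compare a $p$-divisible group with its pullbacks to all artinian quotients via Artin approximation and faithfully flat descent for morphisms of $p$-divisible groups. Full faithfulness then follows from the artinian case and descent, since $\Hom$ on both sides satisfies fpqc sheaf properties.

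The main obstacle is essential surjectivity over general excellent $R$: given an abstract $S$-window $(\cM, \Fil^1\cM, \varphi)$, one must actually construct a $p$-divisible group $G$ whose associated window is this one. The strategy is to produce a compatible family of $p$-divisible groups on artinian quotients using the deformation-theoretic bijection, then glue them by an effectivity argument. The key technical points are: verifying that the window truncates correctly along every closed point of $\Spec(R/p)$, so that the artinian constructions are mutually compatible; checking that the resulting inverse system indeed algebraizes to a $p$-divisible group over $R$, which uses excellence of $R$ and the fact that $p$-divisible groups satisfy effective descent for the $(p)$-adic topology; and ensuring that the nilpotence condition on $\psi$ is precisely what is needed to rule out unliftable multiplicative parts in the approximation step. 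Once these are in place, the functor is checked to be an anti-equivalence by comparing isomorphism classes at each stage of the approximation.
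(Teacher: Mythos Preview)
The paper does not prove this theorem at all; it is simply quoted from \cite{zink}, Theorem~4, and the only additional content is the remark immediately following it that, because the paper uses the contravariant Dieudonn\'e convention, the $p$-divisible group attached to a window here is the Cartier dual of the one Zink attaches. So there is no proof in the paper to compare your proposal against.

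As for your sketch on its own merits: the overall shape (classical Dieudonn\'e theory over a perfect field, Grothendieck--Messing for the artinian step, then passage to general excellent $R$) is in the right spirit, but it is not Zink's actual route. Zink's argument passes through his theory of \emph{displays}: he shows that $S$-windows over $R$ are equivalent to nilpotent displays over $R$, and then invokes his earlier equivalence between nilpotent displays and formal $p$-divisible groups. The essential surjectivity you flag as the main obstacle is exactly where the display machinery does real work your outline does not supply: the $p$-divisible group is built from a display by an explicit functorial construction, not by gluing over artinian quotients. Your approximation-and-descent strategy is plausible in broad outline, but the ``algebraization'' of an inverse system of $p$-divisible groups over artinian quotients to one over $R$ is not automatic and would itself require a serious argument; this is precisely the gap that Zink's display formalism is designed to fill.
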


We point out that, since we have used contravariant Dieudonn\'e theory, the $p$-divisible group corresponding to an $S$-window $(\cM,\Fil^1\cM,\varphi)$ in our sense is the dual of the $p$-divisible group associated with $(\cM,\Fil^1\cM,\varphi)$ in the sense of Zink.

\subsection{} Let $k$ be an algebraically closed field of characteristic $p$, and $W=W(k)$. Let $g\geq 2$ be an integer, $\F_{p^g}$ be the finite field with $p^g$ elements, $\Z_{p^g}=W(\F_{p^g})$. We identify the set of embeddings of $\Z_{p^g}$ into $W(k)$ with $\Z/g\Z$. Let $R$ be a $W$-algebra. We say a $p$-divisible group $G$ over $R$ has \emph{formal real multiplication (or simply RM)} by $\Z_{p^g}$ if $G$ has dimension $g$ and height $2g$, and is equipped with action of $\Z_{p^g}$ such that $\Lie(G)$ is a  locally free $R\otimes_{\Z_p}\Z_{p^g}$-module of rank $1$.

Let $(S,J,\varphi)$ be a frame for $R$, and $G$ be a $p$-divisible group with RM by $\Z_{p^g}$ over $R$. The action of $\Z_{p^g}$ on $G$ induces a natural action of $\Z_{p^g}$ on the Dieudonn\'e $S$-window $(\cM(G),\Fil^1\cM(G),\varphi)$ such that $\cM(G)$ becomes a locally free $S\otimes_{\Z_p}\Z_{p^g}$-module of rank 2. Hence, we have canonical decompositions
$$\cM(G)=\bigoplus_{i\in \Z/g\Z}\cM(G)_i\quad \text{and}\quad\Fil^1\cM(G)=\bigoplus_{i\in \Z/g\Z}\Fil^1\cM(G)_i,$$ and $\varphi(\cM(G)_{i-1})\subset \cM(G)_i$. Note that we have canonical isomorphisms of free $R\otimes_{\Z_p}\Z_{p^g}$-modules of rank 1:
\begin{gather}
\omega_{G}=\bigoplus_{i\in \Z/g\Z}\omega_{G,i}\xra{\sim} \Fil^1\cM(G)/J\cM(G)=\bigoplus_{i\in \Z/g\Z}\Fil^1\cM(G)_i/J\cM(G)_i,\label{equ-omega-win}\\
\Lie(G^\vee)=\bigoplus_{i\in \Z/g\Z}\Lie(G^\vee)_i\xra{\sim}\cM(G)/\Fil^1\cM(G)=\bigoplus_{i\in \Z/g\Z}\cM(G)_{i}/\Fil^1\cM(G)_i.\label{equ-Lie-win}
\end{gather}
  Assume now $R$ has characteristic $p$. Then for each $i\in\Z/g\Z$, we have  a commutative diagram of $\varphi$-semi-linear maps
  \beq\label{diag-ha}\xymatrix{\cM(G)_{i-1}\ar[r]^{\varphi}\ar[d]&\cM(G)_{i}\ar[d]\\
  \Lie(G^\vee)_{i-1}\ar[r]^{\HW_{G,i}}&\Lie(G^\vee)_i,}
  \eeq
  where the vertical arrows are natural quotient maps and $\HW_{G,i}$ is the $i$-th component of the usual Hasse-Witt map on $\Lie(G^\vee)$. If $f_i$ is a basis of  $\Lie(G^\vee)_i$ over $R$ for  $i\in \Z/g\Z$, there exists $t_i\in R$ such that $\HW_{G,i}(f_{i-1})=t_{i}f_{i}$.  We call $t_i$ the \emph{$i$-th partial Hasse invariant} of $G$ (for the basis $(f_i)_{1\leq i\leq g}$).

\subsection{}\label{p-div-RM} Let $G_0$ be  a superspecial $p$-divisible group with RM by $\Z_{p^g}$ over $k$. That is, $G_0$ is isomorphic to the $p$-divisible group of  a product of supersingular elliptic curves of $k$. Then by \cite[5.5.4]{GO}, such a $G_0$ is unique up to isomorphism, and  the (contravariant) Dieudonn\'e module $M(G_0)=\oplus_{i\in \Z/g\Z}M(G_0)_i$ of $G_0$ can be explicitly described as follows: Each $M(G_0)_i$ is a free $W$-module of rank 2 with basis $e_i,f_i$, and the Frobenius is given by
\[\varphi(e_{i-1},f_{i-1})=(e_i,f_i)\begin{bmatrix}0& 1\\p&0\end{bmatrix}.\]
In particular, in the Hodge filtration $0\ra \omega_{G_0}\ra M(G_0)\otimes_{W}k\ra \Lie(G_0^\vee)\ra 0$, $\omega_{G_0}$ is generated by the image of $(e_i)_{i\in \Z/g\Z}$, and $\Lie(G_0^\vee)$ is generated by the image of $(f_i)_{i\in \Z/g\Z}$.

 By \cite[2.3.4]{GO}, the formal scheme which classifies that the deformations of $G_0$ as $p$-divisible groups with RM by $\Z_{p^g}$ is given by $\Spf(R^\univ)$ with $R^\univ=W[[T_1,\cdots, T_g]]$. We equip $R^\univ$ with an endomorphism $\varphi$ which acts on $W$ via Frobenius and sends $T_i$ to $T_i^p$. Then $(R^\univ, 0, \varphi)$ becomes a frame of $R^\univ$ itself. Let $(\cM^{\univ}, \Fil^1\cM^\univ, \varphi)$ be the  Dieudonn\'e $R^\univ$-window of the universal deformation $G^\univ$ over $R^\univ$. By [\emph{loc. cit.}] and the relation between  displays and Dieudonn\'e windows, the universal $R^\univ$-window has the following description: In the canonical decomposition $$\cM^\univ=\bigoplus_{i\in \Z/g\Z}\cM^\univ_i,$$
each $\cM^\univ_i$ is a free $R^\univ$-module of rank 2 with basis $\ee_i,\ff_i$, and  we have $\Fil^1\cM^\univ_i= R^\univ\cdot\ee_i$. The Frobenius map on $\cM^{\univ}$ is given by
 \beq\label{univ-phi}\varphi(\ee_{i-1},\ff_{i-1})=(\ee_i,\ff_i)\begin{bmatrix}0& 1\\ p &T_i\end{bmatrix},\eeq
and the morphism $\psi: \cM^{\univ}\ra \varphi^*(\cM^\univ)$ is thus given by
 \beq\label{univ-psi}\psi(\ee_i,\ff_i)=(\varphi^*\ee_{i-1},\varphi^*\ff_{i-1})\begin{bmatrix}-T_i& 1\\p &0\end{bmatrix}.\eeq
 Note that the image of $\ff_i$ in $\Lie(G^\univ)^\vee_i$ forms a basis. Therefore by \eqref{diag-ha}, the $i$-th partial Hasse invariant of $G^{\univ}\otimes_{R^{\univ}}(R^{\univ}/p)$ is just the image of $T_i$ in $R^{\univ}/p$.

 \begin{rem}\label{rem-CM} Let $\tilde{G}_0$ be the base change of $G^{\univ}$ to $W$ via the map $T_i\mapsto 0$. The $p$-divisible group $\tilde{G}_0$ has many interesting properties, and can be considered as the canonical lifting of $G_0$ to $W$. For example, if $g$ is odd, then $\tilde{G}_0$ has an action by $\Z_{p^{2g}}$ extending the RM by $\Z_{p^{g}}$. If $g$ is even, then  $\tilde{G}_{0}$ has a decomposition $\tilde{G}_0=H_{+}\times_{\Spec(W)} H_{-}$, where $H_{+}$ and $H_{-}$ are $p$-divisible groups of dimension $g/2$ and height $g$. Moreover, there are natural actions of $\Z_{p^g}$ on $H_{+}$ and $H_{-}$ satisfying the properties of ``formal complex multiplication'' by $\Z_{p^g}$.
  \end{rem}

\subsection{} Assume that $g$ is even. For any integer $m,n\geq 1$, let  $R_{m,n}=W\{t_1,\cdots, t_g\}$ be the $p$-adic completion of the polynomial ring $W[t_1,\cdots, t_g]$, $\iota_{m,n}: R^\univ\ra R_{m,n}$ be the homomorphism of $W$-algebras given by
 \[T_i\mapsto\begin{cases}
 p^mt_i& \text{for $i$ odd,}\\
 p^nt_i& \text{for $i$ even.}
 \end{cases}\]
    Similarly to $R^\univ$, we equip  $R_{m,n}$ with the endomorphism $\varphi$ that acts as Frobenius on $W$ and $\varphi(t_i)=t_i^p$. This makes $(R_{m,n},0,\varphi)$ a frame for $R_{m,n}$ itself. Let $G^{m,n}$ be the base change of $G^{\univ}$ to $R_{m,n}$.  Since $\iota_{m,n}$ is compatible with Frobenius, the Dieudonn\'e $R_{m,n}$-window $(\cM^{m,n},\Fil^1\cM^{m,n},\varphi^{m,n})$ associated to $G^{m,n}$ is just the base change of $(\cM^{\univ},\Fil^1\cM^{\univ},\varphi)$ via $\iota_{m,n}$. For $i\in \Z/g\Z$, let  $\ee_i,\ff_i$
denote the image of $\ee_i,\ff_i$ in $\cM^{m,n}$ by an obvious abuse of notation. We have $\Fil^1\cM^{m,n}=\oplus_{i\in \Z/g\Z}R_{m,n}\ee_i$ and
\[\varphi^{m,n}(\ee_{2i-1},\ff_{2i-1})=(\ee_{2i},\ff_{2i})\begin{bmatrix}0&1\\
p&p^nt_{2i}\end{bmatrix},\quad\quad
\varphi^{m,n}(\ee_{2i},\ff_{2i})=(\ee_{2i+1},\ff_{2i+1})\begin{bmatrix}0&1\\p&p^mt_{2i+1}\end{bmatrix}.\]

  The following proposition can be considered as a relative version of \ref{prop-split-kisin}(a) in a more general setting.

 \begin{prop}\label{prop-window}

 \emph{(a)} There exists two finite and flat closed group schemes $H^{m,n}_{+}\subset G^{m,n}[p^m]$ and $H^{m,n}_{-}\subset G^{m,n}[p^n]$ stable under the action of $\Z_{p^g}$, and such that we have
 \beq\label{equ-omega-n}\omega_{H^{m,n}_{+,i}}\simeq \begin{cases}0&\text{for $i$ odd}\\
R_{m,n}/p^mR_{m,n}&\text{for $i$ even},
\end{cases}\quad
\omega_{H^{m,n}_{-,i}}\simeq \begin{cases}R_{m,n}/p^n R_{m,n}&\text{for $i$ odd}\\
0&\text{for $i$ even}.\end{cases}\eeq
%Moreover, $H^{m,n}_{+}$ $($resp. $H^{m,n}_{-})$ is a truncated Barsotti-Tate groups of level $m$ $($resp. of level $n)$ over $R_{m,n}$ of dimension $g/2$ and height $g$,
% and the natural map
 %\[H^{m,n}_{+}\times_{R_{m,n}} H^{m,n}_{-}\ra G^{m,n}[p^N]\]
 % is a closed embedding of finite flat group schemes over $R_{m,n}$ for any $N\geq m,n$. In particular, if $N=m=n$, the closed embedding  above is an isomorphism of truncated Barsotti-Tate groups of level $n$ over $R_{n,n}$.

  \emph{(b)} Let $(\cL^{m,n}_{+},\Fil^1\cL^{m,n}_{+},\varphi_{+})$ $($resp. $(\cL^{m,n}_{-},\Fil^1\cL^{m,n}_{-},\varphi_{-})$ $)$ be the Dieudonn\'e $R_{m,n}$-window over $R^{m,n}$ associated with the quotient $G^{m,n}/H^{m,n}_{+}$ $($resp. $G^{m,n}/H^{m,n}_{-}$ $)$. Then they are actually $R_{m,n}$-windows, and $\cL^{m,n}_{+}$ $($resp. $\cL^{m,n}_{-}$ $)$ is naturally identified with the free $R_{m,n}$-submodule of $\cM^{m,n}$ genearated by $(\ee_{2i-1}, p^m\ff_{2i-1}, p^m\ee_{2i},\ff_{2i})$ $($resp. by $(p^n\ee_{2i-1},\ff_{2i-1},\ee_{2i},p^n\ff_{2i}))$ for $1\leq i\leq g/2$   with the induced $\Fil^1$ and $\varphi$-structures. In particular, we have
  \begin{align*}
&\varphi_{+}(\ee_{2i-1},p^m\ff_{2i-1})=(p^m\ee_{2i},\ff_{2i})\begin{bmatrix}0 &1\\ p& p^{m+n}t_{2i}\end{bmatrix}, \\
  &\varphi_{+}(p^m\ee_{2i},\ff_{2i})=(\ee_{2i+1},p^m\ff_{2i+1})\begin{bmatrix}0 &1\\ p& t_{2i+1}\end{bmatrix};\\
   &\varphi_{-}(p^n\ee_{2i-1},\ff_{2i-1})=(\ee_{2i},p^n\ff_{2i})\begin{bmatrix}0 &1\\ p& t_{2i}\end{bmatrix},\\
  &\varphi_{-}(\ee_{2i},p^{n}\ff_{2i})=(p^n\ee_{2i+1},\ff_{2i+1})\begin{bmatrix}0 &1\\ p& p^{m+n}t_{2i+1}\end{bmatrix}.\\
    \end{align*}

  \end{prop}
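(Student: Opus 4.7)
The plan is to use Zink's anti-equivalence (Theorem \ref{thm-zink}) between $p$-divisible groups over $R_{m,n}$ without multiplicative part and $R_{m,n}$-windows: since the contravariant Dieudonn\'e functor turns a quotient isogeny $G^{m,n}\twoheadrightarrow G^{m,n}/H$ into an injection of windows $\cM(G^{m,n}/H)\hookrightarrow \cM^{m,n}$, constructing $H^{m,n}_{\pm}$ is equivalent to exhibiting two sub-$R_{m,n}$-windows $\cL^{m,n}_{\pm}\subset \cM^{m,n}$ with the required Hodge filtration and Frobenius behavior. I would therefore take the submodules described in part (b) as the definitions of $\cL^{m,n}_{\pm}$, endow them with the induced $\Fil^1$ (namely the intersection with $\Fil^1\cM^{m,n}=\bigoplus_i R_{m,n}\ee_i$) and the restriction of $\varphi^{m,n}$, and check that these really are $R_{m,n}$-windows.

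The stability under $\varphi^{m,n}$ and the explicit matrices in (b) are just the matrices of \eqref{univ-phi} rewritten in the new basis after pulling back via $\iota_{m,n}$: for instance, $\varphi^{m,n}(\ee_{2i-1})=p\ff_{2i}+p^n t_{2i}\ee_{2i}\cdot\text{(terms)}$ rearranges to $\varphi_+(\ee_{2i-1})=p\cdot (p^m\ee_{2i})\cdot p^{-m}+\ff_{2i}$, and similarly at even indices, so $\cL^{m,n}_+$ is stable. The inclusion $\varphi_+(\Fil^1\cL^{m,n}_+)\subset p\cL^{m,n}_+$ is read off the first columns of the displayed matrices, and the images of $\ee_{2i-1}, p^m\ee_{2i}$ under $p^{-1}\varphi_+$ together with the images of $p^m\ff_{2i-1}, \ff_{2i}$ under $\varphi_+$ visibly generate $\cL^{m,n}_+$; this yields the Dieudonn\'e-window structure. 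The extra nilpotence condition (so that one has an honest $R_{m,n}$-window) follows from the corresponding property for $\cM^{m,n}$, which holds because $G^{m,n}$ has supersingular, hence multiplicative-free, closed fiber, and nilpotence passes to any sub-window. The construction and verification for $\cL^{m,n}_-$ are completely symmetric.

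By Theorem \ref{thm-zink}, each $\cL^{m,n}_{\pm}$ is the window of a unique $p$-divisible group $G^{m,n}_{\pm}$ over $R_{m,n}$ (without multiplicative part), and the inclusion $\cL^{m,n}_{\pm}\hookrightarrow \cM^{m,n}$ produces a quotient isogeny $G^{m,n}\twoheadrightarrow G^{m,n}_{\pm}$; its kernel is the desired finite flat $\Z_{p^g}$-stable closed subgroup $H^{m,n}_{\pm}$. The $\Z_{p^g}$-action is inherited from the evident compatibility of the gradings $\cL^{m,n}_{\pm}=\bigoplus_{i\in\Z/g\Z}(\cL^{m,n}_{\pm})_i$ with that of $\cM^{m,n}$. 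The exponent bounds $H^{m,n}_+\subset G^{m,n}[p^m]$ and $H^{m,n}_-\subset G^{m,n}[p^n]$ are read off from the index $[\cM^{m,n}:\cL^{m,n}_{\pm}]$, which is annihilated by $p^m$, resp.\ $p^n$, componentwise.

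Finally, the formulas \eqref{equ-omega-n} follow from the isomorphism \eqref{equ-omega-win} applied to the exact sequence $0\to \omega_{G^{m,n}_{\pm}}\to \omega_{G^{m,n}}\to \omega_{H^{m,n}_{\pm}}\to 0$: writing the piece in degree $i$ as $(\Fil^1\cL^{m,n}_{\pm})_i\hookrightarrow (\Fil^1\cM^{m,n})_i$ and quotienting, one computes for $\cL^{m,n}_+$ that the inclusion is an equality at odd $i$ (giving $\omega_{H^{m,n}_+,i}=0$) and is the embedding $R_{m,n}\cdot p^m\ee_{2i}\hookrightarrow R_{m,n}\cdot \ee_{2i}$ at even $i$ (giving $R_{m,n}/p^m R_{m,n}$); the case of $\cL^{m,n}_-$ is identical with the roles of odd/even and $m/n$ swapped. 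The main delicate point is the Frobenius-stability check in the modified basis, which must be done for each of the four basis vectors at each index modulo~$2$, since the scalings by $p^m$ and $p^n$ interact nontrivially with the matrices coming from \eqref{univ-phi}; once this combinatorics is set up correctly, the rest is formal.
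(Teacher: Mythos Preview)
Your overall strategy is exactly the paper's: define $\cL^{m,n}_{\pm}$ as the explicit submodules in (b), verify the Dieudonn\'e-window axioms, invoke Zink's theorem to produce the quotient $p$-divisible groups and hence the kernels $H^{m,n}_{\pm}$, and read off $\omega_{H^{m,n}_{\pm}}$ from $\Fil^1\cM^{m,n}/\Fil^1\cL^{m,n}_{\pm}$.

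The one place where you diverge from the paper, and where your argument is incomplete, is the nilpotence check. You assert that ``nilpotence passes to any sub-window,'' but this is not automatic: the condition is that $\overline{\psi}_{+}^{[n]}$ vanishes on $\cL^{m,n}_{+}/p$, and the reduction map $\cL^{m,n}_{+}/p\to \cM^{m,n}/p$ is \emph{not} injective (the basis vectors $p^m\ff_{2i-1}$ and $p^m\ee_{2i}$ die), so one cannot simply restrict the nilpotence of $\overline{\psi}$ on $\cM^{m,n}/p$. Concretely, one computes $\overline{\psi}^{[2]}=0$ on $\cM^{m,n}/p$, whereas $\overline{\psi}_{+}^{[2]}(\ff_{2i})=-t_{2i-1}^{p}\,\varphi^{2*}(p^m\ee_{2i-2})\neq 0$ in $\cL^{m,n}_{+}/p$, so the nilpotence index genuinely increases. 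The paper handles this by writing down $\psi_{+}$ explicitly from the matrices of $\varphi_{+}$ and checking directly that $\psi_{+}^{[2g]}$ lands in $p\varphi^{2g*}\cL^{m,n}_{+}$. (Alternatively, one can argue that since $p^{m}\cM^{m,n}\subset\cL^{m,n}_{+}$, the image of $\psi^{[(m+1)N]}_{\cL}$ lies in $p^{m+1}\varphi^{*}\cM^{m,n}\subset p\,\varphi^{*}\cL^{m,n}_{+}$ whenever $\psi_{\cM}^{[N]}$ lands in $p\varphi^{*}\cM^{m,n}$; but this still requires the bounded-index observation and is not the bare statement you wrote.) Either way, this step needs an explicit argument.
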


\begin{proof}
Let $\cL^{m,n}_{+}$ denote the submodule of $\cM^{m,n}$ described in (b). We  have
$$\Fil^1\cL^{m,n}_{+}=\Fil^1\cM^{m,n}\cap\cL^{m,n}_{+}=\bigoplus_{1\leq i\leq g/2}\bigl(R_{m,n}\cdot\ee_{2i-1}\oplus R_{m,n}\cdot p^m\ee_{2i}\bigr).$$ From the formulas of $\varphi_{+}$ on $\cL^{m,n}_{+}$ given above, it's easy to see that $(\cL^{m,n}_{+},\Fil^1\cL^{m,n}_{+},\varphi_{+})$ is indeed a $R^{m,n}$-window. We have to prove that it  is indeed a $R_{m,n}$-window. From the formulas of $\varphi_{+}$, it's easy to see that the morphism $\psi_{+}: \cL^{m,n}_{+}\ra \varphi^*\cL^{m,n}_{+}$ is given by
\begin{gather*}\psi_{+}(p^m\ee_{2i},\ff_{2i})=(\varphi^*\ee_{2i-1},\varphi^*p^m\ff_{2i-1})\begin{bmatrix}-p^{m+n}t_{2i}&1\\p&0\end{bmatrix},\\
\psi_{+}(\ee_{2i+1},p^m\ff_{2i+1})=(\varphi^*\ee_{2i},\varphi^*p^m\ff_{2i})\begin{bmatrix}-t_{2i+1}& 1\\p&0\end{bmatrix}.
\end{gather*}
Now it's direct to check that the image of the morphism
\[\varphi^{(2g-1)*}(\psi_{+})\circ\cdots\circ\varphi^*(\psi_{+})\circ \psi_{+}: \cL^{m,n}_{+}\lra\varphi^{2g*}\cL^{m,n}_{+} \]
is indeed contained in $p\varphi^{2g*}\cL^{m,n}_{+}$. This proves $(\cL^{m,n}_{+},\Fil^1\cL^{m,n}_{+},\varphi_{+})$ is indeed a $R^{m,n}$-window, and it thus corresponds to a quotient of $G^{m,n}$ by a certain kernel $H^{m,n}_{+}$ by Zink's theorem \ref{thm-zink}. Similar arguments apply to $(\cL^{m,n}_{-},\Fil^1\cL^{m,n}_{-},\varphi_{-})$ and $H^{m,n}_{-}$. This proves statement (b). By our construction, the $p$-divisible group $G^{m,n}/H^{m,n}_{+}$ is clearly equipped with RM by $\Z_{p^g}$.  Therefore, the finite flat closed subgroup scheme $H_{+}^{m,n}$ of $G^{m,n}$ is stable under $\Z_{p^g}$. From the exact sequence of groups over $R^{m,n}$
\[0\ra H^{m,n}_{+}\ra G^{m,n}\ra G^{m,n}/H_{+}^{m,n}\ra 0,\]
we get an exact sequence of $R^{m,n}\otimes_{\Z_p}\Z_{p^g}$-modules
\[0\ra \omega_{G^{m,n}/H^{m,n}_{+}}\ra \omega_{G^{m,n}}\ra \omega_{H^{m,n}_{+}}\ra0.\]
In view of the relations \eqref{equ-omega-win}, we have a canonical isomorphism of $R^{m,n}\otimes_{Z_p}\Z_{p^g}$-modules
\[\omega_{H^{m,n}_{+}}\simeq \Fil^1\cM^{m,n}/\Fil^1\cL^{m,n}_{+}=\bigoplus_{1\leq i\leq g/2}R_{m,n}/p^m\cdot \ee_{2i},\]
from which \eqref{equ-omega-n} for $\omega_{H^{m,n}_{+}}$ results immediately. Similarly arguments work for $\omega_{H^{m,n}_{-}}$.
\end{proof}

\begin{rem}\label{rem-window}
(a) Let $\Spf(R^{\univ})_{\rig}$ and $\Spf(R_{m,n})_{\rig}$ be the associated rigid generic fibers of the corresponding formal schemes \cite[0.2]{Be}. Then $\Spf(R^{\univ})_{\rig}$ is isomorphic to the open unit polydisc $\DD$ of dimension $g$ with parameters $T_1,\cdots, T_g$. Via the morphism of rigid spaces induced by $\iota_n$, $\Spf(R_{m,n})_{rig}$ is identified with the closed subdisc
\[\DD(m,n)=\{x\in \DD\;|\; v_p(T_{2i-1}(x))\geq m, v_p(T_{2i}(x))\geq n\;\text{for $1\leq i\leq g/2$} \}.\]
 The associated rigid $p$-divisible group $G^{m,n}_{\rig}$ is just the restriction of $G^{\univ}_{\rig}$ to $\DD(m,n)$.

(b) Let us relate the results of the Proposition above to the results in Section 3 proven via Breuil-Kisin modules.  Let $K$ be a finite extension of $W[1/p]$ with ring of integers $\cO_K$. Let $x$ be a $K$-valued rigid point of $\DD(1,1)$, i.e. $x$ comes from a morphism of formal schemes $\Spf(\cO_K)\ra \Spf(R^{\univ})$ factoring through $\Spf(R_{1,1})$. We denote by $G_{x}, H_{+,x}, H_{-,x}$ respectively  the pullbacks of $G^{1,1}$, $H^{1,1}_{+}$ and $H^{1,1}_{-}$ over $\cO_K$ via $x$. We have
\[\deg_i(H_{+,x})=\begin{cases}0& \text{if $i$ is odd,}\\ 1&\text{if $i$ is even};\end{cases}\quad\deg_i(H_{-,x})=\begin{cases}1&\text{if $i$ is odd},\\
0&\text{if $i$ is even}.\end{cases}\]
On the other hand, $G_x[p]$ is clearly a truncated Barsotti-Tate group of level $1$ with RM by $\Z_{p^g}$ over $\cO_K$  defined in \ref{defn-gp-RM} with partial Hodge heights $w_i(G)=1$ for all $i\in \Z/g\Z$.  Then the  closed subgroup schemes $H_{+,x}, H_{-,x}$  are just  the group schemes $H_{+}$ and $H_{-}$ obtained by applying Cor. \ref{cor-split-CM} to $G_{x}[p]$. Now Prop. \ref{prop-window}(b) allows us to compute  the partial Hasse invariants  of the quotients $(G_x/H_{+,x})\otimes_{\cO_K}\cO_{K}/p$ and $(G_x/H_{-,x})\otimes_{\cO_K}\cO_K/p$.

\end{rem}

%\subsection{Proof of \ref{lem-tech-ss}}

\end{document}